\theoremstyle{plain}
        \newtheorem{theorem}{Theorem}[section]
        \newtheorem*{theorem*}{Theorem}
        \newtheorem*{conj*}{Conjecture}
        \newtheorem{lemma}[theorem]{Lemma}
        \newtheorem{cor}[theorem]{Corollary}
        \newtheorem{prop}[theorem]{Proposition}
        \newtheorem*{poiriers_theorem*}{Poirier's Theorem}
\theoremstyle{definition}
        \newtheorem{definition}[theorem]{Definition}
        \newtheorem{open}{Open Problem}
\theoremstyle{remark}
        \newtheorem*{remark*}{Remark}
        \newtheorem{remark}[theorem]{Remark}
        \newtheorem*{question*}{Question}
        \newtheorem*{claim}{Claim}
        \newtheorem*{case}{Case}
\numberwithin{equation}{section}
\providecommand{\defn}[1]{\emph{#1}}
\newcounter{mylistnum}
\newcommand{\dist}{\operatorname{dist}}
\newcommand{\diam}  {\operatorname{diam}}
\newcommand{\clos}  {\operatorname{clos}}
\newcommand{\inte}  {\operatorname{int}}
\newcommand{\mesh} {\operatorname{mesh}}
\newcommand{\R}{\mathbb{R}}      
\newcommand{\C}{\mathbb{C}}      
\newcommand{\N}{\mathbb{N}}      
\newcommand{\Z}{\mathbb{Z}}      
\newcommand{\Q}{\mathbb{Q}}      
\newcommand{\Halb}{\mathbb{H}}   
\newcommand{\CDach}{\widehat{\mathbb{C}}}
\newcommand{\D}{\mathbb{D}}      
\newcommand{\Dbar}{\overline{\mathbb{D}}}      
\providecommand{\abs}[1]{\lvert#1\rvert}
\providecommand{\norm}[1]{\lVert#1\rVert}
\newcommand{\crit}{\operatorname{crit}}
\newcommand{\post}{\operatorname{post}}
\newcommand{\CC}{\mathcal{C}}
\newcommand{\SC}{\mathcal{S}}
\newcommand{\X} {\mathbf{X}}
\newcommand{\E} {\mathbf{E}}
\newcommand{\V} {\mathbf{V}}
\newcommand{\A} {\mathbf{A}}
\newcommand{\G} {\mathbf{G}}
\newcommand{\simn}{\stackrel{n}{\sim}}
\newcommand{\siminfty}{\stackrel{\infty}{\sim}}
\newcommand{\simhat}{\widehat{\sim}}
\newcommand{\sima}{\stackrel{a}{\sim}}
\newcommand{\simb}{\stackrel{b}{\sim}}
\newcommand{\simalpha}{\stackrel{\alpha}{\sim}}
\newcommand{\Sim}[1]{\stackrel{#1}{\sim}}
\newcommand{\Approx}[1]{\stackrel{#1}{\approx}}
\newcommand{\LC}{\mathcal{L}}
\newcommand{\J}{\mathcal{J}}
\newcommand{\K}{\mathcal{K}}
\newcommand{\F}{\mathcal{F}}
\newcommand{\mate}{\perp \! \! \! \perp}
\newcommand{\wt}{\mathtt{w}}
\newcommand{\bt}{\mathtt{b}}
\begin{document}

\title{Expanding Thurston maps as quotients}

\date{\today} 

\author{Daniel Meyer}
\thanks{The author was partially supported by an NSF postdoctoral
  fellowship and the Academy of Finland (projects SA-11842 and SA-118634)} 
\address{Jacobs University Bremen, Campus Ring 1, 28759 Bremen, Germany}
\email{dmeyermail@gmail.com}  

\subjclass[2000]{Primary: 37F20, Secondary: 37F10}

\keywords{Expanding Thurston map, invariant Peano curve, mating of
  polynomials, critical portraits, fractal tilings}

\begin{abstract}
  A \emph{Thurston map} is a branched covering map
  $f\colon S^2\to S^2$ that is \emph{postcritically finite}. 
  \emph{Mating of polynomials}, introduced by Douady and Hubbard, is a
  method to \emph{geometrically} combine the Julia sets of two
  polynomials (and their dynamics) to form a rational map.
  We show
  that for every \emph{expanding} Thurston map $f$ every sufficiently
  high iterate $F=f^n$ is obtained as the mating of two polynomials. One
  obtains a concise description of $F$ via \emph{critical
    portraits}. The proof is based on the construction of the
  invariant Peano curve from \cite{peano}. 
  As another consequence we obtain a large number 
  of fractal tilings of the plane and the hyperbolic plane. 
\end{abstract}

\maketitle

\tableofcontents

\section{Introduction}
\label{sec:introduction}

Douady and Hubbard observed in the early 80's (see \cite{MR728980})
that the Julia set of certain rational maps ``contains'' the Julia
sets of some polynomials. This prompted them to introduce the notion
of \defn{mating of polynomials}.  This is a method to geometrically
combine the Julia sets of two polynomials and the dynamics defined on
them. Somewhat surprisingly this operation ``often'' (though certainly
not always) results in a rational map. The dynamics of such a rational
map may then be described in terms of the dynamics of polynomials
(which is much better understood). The main result of the present
paper is that for any postcritically finite rational map $f$ whose
Julia set is the whole Riemann sphere, every sufficiently high iterate
$F=f^n$ is obtained as a mating.

An excellent introduction to matings can be found in
\cite{MilnorMating}. 
There are several different ways to define matings (not all of them
equivalent). An overview is given in \cite{mating_defs}. We focus
here on the \defn{topological mating}, the definition of which is given
below. This is possibly the most commonly used notion of mating.

\subsection{The Carath\'{e}odory semi-conjugacy of a polynomial Julia set}
\label{sec:carath-semi-conjugacy-polyn}

Let $P$ be a monic polynomial (i.e., the coefficient of the
leading term is $1$) of degree $d\geq 2$ with connected and locally
connected filled Julia 
set $\K$. \emph{B\"{o}ttcher's theorem} (see  for example \cite[\S
9]{MR2193309} or \cite[II.4]{Carleson}) asserts that $P$ is
conformally conjugate to $z^n$ in a neighborhood of $\infty$, i.e.,
there is a conformal map $\phi\colon 
U\to V$, where $U$, $V$ are neighborhoods of $\infty$, such that
$\phi(\infty)= \infty$ and
\begin{equation}
  \label{eq:boettcher}
  \phi(z^d)= P(\phi(z)),
\end{equation}
for all $z\in U$. We may choose $\phi'(\infty):= \lim_{z\to \infty}
z/\phi(z) >0$ (in fact then $\phi'(\infty)=1$). This makes $\phi$
unique. The conjugacy $\phi$ may be extended conformally to the whole
domain of attraction of $\infty$ (i.e., to the complement of the
filled Julia set $\K$ of $P$). The extended map $\overline{\phi}\colon
\CDach \setminus \Dbar\to \CDach \setminus \K$ then is the Riemann map
of the simply connected domain $\CDach \setminus \K$.

Since the Julia set $\J=\partial \K$ of $P$ is assumed to be locally
connected it follows from Carath\'{e}odory's theorem (see for example
\cite[Theorem 17.14]{Milnor}) that $\overline{\phi}$    
extends continuously to
\begin{equation}
  \label{eq:defCaraSemi-Conjugacy}
  \sigma\colon S^1=\partial \Dbar\to\partial \K=\J, 
\end{equation}
where $\J$ is the \defn{Julia set} of $P$. Since $\sigma$ is the
extension of the conjugacy $\overline{\phi}$ it follows that
$\sigma(z^d)= P(\sigma(z))$ for all $z\in S^1$, i.e., the following
diagram commutes
\begin{equation}
\label{eq:cara_loop}
  \xymatrix{
    S^1 \ar[r]^{z^d} \ar[d]_\sigma & S^1 \ar[d]^\sigma
    \\
    \J \ar[r]^P & \J.
  }
\end{equation}
Note however that $\sigma$ will not be injective in general. 
We call the map $\sigma$ the \defn{Carath\'{e}odory semi-conjugacy} of
$\J$. 

We remind the reader that every postcritically finite polynomial
has connected and locally connected filled Julia set (see for example
\cite[Theorem 19.7]{Milnor}). Thus the description above holds in this
case. 

\subsection{Mating of polynomials}
\label{sec:mating-polynomials}

Consider two monic polynomials $P_{\wt},P_{\bt}$ (called the \emph{white} and
the \emph{black} polynomial) of the same degree $d\geq 2$ with
connected and locally connected Julia sets. 
Let $\sigma_{\wt},\sigma_{\bt}$ be the Carath\'{e}odory semi-conjugacies of
their Julia 
sets $\J_{\wt},\J_{\bt}$. 

Glue the filled Julia sets $\K_{\wt},\K_{\bt}$ (of $P_{\wt},P_{\bt}$) together by
identifying $\sigma_{\wt}(z)\in \partial \K_{\wt}$ with
$\sigma_{\bt}(\bar{z})\in \partial \K_{\bt}$. More precisely, we consider the
disjoint union of $\K_{\wt},\K_{\bt}$, 
and let $\K_{\wt}\mate \K_{\bt}$ be the quotient obtained from the
equivalence relation generated by $\sigma_{\wt}(z)\sim \sigma_{\bt}(\bar{z})$  for all $z\in S^1=\partial
\D$. The complex conjugation $\bar{z}$ is customary here, though not
essential: identifying $\sigma_{\wt}(z)$ with $\sigma_{\bt}({z})$ amounts to the
mating of $P_{\wt}$ with $\overline{P_{\bt}(\bar{z})}$.  The \defn{topological
  mating of $P_{\wt},P_{\bt}$} is the map
\begin{align*}
  &P_{\wt}\mate P_{\bt} \colon \K_{\wt}\mate \K_{\bt} \to \K_{\wt}\mate \K_{\bt},
  \intertext{given by}
  &P_\wt\mate P_\bt|_{\K_i}=P_i,
\end{align*}
for $i=\wt,\bt$. It follows from \eqref{eq:cara_loop} that $x_{\wt}\sim x_{\bt}
\Rightarrow P_{\wt}(x_{\wt})\sim P_{\bt}(x_{\bt})$ (for all $x_{\wt}\in \K_{\wt}, x_{\bt}\in
\K_{\bt}$). This shows that the map $P_{\wt}\mate P_{\bt}$ is well
defined. If a map is topologically conjugate to a map $P_{\wt}\mate P_{\bt}$
we say it is obtained as a (topological) mating. 

Most results obtained so far have focused on the question of when the
mating of two polynomials results in a map that is (topologically
conjugate to) a rational map. This has been completely answered in the
quadratic, postcritically finite case by Rees, Tan, and Shishikura
(see \cite{MR1149864}, \cite{MR1182664}, \cite{MR1765095}). See also
the result on matings of Siegel disk polynomials by Yampolsky-Zakeri
\cite{MR1800348}. Here we consider the question whether a given
rational map is obtained as a mating.

\subsection{Main results}
\label{sec:main-results}

The first main result
of this paper is the following.

\begin{theorem}
  \label{thm:main_rational}
  Let $f\colon \CDach \to \CDach$ be a postcritcally finite rational
  map such that its Julia set is the whole sphere, i.e., $\J(f)=
  \CDach$. Then every sufficiently high iterate $F=f^n$ arises as the
  topological mating of two polynomials. 
\end{theorem}

Note that mating of polynomials may result in maps that are branched
covering maps $f\colon S^2\to S^2$, but are not (topologically
conjugate to) rational maps. A branched covering map $f$ of the sphere
$S^2$ is called a \emph{Thurston map} if it is \defn{postcritically
  finite} (i.e., each critical point has finite orbit). We furthermore
assume that $f$ is \defn{expanding} in a suitable sense (see
Section~\ref{sec:thurston-maps-as-1} for definitions and more
background). Recall that a \defn{periodic critical point} (of a
Thurston map $f$) is a critical point $c$, such that $f^k(c)=c$ for
some $k\geq 1$. A postcritically finite rational map is expanding if
and only if it has no periodic critical points if and only if its
Julia set is the whole sphere. In general however, an expanding
Thurston maps may have periodic critical points.
Theorem~\ref{thm:main_rational} is a special case of the following
more general theorem.

\begin{theorem}
  \label{thm:mating1}
  Let $f\colon S^2 \to S^2$ be an expanding Thurston map $f$ without
  periodic critical points. Then every sufficiently high iterate $F=f^n$
  is obtained as a topological mating of two polynomials
  $P_{\wt},P_{\bt}$.   
\end{theorem}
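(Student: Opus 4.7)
The plan is to realize $F$ as a topological mating by extracting two polynomials from the critical portraits given by Theorem \ref{thm:FcriticalPortraits} and then identifying the mating quotient with the quotient $S^1/\!\sim$ from Theorem \ref{thm:S1simS2}.

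First I would apply Theorem \ref{thm:FcriticalPortraits} to obtain two critical portraits $\mathcal{A}_w, \mathcal{A}_b$ describing $F$. Since $f$ (and hence $F=f^n$) has no periodic critical points, every angle appearing in $\mathcal{A}_w$ or $\mathcal{A}_b$ is strictly preperiodic under $z \mapsto z^d$ with $d=\deg F$. Poirier's Theorem then produces monic postcritically finite polynomials $P_w, P_b$ of degree $d$ realizing these portraits; because the portraits are strictly preperiodic, the Julia sets $\J_w, \J_b$ are dendrites (so $\K_i=\J_i$) and the Carathéodory semi-conjugacies $\sigma_w, \sigma_b\colon S^1 \to \J_w, \J_b$ are surjective. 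For a postcritically finite polynomial with dendrite Julia set, the Carathéodory equivalence relation $\approx_i$ defined by \eqref{eq:approx_Cara} is characterized as the smallest closed, $z^d$-invariant equivalence relation with finite, unlinked classes extending $\mathcal{A}_i$. The inductive passage from $\Sim{n,w}$ to $\Sim{n+1,w}$ in Theorem \ref{thm:LnLn+1}, together with the limit description in Theorem \ref{thm:sim_usc_clos_siminfty}, shows that $\Sim{\infty,w}$ satisfies exactly the same characterization; hence $\approx_w = \Sim{\infty,w}$, and similarly $\approx_b = \Sim{\infty,b}$.

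Next I would identify the topological mating with $F$. The quotient $\K_w \amalg \K_b$ admits a surjective parametrization $S^1 \to \K_w \amalg \K_b$, $t \mapsto [\sigma_w(t)]=[\sigma_b(-t)]$, whose fibers are exactly the transitive closure of $\approx_w$ together with the twisted relation $s \sim' t :\Leftrightarrow \sigma_b(-s)=\sigma_b(-t)$. The Peano curve $\gamma$ in Theorem \ref{thm:main} is constructed in \cite{peano} by traversing the white and black tiles in opposite orientations, which produces precisely the minus sign in the mating identification. Consequently the transitive closure of $\Sim{\infty,w}$ and twisted $\Sim{\infty,b}$ coincides with $\sim$, and Theorem \ref{thm:S1simS2} then supplies a homeomorphism $\K_w \amalg \K_b \to S^2$ conjugating $P_w \amalg P_b$ to $F$.

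The principal obstacle is this last identification. One must verify that the orientation reversal between the white and black components of $S^2 \setminus \gamma^n$ is captured precisely by the twist $t \mapsto -t$, and that the join of the two one-sided relations equals $\sim$ with no spurious identifications introduced by passing to transitive closures. This amounts to a careful audit of the sign conventions in the construction of $\gamma$ from \cite{peano} and of how these conventions interact with the pull-back dynamics $z \mapsto z^d$; the no-periodic-critical-point hypothesis is essential here, as it guarantees that the polynomials $P_w, P_b$ have dendrite Julia sets, so that the relations $\approx_w, \approx_b$ alone capture all internal identifications within each of $\K_w, \K_b$ and nothing further is contributed by bounded Fatou components.
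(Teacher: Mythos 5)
Your overall architecture matches the paper's: extract the two critical portraits, realize them by Poirier's theorem as strictly preperiodic polynomials with dendrite Julia sets, identify the Carath\'eodory relations with the white/black relations built from the approximations, and conclude via Theorem \ref{thm:S1simS2} and the quotient description of the topological mating. However, there are two genuine gaps.

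First, the identification $\Approx{w}\,=\,\Sim{\infty,w}$ is false as stated, and the ``characterization'' you invoke to prove it is neither established nor consistent with it. The Carath\'eodory relation $\Approx{w}$ is automatically \emph{closed} (it is the fiber relation of a continuous map on a compact space), whereas $\Sim{\infty,w}=\bigvee_n\Sim{n,w}$ is not: its nontrivial classes live entirely in $\A^\infty$, while $\Approx{w}$ must also identify, e.g., the (possibly irrational) angles of external rays landing at a common non-precritical Julia point --- precisely the points of a set $[(G^n)]=\bigcap G^n\cap S^1$. The correct statement is $\Approx{w}\,=\,\Sim{w}$, the \emph{closure} of $\Sim{\infty,w}$, and proving it is the bulk of the work: one needs Poirier's explicit gap description of $\approx$ (\ref{eq:defapprox}), the finiteness and closedness of $\approx$ (Lemma \ref{lem:prop_approx}, Proposition \ref{prop:approx_closed}), and then the sandwich $\Sim{\infty,w}\leq\,\Approx{\F,w}\leq\,\Sim{w}$ of Proposition \ref{prop:SimPFSimw}. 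Your proposal skips all of this.

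Second, even granting $\Approx{w}=\Sim{w}$ and $\Approx{b}=\Sim{b}$, the mating quotient is $S^1/(\Sim{w}\vee\Sim{b})$, while Theorem \ref{thm:S1simS2} concerns $S^1/\!\sim$ where $\sim$ is the \emph{closure} of $\Sim{\infty,w}\vee\Sim{\infty,b}$ (Theorem \ref{thm:sim_usc_clos_siminfty}). So one must prove that the join $\Sim{w}\vee\Sim{b}$ of the two closed relations is itself closed. This is Lemma \ref{lem:Fnocrit_simwsimb}, and it is where the no-periodic-critical-points hypothesis does its second piece of work: Theorem \ref{thm:sim_finite} gives a uniform bound $N$ on the size of $\sim$-classes, which caps the length of alternating chains $s_1\Sim{w}s_2\Sim{b}\cdots$ and allows a diagonal/subsequence argument as in Proposition \ref{prop:approx_closed}. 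You flag ``no spurious identifications from transitive closures'' as an obstacle but frame the principal difficulty as the sign convention $t\mapsto -t$; that convention is routine bookkeeping, whereas the closedness of the join (and of $\approx$ itself) is the substantive issue. Also note that your opening assertion --- no periodic critical points implies all portrait classes are strictly preperiodic (i.e., of Julia type) --- is itself Proposition \ref{prop:Fatou_crit_cycles} and requires the size estimates of Lemma \ref{lem:sizes_an}, not just the definition.
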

The polynomials $P_{\wt},P_{\bt}$ are postcritically finite, where each 
critical point is \emph{strictly} preperiodic (i.e., $P_{\wt},P_{\bt}$ have no
periodic critical points).

\smallskip 
We also prove a version of this theorem in the case when
$f$ is allowed to have periodic critical points. It is relatively easy
to show that in this case no iterate $F=f^n$ can be (i.e., is
topologically conjugate to) the mating of two polynomials. 

It is however possible to slightly alter the mating construction, so
that a result similar to Theorem~\ref{thm:mating1} holds. Namely we
collapse the closure of each bounded Fatou component of $P_{\wt}$ and
$P_{\bt}$ (i.e., each Fatou component distinct from the basins of
attraction of $\infty$). In addition we need to take the
\emph{closure} of the equivalence relation. An equivalence relation
$\sim$ on a compact metric space $S$ is called \defn{closed} if it is
closed as a subset of the product space $S\times S$.  If $\sim$ is not
closed the quotient $S/\!\sim$ fails to be Hausdorff.

\smallskip
Formally we consider
the equivalence relation on the disjoint union of
$\K_{\wt},\K_{\bt}$ generated by the following,
\begin{align*}
  &\sigma_{\wt}(z)\sim \sigma_{\bt}(\bar{z}), \quad \text{for all $z\in S^1$}
  \intertext{and for any bounded Fatou component  $A$ of $P_{\wt}$ or $P_{\bt}$,}
  &x\sim y, \quad \text{for all } x,y\in \clos A.
\end{align*}
Since $\sim$ may not be closed in general, we consider the closure
$\simhat$ of $\sim$ (see Lemma~\ref{lem:usc_closure}). Let $\K_{\wt}
\widehat{\mate} \,\K_{\bt}$ be the quotient of (the disjoint union of)
$\K_{\wt},\K_{\bt}$ with $\simhat$. We will show that the maps
$P_{\wt},P_{\bt}$ descend to this quotient, meaning that the following
is well defined.
\begin{align}
  \label{eq:PwPbsimhat}
  &P_{\wt} \,\widehat{\mate} \,P_{\bt} \colon \K_{\wt} \widehat{\mate} \,\K_{\bt} \to \K_{\wt}
  \widehat{\mate} \,\K_{\bt},
  \\
  \notag
  &P_{\wt}\widehat{\mate} P_{\bt}([x]) := 
  \begin{cases}
    [P_{\wt}(x)], &x\in \K_{\wt};
    \\
    [P_{\bt}(x)], &y\in K_{\bt}.
  \end{cases}
\end{align}

\begin{theorem}
  \label{thm:mating2}
  Let $f$ be an expanding Thurston map. Then every sufficiently high
  iterate $F=f^n$ is topologically conjugate to a map $P_{\wt}
  \,\widehat{\mate} \,P_{\bt}$ as above.  
\end{theorem}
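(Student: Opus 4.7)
The plan is to combine Theorem \ref{thm:S1simS2}, which already expresses $F$ as $(S^1/\!\sim,\, z^d/\!\sim)$, with a construction of polynomials $P_w,P_b$ from the critical portraits furnished by Theorem \ref{thm:FcriticalPortraits}, and then to identify the two resulting quotient spaces. By Theorem \ref{thm:sim_usc_clos_siminfty}, the equivalence relation $\sim$ is the closure $\simhat$ of $\bigcup_n (\Sim{n,w}\cup\Sim{n,b})$; by Theorem \ref{thm:FcriticalPortraits}, the initial pair $(\Sim{1,w},\Sim{1,b})$ is a pair of critical portraits of degree $d=\deg F$.

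My first step is to realize these critical portraits as polynomials. Using a version of Poirier's theorem that permits \emph{periodic} critical elements, I obtain postcritically finite monic polynomials $P_w,P_b$ of degree $d$ whose critical portraits are $\Sim{1,w}$ and $\Sim{1,b}$. Periodic critical points of $f$, sorted by whether their Fatou cycles are ``white'' or ``black'' in the two-colored construction, match the bounded periodic Fatou components of $P_w$ respectively $P_b$. I then verify that the closed equivalence relation on $S^1$ induced by the Carath\'{e}odory semi-conjugacy of $P_w$, \emph{after} further identifying angles that bound a common bounded Fatou component, coincides with $\overline{\bigcup_n \Sim{n,w}}$; this is because the pull-back procedure of Theorem \ref{thm:LnLn+1} is exactly Thurston's invariant lamination for $P_w$, and the Fatou-component identification is what appears in the limit. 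The analogue holds for $P_b$, with the sign reversal $t\mapsto -t$ built into the construction of $\gamma$.

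The second step is to assemble these two sides into a homeomorphism
\begin{equation*}
\Phi \colon S^1/\!\sim \;\longrightarrow\; \K_w\,\widehat{\amalg}\,\K_b.
\end{equation*}
Heuristically, $[s]_{\sim}$ should map to the common class of $\sigma_w(s)$ and $\sigma_b(-s)$. Well-definedness and injectivity come from matching the three families of generators of $\widehat{\sim}$ on the mating side (Carath\'{e}odory inside $\K_w$, Carath\'{e}odory inside $\K_b$, and the cross identifications $\sigma_w(t)\sim \sigma_b(-t)$) with the two families of generators of $\simhat$ on the Peano-curve side (the white touches $\Sim{n,w}$ and the black touches $\Sim{n,b}$), the cross identifications reflecting the fact that $\gamma$ traverses $S^1$ once. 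Continuity is forced by $\simhat$ being a closure; bijectivity then makes $\Phi$ a homeomorphism by compactness. The intertwining $\Phi\circ(z^d/\!\sim)=(P_w\,\widehat{\amalg}\,P_b)\circ\Phi$ is immediate from $F(\gamma(z))=\gamma(z^d)$ of Theorem \ref{thm:main} together with B\"{o}ttcher's theorem applied to $P_w$ and $P_b$.

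The main obstacle---and the reason Theorem \ref{thm:mating2} takes the \emph{closure} of the naive equivalence relation---is handling the identifications forced by bounded periodic Fatou components. Such a component of $P_w$ is bounded by an infinite system of external rays, so its collapse identifies a whole (countable) family of angles in $S^1$. On the Peano-curve side, the same family must appear from $\simhat$, i.e., as a limit of the $\Sim{n,w}$, including all iterated preimages of the periodic Fatou cycle. Checking that the two closures produce exactly the same identifications---neither more nor fewer---is where the technical work lies; the inductive structure of Theorem \ref{thm:LnLn+1} combined with expansion of $f$ should supply the required fine control in the limit.
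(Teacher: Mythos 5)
Your outline follows the same route as the paper: realize the white and black critical portraits by Poirier's theorem, show that the relation coming from the Carath\'{e}odory semi-conjugacy together with the collapse of bounded Fatou components equals the closed relation $\Sim{w}$ (resp.\ $\Sim{b}$), and then pass to quotients using Theorem \ref{thm:S1simS2}. However, the step you defer as ``where the technical work lies'' is the entire substance of the proof, and one prerequisite for it is missing altogether. Before the relation $\Approx{\F,w}$ of (\ref{eq:def_approxFw}) can even be discussed as a closed equivalence relation, one must know that the bounded Fatou components of $P_w$ are \emph{separated}: distinct bounded components have disjoint closures, and no bounded component has a Julia-type landing point on its boundary. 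Without this, transitivity of $\Approx{\F,w}$ fails (two components sharing a boundary point would force further identifications), and the whole comparison with $\Sim{w}$ breaks down. The paper proves this separation (Proposition \ref{prop:Fatou_Pw}) from the last condition in Definition \ref{def:critical_portrait} --- the $n_0$-condition that eventually no white gap meets two inequivalent classes --- which is strictly stronger than what Poirier's general theorem assumes; your proposal neither isolates this condition nor uses it, and a generic invocation of ``a version of Poirier's theorem that permits periodic critical elements'' does not supply it.

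Beyond that, the asserted equality $\overline{\bigvee_n\Sim{n,w}}=\Approx{\F,w}$ is not a formal consequence of ``the pull-back procedure is Thurston's invariant lamination.'' The paper obtains it by a sandwich $\Sim{\infty,w}\;\leq\;\Approx{\F,w}\;\leq\;\Sim{w}$ together with a proof that $\Approx{\F,w}$ is itself closed (Lemma \ref{lem:approxFclosed}); both inequalities and the closedness rest on a quantitative analysis of Fatou-type versus Julia-type classes, the growth of $\#[\alpha^n]_{m,w}$, and the nested gap structure (Lemma \ref{lem:sizes_an} through Lemma \ref{lem:Fatou_classes_ApproxF}). Your sentence that expansion ``should supply the required fine control'' is exactly the claim that has to be proven. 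Finally, the intertwining is not quite ``immediate'': since one takes closures of the equivalence relations, one must check that the closures remain invariant under the dynamics and that the conjugacy descends to the closed quotients (Lemmas \ref{lem:closure_dynamics} and \ref{lem:closure_top_conju}); this is routine but cannot be skipped.
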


The polynomials $P_{\wt},P_{\bt}$ are postcritically finite, furthermore their
Fatou sets are \emph{separated}. This means that two
distinct bounded Fatou
components of $P_{\wt}$ (of $P_{\bt}$) have disjoint closures.  

Note that if each critical point of $P_{\wt}, P_{\bt}$ is strictly
preperiodic there are no bounded Fatou components, i.e., 
$P_{\wt}\,\widehat{\mate} \,P_{\bt}= P_{\wt}\mate P_{\bt}$ in this case.  
Theorem~\ref{thm:mating1} may thus be viewed as a special case of
Theorem~\ref{thm:mating2}.

\smallskip
It is easy to find examples of Thurston maps $f\colon S^2 \to S^2$
(which may be rational) such that no iterate $F=f^n$ is obtained as a
mating of polynomials. For example this is the case when $f$ is a
hyperbolic, postcritically finite rational map which is not a
polynomial and has exactly three postcritical points; see
\cite{unmating}. 

\smallskip
In the theorems above we do not only show existence of the polynomials
$P_{\wt}, P_{\bt}$ into which $F$ ``unmates'', but the polynomials may be
explicitly computed via a simple algorithm. More precisely
their \defn{critical portraits} are computed explicitly. Roughly---and
somewhat incorrectly---speaking this is the set of external angles at
the critical points. Such a critical portrait determines a monic
centered polynomial uniquely. This was introduced in \cite{MR1149891}
and generalized in \cite{Poirier}. 

\smallskip
It has long been known that a rational map $F$ may arise as the mating
of polynomials in several distinct ways. This phenomenon of
\defn{shared mating} was first observed by Wittner in his thesis
\cite{MR2636558}. From our construction it follows that shared matings
are in fact abundant in our setting. The algorithm to find the
critical portraits together with several examples of shared matings
are presented in \cite{unmating}. 

\subsection{Invariant Peano curves}
\label{sec:invar-peano-curv}

This paper is a direct continuation of the paper \cite{peano}, where
the following theorem was proved.

\begin{theorem}
  \label{thm:main}
  Let $f\colon S^2\to S^2$ be an expanding Thurston map. Then for
  every sufficiently high iterate $F=f^n$ there is a \defn{Peano
    curve} $\gamma\colon S^1 \to S^2$ (onto) such that $F(\gamma(z))=
  \gamma(z^d)$. Here $d=\deg F$. This means that the following diagram
  commutes.
  \begin{equation*}
    \xymatrix{
      S^1 \ar[r]^{z^d} \ar[d]_{\gamma}
      &
      S^1 \ar[d]^{\gamma}
      \\
      S^2 \ar[r]_F & S^2
    }
  \end{equation*}
\end{theorem}

Thus $F$ admits a description akin to the description of polynomials
obtained from the Carath\'{e}odory semi-conjugacy \eqref{eq:cara_loop}.

It should be pointed out that this theorem follows from the respective
theorems in Section~\ref{sec:main-results}. Namely the
Carath\'{e}odory semi-conjugacy $\sigma_{\wt}\colon S^1 \to \J_{\wt}\subset
\K_{\wt}$ (similarly $\sigma_{\bt}\colon S^1 \to \J_{\bt}$) descends to (the
quotient) $\K_{\wt}\mate \K_{\bt}$. If we are in the situation of
Theorem~\ref{thm:mating1} there is a homeomorphism $h\colon \K_{\wt} \mate
\K_{\bt}\to S^2$. Then the Peano curve $\gamma\colon S^1\to S^2$ from
Theorem~\ref{thm:main} is given by the composition 
\begin{equation*}
  S^1\overset{\sigma_{\wt}}{\longrightarrow} \J_{\wt} \hookrightarrow 
  \K_{\wt} \mate \K_{\bt} \overset{h}{\longrightarrow} S^2. 
\end{equation*}
Similarly if we are in the case of Theorem~\ref{thm:mating2}.

The theorems from Section~\ref{sec:main-results} however are proved
using Theorem~\ref{thm:main}. The iterate $F=f^n$ needed
in Theorem~\ref{thm:mating1} and Theorem~\ref{thm:mating2} is the same
as the one from Theorem~\ref{thm:main}. Several sufficient conditions
for the existence of an invariant Peano curve $\gamma$ for an
expanding Thurston map $f$
are given in \cite{peano}. These conditions thus are sufficient to
show that $f$ (not some iterate) arises as a mating. An overview of
these conditions is given in \cite{unmating}.

\subsection{The map $F$  as a quotient}
\label{sec:f-described-via}

Using the invariant Peano curve $\gamma \colon S^1\to S^2$ from
Theorem \ref{thm:main}, an
\emph{equivalence relation} on $S^1$ is defined by
\begin{equation}
  \label{eq:eq_rel}
  z\sim w \Leftrightarrow \gamma(z)=\gamma(w),
\end{equation}
for all $z,w\in S^1$. Elementary topology yields that $S^1/\!\sim$ is
homeomorphic to $S^2$ and that $z^d/\!\sim\,\colon S^1/\!\sim \,\to S^1/\!\sim$
is topologically conjugate to the map $F$. 
\begin{theorem}
  \label{thm:S1simS2}
  Let $\gamma\colon S^1\to S^2$ be an invariant Peano curve for the
  expanding Thurston map $F=f^n$ as in Theorem~\ref{thm:main} and
  $\sim$ be the equivalence relation on $S^1$ induced by $\gamma$ as
  above. Then the following diagram commutes,
  \begin{equation*}
    \xymatrix{
      S^1/\!\sim \ar[r]^{z^d/\sim} \ar[d]_{h}
      &
      S^1/\!\sim \ar[d]^{h}
      \\
      S^2 \ar[r]_F & S^2.
    }
  \end{equation*}
  Here the homeomorphism $h\colon S^1/\!\sim \,\to S^2$ is given by
  $h\colon [s]\mapsto \gamma(s)$, for all $s\in S^1$. 
\end{theorem}

Thus the equivalence relation $\sim$ contains all the information to
recover $F$ up to topological conjugacy. 
It turns out that the equivalence relation $\sim$  may be
constructed from 
\emph{finite data}, more precisely from two finite families of finite
sets of rational numbers. Thus these families encode the map $F$ up to
topological conjugacy by the previous theorem. 

The proper setting is as follows. The Peano curve $\gamma$ from
Theorem \ref{thm:main} was constructed as the limit of
\defn{approximations} $\gamma^n$. The approximations have finitely
many points where they touch themselves, but they never cross
themselves. Thus $S^2\setminus \gamma^n$ has two components which are
colored  \emph{white} and \emph{black}. 
We define equivalence relations $\Sim{n,\wt}, \Sim{n,\bt}$ on
$S^1$ by
$s\Sim{n,\wt}t$ ($s\Sim{n,\bt} t$) whenever $\gamma^n(s)=\gamma^n(t)$ and
$\gamma^n$ touches itself at $s,t$ in the white component (black
component). It turns out that
\begin{itemize}
\item $\sim$ can be recovered from the equivalence relations
  $\Sim{n,\wt},\Sim{n,\bt}$ as a limit (defined in a suitable sense, see
  Theorem \ref{thm:sim_usc_clos_siminfty}).
\item The equivalence relations $\Sim{n,\wt},\Sim{n,\bt}$ can be
  \emph{inductively obtained} from the ``initial ones''
  $\Sim{1,\wt},\Sim{1,\bt}$ (Theorem \ref{thm:LnLn+1}).
\item The equivalence classes of $\Sim{1,\wt},\Sim{1,\bt}$ form a
  \defn{critical portrait} in the sense of \cite{Poirier}, see
  Definition \ref{def:critical_portrait}. Such a critical portrait is a
  (finite) family of finite sets of rational angles. 
\end{itemize}

Thus we obtain the following additional main result of this paper. The map
$F=f^n$ is the (same) iterate of the expanding Thurston map $f$ from
Theorem \ref{thm:main}. 
\begin{theorem}
  \label{thm:FcriticalPortraits}
  Let $f\colon S^2\to S^2$ be an expanding Thurston map. Then every
  sufficiently high iterate
  $F=f^n$ admits a description via two critical portraits.
\end{theorem}

This provides an effective way to describe $F$. 
The description of $F$ as in Theorem~\ref{thm:S1simS2} may be viewed
as a two-sided version of the viewpoint introduced by Douady-Hubbard
and Thurston 
(\cite{DHOrsayI}, \cite{DHOrsayII}, \cite{MR2508255}, see
also \cite{MR1149864} and 
\cite{MR1761576}), namely the combinatorial description of Julia sets
in terms of \emph{external rays}.  

\smallskip An outline of the proof Theorem~\ref{thm:mating1} is as
follows. Consider the polynomials $P_{\wt}, P_{\bt}$ given by the
critical portraits (i.e., by $\Sim{1,\wt}, \Sim{1,\bt}$) as outlined
above. The Carath\'{e}odory semi-conjugacy $\sigma_{\wt} \colon S^1 \to
\J_{\wt}$ induces an equivalence relation $\Approx{\wt}$ on $S^1$ as
above by
\begin{equation}
  \label{eq:approx_Cara}
  z\Approx{\wt} w \; :\Leftrightarrow \;\sigma_{\wt}(z)= \sigma_{\wt}(w),
\end{equation}
for all $z,w\in S^1$. The equivalence relation $\Approx{\bt}$ on $S^1$
is defined similarly via $\sigma_{\bt}$. The equivalence relations
$\Approx{\wt}$ and $\Approx{\bt}$ generate the equivalence relation
$\approx$ on $S^1$.  It is relatively easy to show that $z^d/\!\approx
\colon S^1/\! \approx\; \to S^1/\!\approx$ is topologically conjugate
to the mating $P_{\wt}\mate P_{\bt}$. The proof of
Theorem~\ref{thm:mating1} will be obtained by showing that $\approx$
equals the equivalence relation $\sim$ induced by the invariant Peano
curve $\gamma$ as in \eqref{eq:eq_rel}.

\subsection{Further results}
\label{sec:further-results}

We also prove the following theorem here. 
\begin{theorem}
  \label{thm:gammaL1L2}
  The invariant Peano curve $\gamma\colon S^1\to S^2$ from
  Theorem~\ref{thm:main} maps normalized Lebesgue measure
  on $S^1$ to the measure of maximal entropy on $S^2$ with respect to
  $F$.
\end{theorem}
By ``normalized'' it is meant that the total mass is $1$, i.e., that the
measure is a probability measure. The \defn{measure of maximal
  entropy} (also called the \defn{Lyubich} or \defn{Brolin} measure)
is the unique invariant probability measure that maximizes the
(measure theoretic) entropy. It can be defined as the weak limit of
$1/d^n \sum_{y\in F^{-n}(x_0)} \delta_y$, for any point $x_0\in
S^2$. Note that the measure of maximal entropy of $F=f^n$ equals the
measure of maximal entropy of $f$.

Finally we note that each invariant Peano curve $\gamma\colon S^1\to
S^2$ induces a \emph{fractal tiling}. Namely we divide the circle
$S^1$ into $d= \deg F$ arcs and consider their images by $\gamma$. If
$F$ is a rational map these fractal tilings may be lifted to the
\emph{orbifold covering}, which is either the plane $\C$ or the
hyperbolic plane $\Halb$.

\subsection{Outline of this paper}
\label{sec:outline}

In Section \ref{sec:thurston-maps-as-1} we recall the setup from
\cite{expThurMarkov}. Namely one picks a Jordan curve $\CC$ containing all
postcritical points. Then $F^{-n}(\CC)$ decomposes the sphere $S^2$
into \emph{$n$-tiles}. This in turn allows for a combinatorial
description of $F$. 

In Section \ref{sec:invar-peano-curve} the \emph{construction} of the
invariant Peano curve $\gamma$ from Theorem~\ref{thm:main} as given in
\cite{peano} is \emph{reviewed}. In particular $\gamma$ was constructed as a
limit of \emph{approximations} $\gamma^n$, whose properties we list.  

Some elementary facts about equivalence relations are provided in
Section \ref{sec:equivalence-rels}. 

In Section \ref{sec:laminations-2} we introduce \emph{equivalence
  relations} 
$\Sim{n,\wt}, \Sim{n,\bt}$. They describe the self-intersections of the
approximations $\gamma^n$. It is shown that these equivalence
relations are obtained inductively, i.e., from
$\Sim{1,\wt},\Sim{1,\bt}$. These ``initial equivalence relations''
$\Sim{1,\wt}, \Sim{1,\bt}$ form a \emph{critical portrait} in the sense of
\cite{Poirier}. Furthermore the map $F$ is completely determined from
them, up to topological conjugacy. 

In Section \ref{sec:equiv-class-are} we investigate the \emph{sizes} of the
\emph{equivalence classes} induced by $\gamma$. More precisely we show that
if $F$ does not have periodic critical points, the size of such equivalence
classes is bounded by some number $N< \infty$. If $F$ has periodic critical
points, we show that at least one equivalence class is
finite.

In Section \ref{sec:f-mating} we show that $F$ is \emph{obtained as a
  mating}, for the case when $F$ has no periodic critical 
points; i.e., Theorem \ref{thm:mating1} is proved.

The case when $F$ has periodic critical points, i.e.,
Theorem~\ref{thm:mating2}, is proved in Section
\ref{sec:proof-theorem-1}.

In Section \ref{sec:gamma-maps-lebesgue} we show that the invariant
Peano curve $\gamma$ maps \emph{Lebesgue measure} on the circle $S^1$
to the \emph{measure of maximal entropy} of $F$ (on $S^2$), i.e.,
prove Theorem~\ref{thm:gammaL1L2}.

In Section \ref{sec:fractal-tilings} we illustrate the fractal tilings
obtained from the construction. This also shows explicitly the
invariant Peano curve for some examples.

We conclude the paper in Section \ref{sec:open-questions} with some
open questions. 

\subsection{Notation}
\label{sec:notation}

The circle is denoted by $S^1$, the $2$-sphere by $S^2$. 
By $\inte U$ we denote the interior, by $\clos U$ the closure of a
set $U$. The cardinality of a (finite) set $S$ is denoted by $\#S$.  

It will often be convenient to identify $S^1$ with $\R/\Z$; the map
$z^d \colon S^1\to S^1$ then is denoted by
\begin{equation*}
  \boxed{\phantom{x}\mu=\mu_d\colon \R/\Z \to \R/\Z, 
    \quad \mu(t) = dt (\bmod 1), \phantom{x}   }
\end{equation*}
the $n$-th iterate is $\mu^n(t)=d^nt(\bmod 1)$. 

For two non-negative expressions $A,B$ we write $A\lesssim
B$ if there is a constant $C>0$ such that $A\leq C B$. We refer to $C$
as $C(\lesssim)$. Similarly we write $A\asymp B$ if
$A/C \leq B\leq C A$ for a constant $C\geq 1$, we refer to $C$ as
$C(\asymp)$. 

\begin{asparaitem}[$\centerdot$]
\item The \emph{$n$-iterate} of a map $f$ is denoted by $f^n$.
\item $F=f^n$ is the iterate of the expanding Thurston map $f$ from
  Theorem \ref{thm:main}.
\item By $\crit=\crit(f)$, $\post=\post(f)$ we denote the \defn{set of
    critical} and \defn{postcritical points} (see Section
  \ref{sec:expand-thurst-maps}).
\item The \defn{degree} of $F$ is denoted by $d$, the \defn{number of
    postcritical points} by $k$.
\item The \defn{local degree} of $F$ at $v\in S^2$ is denoted by
  $\deg_F(v)$, see Definition \ref{def:Thurston_map}.
\item \emph{Upper indices} indicate the \defn{order} of an object.
  For example some preimage of some object $U^0$ by
  $F^n$ will be denoted by $U^n$. 
  Also maps and other objects associated with such objects
  $U^n$ will have an upper index ``$n$''.
\item $\CC$ is a Jordan curve containing all postcritical points. 
\item $X^0_{\wt},X^0_{\bt}$ are the white/black $0$-tiles, i.e., the closures
  of the two components of $S^2\setminus \CC$.
\item \emph{Lower indices} ``$\wt$'' or ``$\bt$'' indicate whether an
  object is colored ``white'' or ``black'', i.e., if it is mapped
  eventually to $X^0_{\wt}$ or $X^0_{\bt}$; or closely related to such objects. 
\item A visual metric is denoted by $\varrho$, see Section
  \ref{sec:visual-metric}. 

\item The \emph{sets of all $n$-tiles, -edges, -vertices} are denoted by
  $\X^n,\E^n,\V^n$ (Section~\ref{sec:tiles-edges}).
\item $\gamma^n$ is the \defn{$n$-th approximation of the Peano curve
    $\gamma$} (Section \ref{sec:approximations}).
\item A point $\alpha^n\in S^1$ such that $\gamma^n(\alpha^n)\in
  \V^n$ is called an \defn{$n$-angle}. The \defn{set of all
    $n$-angles} is denoted by $\A^n$ (Section
  \ref{sec:approximations}).
\item An $n$-arc $a^n$ is a closed interval in $\R/\Z=S^1$ that is
  mapped by $\gamma^n$ (homeomorphically) to an $n$-edge.
\item $H^n$ are the \defn{pseudo-isotopies} from which the
  approximations $\gamma^n$ were constructed, see Section
  \ref{sec:pseudo-isotopies}. 
\item $\pi_{\wt}\cup\pi_{\bt}$ is a cnc-partition (of a set $\{0,1, \dots,
  2n-1\}$). It describes the connection at a vertex, i.e., which
  white/black tiles are ``connected'' at $v$ (see Section
  \ref{sec:connetions}).
\item By $\sim$ we denote the equivalence relation on $S^1$ induced by
  the invariant Peano curve ($s\sim t \Leftrightarrow
  \gamma(s)=\gamma(t)$), by $\Sim{n}$ the equivalence relation induced
  by $\gamma^n$ (Section \ref{sec:sim-induced-gamma}). 
\item The \defn{join} of two equivalence relations $\Sim{a},\Sim{b}$ is
  denoted by $\Sim{a} \vee \Sim{b}$, their \defn{meet} by $\Sim{a}\wedge
  \Sim{b}$ (Section \ref{sec:latt-equiv-class}). 
\item The \defn{equivalence relations} $\Sim{n,\wt}, \Sim{n,\bt}$ describe where
  $\gamma^n$ ``touches itself on the white/black side'' (see
  Definition \ref{def:simnw_simnb}). Their equivalence classes are
  denoted by $[\alpha]_{n,\wt}$, $[\alpha]_{n,\bt}$. 
\item $S^2_{\wt}, S^2_{\bt}$ are the \defn{white/black hemispheres}, i.e., the
  components of $S^2\setminus S^1$. They are equipped with the
  hyperbolic metric. 
\item $\LC^n_{\wt},\LC^n_{\wt}$ are the \defn{laminations} associated to
  $\Sim{n,\wt}, \Sim{n,\bt}$. Namely for each equivalence class
  $[\alpha]_{n,\wt}$ there is a \emph{leaf} $L\in \LC^n_{\wt}$, given as the
  hyperbolically convex hull of $[\alpha]_{n,\wt}$. A white/black
  \defn{$n$-gap} is the closure of one component of $S^2_{\wt}\setminus
  \bigcup \LC^n_{\wt}$, or of $S^2_{\wt}\setminus \bigcup \LC^n_{\wt}$
  respectively. The \defn{set of 
    white/black $n$-gaps} is denoted by
  $\G^n_{\wt}, \G^n_{\bt}$ (Section \ref{sec:laminations-1}).
\item The \defn{Carath\'{e}odory semi-conjugacy} is denoted by
  $\sigma$ (see Section \ref{sec:carath-semi-conjugacy-polyn}). The
  equivalence relation induced by $\sigma$ is denoted by $\approx$
  (\ref{eq:approx_Cara}). The semi-conjugacies, equivalence relations
  of the white, black polynomials $P_{\wt},P_{\bt}$ are denoted by
  $\sigma_{\wt},\sigma_{\bt}$ and $\Approx{\wt}, \Approx{\bt}$. Identifying
  additionally 
  closures of bounded Fatou components yields the equivalence
  relations $\Approx{\F,\wt}, \Approx{\F,\bt}$ (see
  (\ref{eq:def_approxFw}), (\ref{eq:def_approxFb})).  
\end{asparaitem}

\section{Expanding Thurston maps}
\label{sec:thurston-maps-as-1}

Here some material from \cite{expThurMarkov} is reviewed. 

\subsection{Definition of expanding Thurston maps}
\label{sec:expand-thurst-maps}

\begin{definition}
  \label{def:Thurston_map}
  A \defn{Thurston map} is an orientation-preserving, postcritically
  finite, branched cover of
  the sphere $f\colon S^2\to S^2$. This means that locally $f$ can be
  written as 
  $z\mapsto z^q$, $q\geq 1$ (after suitable local, orientation
  preserving, 
  homeomorphic changes of coordinates in domain and range). 
  More precisely for each point
  $v\in S^2$ there exists a $q\in \N$, (open) neighborhoods $V, W$
  of $v, w=f(v)$ and orientation preserving homeomorphisms
  $\varphi\colon V\to \D$,  $\psi\colon W\to \D$ with $\varphi(v)=0$,
  $\psi(w)=0$ satisfying 
  \begin{equation*}
    \psi\circ f\circ \varphi^{-1}(z)= z^q,
  \end{equation*}
  for all $z\in \D$.  The integer $q=\deg_f(v)\geq 1$ is called the
  \defn{local degree} of the map at $v$. A point $c\in S^2$ at which
  the local degree $\deg_f(c)\geq 2$ is called a \defn{critical
    point}.  The set of all critical points is denoted by
  $\crit=\crit(f)$. \emph{Postcritically finiteness} means that the
  \defn{set of postcritical points}
  \begin{equation*}
    \post=\post(f):= \bigcup_{j\geq 1} f^j(\crit) 
  \end{equation*}
  is finite.

  Fix a Jordan curve $\CC\supset \post$. The Thurston map is called
  \defn{expanding} if
  \begin{equation*}
    \mesh f^{-n}(\CC)\to 0 \text{ as } n\to \infty.
  \end{equation*}
  Here $\mesh f^{-n}(\CC)$ is the maximal diameter of a component of
  $S^2\setminus f^{-n}(\CC)$. 
  This is independent of the chosen curve $\CC$ (see
  \cite[Lemma~6.1]{expThurMarkov}). It is equivalent to the notion of
  expansion by Ha\"{i}ssinsky-Pilgrim in \cite{HaiPilCoarse} (see
  \cite[Proposition~6.2]{expThurMarkov}).  
\end{definition}
If $f$ is a rational map it is expanding if and only if its Julia set is
the whole sphere if and only if $f$ has no periodic critical points,
see\cite[Proposition~2.3]{expThurMarkov}. 

We will however not assume that $f$ is a rational map; in fact it
should be emphasized that we make no assumption on the smoothness of
$f$.  

In particular we allow expanding Thurston maps that have
\emph{periodic critical points}. An example of  such a map is obtained
as follows. Consider a postcritically finite rational map $g$ whose Julia
set is a Sierpi\'{n}ski carpet (the closure of each Fatou component is
a Jordan domain, closures of distinct Fatou components are
disjoint). Identify the closure of each Fatou component, the map
descends to this quotient. The quotient map is an expanding Thurston
map with periodic critical points. 

\smallskip
The map
\begin{equation*}
  F=f^n
\end{equation*}
will always denote the iterate from Theorem \ref{thm:main}. Note that
$\post(F)=\post(f)$. 
Throughout this paper we denote by
\begin{equation*}
  \boxed{\phantom{x}d := \deg F = (\deg f)^n, \quad k:=\#\post.\phantom{x}   }
\end{equation*}

\subsection{Tiles and edges}
\label{sec:tiles-edges}

Fix a Jordan curve $\CC\supset \post$ (this was the first step to
construct the Peano curve $\gamma$) and give it an orientation.  In
\cite[Chapter~13]{expThurMarkov} it was shown that we can choose $\CC$
to be $F$-invariant ($F(\CC)\subset \CC$), but we do not assume this
here.  We \emph{color} the components of $S^2\setminus \CC$
\emph{white} and \emph{black}, such that $\CC$ is positively oriented
as the boundary of the white component. The closures of the
white/black components of $S^2\setminus \CC$ are called the
white/black $0$-tiles, denoted by $X^0_{\wt},X^0_{\bt}$. Similarly the
closure of each component of $S^2\setminus F^{-n}(\CC)$ is called an
\defn{$n$-tile}. It is colored white if it is mapped to $X^0_{\wt}$ by
$F^n$, black if mapped to $X^0_{\bt}$ by $F^n$. The set of all
$n$-tiles is denoted by $\X^n$.  The restricted map
\begin{equation}
  \label{eq:FXnX_homeo}
  F^n\colon X^n \to X_{j} \text{ is a homeomorphism}
\end{equation}
for each $X^n\in \X^n$, here $j\in\{\wt,\bt\}$ (see
\cite[Prop~5.17]{expThurMarkov}). 
In particular each $n$-tile
is a closed Jordan domain. 

Any point $v\in F^{-n}(\post)$ is called an \emph{$n$-vertex}, the set
of all $n$-vertices is $\V^n:= F^{-n}(\post)$.  Each $n$-tile contains
$k$ $n$-vertices in its boundary. Sometimes a postcritical point is
also called a \defn{$0$-vertex} for convenience. The set of all
$n$-vertices is denoted by $\V^n$. Note that
\begin{equation*}
  \post =\V^0 \subset \V^1 \subset \dots .
\end{equation*}
Thus an $n$-vertex $v\in S^2$ is a $m$-vertex for all $m\geq
n$. Expansion implies that the union of the sets $\V^n$ is dense.

\smallskip
The postcritical points divide the curve $\CC$ into $k$ (closed) arcs,
called the \defn{$0$-edges}. Similarly the closure of a component of
$F^{-n}(\CC) \setminus \V^n$ is called an \defn{$n$-edge}. Each
$n$-edge is mapped by $F^n$ homeomorphically to some $0$-edge. 
The boundary of each $n$-tile consists of $k$
$n$-edges. The set of all $n$-edges is denoted by $\E^n$.

\smallskip
Recall that $n$-tiles are colored white and black. The $n$-tiles tile
the sphere $S^2$ in a \emph{checkerboard fashion}. This means that two
$n$-tiles which share an $n$-edge are colored differently. Put
differently, at each $n$-vertex $v$ an even number of $n$-tiles
intersect, their colors alternate around $v$. 

\subsection{The visual metric}
\label{sec:visual-metric}

In \cite[Chapter~7]{expThurMarkov} \defn{visual metrics} on $S^2$ for
an expanding Thurston map $F$ were considered. These are metrics
defined in combinatorial terms, i.e., in terms of the tiles defined
above.

Assume the Jordan curve $\CC\supset \post$ has been fixed, and tiles
are defined as above. Let
\begin{equation}
  \label{eq:def_mxy}
  m(x,y):= \min\{n\in \N \mid \text{ there exists \emph{disjoint}
    $n$-tiles } X^n\ni x, Y^n\ni y\},  
\end{equation}
for all distinct $x,y\in S^2$; we set $m(x,x)=\infty$ (for all $x\in
S^2$). A metric $\varrho$ on $S^2$ is a \defn{visual metric} for $F$
if there is a constant $\Lambda>1$ such that
\begin{equation}
  \label{eq:dS_comb}
  \varrho(x,y)\asymp \Lambda^{-m(x,y)},
\end{equation}
for all $x,y\in S^2$. Here we set $\Lambda^{-\infty}=0$. The constant
$C(\asymp)=C(\CC)$ is independent of $x,y$. The number $\Lambda$ is
called the \defn{expansion factor} of $\varrho$. Visual metrics are
not unique, but distinct visual metrics $\varrho, \widetilde{\varrho}$
are \defn{snowflake equivalent} (i.e., $\varrho^\alpha\asymp
\widetilde{\varrho}$ for some $\alpha>0$). 
 
\smallskip
We note the following. Let $A\subset S^2$ be an $n$-tile or an
$n$-edge. Then
\begin{equation}
  \label{eq:diam_visual}
  \diam A \asymp \Lambda^{-n},
\end{equation}
where $C=C(\asymp)$ is a constant independent of $A$, the diameter is
measured with respect to a visual metric $\varrho$ (with expansion
factor $\Lambda>1$). 

*** check: is this needed? ***
 it was shown that $S^2$ can be equipped with a
\defn{visual metric}, denoted by $\abs{x-y}_{\SC}$, with respect to
which $F$ is a \emph{local similarity}.
\begin{theorem}[\cite{expThurMarkov}]
  \label{thm:Smetric}
  There is a constant $\Lambda>1$ such that the following holds.
  For every $x\in S^2$ there is a neighborhood $U_x$ of $x$ such that
  \begin{equation*}
    \frac{\abs{F(x) -F(y)}_{\SC}}{\abs{x-y}_{\SC}} = \Lambda 
    \quad 
    \text{for all $y\in U_x$.}
  \end{equation*}
\end{theorem}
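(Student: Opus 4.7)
The plan is to build the visual metric directly from the tile decompositions, and then read off the local similarity from the observation that $F$ sends $(n+1)$-tiles to $n$-tiles homeomorphically by (\ref{eq:FXnX_homeo}). For $x,y\in S^2$ define the combinatorial separation
\[
m(x,y) := \max\bigl\{n\geq 0 : \exists\, X^n,Y^n\in \X^n \text{ with } x\in X^n,\ y\in Y^n,\ X^n\cap Y^n\neq\emptyset\bigr\},
\]
with $m(x,x)=\infty$. Expansion of $F$ gives $m(x,y)<\infty$ whenever $x\neq y$, and $m(x,y)\to\infty$ as $y\to x$. For a suitably chosen $\Lambda>1$, reflecting the combinatorial expansion rate at which chains of meeting tiles shrink under refinement, the function $\rho(x,y):=\Lambda^{-m(x,y)}$ satisfies a relaxed triangle inequality $\rho(x,z)\leq C\bigl(\rho(x,y)+\rho(y,z)\bigr)$. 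A standard chaining argument (Frink's lemma) then upgrades $\rho$ to an honest metric
\[
\abs{x-y}_{\SC} := \inf \sum_{i=0}^{k-1}\rho(z_i,z_{i+1}),
\]
comparable to $\rho$, where the infimum runs over finite chains $x=z_0,z_1,\dots,z_k=y$.

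The next step is the tile-level identity $m(F(x),F(y)) = m(x,y)-1$ in a small enough neighborhood of $x$. One inequality is automatic: if $X^n,Y^n$ are meeting $n$-tiles containing $x,y$, then $F(X^n),F(Y^n)$ are meeting $(n-1)$-tiles containing $F(x),F(y)$. For the reverse inequality one lifts: given meeting $(n-1)$-tiles through $F(x)$ and $F(y)$, the local model $\psi\circ F\circ\varphi^{-1}(z)=z^q$ from Definition \ref{def:Thurston_map} shows that, provided $y$ lies close enough to $x$, each of these tiles has a unique preimage $n$-tile containing $x$, respectively $y$, and the preimage tiles through $x$ and $y$ still meet. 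Accordingly $U_x$ can be taken as the interior of the union of all $n$-tiles meeting $x$ at some sufficiently high level $n$, chosen large enough that the local branched cover at $x$ is unfolded correctly.

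Finally, transfer this identity from $\rho$ to $\abs{\cdot-\cdot}_{\SC}$. Shrinking $U_x$ further, every near-optimal chain realizing $\abs{x-y}_{\SC}$ with $y\in U_x$ may be assumed to lie inside $U_x$, where $F$ is injective on the branch through $x$; such chains push forward to chains for $\abs{F(x)-F(y)}_{\SC}$ whose $\rho$-lengths are multiplied by exactly $\Lambda$, because $\rho(F(z_i),F(z_{i+1}))=\Lambda\rho(z_i,z_{i+1})$ term by term. Conversely, chains near-optimal for $\abs{F(x)-F(y)}_{\SC}$ lift through the branch of $F^{-1}$ at $x$ to chains between $x$ and $y$ with the reverse scaling.

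The hardest point will be promoting the bi-Lipschitz version of this argument into the claimed \emph{exact} equality $\abs{F(x)-F(y)}_{\SC}=\Lambda\abs{x-y}_{\SC}$. A naive chain argument yields only comparability; to get equality one must ensure that the infima on both sides of $F$ are realized by canonically matched chains. This forces the metric $\abs{\cdot-\cdot}_{\SC}$ to be defined in a self-similar fashion compatible with the tile hierarchy, and the delicate case is at critical or postcritical $x$, where the local degree $\deg_F(x)\geq 2$ means that several $n$-tile branches merge at $x$ and the matching of chains across $F$ must be arranged so that no ``shortcut'' through a different branch shortens either side. I expect this compatibility at branch points to dominate the technical work; the rest is bookkeeping with tiles.
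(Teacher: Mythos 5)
First, a point of comparison: the paper does not prove Theorem \ref{thm:Smetric} at all --- it is imported from \cite[Section 15]{expThurMarkov} --- so your proposal has to stand on its own, and as written it does not establish the statement. The gap is exactly the one you concede in your last paragraph: the \emph{exact} equality $\abs{F(x)-F(y)}_{\SC}=\Lambda\abs{x-y}_{\SC}$ is the entire content of the theorem (the comparability version is just (\ref{eq:dS_comb})), and your argument only reaches comparability. Two specific steps fail. (i) The term-by-term identity $\rho(F(z_i),F(z_{i+1}))=\Lambda\,\rho(z_i,z_{i+1})$, i.e.\ $m(F(z),F(w))=m(z,w)-1$, is false in general. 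Only $m(F(z),F(w))\geq m(z,w)-1$ is automatic (images of intersecting tiles intersect). The reverse inequality requires lifting a pair of intersecting $(n-1)$-tiles through $F(z),F(w)$ to intersecting $n$-tiles through $z,w$; the two lifts may belong to different branches and need not meet. In the local model $z\mapsto z^{q}$, $q\geq 2$, two points in different sectors around the critical point are sent to points that are combinatorially far closer than the original pair, so $m$ can jump by much more than $1$. Since the chain realizing the infimum passes through arbitrary intermediate points $z_i$, this cannot be removed by shrinking $U_x$ when $x$ is critical (the links $\{z_i,z_{i+1}\}$ can straddle $x$), and your ``unique preimage $n$-tile containing $x$'' is also wrong there, as each image tile has $\deg_F(x)$ preimage tiles through $x$. (ii) Even where the identity does hold, Frink's lemma only returns a metric \emph{comparable} to $\rho$ with an uncontrolled constant, and pushing forward or lifting near-optimal chains then yields $\abs{F(x)-F(y)}_{\SC}\asymp\Lambda\abs{x-y}_{\SC}$, not equality; moreover the lifted chain is only guaranteed to end at \emph{some} preimage of $F(y)$, and matching it to $y$ is a separate localization argument.

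The missing idea is to build exact self-similarity into the length functional itself rather than into the pointwise weight $\Lambda^{-m(z,w)}$. For instance, one can measure a chain of tiles $X_1,\dots,X_N$ of (varying) levels $n_1,\dots,n_N$, each consecutive pair intersecting, by $\sum_k\Lambda^{-n_k}$, and define the distance as the infimum over all such tile chains joining $x$ to $y$. Applying $F$ to a tile chain lowers every level by exactly one and hence multiplies its length by exactly $\Lambda$, and lifting a chain raises every level by one (lifts of consecutive tiles can always be chosen to intersect by following the chain inductively through shared points). With such a functional the only remaining work is (a) showing the infimum is bounded below by a multiple of $\Lambda^{-m(x,y)}$, which requires $\Lambda$ sufficiently close to $1$ and a counting lemma for the number of $n$-tiles needed to join combinatorially distant points, and (b) the localization of near-optimal chains into $U_x$ that you already sketch, so that the lifted chain terminates at $y$ itself. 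Your write-up correctly identifies where the difficulty lies but defers it as ``technical work''; since that difficulty is the theorem, the proposal as it stands is incomplete.
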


 Then (see \cite[Section 8]{expThurMarkov}) 

\section{The invariant Peano curve and its approximations}
\label{sec:invar-peano-curve}

The paper present is a direct continuation of \cite{peano}, as
mentioned in the introduction. In particular we do not only use the
main result (i.e., Theorem \ref{thm:main}), but in fact the whole
construction of the invariant Peano curve $\gamma$. We outline the
construction here. Results of \cite{peano} will be used freely.

\smallskip
A Jordan curve $\CC\subset S^2$ with $\post\subset \CC$ is fixed. A
visual metric $\varrho$ for $F$ with expansion factor $\Lambda>1$ is
defined via $\CC$ and fixed. Metrical terms below are defined
in terms of $\varrho$.

\smallskip
It will often be convenient to identify $S^1$ with $\R/\Z$. We write
$\mu=\mu_d \colon \R/\Z\to \R/\Z$ for the map $t\mapsto d t(\bmod 1)$
(which is conjugate to $z^d\colon \partial \D\to \partial \D$).  
We still write $\gamma\colon \R/\Z \to \R/\Z$ for the
invariant Peano curve, slightly abusing notation. The same abuse of
notation applies to the approximations $\gamma^n$ defined below. 

\subsection{The approximations $\gamma^n$}
\label{sec:approximations}

The curve $\gamma$ is constructed as the limit of curves
$\gamma^n\colon S^1\to S^2$, called the \defn{$n$-th approximation}
(of the Peano curve). The approximations satisfy the following.
\begin{itemize}
\item As a set $\gamma^0=\CC$, more precisely
  \begin{equation*}
    \gamma^0\colon S^1 \to \CC
  \end{equation*}
  is a homeomorphism.
\item The $\gamma^n$ cover all $n$-edges. This means when $\gamma^n$
  is viewed as a set it holds
  \begin{equation*}
    \gamma^n = \bigcup \E^n.
  \end{equation*}
\item A point $\alpha^n\in S^1$ such that $\gamma^n(\alpha^n)\in \V^n$ 
  is called an \defn{$n$-angle}. Each $n$-angle is \emph{rational}
  (here we 
  identify $S^1$ with $\R/\Z$).  The \defn{set of all $n$-angles} 
  \begin{equation*}
    \A^n:= (\gamma^n)^{-1}(\V^n)
  \end{equation*}
  is a finite set.
\item For $m\geq n$ it holds that $\gamma^m=\gamma^n$ on $\A^n$, 
  \begin{equation*}
    \gamma^n|_{\A^n}=\gamma^m|_{\A^n} = \gamma|_{\A^n}.
  \end{equation*}
  Thus (recall that $\V^0\subset \V^1\subset \dots$)  
  \begin{equation*}
    \A^0\subset \A^1\subset \dots .
  \end{equation*}
\item The $n$-angles divide the circle $S^1$ into (closed)
  \defn{$n$-arcs}. Each $n$-arc is mapped by $\gamma^n$
  homeomorphically to an 
  $n$-edge. Conversely for each $n$-edge $E^n$ there is a unique
  $n$-arc $a^n\subset S^1$, such that $\gamma^n(a^n)= E^n$.
\item Each $n$-arc is mapped homeomorphically to an $(n-1)$-arc by
  $z^d\colon S^1\to S^1$. More precisely, we have the following
  commutative diagram:
  \begin{equation}
    \label{eq:Angn_comm_dia}
    \xymatrix{
      \A^{n+1}\subset \R/\Z \ar[r]^{\mu} \ar[d]_{\gamma^{n+1}}
      &
      \A^n\subset \R/\Z \ar[d]^{\gamma^n}
      \\
      \V^{n+1}\subset S^2 \ar[r]_F & \V^n \subset S^2.
    }
  \end{equation}
  See \cite[Lemma 4.5 (2)]{peano}, as well as Remark
  \ref{rem:alpha_alpha_tilde}. Thus there is a constant
  $C(\lesssim)>0$ such that  
  \begin{equation*}
   \diam a^n\lesssim d^{-n} 
 \end{equation*}
 for each $n$-arc $a^n$. Recall that the diameter is measured with
 respect to the visual metric $\varrho$. 
\item The curve $\gamma^n$ touches itself, but does not intersect
  itself. This means for any $\epsilon>0$ there is a Jordan curve
  $\gamma^n_{\epsilon}\colon S^1\to S^2$ such that
  \begin{equation*}
    \norm{\gamma^n -\gamma^n_{\epsilon}}_{\infty} < \epsilon. 
  \end{equation*}
  Here $\norm{\gamma^n - \gamma^n_\epsilon}_\infty :=
  \max\{\varrho(\gamma^n(t), \gamma^n_\epsilon(t)) \mid t\in S^1\}$ is
  the supremums norm with respect to some fixed visual metric
  $\varrho$.  
\item The curves $\gamma^n$ converge uniformly to $\gamma$. More
  precisely 
  \begin{equation}
\label{eq:gntog}
    \norm{\gamma^n-\gamma}_{\infty}\lesssim \Lambda^{-n}.
  \end{equation}
\item Each $n$-edge $E^n$ is contained in the boundary of a (unique)
  white $n$-tile $X^n_{\wt}$. We orient $\partial X^n_{\wt}$
  mathematically positively, this induces an orientation on
  $E^n$. Equivalently we may define the orientation on $E^n$ as
  follows. Each $0$-edge $E^0\subset\CC$ inherits an orientation from the
  orientation of $\CC$. Since $F^n$ maps $E^n$ to some $0$-edge $E^0$
  homeomorphically, we call pull back the orientation of $0$-edges
  to each $n$-edge. Note that $F$ maps
  positively oriented $n$-edges to positively oriented $(n-1)$-edges
  by definition.
  
  A distinct way to define the orientation of $n$-edges is as
  follows. Recall that for each $n$-edge $E^n$ there is exactly one
  $n$-arc $a^n\subset S^1$ which is mapped homeomorphically to $E^n$
  by $\gamma^n$. Thus $E^n= \gamma^n(a^n)$ inherits the orientation of
  $a^n\subset S^1$. The following holds for the approximations
  $\gamma^n$:
  \begin{equation*}
    (\star) \text{ The two orientations on $E^n$ described above agree.}
  \end{equation*}

\end{itemize}

\begin{remark}
  \label{rem:alpha_alpha_tilde}
  Note that the approximations
  $\tilde{\gamma}^n\colon \R/\Z\to S^2$ parametrized as in 
  \cite[Section 4.2]{peano}  
  converge to $\tilde{\gamma}\colon \R/\Z\to S^2$, which
  semi-conjugates $F$ to $\widetilde{\mu}(t)=dt + \theta_0 (\bmod 1$). 
  The approximations 
  \begin{align*}
    &\gamma^n(t):=\tilde{\gamma}^n(t - \theta_0/(d-1))
    \intertext{converge to}
    &\gamma(t)=\tilde{\gamma}(t- \theta_0(d-1)),
  \end{align*}
  which is the desired Peano curve, i.e., semi-conjugates
  $F$ to $\mu(t) = dt (\bmod 1)$, see 
  \cite[Lemma 4.1]{peano}  
  and the subsequent remark.
  Let
  $\{\tilde{\alpha}^n_j\}\subset S^1$ be the 
  points from 
  \cite[Section 4.2]{peano}  
  (that are mapped by
  $\tilde{\gamma}^n$ to $n$-vertices). Then the points 
  \begin{equation*}
    \alpha^n_j:= \tilde{\alpha}^n_j+\theta_0/(d-1)     
  \end{equation*}
  are the $n$-angles. 
\end{remark}

\subsection{Pseudo-isotopies}
\label{sec:pseudo-isotopies}

The approximations $\gamma^n$ described in the last section were
constructed as follows. A \defn{pseudo-isotopy} $H\colon S^2\times
[0,1]\to S^2$ is a homotopy, that is an isotopy an $[0,1)$ (i.e., it ceases
to be a homeomorphism only at $t=1$); if $H$ is constant on a set $A$
it is an \defn{isotopy rel.\ $A$}. In \cite{peano} a pseudo-isotopy $H^0$
rel.\ $\post=\V^0$ is constructed that deforms $\gamma^0$ to
$\gamma^1$, where $\gamma^0,\gamma^1$ are as in the last section. It
is possible to \emph{lift} $H^0$ by $F^n$ to $H^n$. This is a
pseudo-isotopy rel.\ $\V^n$ and deforms $\gamma^n$ to $\gamma^{n+1}$.

\subsection{Connections}
\label{sec:connetions}

The first approximation $\gamma^1$ (more precisely the pseudo-isotopy
$H^0$)  
was constructed as follows. At each $1$-vertex $v$ several
white $1$-tiles intersect. We declare which white $1$-tiles are
\defn{connected} at $v$; see Figure \ref{fig:connection} for an
illustration. Connections (of white $1$-tiles at $v$) are 
\defn{non-crossing}. Furthermore the resulting \emph{white tile graph}
forms a \emph{spanning tree}. ``Following the outline'' of this tree
yields the first approximation $\gamma^1$. There is one more
ingredient: the resulting curve $\gamma^1$ has to be ``in the right
homotopy class''. This means there has to be a pseudo-isotopy $H^0$ rel.\
$\post$ as
in Section \ref{sec:pseudo-isotopies} that deforms $\gamma^0$ to
$\gamma^1$. 

\begin{figure}
  \centering
  
  \includegraphics[width=11cm]{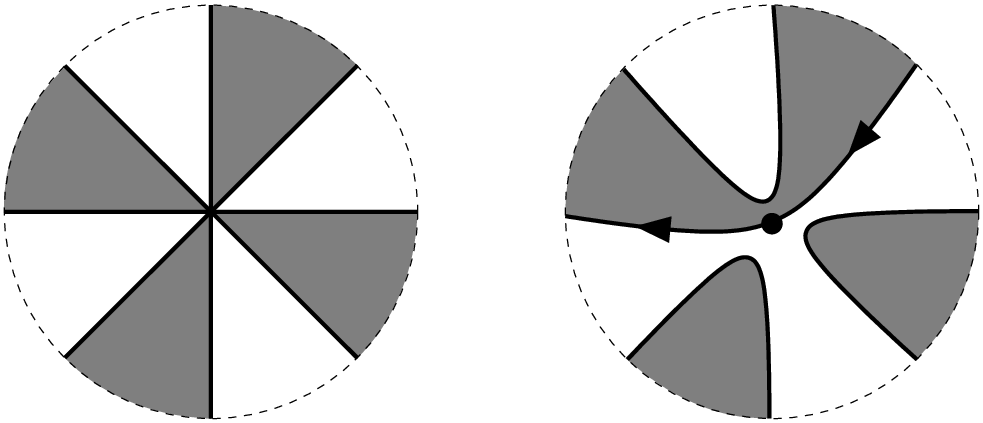}
  \begin{picture}(10,10)
    %
    \put(-220,0){$\scriptstyle{X_0}$}
    \put(-185,40){$\scriptstyle{X_1}$}
    \put(-184,90){$\scriptstyle{X_2}$}
    \put(-215,127){$\scriptstyle{X_3}$}
    \put(-288,130){$\scriptstyle{X_4}$}
    \put(-323,90){$\scriptstyle{X_5}$}
    \put(-321,37){$\scriptstyle{X_6}$}
    \put(-280,-2){$\scriptstyle{X_7}$}
    %
    \put(-47,-2){$\scriptstyle{X_0}$}
    \put(-8,40){$\scriptstyle{X_1}$}
    \put(-6,90){$\scriptstyle{X_2}$}
    \put(-42,129){$\scriptstyle{X_3}$}
    \put(-106,131){$\scriptstyle{X_4}$}
    \put(-144,90){$\scriptstyle{X_5}$}
    \put(-144,37){$\scriptstyle{X_6}$}
    \put(-107,0){$\scriptstyle{X_7}$}    
    \put(-23,115){$\scriptstyle{E}$}
    \put(-147,63){$\scriptstyle{E'}$}        
  \end{picture}
  \caption{Connection at a vertex.}
  \label{fig:connection}
\end{figure}

  

\smallskip
Formally let $X_0,\dots X_{2n-1}$ be the $1$-tiles intersecting in a
$1$-vertex $v$, ordered mathematically positively around $v$. The
white $1$-tiles have even index, the black ones odd index. We consider
a decomposition $\pi_{\wt}=\pi_{\wt}(v)$ of $\{0, 2, \dots , 2n-2\}$ (i.e., of
indices corresponding to white $1$-tiles around $v$); and a
decomposition $\pi_{\bt}=\pi_{\bt}(v)$ of $\{1,3,\dots, 2n-1\}$ (i.e., indices
corresponding to black $1$-tiles around $v$). They satisfy the
following:
\begin{itemize}
\item They are \emph{decompositions}. This means $\pi_{\wt}=\{b_1,\dots,
  b_N\}$, where each \defn{block} $b_i$ is a subset of $\{0,2,\dots,
  2n-2\}$, $b_i\cap b_j=\emptyset$ ($i\neq j$), and $\bigcup b_i =
  \{0,2, \dots, 2n-2\}$. Similarly for $\pi_{\bt}$. 
\item The decompositions $\pi_{\wt},\pi_{\bt}$ are \defn{non-crossing}. This
  means the following. Two distinct blocks $b_i,b_j\in \pi_{\wt}$ are
  \emph{crossing} if there are numbers $a,c\in b_i$, $b,d\in b_j$ and
  \begin{equation*}
    a < b < c < d. 
  \end{equation*}
  Each partition $\pi_{\wt},\pi_{\bt}$ does not contain any (pair of) crossing
  blocks. 
\item The partitions $\pi_{\wt},\pi_{\bt}$ are \defn{complementary}. This
  means the following. Given $\pi_{\wt}$, the partition $\pi_{\bt}$ is the
  unique, biggest partition (of $\{1,3,\dots, 2n-1\}$) such that
  $\pi_{\wt}\cup \pi_{\bt}$ is a non-crossing partition of $\{0,1,\dots,
  2n-1\}$.  
\end{itemize}
A partition $\pi_{\wt}\cup \pi_{\bt}$ as above is called a \defn{complementary
  non-crossing partition}, or cnc-partition. A \defn{connection} (of
$1$-tiles) assigns to each $1$-vertex a cnc-partition as above. Two
$1$-tiles $X_i,X_j\ni v$ are said to be \defn{connected at $v$} if
the indices $i,j$ are contained in the \emph{same} block of $\pi_{\wt}\cup
\pi_{\bt}$. Note that tiles of different color are never connected. The
$1$-tile $X_i$ is \defn{incident} to the block $b\ni i$ of $\pi_{\wt}\cup
\pi_{\bt}$.  
In the
example illustrated in Figure \ref{fig:connection} we have
$\pi_{\wt}=\left\{\{0,2,6\}, \{4\}\right\}$,
$\pi_{\bt}=\left\{\{1\},\{3,5\},\{7\}\right\}$.   

\begin{remark}
  In the construction of the ``initial pseudo-isotopy'' $H^0$ we need
  additional data to the one described above. Namely in the case when
  $v$ is a postcritical point, we need to ``say where $v$ is in the
  connection''. This will be of no importance in the present paper. 
\end{remark}



\subsection{Succeeding edges}
\label{sec:succeeding-edges}

The connection of $1$-tiles from Section \ref{sec:connetions} can be
used to define which $1$-edges are \defn{succeeding} at some
$1$-vertex $v$. Indeed this is the main purpose of the
connection. Figure \ref{fig:connection} serves again as an
illustration. 

Let the connection at a $1$-vertex $v$ be given by $\pi_{\wt}(v)\cup
\pi_{\bt}(v)$. Two
indices $i,j\in b\in \pi_{\wt}(v)\cup\pi_{\bt}(v)$ are called
\defn{succeeding} (in $b$), if $b$ does not contain any index $i+1,
i+2, \dots, j-1$. Indices are taken $\bmod \,2n$ here, where $2n$ is the
number of $1$-tiles containing $v$. 

Consider two positively oriented (as boundaries of white $1$-tiles)
$1$-edges $E,E'$; where $E$ has terminal point $v$, $E'$ has initial
point $v$. $E,E'$ are \defn{succeeding at $v$} (with respect to the
given connection) if $E\subset X_i$,
$E'\subset X_j$ and $i,j$ are succeeding indices of a block $b\in
\pi_{\wt}(v)$ (thus $X_i,X_j$ are \defn{white} $1$-tiles). The first
approximation $\gamma^1$ (viewed as an Eulerian circuit in $\bigcup
\E^1$) may be given by the connection,
\begin{align*}
  &\text{the $1$-edges $E,E'$ are \defn{succeeding} in } \gamma^1
  \text{ if and only if}
  \\
  &\text{they are succeeding with respect to the
    connection.}  
\end{align*}

\subsection{Connection of $n$-tiles}
\label{sec:connection-n-tiles}

The connection of $1$-tiles can be \emph{lifted} to a connection of
$n$-tiles (see \cite[Section 8]{peano}). Thus at each $n$-vertex $v$ a
cnc-partition $\pi^n_{\wt}(v)\cup\pi^n_{\bt}(v)$ (as in
Section~\ref{sec:connetions}) is defined. Succeeding $n$-edges are
defined as in Section~\ref{sec:succeeding-edges}. As before
\begin{align*}
  &\text{the $n$-edges $E,E'$ are \defn{succeeding} in } \gamma^n
  \text{ if and only if}
  \\
  &\text{they are succeeding with respect to the
    connection of $n$-tiles.}  
\end{align*}

\subsection{The connection graph}
\label{sec:connection-graph}

The connection of $n$-tiles can be used to defined the \defn{$n$-th white
  connection graph}. It is constructed as follows. For each
\emph{white} $n$-tile $X$ there is a vertex $c(X)$ (thought of as the
\emph{center} of the $n$-tile $X$); for each $n$-vertex $v$ and block
$b\in \pi^n_{\wt}(v)$ there is a vertex $c(v,b)$. The vertex $c(X)$ is
connected to $c(v,b)$ if (and only if) $X$ is incident to $b$ at
$v$. The connection of $n$-tiles satisfies the following.
\begin{equation*}
  \text{The $n$-th white connection graph is a tree.}
\end{equation*}

\section{Equivalence relations}
\label{sec:equivalence-rels}

After these preparations we begin with the proof of the main
theorems. As outlined in the introduction we consider the equivalence
relation $\sim$ induced by the invariant Peano curve $\gamma$. First
some elementary material is reviewed. The main result here is
Theorem~\ref{thm:sim_usc_clos_siminfty}, which says that $\sim$ may be
obtained from the equivalence relations $\Sim{n}$ induced by the
approximations $\gamma^n$.

\subsection{The lattice of equivalence relations}
\label{sec:latt-equiv-class}

Equivalence relations on a set $S$ can be \defn{partially ordered} in
a natural way, namely $\simb$ is \defn{bigger} than $\sima$ ($\simb
\;\geq\; \sima$) if
\begin{equation}
  \label{eq:eq_rel_po}
  s\sima t \Rightarrow s\simb t,
\end{equation}
for all $s,t\in S$. Equivalently, each equivalence class of $\sima$ is
a subset of some equivalence class of $\simb$; equivalently if we view
equivalence relations as subsets of $S\times S$ this means $\simb\;
\supset\; \sima$. 

The set of all equivalence relations on $S$ forms a \defn{lattice},
when equipped with this partial ordering. This means that
\defn{join} and \defn{meet} are well defined. Recall that
the join $\sima \vee \simb$ is the smallest equivalence relation bigger  
than $\sima$ and $\simb$. If
$\Sim{\vee}\; :=\; \sima\vee \simb$, then
\begin{align*}
  & s\Sim{\vee} t \text{ if and only if}
  \\
  & \text{there are } s_1,\dots, s_N \in S \text{ such that}
  \\
  & s=s_1\sima s_2 \simb s_3\dots s_{N-2} \sima s_{N-1} \simb s_N = t,
\end{align*}
for all $s,t\in S$.

The meet $\sima \wedge \simb$ is the
biggest equivalence relation smaller than $\sima$ and $\simb$. If
$\Sim{\wedge}\; := \;\Sim{a} \wedge \Sim{b}$, then
\begin{align}
  \label{eq:defmeet}
  & s\Sim{\wedge} t \text{ if and only if } s\Sim{a} t \text{ and }
  s\Sim{b} t.
\end{align}
for all $s,t\in S$. When $\sima, \simb$ are viewed as subsets of
$S\times S$ this means that $\Sim{\wedge}\; = \;{\sima \cap \simb}$.

\subsection{Closed equivalence relations}
\label{sec:upper-semic-decomp}

We consider an equivalence relation $\sim$ on a topological space
$S$. 
 
\begin{definition}[Closed equivalence relation]
  \label{deflem:usc}
  An equivalence relation $\sim$ on a compact metric space $S$ is called
  \defn{closed} if one of the following equivalent conditions 
  holds. 
  \begin{enumerate}[(CE 1)]
  \item 
    \label{item:simclosed}
    $\{(s,t) \mid s\sim t\}\subset S\times S$ is closed.
  \item 
    \label{item:usc_3}
    For any two convergent sequences in $S$; $s_n\to s_0, t_n\to t_0$
    it holds
    \begin{equation*}
      s_n\sim t_n \text{ for all } n\geq 1 \Rightarrow s_0 \sim t_0. 
    \end{equation*}
  \item 
    \label{item:usc_2}
    The projection map
    \begin{equation*}
      \pi\colon S\to S/\!\sim \text{ given by } s\mapsto [s]
    \end{equation*}
    is closed.
    %
    \setcounter{mylistnum}{\value{enumi}}
  \end{enumerate}
\end{definition}

The proof that the above conditions are equivalent is straightforward
and left as an exercise.  

\begin{remark}
  If $\sim$ is closed as above it follows that each equivalence class
  is compact. Indeed by (CE \ref{item:usc_2}) the set
  $[s]=\pi^{-1}(\pi(s))$ is closed, thus compact (for all $s\in S$). 
\end{remark}

\begin{remark}
  The set of equivalence classes $\{[s]\mid s\in S\}$ forms a
  \defn{decomposition} of $S$ (i.e., a set of disjoint subsets of $S$
  whose union is $S$). Conversely, each decomposition can be viewed as
  an equivalence relation. An equivalence relation is closed if and
  only if the induced decomposition is \defn{upper
    semicontinuous}. Property (CE \ref{item:usc_2}), together with the
  requirement that each equivalence class is compact, is the general
  definition of upper semicontinuity in any topological space. 
\end{remark}  

\begin{remark}
  The closedness/upper semicontinuity of $\sim$ should be viewed as
  the minimal requirement that the quotient space (or
  \emph{decomposition}   
  space) $S/\!\sim$ has a ``reasonable'' topology. For example ($S$ is
  a compact metric space)  $\sim$
  is closed if and only if
  \begin{enumerate}[(CE 1)]
    \setcounter{enumi}{\value{mylistnum}}
  \item 
    \label{item:usc4}
    $S/\!\sim$ is Hausdorff.
  \end{enumerate}
  The necessity follows immediately from (CE \ref{item:usc_3}), see 
  \cite[Proposition 2.1]{MR872468} for the sufficiency (this is the
  standard reference on decomposition spaces). Several other
  equivalent conditions for an equivalence relation to be closed may
  be found in \cite[Lemma~2.2]{mating_defs}. 
\end{remark}

\begin{lemma}[Closure of equivalence relation]
  \label{lem:usc_closure}
  Let $\sim$ be an equivalence relation on a compact metric space
  $S$. Then there is a unique smallest closed equivalence relation $\simhat$
  bigger than $\sim$. We call $\simhat$ the \defn{closure} 
  of $\sim$. 
\end{lemma}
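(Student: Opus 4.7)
The plan is to construct $\simhat$ as the intersection of all closed equivalence relations containing $\sim$, rather than trying to take the topological closure of the graph of $\sim$ directly (the latter approach is tempting but fails in general, since the closure in $S\times S$ of a transitive relation need not be transitive).

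First I would note that the family
\begin{equation*}
  \mathcal{R} := \{\Sim{\ast} \mid \Sim{\ast}\text{ is a closed equivalence relation on } S \text{ with } \sim\,\leq\,\Sim{\ast}\}
\end{equation*}
is non-empty, since the trivial relation $S\times S$ (in which every pair is equivalent) is closed and contains $\sim$. Then I would define $\simhat$ by
\begin{equation*}
  s\simhat t \;:\Leftrightarrow\; s\Sim{\ast} t \text{ for every } \Sim{\ast}\in\mathcal{R},
\end{equation*}
i.e., as the subset of $S\times S$ obtained by intersecting all members of $\mathcal{R}$.

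Next I would verify the three required properties. Reflexivity, symmetry, and transitivity of $\simhat$ are each inherited from the corresponding properties of every $\Sim{\ast}\in\mathcal{R}$; these are routine set-theoretic checks, so I would not dwell on them. Closedness follows from the fact that an arbitrary intersection of closed subsets of $S\times S$ is closed, invoking (CE \ref{item:simclosed}) of Definition \ref{deflem:usc}. That $\sim\,\leq\,\simhat$ is immediate since each $\Sim{\ast}\in\mathcal{R}$ satisfies $\sim\,\leq\,\Sim{\ast}$. Minimality: if $\Sim{\ast}$ is any closed equivalence relation with $\sim\,\leq\,\Sim{\ast}$, then $\Sim{\ast}\in\mathcal{R}$, hence $\simhat\,\leq\,\Sim{\ast}$ by construction. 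Uniqueness of the smallest such relation is then automatic from antisymmetry of the partial order (\ref{eq:eq_rel_po}).

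I do not anticipate a serious obstacle here; the only subtle point is the one mentioned at the outset, namely resisting the temptation to define $\simhat$ as the topological closure of the graph of $\sim$ in $S\times S$. The intersection construction sidesteps this issue cleanly, at the (negligible) cost of a non-constructive definition. If a more explicit description is desired later in the paper, one can characterize $s\simhat t$ as holding precisely when there exist sequences $s_n\to s$, $t_n\to t$ and chains $s_n = u_{n,1}\sim u_{n,2}\sim\dots\sim u_{n,k_n} = t_n$ whose endpoints converge appropriately; but such a characterization is not needed for the statement of the lemma.
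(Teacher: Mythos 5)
Your proposal is correct and follows essentially the same route as the paper: both define $\simhat$ as the meet (intersection) of the non-empty family of all closed equivalence relations containing $\sim$, and both note that the topological closure of the graph of $\sim$ would not work since it need not be transitive. The only difference is cosmetic — you verify closedness directly via (CE \ref{item:simclosed}) (an intersection of closed subsets of $S\times S$ is closed), which is in fact cleaner than the paper's verification of (CE \ref{item:usc_2}).
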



\begin{proof}
  Consider the family $\{\simalpha\}_{\alpha\in I}$ of all closed
  equivalence relations bigger than $\sim$. This family is
  non-empty. Let $\simhat$ be their meet
  \begin{equation*}
    s\;\simhat\; t := \textstyle{\bigwedge} \simalpha \;\;=\; \bigcap
    \simalpha. 
  \end{equation*}
  In the last expression each $\simalpha$ is viewed as a subset of
  $S\times S$. Clearly $\simhat$ is the unique smallest closed
  equivalence relation bigger than $\sim$.
\end{proof}
Note that $\{(s,t)\mid s\;\simhat\; t\}$ is generally \emph{not} the
closure of 
$\{(s,t)\mid s\sim t\}$, which may fail to be transitive. 

\subsection{Equivalence relation induced by $\gamma$}
\label{sec:sim-induced-gamma}

A surjection $h\colon S\to S'$ induces an equivalence relation in a
natural way. Under mild assumptions the quotient $S/\!\sim$ is
homeomorphic to $S'$.  
\begin{lemma}[Equivalence relation induced by a map]
  \label{lem:usc_from_map}
  Let $S,S'$ be compact Hausdorff spaces, and $h\colon S\to S'$ a
  continuous surjection. Define an equivalence relation on $S$ by
  $s\sim t$ if and only if $h(s)=h(t)$. Then
  \begin{itemize}
  \item $S/\!\sim$ is homeomorphic to $S'$. The homeomorphism is given by
    $\varphi\colon [s]\mapsto h(s)$;
  \end{itemize}
  Assume now furthermore that there are continuous maps $f\colon S\to
  S$, $g\colon S'\to S'$ such that $h\circ f= g\circ h$, i.e., the
  following diagram commutes
  \begin{equation*}
    \xymatrix{
      S \ar[r]^f \ar[d]_h & S \ar[d]^h
      \\
      S' \ar[r]^g & S'.
    }
  \end{equation*}
  Then
  \begin{itemize}
  \item the map $f/\!\sim\,\colon S/\!\sim \,\to S/\!\sim$ given by $f/\!\sim\,
    \colon [s] \mapsto [f(s)]$ is well defined;
  \item it holds $\varphi \circ f/\!\sim\, = g\circ \varphi$, i.e.,
    $\varphi$ is a topological conjugacy between $f/\!\sim$ and $g$.  
  \end{itemize}
\end{lemma}

\begin{proof}
  The first statement is \cite[Theorem 3-37]{MR1016814}, the second
  statement follows immediately from the commutative diagram. Finally
  let $[s]\in S/\!\sim$ be arbitrary, then
  \begin{align*}
    g\circ \varphi([s])=g\circ h(s)= h\circ f(s)=
    \varphi([f(s)])=\varphi\circ f/\!\sim ([s]),
  \end{align*}
  i.e., the third statement holds. 
\end{proof}

From now on the equivalence relation $\sim$ on $S^1$ is the one
induced by $\gamma$,
\begin{equation*}
  s\sim t :\Leftrightarrow \gamma(s)=\gamma(t),  
\end{equation*}
for all $s,t\in S^1$. The previous lemma together with Theorem
\ref{thm:main} now yields Theorem \ref{thm:S1simS2}.


Consider now the equivalence relations $\simn$ induced by $\gamma^n$
and their join $\siminfty$, 
\begin{align}
  \label{eq:def_simn}
  & s\simn t \;\text{ if and only if }\; \gamma^n(s)=\gamma^n(t);
  \\
  \label{eq:def_siminfty}
  & \siminfty\; := \textstyle{\bigvee} \Sim{n}\,, 
  \text{ meaning }  s\Sim{\infty} t \;
  \text{ if and only if }\; s\simn t \;\text{ for
    some } n;
\end{align}
for all $s,t\in S^1$. 

\begin{theorem}
  \label{thm:sim_usc_clos_siminfty}
  The equivalence relation $\sim$ on $S^1$ induced by $\gamma$
  (\ref{eq:eq_rel}) is the closure of $\siminfty$.  
\end{theorem}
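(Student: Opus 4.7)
I will establish the two inclusions between $\sim$ and the closure of $\siminfty$ separately.

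For the direction that the closure of $\siminfty$ is $\leq\;\sim$, I would first verify that $\sim$ is itself a closed equivalence relation, which follows immediately from (CE \ref{item:usc_3}) and continuity of $\gamma$: if $s_k\to s_0$, $t_k\to t_0$ with $\gamma(s_k)=\gamma(t_k)$, then by continuity $\gamma(s_0)=\gamma(t_0)$, so $s_0\sim t_0$. Next, $\Sim{n}\;\leq\;\sim$ for every $n$: if $\gamma^n(s)=\gamma^n(t)$ with $s\neq t$, then since $\gamma^n$ is a homeomorphism on each $n$-arc and distinct $n$-arcs map to distinct $n$-edges, the common value can only be an $n$-vertex; this forces $s,t\in\A^n$, and the compatibility $\gamma^n|_{\A^n}=\gamma|_{\A^n}$ gives $\gamma(s)=\gamma(t)$, so $s\sim t$. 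Taking joins, $\siminfty\;\leq\;\sim$, and minimality of the closure (Lemma \ref{lem:usc_closure}) among closed equivalence relations containing $\siminfty$ yields that the closure of $\siminfty$ is $\leq\;\sim$.

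For the reverse inclusion, I would apply (CE \ref{item:usc_3}) to the closure of $\siminfty$. It is then enough to produce, for any $s, t\in S^1$ with $\gamma(s)=\gamma(t)=x$, sequences $s_k\to s$, $t_k\to t$ with $s_k\Sim{m_k}t_k$ for some levels $m_k$, since such pairs lie in $\siminfty$ and hence in its closure. The natural candidates are the $\A^k$-endpoints $\alpha_k,\beta_k$ of the $k$-arcs through $s$ and $t$: the diameter estimate $\diam a^k\lesssim d^{-k}$ gives $\alpha_k\to s$ and $\beta_k\to t$, while $\gamma|_{\A^k}=\gamma^k|_{\A^k}$ combined with the uniform estimate $\|\gamma^k-\gamma\|_\infty\lesssim\Lambda^{-k}$ places both $k$-vertices $\gamma^k(\alpha_k)$ and $\gamma^k(\beta_k)$ in a visual neighborhood of $x$ of diameter $\lesssim\Lambda^{-k}$. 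Whenever these two vertices coincide for some $k$, one has $\alpha_k\Sim{k}\beta_k$ and the argument concludes immediately.

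The main obstacle is the generic case where the two approximating $k$-vertices remain distinct for all large $k$. Here the plan is to connect them through a short chain of adjacent $k$-edges lying inside the shrinking neighborhood of $x$: each adjacency of two $k$-edges at a common vertex produces an individual $\Sim{k}$-identification between two angles in $\A^k$, and transitivity of $\Sim{k}$ composes the chain into a single direct $\Sim{k}$-equivalence between modified approximants $s_k, t_k$ still converging to $s, t$. The delicate issue is that the intermediate angles along such a chain are endpoints of intermediate $k$-arcs on $S^1$ whose positions are dictated by the combinatorics of the tiling rather than by proximity to $s$ or $t$; to control this one should exploit the $F$-equivariance $F\circ\gamma=\gamma\circ\mu$ from Theorem \ref{thm:main} to push forward to a sufficiently high iterate in which $x$ lies near $\V^m$, where the chain collapses to a single vertex identification and can then be pulled back. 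Once the $\Sim{m_k}$-equivalent pairs $(s_k, t_k)\to(s, t)$ are in hand, (CE \ref{item:usc_3}) applied to the closed relation that is the closure of $\siminfty$ delivers the desired equivalence and completes the proof.
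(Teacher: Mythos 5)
Your first direction is fine, and the skeleton of your second direction (approximate $s,t$ by $k$-angles, connect the nearby $k$-vertices by a chain of $k$-edges, pass to the limit) is the same as the paper's. But there is a genuine gap at the step you yourself flag as delicate, and your proposed repair does not close it. You are trying to produce a \emph{single direct} identification $s_k\Sim{m_k}t_k$ with $s_k\to s$, $t_k\to t$; equivalently, you are trying to show that $(s,t)$ lies in the topological closure of $\siminfty$ as a subset of $S^1\times S^1$. That is strictly stronger than what the theorem asserts: the closure $\simhat$ of an equivalence relation in the sense of Lemma \ref{lem:usc_closure} is in general larger than the topological closure of its graph (the paper remarks explicitly that the latter may fail to be transitive). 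A chain $E^k_1,\dots,E^k_N$ of $k$-edges does \emph{not} collapse to one $\Sim{k}$-identification by transitivity: the links of the chain alternate between pairs of angles that are $\Sim{k}$-equivalent but possibly far apart on $S^1$ (endpoints of different $k$-arcs mapping to a shared $k$-vertex) and pairs that are close on $S^1$ but \emph{not} $\Sim{k}$-equivalent (the two endpoints of a single $k$-arc). Your suggestion to push forward by $F$-equivariance so that ``$x$ lies near $\V^m$ and the chain collapses'' is not an argument: applying $\mu$ lowers the level of the equivalence relations rather than helping, and for a generic $x$ the two approximating vertices remain distinct at every level, so no iterate collapses the chain.

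The correct resolution, which is what the paper does, is to give up on a direct identification and instead use the transitivity of $\simhat$ itself. After Lemma \ref{lem:NEsEt} provides a bound $N$ on the chain length that is uniform in $k$ (a point you should also establish rather than just assert the chain is ``short''), one passes to a subsequence so that $N$ is constant and all endpoints $u^k_j,v^k_j$ of the intermediate $k$-arcs converge; since $\diam a^k\lesssim d^{-k}$, both endpoints of the $j$-th arc converge to the same point $v_j$, with $v_1=s$ and $v_N=t$. Each link $v^k_j\Sim{k}u^k_{j+1}$ survives the limit by property (CE \ref{item:usc_3}) applied to the closed relation $\simhat\;\geq\;\siminfty$, yielding $v_j\;\simhat\;v_{j+1}$, and then transitivity of $\simhat$ gives $s\;\simhat\;t$. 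The intermediate limit points $v_2,\dots,v_{N-1}$ need not be anywhere near $s$ or $t$, and that is precisely why no control over their positions is required.
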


To prove this theorem we will need some preparations first. Consider 
two points $s,t\in S^1$ such that $\gamma(s)=\gamma(t)$. We want to
show that $s,t$ are equivalent with respect to the closure of
$\siminfty$. Recall 
that $\gamma^n(s),\gamma^n(t)$ are contained in some $n$-edges by
construction. 

\begin{lemma}
  \label{lem:NEsEt}
  There is a constant $N$ (independent of $s,t$, and $n$) such that 
  \begin{equation*}
    \gamma^n(s),\gamma^n(t) \text{ can be joined by at most $N$ $n$-edges,}
  \end{equation*}
  for all $s,t\in S^1$ with $\gamma(s)=\gamma(t)$. 
\end{lemma}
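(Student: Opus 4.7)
The approach combines the uniform rate of convergence $\norm{\gamma^n-\gamma}_{\infty}\lesssim \Lambda^{-n}$ (Section \ref{sec:approximations}) with uniform local finiteness of the $n$-tile structure at scale $\Lambda^{-n}$.

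First, since $\gamma(s)=\gamma(t)=:x$, the triangle inequality yields $\abs{\gamma^n(s)-\gamma^n(t)}_{\SC}\lesssim \Lambda^{-n}$, and each of $\gamma^n(s),\gamma^n(t)$ lies within visual distance $\lesssim\Lambda^{-n}$ of $x$, with constants independent of $s,t,n$. Second, I would invoke the standard local-finiteness fact (a consequence of the combinatorial characterization \eqref{eq:dS_comb} of the visual metric together with the bounded-valence properties established in \cite{expThurMarkov}): there is a constant $K$, independent of $n$, such that at most $K$ $n$-tiles meet the visual ball $B_{C\Lambda^{-n}}(x)$, for any fixed $C$. Choosing $C$ large enough, the $n$-edges $E_s, E_t$ containing $\gamma^n(s), \gamma^n(t)$ are boundary edges of tiles in this finite family $\mathcal{T}$.

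The conclusion would follow by showing that the dual adjacency graph of $\mathcal{T}$ (with $n$-tiles as vertices and pairs sharing an $n$-edge as edges) has a connected component containing both the tile $X_s\supset E_s$ and the tile $X_t\supset E_t$. Granted this, one obtains a chain of at most $K$ $n$-tiles joining $X_s$ to $X_t$; since each $n$-tile has exactly $k=\#\post$ boundary edges, the chain yields a connecting path of at most $N:=K\cdot k$ $n$-edges, which is the desired bound.

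The main obstacle is the dual-connectivity step: local finiteness of $\mathcal{T}$ is immediate from the visual metric, but asserting that the tiles in $\mathcal{T}$ whose closures lie near $x$ are dual-graph connected requires careful use of the local topology of $S^2$ at $x$. I would handle this by slightly enlarging $\mathcal{T}$ (say, to include all $n$-tiles meeting a ball $B_{C'\Lambda^{-n}}(x)$ with $C'>C$, still within the bounded family) so that the union of tiles in the enlarged family contains a topological open disk around $x$. Connectedness of this disk then forces connectedness of the dual graph restricted to the enlarged family, which contains both $X_s$ and $X_t$. Case analysis (based on whether $x$ is in the interior of an $n$-tile, the interior of an $n$-edge, or an $n$-vertex) confirms this, with the $n$-vertex case using the checkerboard cycle of tiles around an $n$-vertex.
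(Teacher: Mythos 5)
Your first step---deducing $\abs{\gamma^n(s)-\gamma^n(t)}_{\SC}\lesssim \Lambda^{-n}$ from the uniform convergence of the approximations---matches the paper. The gap is in your ``standard local-finiteness fact'': it is \emph{not} true here that at most $K$ $n$-tiles, with $K$ independent of $n$, meet a ball $B_{C\Lambda^{-n}}(x)$. The paper explicitly allows expanding Thurston maps with \emph{periodic critical points} (Section \ref{sec:expand-thurst-maps}, Theorem \ref{thm:mating2}), and if $c$ is a critical point with $F^p(c)=c$, then $c\in\post=\V^0\subset\V^n$ and the number of $n$-tiles containing $c$ equals $2\deg_{F^n}(c)$, which grows without bound as $n\to\infty$. (The paper itself flags the related difficulty: ``The number required may be unbounded\dots''.) So your family $\mathcal{T}$ need not have uniformly bounded cardinality, and the final bound $N=K\cdot k$ collapses. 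Your argument would be fine under the extra hypothesis that $F$ has no periodic critical points (then $\deg_{F^n}$ is uniformly bounded, as is used later in the proof of Theorem \ref{thm:sim_finite}), but the lemma is needed in full generality, since it feeds into Theorem \ref{thm:sim_usc_clos_siminfty}.

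The paper's proof avoids this by working with $n$-\emph{flowers} $W^n(v)$ (unions of all $n$-tiles containing an $n$-vertex $v$) rather than with individual $n$-tiles: two non-disjoint $(n-n_0)$-tiles containing $\gamma^n(s),\gamma^n(t)$ can be covered by a uniformly bounded number $M$ of $n$-flowers, and inside a single flower any two $n$-edges can be joined by at most $2k$ $n$-edges by routing each one to the central vertex $v$ along the boundary cycle of the tile containing it---a bound independent of how many tiles the flower contains. This gives $N=4kM$. To repair your argument, replace ``boundedly many tiles'' by ``boundedly many flowers'' and replace the tile-chain/edge-count step by this routing-through-the-vertex observation; the dual-graph connectivity issue you worry about then also disappears, since all tiles of a flower meet at $v$.
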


\begin{proof}
  Since $\varrho$ is a visual metric for $F$ (see
  Section~\ref{sec:visual-metric}) it holds $\varrho(x,y) \asymp
  \Lambda^{-m}$ for all $x,y\in S^2$. Here $\Lambda>1$ is the
  expansion factor of $\varrho$ and $m=m(x,y)$ is the smallest number
  for which there exist disjoint $m$-tiles $X^m\ni x, Y^m\ni y$.

  Fix $s,t\in S^1$ with $\gamma(s)=\gamma(t)$. 
  Since $\gamma^n$ converges uniformly to $\gamma$ as in
  \eqref{eq:gntog} it follows that
  \begin{equation*}
    \varrho(\gamma^n(s),\gamma^n(t)) \lesssim \Lambda^{-n},
  \end{equation*}
  with a constant $C(\lesssim)$ independent of $s,t$, and $n$. Thus,
  there is a constant $n_0$ such that the $(n-n_0)$-tiles
  \begin{equation*}
    X^{n-n_0}\ni \gamma^n(s),\; Y^{n-n_0}\ni \gamma^n(t) \text{
      are not disjoint.}
  \end{equation*}
  We now want to cover $X^{n-n_0},Y^{n-n_0}$ by $n$-tiles. The number
  required may be unbounded (since we do not assume that $\CC$
  is $F$-invariant). 
  Given an $n$-vertex $v$, let $W^n(v)$ be the union of all $n$-tiles
  containing $v$ (this is the closure of an \defn{$n$-flower} as
  defined in \cite[Chapter~5.4]{expThurMarkov}). Then
  \begin{equation*}
    \text{ every $(n-n_0)$-tile can be covered by $M$ sets $W^n(v)$,}
  \end{equation*}
  where the number $M$ is independent of $n,n_0$ (see
  \cite[Lemma~5.29]{expThurMarkov}). 
  Clearly in any set $W^n(v)$ we can connect any two $n$-edges
  $E_1,E_2\subset W^n(v)$ by at most $2k$ $n$-edges in $W^n(v)$. Thus
  $\gamma^n(s),\gamma^n(t)$ can be connected by at most $4kM$
  $n$-edges. 
\end{proof}

\begin{proof}[Proof of Theorem \ref{thm:sim_usc_clos_siminfty}]
  Fix $s,t\in S^1$ with $\gamma(s)=\gamma(t)$. 
  According to the last lemma let
  \begin{equation*}
    E^n_1,\dots, E^n_N, \text{ with } \gamma^n(s)\in E^n_1, \gamma^n(t)\in
    E^n_N, 
  \end{equation*}
  be a chain of $n$-edges for each $n$. We can assume that
  $E^n_1,E^n_N$ are the images of $n$-arcs containing $s,t$ by
  $\gamma^n$. By taking a subsequence we can assume that $N$, the number of
  $n$-edges in this chain, is the same for all $n$. 

  Let $[u^n_j,v^n_j]\subset \R/\Z$ be the $n$-arc that is mapped to
  $E^n_j$, 
  $\gamma^n([u^n_j,v^n_j])=E^n_j$. Then
  \begin{equation*}
    v_j^n\simn u^{n}_{j+1} \text{ for } j=1,\dots N-1. 
  \end{equation*}
  Taking subsequences, we can assume that all the sequences
  $(v^n_j)_{n\in\N}$, $(u^n_j)_{n\in \N}$ converge. Thus it follows
  from \eqref{eq:diam_visual} (for all
  $j=1,\dots, N$) that 
  \begin{align*}
    &\lim_n u^n_j=\lim_n v^n_j=: v_j \text{ and}
    \\
    &\lim_n v^n_1=s, \quad \lim_n v^n_N=t.
  \end{align*}

  \medskip
  Now let $\simhat$ be the closure of $\siminfty$. Then
  \begin{equation*}
    s=v_1 \;\simhat\; v_2\;\simhat\; \dots\;\simhat\; v_N=t. 
  \end{equation*}
  Hence $s\;\simhat\; t$, meaning that $\simhat$ is bigger than $\sim$.  
\end{proof}

\section{The white and black equivalence relations}
\label{sec:laminations-2}

In view of Theorem \ref{thm:S1simS2} and Theorem
\ref{thm:sim_usc_clos_siminfty}, it is possible to recover the map $F$
(up to topological conjugacy) from the equivalence relations
$\Sim{n}$, i.e., the self intersections of the approximations
$\gamma^n$. Since our ultimate goal is to ``unmate'' the map $F$ into
two polynomials, we will decompose each equivalence relation $\Sim{n}$
into two equivalence relations $\Sim{n,\wt}, \Sim{n,\bt}$ (on
$S^1$). The equivalence relations $\Sim{n,\wt},\Sim{n,\bt}$ can be
obtained inductively from $\Sim{1,\wt},\Sim{1,\bt}$. Thus it is
possible to recover $F$ up to topological conjugacy from
$\Sim{1,\wt},\Sim{1,\bt}$, i.e., from finite data.

\smallskip Roughly speaking $\Sim{n,\wt}, \Sim{n,\bt}$ describe where
$\gamma^n$ touches itself ``in the white component'', or ``in the
black component''. We have
\begin{equation*}
  \Sim{n} \;= \;\Sim{n,\wt} \vee \Sim{n,\bt}.
\end{equation*}

\smallskip These two sequences of equivalence relations are closely
related to the two polynomials $P_\wt, P_\bt$ into which $F$
unmates. In fact $\Sim{1,\wt}, \Sim{1,\bt}$ yield the ``critical
portraits'' of polynomials $P_\wt, P_\bt$.  From the sequence
$\Sim{n,\wt}$ we construct an equivalence relation $\Sim{\wt}$. More
precisely $\Sim{\wt}$ will be the closure of the join of the $\Sim{n,
  \wt}$ (recall that $\sim$ was obtained from the relations $\Sim{n}$
in exactly the same way). We will then show that $\Sim{\wt}$ is the
equivalence relation induced by the Carath\'{e}odory semi-conjugacy
$\sigma_\wt\colon S^1\to \J_\wt$ of $P_\wt$. The corresponding
statements are obtained in exactly the same way for the black
polynomial $P_\bt$.

\smallskip The equivalence relations $\Sim{n,\wt},\Sim{n,\bt}$ will be
\defn{non-crossing}. This means the following. Given two distinct
equivalence classes $[s]_{n,\wt}, [t]_{n,\wt}\subset \R/\Z$ of
$\Sim{n,\wt}$ there are no points $a,c\in [s]_{n,\wt}$, $b,d\in
[t]_{n,\wt}$ such that
\begin{equation}
  \label{eq:def_non-crossing}
  0\leq a<b<c<d<1.   
\end{equation}
Note that
sometimes the term ``unlinked'' has been used for what we call
non-crossing. 

\smallskip 
It will be convenient to represent the equivalence relations
geometrically in two different ways. View $S^1\subset S^2$ as the
equator. The components (hemispheres) of $S^2\setminus S^1$ are
denoted by $S^2_{\wt},S^2_{\bt}$. The circle $S^1$ is positively oriented as
the boundary of the \emph{white} hemisphere $S^2_{\wt}$ (negatively
oriented as boundary of the \emph{black} hemisphere $S^2_{\bt}$). We equip
$S^2_{\wt},S^2_{\wt}$ with the hyperbolic metric (solely to be able to talk
about \emph{hyperbolic geodesics}).

For each
equivalence class $[s]_{n,\wt} \subset S^1\subset S^2$ of $\Sim{n,\wt}$
there is a \defn{leaf}, which is the hyperbolically convex hull of
$[s]_{n,\wt}$ in $S^2_{\wt}$. The \defn{set of all leaves} is the
\defn{lamination} $\LC^n_{\wt}$. Similarly the lamination $\LC^n_{\bt}$ is
defined in terms of $\Sim{n,\bt}$ (with leaves in $S^2_{\bt}$). 
That $\Sim{n,\wt}$ is non-crossing is equivalent with the fact that
distinct leaves in $\LC^n_{\wt}$ are disjoint. 

The second way to geometrically represent the equivalence relations
$\Sim{n,\wt},\Sim{n,\bt}$ is via \defn{gaps}. A \emph{white $n$-gap} is
the closure of one component of $S^2_{\wt}\setminus \bigcup \LC^n_{\wt}$. 
Gaps will correspond to tiles, leaves to vertices, and arcs to edges.  

\subsection{An example}
\label{sec:laminations}


The construction
will be illustrated first for the example $g$ from 
\cite[Section 1.4]{peano}.
It is a Latt\`{e}s map and may be obtained as follows. Consider the
equivalence relation $z\simeq \pm z + m + ni$, for all $m,n\in \Z$, on
$\C$. The map $z\mapsto 2z$ descends to the quotient
$\C/\!\simeq$. Then $g$ is topologically conjugate to $2z/\!\simeq
\;\colon \C/\!\simeq\, \to \C/\!\simeq$. The degree of the map is
$4$. This map is best represented as follows. Glue two unit squares
together along their boundaries to form a \emph{pillow}, which is a
topological sphere. Divide each side of this pillow (i.e., each unit
square) into four squares. Map each of these small squares to one of
the sides as indicated in Figure~\ref{fig:mapg} of the pillow yields
the map $g$ (up to topological conjugacy). Note that the vertices as
the pillow are the postcritical points of the map. We choose the
boundary of the pillow as the curve $\CC$. The two sides of the pillow
then are the two $0$-tiles. The $8$ small squares are the $1$-tiles. 

\begin{figure}
  \centering
  \includegraphics[scale=0.5]{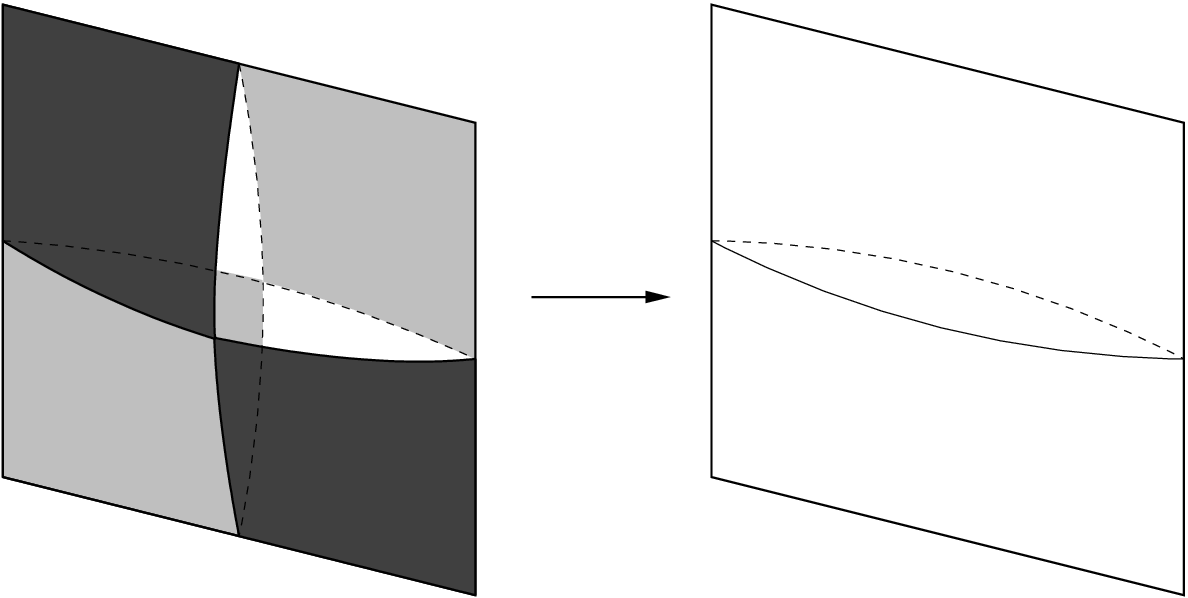}
  \begin{picture}(10,10)
    \put(-122,19){$\scriptstyle 0$}
    \put(0,-5){$\scriptstyle 1$}
    \put(-132,140){$\scriptstyle -1$}
    \put(-5,119){$\scriptstyle \infty$}
    \put(-172,-3){$\scriptstyle 1\mapsto 0$}
    \put(-240,5){$\scriptstyle \mapsto 1$}
    \put(-300,18){$\scriptstyle 0\mapsto 0$}
    \put(-313,80){$\scriptstyle \mapsto -1$}
    \put(-300,148){$\scriptstyle -1\mapsto 0$}
    \put(-233,135){$\scriptstyle \mapsto 1$}
    \put(-177,120){$\scriptstyle \infty \mapsto 0$}
    \put(-170,55){$\scriptstyle \mapsto -1$}
    \put(-148,80){$\scriptstyle g$}
    \put(-230, 65){$\scriptstyle \mapsto \infty$}
  \end{picture}
  \caption{The Latt\`{e}s map $g$.}
  \label{fig:mapg}
\end{figure}

\begin{figure}
  \centering
  \begin{overpic}
    [width=12cm, tics=20,
    ]
    {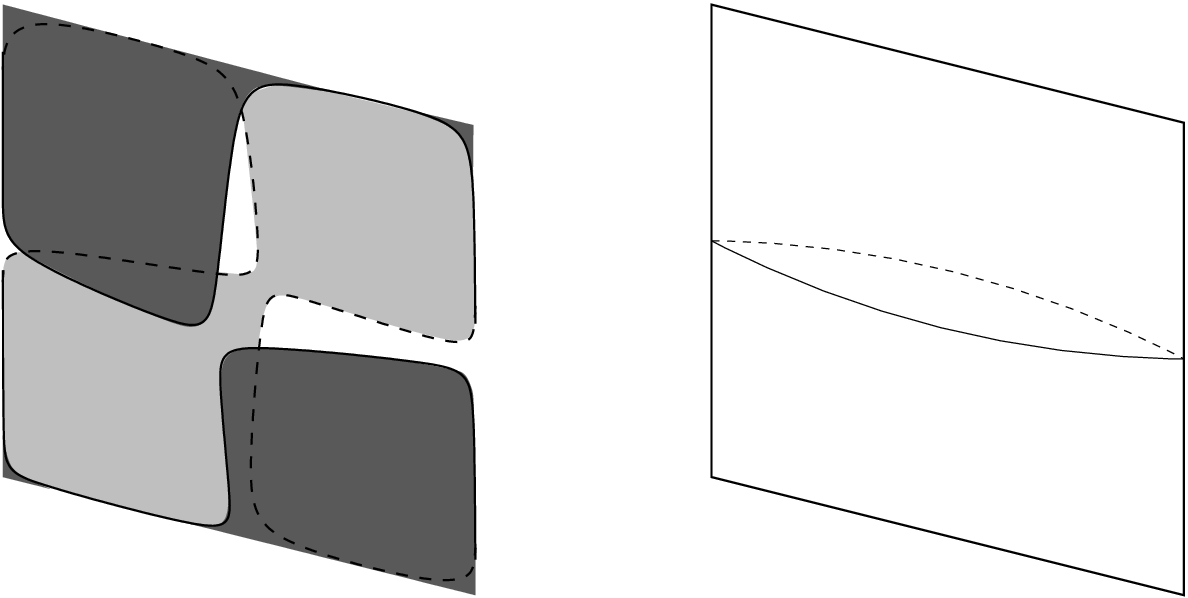}
    \put(60,5){$\gamma^0=\CC$}
    \put(5,5){$\gamma^1$}
  \end{overpic}
  \caption{First approximation of invariant Peano curve.}
  \label{fig:g1_g}
  \includegraphics[width=12cm]{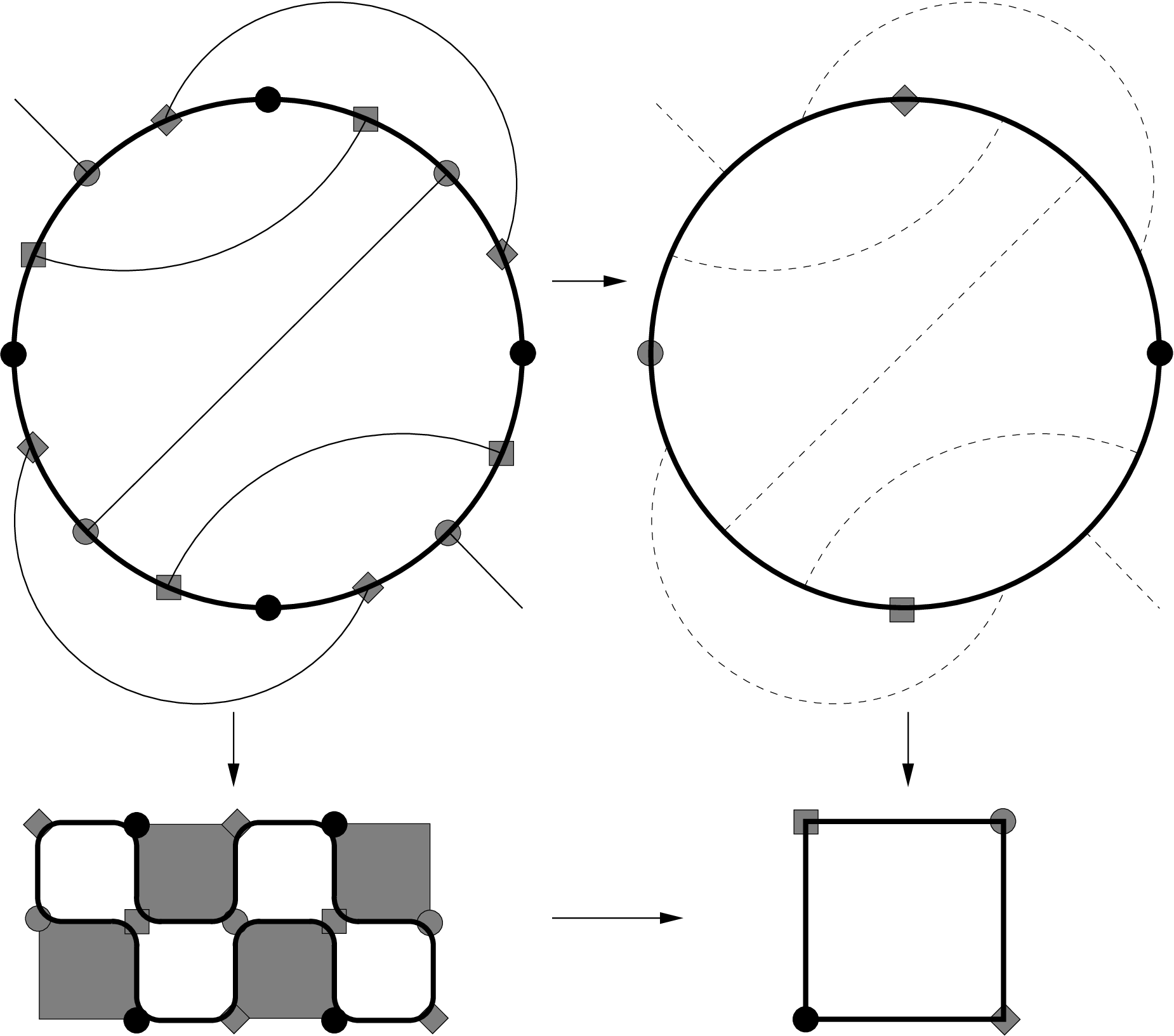}
  \begin{picture}(10,10)
    \put(-185,193){$\scriptstyle{0}$}
    \put(-0,193){$\scriptstyle{0}$}
    \put(-317,-7){$\scriptstyle{p_0=0}$}
    \put(-122,-7){$\scriptstyle{p_0=0}$}
    \put(-179,228){$z^4$}
    \put(-270,80){$\gamma^1$}
    \put(-74,80){$\gamma^0$}
    \put(-170,40){$g$}
  \end{picture}
  \caption{The laminations $\LC^1_{w,g},\LC^1_{b,g}$.}
  \label{fig:circleeq}
\end{figure}

The $0$-th approximation of the invariant Peano curve is
$\gamma^0=\CC$. The first approximation $\gamma^1$ is shown in the
left of Figure~\ref{fig:g1_g}. It should be emphasized that we could
have used several other choices for $\gamma^1$. Each of these would
have resulted in a different invariant Peano curve and different pairs
of polynomials $P_\wt, P_\bt$ into which $g$ unmates.

For illustrative purposes it is best to lift objects
to $\C$, technically speaking we work in the \emph{orbifold
  covering}. The (lifts of the) approximations $\gamma^1,\gamma^0$ are
illustrated in Figure~\ref{fig:circleeq}. Practically speaking this
means we cut the pillow from Figure~\ref{fig:g1_g} so that the the two
halves of the ``back of the pillow'' are folded to the left and right,
to obtain the picture in the lower left of
Figure~\ref{fig:circleeq}. 

The parametrization of $\gamma^1$ and $\gamma^0$ is shown as well. 
Additionally the connections of the $1$-tiles is shown (see Section
\ref{sec:connection-n-tiles}). Namely the $4$ white $1$-tiles are connected at the $3$
$1$-vertices in the middle. The $1$-angles $2/16$ and $10/16$ are both
mapped by $\gamma^1$ to the point in the middle. The set
$\{2/16,10/16\}$ forms one equivalence class of $\Sim{1,\wt}$. The
non-trivial equivalence classes (i.e., the ones which are not
singletons) are
\begin{equation}
  \label{eq:sim1w_exampleg}
  \text{equivalence classes of $\Sim{1,\wt}$: }
  \left\{\frac{2}{16},\frac{10}{16}\right\}, 
  \left\{\frac{3}{16},\frac{7}{16}\right\},
  \left\{\frac{11}{16}, \frac{15}{16}\right\}. 
\end{equation}
The curve $\gamma^1$ also ``touches itself in the black component''. This
however is more easily seen in Figure~\ref{fig:g1_g} than in the
orbifold covering, i.e., in Figure~\ref{fig:circleeq}. The
corresponding non-trivial equivalence classes of $\Sim{1,\bt}$ are
\begin{equation}
  \label{eq:sim1b_exampleg}
  \text{equivalence classes of $\Sim{1,\bt}$: }
  \left\{\frac{1}{16},\frac{5}{16}\right\}, 
  \left\{\frac{6}{16},\frac{14}{16}\right\},
  \left\{\frac{9}{16}, \frac{13}{16}\right\}. 
\end{equation}

We now view $S^1=\R/\Z$ (i.e., the domain of $\gamma^1, \gamma^0$) as
the boundary of a disk, which we identify with the hemisphere
$S^2_\wt$. 
The hyperbolic geodesic (in $S^2_\wt$) connecting the two points of one 
equivalence class of $\Sim{1,\wt}\subset S^1$ is a \defn{leaf}.  
The lamination $\LC^1_{\wt}$ is the set of these three leaves. 

A \emph{white $1$-gap} $G$ is the closure of one component of $S^2_\wt
\setminus \bigcup \LC^1_\wt$. The set of all white $1$-gaps is denoted
by $\G^1_\wt$. 

\smallskip Similarly we connect the points of one equivalence class of
$\Sim{1,\bt}$ by a hyperbolic geodesic in the outside of the circle
(which we identify with the hemisphere $S^2_{\bt}$). The lamination
$\LC^1_{\bt}$ is the set of the three leaves thus obtained, the
closure of each component of $S^2_\bt$ is a black $1$-gaps. This is
illustrated in the top left of Figure~\ref{fig:circleeq}.

\smallskip Recall that each $1$-arc was mapped by $\gamma^1$ to a
$1$-edge. Each white/black $1$-gaps correspond to a white/black
$1$-tile. Note that each $1$-gap $G$ has $4$ $1$-arcs in its boundary,
one of each type. Thus $G\cap S^1$ is mapped by
$\mu(t)=4t$. Furthermore these four $1$-arcs are mapped by $\gamma^1$
to $1$-edges contained in the same $1$-tile.  Each $1$-leave
corresponds to a $1$-vertex, i.e., each equivalence class (of
$\Sim{1,\wt}$ or of $\Sim{1,\bt}$) is mapped by $\gamma^1$ to a
$1$-vertex. Note however that several distinct $1$-leaves may be
mapped to the same $1$-vertex (this corresponds to distinct critical
points being identified in the mating). Thus the combinatorial
description via tiles, edges, and vertices is given via gaps, arcs,
and leaves.

\smallskip The equivalence relations $\Sim{n,\wt},\Sim{n,\bt}$ are
constructed in the same fashion. They can however be obtained
inductively from $\Sim{1,\wt},\Sim{1,\bt}$ as follows. Consider a
\defn{white $1$-gap} $G$. The following holds for all $s,t \in
S^1=\R/\Z$:
\begin{align*}
  s\Sim{2,\wt} t &\text{ if and only if}
  \\
  &\text{there is a white $1$-gap } G\ni s,t \text{ and } 
  \mu(s)\Sim{1,\wt} \mu(t);
\end{align*}
thus the second white equivalence relation $\Sim{2,\wt}$ is obtained
from $\Sim{1,\wt}$. 
In the same fashion all equivalence relations $\Sim{n,\wt}$ can be
inductively constructed from $\Sim{1,\wt}$; similarly all $\Sim{n,\bt}$
can be constructed from $\Sim{1,\bt}$. Thus the lists
(\ref{eq:sim1w_exampleg}), (\ref{eq:sim1b_exampleg}) contain all
information 
to recover the map $g$ up to topological conjugacy (by Theorem
\ref{thm:sim_usc_clos_siminfty} and \ref{thm:S1simS2}). They are
called the white and black \defn{critical portraits} of $g$.

\subsection{Definition of $\Sim{n,\wt}, \Sim{n,\bt}$}
\label{sec:laminations-1}

We now define the equivalence relations $\Sim{n,\wt}, \Sim{n,\bt}$  in
general. They will be 
defined in terms of which white/black $n$-tiles are connected at some
$n$-vertex, i.e., in terms of the \emph{connection of $n$-tiles} (see
Section \ref{sec:connection-n-tiles}). 

\medskip
Let $v$ be an $n$-vertex. Consider one block $b\in \pi^n_{\wt}(v)\cup
\pi^n_{\bt}(v)$ (from the cnc-partition defining the connection at
$v$). Let $X_0,\dots,X_{2m-1}$ be the $n$-tiles containing $v$. We
call an $n$-edge $E\ni v$ \defn{incident to $b$ at $v$} if
\begin{equation*}
  E\subset X_i, \quad \text{ where } i\in b.
\end{equation*}
Each $n$-edge is incident to exactly one block $b\in \pi^n_{\wt}(v)$ and
one block $c\in \pi^n_{\bt}(v)$. 
Succeeding $n$-edges $E,E'$ (at $v$) are incident to the same
white/black block 
(see \cite[Lemma 6.13]{peano}). 

Consider 
an $n$-angle $\alpha^n_j\in S^1$ such that $\gamma^n(\alpha^n_j)= v$. It is
called \defn{incident to the block $b\in
  \pi^n_{\wt}(v)\cup \pi^n_{\bt}(v)$} at $v$ if the $n$-arc
$a^n_j=[\alpha^n_j,\alpha^n_{j+1}]$ (as well as
$a^n_{j-1}=[\alpha^n_{j-1},\alpha^n_j]$) is mapped by $\gamma^n$ to
an $n$-edge incident to $b$ at $v$. In this case we also say that the
$n$-edges $E= \gamma^n(a^n_{j-1})$ and $E'=\gamma^n(a^n_j)$ are
incident to the $n$-angle $\alpha^n_j$ (at $v$) and vice versa. 
Recall that the set of $n$-angles
$\A^n=\{\alpha^n_j\}\subset S^1$ is the set of all points that are
mapped by $\gamma^n$ to some $n$-vertex.





\begin{definition}[Equivalence
  relations $\Sim{n,\wt}, \Sim{n,\bt}\,$]
  \label{def:simnw_simnb}
  Let the connection at any $n$-vertex $v$ be given by the
  cnc-partition $\pi^n_{\wt}(v)\cup \pi^n_{\bt}(v)$.  
  
  Define the equivalence relations $\Sim{n,\wt},\Sim{n,\bt}$ on $\A^n$ by
  \begin{align*}
    \alpha \Sim{n,\wt} \alpha'  & :\Leftrightarrow 
    \alpha,\alpha' \text{ are incident to
      the same block } b\in \pi^n_{\wt}(v),
    \\
    &\phantom{XXXXXXXXXXXXXXXX}\text{ at some $v\in \V^n$};
    \\
    \alpha \Sim{n,\bt} \alpha' & :\Leftrightarrow 
    \alpha,\alpha' \text{ are incident to
      the same block } b\in \pi^n_{\bt}(v),
    \\
    &\phantom{XXXXXXXXXXXXXXXX}\text{ at some $v\in \V^n$};    
  \end{align*}
  for all $\alpha,\alpha'\in \A^n$. Thus there is a one-to-one
  correspondence 
  between blocks $b\in \pi^n_{\wt}(v)$ and equivalence classes with
  respect to $\Sim{n,\wt}$; analogously between blocks $b\in
  \pi^n_{\bt}(v)$ and equivalence classes with respect to
  $\Sim{n,\bt}$. Note that $n$-angles $\alpha,\alpha'$ such that
  $\gamma^n(\alpha)\neq \gamma^n(\alpha')$ are never equivalent with
  respect to $\Sim{n,\wt}$ or $\Sim{n,\bt}$.  

  It will be convenient to consider the equivalence relations
  $\Sim{0,w},\Sim{0,b}$ on $\A^0$ as well. They are defined to be
  trivial, meaning that each equivalence class is a singleton. 
\end{definition}

We can view $\Sim{n,\wt},\Sim{n,\bt}$ as equivalence relations on
$S^1$. Namely each $s\in S^1\setminus \A^n$ is equivalent to itself
and only to itself with respect to both $\Sim{n,\wt}$ and $\Sim{n,\bt}$. 
Let us record that $\Sim{n,\wt}, \Sim{n,\bt}$ \emph{generate} the
equivalence relation $\Sim{n}$, which is an immediate consequence of
\cite[Lemma 6.2]{peano},
\begin{equation}
  \label{eq:simn-simnw-simnb}
  \Sim{n}\;=\; \Sim{n,\wt} \vee \Sim{n,\bt}.
\end{equation}


The angles of each equivalence class $[\alpha^n]_{n,\wt}\subset S^1$
inherit the cyclical ordering from $S^1$.   
\begin{definition}[Succeeding]
  \label{def:succeeding}
  Two $n$-angles $\alpha^n,\tilde{\alpha}^n$ are called
  \defn{succeeding} (with respect to $\Sim{n,\wt}$) if
  \begin{align*}
    &\alpha^n \Sim{n,\wt} \tilde{\alpha}^n \quad \text{and}
    \\
    &\tilde{\alpha}^n \text{ is the successor to $\alpha^n$ in }
    [\alpha^n]_{n,\wt}. 
  \end{align*}
  This means that the open arc $(\alpha^n,\tilde{\alpha}^n)\subset
  S^1$ does not contain any $n$-angle from $[\alpha^n]_{n,\wt}$. The
  $n$-angle 
  $\tilde{\alpha}^n$ is the \defn{predecessor} to $\alpha^n$. 

  A finite sequence $\alpha^n_1,\dots,\alpha^n_k$ is called
  \defn{succeeding} (with respect to $\Sim{n,\wt}$) if
  $\alpha^n_j,\alpha^n_{j+1}$ are succeeding with respect to
  $\Sim{n,\wt}$ (for all $1\leq j\leq k-1$). Clearly
  $\alpha^n\Sim{n,\wt}\tilde{\alpha}^n$ if and only if there is a
  sequence of succeeding $n$-angles from $\alpha^n$ to
  $\tilde{\alpha}^n$ ($\A^n$ is a finite set). 
\end{definition}
Two succeeding angles correspond to a hyperbolic geodesic which forms a
boundary arc of some leaf of the lamination. 



The nect lemma shows that the cyclical ordering may 
The following lemma gives an alternative way to construct
$\Sim{n,\wt}$. 
\begin{lemma}
  \label{lem:lamination_succeeding}
  Consider two $n$-angles $\alpha^n_i,\alpha^n_j\in S^1$ that are mapped by
  $\gamma^n$ to the same $n$-vertex $v$. Then 
  \begin{align*}
    &\alpha^n_i,\alpha^n_j \text{ are \emph{succeeding}} 
    \intertext{ if and only if}
    &\text{The $n$-arcs $a^n_i=[\alpha^n_i,\alpha^n_{i+1}],
      a^n_{j-1}=[\alpha^n_{j-1},\alpha^n_j]$ are mapped by $\gamma^n$}
    \\
    &\text{to $n$-edges in the \emph{same white 
        $n$-tile $X$}.} 
  \end{align*}
  In this case the $n$-edge $E'=\gamma^n(a^n_i)$ succeeds
  $E=\gamma^n(a^n_{j-1})$ in $\partial X$.   
\end{lemma}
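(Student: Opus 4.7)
The plan is to unwind the combinatorics via the correspondence between angles at $v$ and succeeding pairs in the cnc-partition $\pi^n_w(v)\cup\pi^n_b(v)$. For an angle $\alpha=\alpha^n_k$ with $\gamma^n(\alpha)=v$, the preceding arc $a^n_{k-1}$ maps to some $n$-edge terminating at $v$ and the following arc $a^n_k$ maps to some $n$-edge starting at $v$. By the construction of $\gamma^n$ from the connection of $n$-tiles (Sections \ref{sec:succeeding-edges} and \ref{sec:connection-n-tiles}), these two edges form a succeeding pair at $v$, so they lie in tiles $X_p, X_q$ with indices $p,q$ succeeding in a common block of $\pi^n_w(v)\cup\pi^n_b(v)$. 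In particular $\alpha$ is incident to the unique white block $b\in\pi^n_w(v)$ that contains the indices of the white tiles hosting these two edges, and the angles incident to such a block $b=\{i_1<\dots<i_r\}$ biject with the cyclically succeeding pairs $(i_1,i_2),(i_2,i_3),\dots,(i_r,i_1)$ in $b$.

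The key step is to show that this bijection is \emph{order-preserving}: the cyclic order on $S^1$ of the angles incident to $b$ coincides with the cyclic order of the corresponding succeeding pairs around $v$. This is a planarity statement: passing to one of the Jordan approximants $\gamma^n_\epsilon$, each passage of $\gamma^n$ through $v$ can be replaced by a chord in a small disk centered at $v$, and the non-crossing property of the cnc-partition together with the fact that $\gamma^n$ (and hence $\gamma^n_\epsilon$) touches but does not cross itself forces these chords to appear on $S^1$ in the same cyclic order as their endpoints appear around $v$.

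Granted this order-preservation, the lemma is immediate. In the forward direction, two $\Sim{n,w}$-succeeding angles $\alpha^n_i,\alpha^n_j$ at $v$ correspond to consecutive succeeding pairs $(i_k,i_{k+1})$ and $(i_{k+1},i_{k+2})$ in $b$, sharing the middle index $i_{k+1}$; hence $\gamma^n(a^n_i)\subset X_{i_{k+1}}$ (the outgoing tile at $\alpha^n_i$) and $\gamma^n(a^n_{j-1})\subset X_{i_{k+1}}$ (the incoming tile at $\alpha^n_j$) both lie in the same white $n$-tile $X:=X_{i_{k+1}}$; since $X$ is a closed Jordan domain meeting $v$ in its boundary, exactly two $n$-edges of $\partial X$ are incident to $v$, one ending and one starting at $v$, namely $\gamma^n(a^n_{j-1})$ and $\gamma^n(a^n_i)$, so the latter succeeds the former in the positively oriented $\partial X$. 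Conversely, if both $\gamma^n(a^n_i)$ and $\gamma^n(a^n_{j-1})$ lie in a common white $n$-tile $X$ at $v$, the succeeding pairs associated to $\alpha^n_i$ and $\alpha^n_j$ end and begin with the index of $X$ respectively, so they are consecutive in $b$, and order-preservation gives $\alpha^n_j$ as the successor of $\alpha^n_i$ in $[\alpha^n_i]_{n,w}$. The main obstacle is the order-preservation step; it is essentially a planarity argument, but it needs to be checked carefully using the non-crossing properties of both the cnc-partition and the curve $\gamma^n$.
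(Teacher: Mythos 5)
Your proposal is correct and follows essentially the same route as the paper: the paper also reduces everything to the observation that, because $\gamma^n$ can be perturbed to a Jordan curve near each $n$-vertex, the chain of the Eulerian circuit from $E'_l$ to $E_l$ contains no other $n$-edge incident to the block $b$ — which is exactly your order-preservation step. The identification of succeeding angles with pairs $E'_l,E_l$ lying in the same white $n$-tile then closes the argument just as you describe.
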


Here $\partial X$ is oriented as positively as boundary of $X$ as
usual. The situation is illustrated in
Figure~\ref{fig:lem_succed_angles}. 

\begin{figure}
  \centering
  \begin{overpic}
    [width=7cm, tics=20,
    ]
    {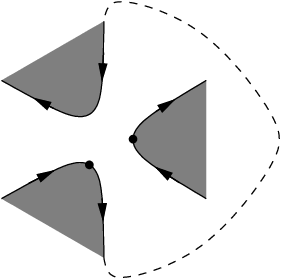}
    \put(50,12){$\scriptstyle{X=X_0}$}
    \put(52,80){$\scriptstyle{X_1}$}
    \put(0,48){$\scriptstyle{X_2}$}
    \put(28,44){${\scriptstyle \gamma^n(\alpha^n_i)}$}
    \put(35,53){${\scriptstyle \gamma^n(\alpha^n_j)}$}
    \put(38,23){$\scriptstyle{E'=E'_0}$}
    \put(56,29){$\scriptstyle{E=E_0}$}
    \put(56,65){$\scriptstyle{E'_1}$}
    \put(38,70){$\scriptstyle{E_2}$}
    \put(15,56){$\scriptstyle{E'_2}$}
    \put(14,40){$\scriptstyle{E_2}$}
  \end{overpic}
  \caption{Illustration to Lemma~\ref{lem:lamination_succeeding}}
  \label{fig:lem_succed_angles}
\end{figure}

\begin{proof}
  Recall from Section~\ref{sec:approximations} that the curve
  $\gamma^n$ traverses each $n$-edge $E$ positively as boundary of the
  white $n$-tile that $E$ is contained in.

  Note that both conditions in the statement of the lemma imply that
  $\alpha^n_i,\alpha^n_j$ are incident to the same block
  $b\in\pi^n_{\wt}(v)$.

  Let $X_0,\dots, X_{m-1}$ be the white $n$-tiles incident to $b$ (at
  $v$), cyclically ordered around $v$. Let $E_j,E'_j\subset X_j$ be
  the $n$-edges with terminal/initial point $v$. Then the cyclical
  order of the $n$-edges incident to $b$ around $v$ is
  $E_0',E_0,E_1',E_1,\dots, E_{m-1}',E_{m-1}$. Recall from Section
  \ref{sec:connection-n-tiles} that $n$-edges are
  succeeding in $\gamma^n$ (at $v$) if and only if they are succeeding
  with respect to $\pi^n_{\bt}(v)\cup\pi^n_{\wt}(v)$. Thus $E_l$ is succeeded
  by $E_{l+1}'$ in $\gamma^n$ (index $l$ is taken $\bmod\, m$), by
  definition (see Section \ref{sec:succeeding-edges}). This means that
  $E_l$ and $E'_{l+1}$ are both incident to the same $n$-angle
  $\alpha$ incident to $b$. The proof thus will be finished if we can
  show that in $\gamma^n$ each $n$-edge $E_l'$ is followed by $E_l$,
  i.e., in $\gamma^n$ between $E'_l$ and $E_l$ there is no other
  $n$-edge incident to $b$. 

  \smallskip
  Recall that $\gamma^n$ is a non-crossing Eulerian circuit. This
  means we can change $\gamma^n$ slightly in a neighborhood of each
  $n$-vertex to obtain a Jordan curve $\gamma^n_\epsilon$. If $E'_l$
  would be followed in 
  $\gamma^n$ by $E_m$ (where $m\neq l$) (as shown in
  Figure~\ref{fig:lem_succed_angles}) this would not be
  possible. Indeed let $\alpha$ be the $n$-angle incident to $E'_l$
  and $\beta$ be the $n$-angle incident to $E_m$ at
  $v$. We can connect $\gamma^n_\epsilon(\alpha)$ and
  $\gamma^n_\epsilon(\beta)$ by a an arc that does not cross
  $\gamma^n_\epsilon$ to form a Jordan curve $\Gamma$. Both components
  of $\Gamma$ then contain parts of the curve $\gamma^n_\epsilon$,
  which is a contradiction.  
  

\end{proof}

Note that the previous lemma remains valid if $[\alpha^n_i]_{n,\wt}$
consists of a single point, then of course $\alpha^n_i=\alpha^n_j$. 

\subsection{Laminations and gaps}
\label{sec:laminations-gaps}

We now define the laminations and gaps. These are different geometric
representations of the equivalence relations $\Sim{n,\wt},
\Sim{n,\bt}$. 

We view $S^1\subset S^2$ as the equator. Recall that the two
hemispheres $S^2_\wt, S^2_\bt$ (i.e., complementary components of
$S^2\setminus S^1$) are equipped with the hyperbolic metric. 

\begin{definition}[Laminations $\LC^n_{\wt},\LC^n_{\bt}$]
  \label{def:Ln} 
  Let $\alpha\in \A^n$ and $[\alpha]_{n,\wt}\subset S^1$ be its equivalence
  class with respect to $\Sim{n,\wt}$. 
  A \defn{leaf} $L=L([\alpha]_{n,\wt})$ is the hyperbolically convex hull of
  $[\alpha]_{n,\wt}$ in $S^2_{\wt}$. If $[\alpha]_{n,\wt}$ corresponds to the 
  block $b\in \pi^n_{\wt}(v)$ we sometimes write $L=L(v,b)$ to indicate
  ``where the leaf comes from''. The set of all such leaves is the
  \defn{white lamination} $\LC^n_{\wt}$.  

  If $\#[\alpha]_{n,\wt}=m$ the leaf $L=L([\alpha]_{n,\wt})$ is an
  \defn{ideal $m$-gon}.  
  If $[\alpha]_{n,\wt}$ consists of two points the corresponding leaf $L$ is the
  \emph{hyperbolic geodesic} (in $S^2_{\wt}$) between these points. 
  If $[\alpha]_{n,\wt}$ consists of a single point the corresponding
  leaf is $L=\{\alpha\}$. 

  \smallskip
  The \defn{black lamination} $\LC^n_{\bt}$ (containing $m$-gons in $S^2_{\bt}$) is
  constructed similarly from the equivalence classes of $\Sim{n,\bt}$. 
\end{definition}

\begin{remark}
  A \emph{lamination} is usually defined to be a closed set of
  disjoint \defn{geodesics}. One obtains a lamination in this standard
  sense 
  by taking the boundaries of all our leaves. 
  We use this slightly
  non-standard terminology, since we do not want to make a
  distinction between hyperbolic geodesics and ideal $m$-gons.  
\end{remark}

\begin{remark}
  \label{rem:LnwLnb}
  We will formulate the following lemmas only for the \emph{white}
  equivalence relations $\Sim{n,\wt}$, and the corresponding white
  laminations $\LC^n_{\wt}$. In each
  case there is an obvious analog for \emph{black} equivalence
  relations $\Sim{n,\bt}$, and the corresponding black laminations
  $\LC^n_{\bt}$. More precisely one 
  has to replace ``white'' by ``black'', and each index ``$\wt$'' by
  ``$\bt$''. Furthermore in Lemma \ref{lem:prop_Ds} ``cyclically'' has
  to be replaced by ``anti-cyclically''. 
\end{remark}

\smallskip
Recall that $\mu\colon \R/\Z\to \R/\Z$, $\mu(t)=dt (\bmod 1)$. 
The \defn{image} of
an ideal $m$-gon $L$ (in $S^2_{\wt}$ or $S^2_{\bt}$) with vertices
$\alpha^n_{j_1},\dots, \alpha^n_{j_m}$ by $\mu$ is the 
ideal $\widetilde{m}$-gon $\tilde{L}$ with vertices
$\mu(\alpha^n_{j_1}),\dots,\mu(\alpha^n_{j_m})$. The ideal
$\widetilde{m}$-gon $\tilde{L}$ lies in the same hemisphere (in
$S^2_{\wt}$ or $S^2_{\bt}$) as $L$. We write $\mu(L)=\tilde{L}$.

\begin{lemma}[Properties of $\Sim{n,\wt}$ and $\LC^n_{\wt}$]
  \label{lem:prop_L}
  The equivalence relations $\Sim{n,\wt}$ and the laminations $\LC^n_{\wt}$
  satisfy the following. 
  \begin{enumerate}[\upshape{($\LC^n$} 1\upshape{)}]
  \item 
    \label{item:LinQ}
    The non-trivial equivalence classes are in $\Q$ and are mapped by
    the iterate $\mu^n$ to a single point,
    \begin{equation*}
      [\alpha^n]_{n,\wt} \subset \Q, \quad
      \mu^n([\alpha^n]_{n,\wt})=\{\alpha^0\}\in \A^0,
    \end{equation*}
    for all $\alpha^n\in \A^n$.
  \item 
    \label{item:propL2}
    The equivalence relations $\Sim{n,\wt}$ 
    are \defn{non-crossing} (see (\ref{eq:def_non-crossing})). This
    means that the leaves in $\LC^n_{\wt}$ 
    are \emph{disjoint}.
  \item
    \label{item:propLnum_L}
    Let $\{L_j\}=\LC^n_{\wt}$, 
    and each $L_j$ an ideal
    $m_j$-gon. Then
    \begin{align*}
      \sum_j m_j= k d^n,
      \quad
      \sum_j (m_j-1) = d^n -1.
    \end{align*}
  \item 
    \label{item:propL6}
    The lamination $\LC_{\wt}^{n+1}$ 
    is mapped by $\mu$
    to $\LC^n_{\wt}$. 
    This means 
    \begin{align*}
      &L^{n+1}\in \LC^{n+1}_{\wt} 
      \Rightarrow \mu(L^{n+1})
      \in \LC^n_{\wt}, 
      \intertext{equivalently}
      &\mu([\alpha^{n+1}]_{n+1,\wt})
      = [\mu(\alpha^{n+1})]_{n,\wt}
      = [\alpha^{n}]_{n,\wt},
    \end{align*}
    for all $\alpha^{n+1}\in \A^{n+1}$, where
    $\alpha^n=\mu(\alpha^{n+1})\in \A^n$.  
  \item 
    \label{item:propL7}
    For each leaf $L^n\in \LC^n_{\wt}$ 
    there is a leaf $L^{n+1}\in
    \LC^{n+1}_{\wt}$, 
    such that $\mu(L^{n+1})=L^n$. More
    precisely let $L^{n+1}_1,\dots, L^{n+1}_N$ be the leaves that are
    mapped by $\mu$ to $L^n$. Let $L^n=L^n([\alpha^n]_{n,\wt})$ be an
    ideal $m$-gon  ($\#[\alpha^n]_{n,\wt}=m$) and each
    $L^{n+1}_j=L^{n+1}([\alpha^{n+1}_j]_{n,\wt})$ an ideal $m_j$-gon
    ($\#[\alpha^{n+1}_j]=m_j$). Each $m_j$ is a multiple of $m$ and 
    \begin{equation*}
      \sum_j m_j = d\, m.
    \end{equation*}
  \item
    \label{item:Ln5}
    Restricted to $\A^n$ it holds 
    \begin{align*}
      &\Sim{n+1,\wt}\; =\;\Sim{n,\wt}, \text{ equivalently}
      \\
      &[\alpha^n]_{n,\wt} =[\tilde{\alpha}^n]_{n,\wt} 
      \Leftrightarrow
      [\alpha^n]_{n+1,\wt} = [\tilde{\alpha}^n]_{n+1,\wt},
    \end{align*}
    for all $\alpha^n,\tilde{\alpha}^n\in \A^n$, $n\geq 0$.  
    In particular 
    \begin{align*}
      &[\alpha^n]_{n,\wt}\subset [\alpha^{n}]_{n+1,\wt}, 
      \quad \text{equivalently }\;
      L^n\subset L^{n+1},
      \intertext{where $L^n=L^n([\alpha^n]_{n,\wt})\in \LC^n_{\wt}$,
        $L^{n+1}=L^{n+1}([\alpha^{n}]_{n+1,\wt})\in \LC^{n+1}_{\wt}$. 
        This means that }
      &\Sim{1,\wt} \; \leq \; \Sim{2,\wt} \; \leq \dots \;.
    \end{align*}
    Note also that this means that distinct $0$-angles
    $\alpha^0,\tilde{\alpha}^0\in \A^0$ are not equivalent with
    respect to any $\Sim{n,\wt}$. 
    %
    \setcounter{mylistnum}{\value{enumi}}
  \end{enumerate}
\end{lemma}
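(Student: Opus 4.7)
The plan is to verify the six properties in turn, using primarily the commutative diagram \eqref{eq:Angn_comm_dia}, the non-self-crossing nature of $\gamma^n$, the one-to-one correspondence between blocks of $\pi^n_w(v)$ and equivalence classes of $\Sim{n,w}$ (Definition \ref{def:simnw_simnb}), the tree property of the white connection graph from Section \ref{sec:connection-graph}, and the fact that the cnc-partitions at level $n+1$ are lifts of those at level $n$ (Section \ref{sec:connection-n-tiles}).

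For ($\LC^n$ \ref{item:LinQ}), rationality of each $n$-angle was already recorded in Section \ref{sec:approximations}. All angles in $[\alpha^n]_{n,w}$ are mapped by $\gamma^n$ to one common $n$-vertex $v$, so by \eqref{eq:Angn_comm_dia} each is sent under $\mu^n$ to $(\gamma^0)^{-1}(F^n(v))$, a single $0$-angle. For ($\LC^n$ \ref{item:propL2}), the non-crossing property at each individual vertex is built into the definition of a cnc-partition (Section \ref{sec:connetions}). Two crossing classes coming from different vertices would force $\gamma^n$ to have a genuine transversal self-intersection; this contradicts the fact, recalled in Section \ref{sec:approximations}, that $\gamma^n$ can be $\epsilon$-approximated by a Jordan curve $\gamma^n_\epsilon$.

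For ($\LC^n$ \ref{item:propLnum_L}), the number of $n$-angles equals the number of $n$-arcs, hence the number of $n$-edges, hence $kd^n$ (there are $2d^n$ $n$-tiles, each with $k$ boundary edges, each edge shared by two tiles). Since every $n$-angle lies in exactly one white equivalence class, $\sum_j m_j = kd^n$. For the second identity I use that the $n$-th white connection graph is a tree: it has $d^n$ tile-vertices, one block-vertex per white equivalence class, and $kd^n$ edges (one for every pair of an $n$-tile $X$ and a corner $v\in \partial X$). The tree relation $V - 1 = E$ gives $d^n + \#(\text{classes}) - 1 = kd^n$, so $\#(\text{classes}) = (k-1)d^n + 1$, whence $\sum_j(m_j - 1) = kd^n - ((k-1)d^n + 1) = d^n - 1$.

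Properties ($\LC^n$ \ref{item:propL6}) and ($\LC^n$ \ref{item:propL7}) both follow from the lifting principle of Section \ref{sec:connection-n-tiles}: for $v\in\V^{n+1}$ with $F(v) = v' \in \V^n$ and $q = \deg_F(v)$, the partition $\pi^{n+1}_w(v)$ is the ``lift'' of $\pi^n_w(v')$, so each block $b' \in \pi^n_w(v')$ has exactly $q$ preimage blocks in $\pi^{n+1}_w(v)$, each of the same size. Via \eqref{eq:Angn_comm_dia} this translates directly to $\mu([\alpha^{n+1}]_{n+1,w}) = [\mu(\alpha^{n+1})]_{n,w}$, giving ($\LC^n$ \ref{item:propL6}). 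Summing the local contributions $\sum_{v\in F^{-1}(v')} q \cdot m = m \sum_{v\in F^{-1}(v')} \deg_F(v) = dm$ gives ($\LC^n$ \ref{item:propL7}). Finally, ($\LC^n$ \ref{item:Ln5}) will be proved by induction on $n$; the statement is that at an ``old'' vertex $v\in \V^n \subset \V^{n+1}$, the blocks of $\pi^{n+1}_w(v)$ group the $(n+1)$-edges at $v$ in the same way that the blocks of $\pi^n_w(v)$ group the $n$-edges at $v$, which follows because each $n$-edge at $v$ is the initial $(n+1)$-edge of itself and the refinement preserves which tiles are connected through $v$.

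The main obstacle will be ($\LC^n$ \ref{item:Ln5}), because verifying it rigorously requires relating two a priori different cnc-partitions at the same vertex $v\in \V^n$ - the partition inherited from the $n$-tiling and the one inherited from the $(n+1)$-tiling. Handling this cleanly means invoking the actual inductive construction of the pseudo-isotopies $H^n$ from Section \ref{sec:pseudo-isotopies} and the compatibility of successive approximations $\gamma^n, \gamma^{n+1}$ on $\A^n$ recorded in Section \ref{sec:approximations}, rather than attempting an abstract combinatorial argument.
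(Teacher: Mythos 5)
Your counting arguments for ($\LC^n$ 1) and ($\LC^n$ 3) are correct and essentially identical to the paper's (the tree relation for the white connection graph with $d^n$ tile-vertices, one block-vertex per class, and $kd^n$ edges is exactly the computation used there). However, there are two genuine problems. First, your description of the lift underlying ($\LC^n$ 4) and ($\LC^n$ 5) is wrong: you assert that a block $b'\in\pi^n_w(v')$ of size $m$ has ``exactly $q$ preimage blocks in $\pi^{n+1}_w(v)$, each of the same size.'' That would force every $m_j$ to equal $m$, contradicting the very statement of ($\LC^n$ 5) (which allows $m_j$ to be a proper multiple of $m$) and the example of Section \ref{sec:laminations}, where the singleton class $\{1/2\}$ of $\Sim{0,w}$ has the preimage class $\{2/16,10/16\}$ of size $2$. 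At a critical point the lifted connection merges tiles: the $q$ preimage $(n+1)$-edges of an $n$-edge incident to $b'$ distribute among the blocks over $b'$ at $v$, a block receiving $d_j$ of them has size $m_j=d_jm$, and $\sum_j d_j=d$ over all preimage vertices; this edge count is how the paper gets both ``$m_j$ is a multiple of $m$'' and $\sum m_j=dm$. Your total $\sum_v \deg_F(v)\cdot m=dm$ happens to come out right, but the structural claim it rests on is false.

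Second, ($\LC^n$ 6) is not proved. Saying that ``the refinement preserves which tiles are connected through $v$'' is a restatement of the claim, not an argument, and your sketch does not address the harder direction at all: ruling out that two $n$-angles which are \emph{not} $\Sim{n,w}$-equivalent become $\Sim{n+1,w}$-equivalent through a chain of newly created $(n+1)$-angles. The paper handles this by first proving the inductive construction (Theorem \ref{thm:LnLn+1}), establishing the base case that distinct $0$-angles are never $\Sim{1,w}$-equivalent, and then tracking chains of succeeding angles through $n$-gaps; note that the paper itself postpones ($\LC^n$ 6) precisely because it needs this machinery. A smaller issue of the same kind occurs in ($\LC^n$ 2): your claim that crossing classes would ``force a transversal self-intersection'' of $\gamma^n_\epsilon$ is plausible but unproved as stated — one must first know that identifications under $\Sim{n,w}$ take place on the white side of the curve (this is the content of Lemma \ref{lem:lamination_succeeding} and the gap/tile correspondence); the paper instead derives non-crossing purely combinatorially from the tree property of the connection graph, which avoids this topological step.
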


We now shift our attention to the \emph{complements} of
the laminations.

\begin{definition}[Gaps]
  \label{def:gaps}
  The closure of one component of $S^2_{\wt}\setminus \bigcup \LC^n_{\wt}$
  ($S^2_{\wt}\setminus \bigcup \LC^n_{\wt}$) is
  called a white (black) \defn{gap} or $n$-gap. The set of all white 
  $n$-gaps is denoted by $\G^n_{\wt}$. 
\end{definition}


Let $E_0,\dots,E_{k-1}$ be the $0$-edges ordered cyclically on $\CC$,
meaning 
mathematically positively as boundary of the white $0$-tile
$X^0_{\wt}$. Each $n$-edge $E^n$ is said to be of
\defn{type} $j$ if $F^n(E^n)=E_j$. Similarly each $n$-arc $a^n\subset
S^1$ is of type $j$ if $\gamma^n(\alpha^n)$ is, i.e., if
$F^n(\gamma^n(a^n))=E_j$. 

In the same fashion let $p_0,\dots,p_{k-1}$ be the postcritical points
labeled cyclically on $\CC$. Each $n$-vertex $v$ is of \defn{type} $j$
if $F^n(v)=p_j$. Note that $v$ is also an $(n+m)$-vertex (for each
$m\geq 0$), and might be of different type as such. A leaf $L=L(v,b)\in
\LC^n_{\wt}$ is of \defn{type} $j$ if $v$ is. 

\begin{lemma}[Properties of gaps]
  \label{lem:prop_Ds}
  We have the following properties.
  \begin{enumerate}[\upshape{(}i\upshape{)}]
    \renewcommand{\theenumi}{$\G$ \arabic{enumi}}
  \item 
    \label{item:propL4}
    There is one white $n$-gap for each white $n$-tile,
    \begin{equation*}
      \#\G^n_{\wt}= d^n.      
    \end{equation*}
  \item 
    \label{item:propL5}
    Each gap $G\in \G^n_{\wt}$ 
    has \emph{$k$ $n$-arcs} $\subset S^1$
    in its boundary, \emph{one of each type}. Their types are
    \emph{cyclically ordered} as boundary of $G$. Equivalently
    $G$ intersects \emph{$k$ leaves} $L\in\LC^n_{\wt}$, \emph{one
      of each type}, 
    \emph{cyclically ordered} on $\partial G$.  
  \item
    \label{item:propLDtoX}
    Consider two $n$-arcs $a^n,b^n\subset S^1$. Then
    \begin{align*}
      &a^n,b^n \text{ are in the boundary of the \emph{same} gap $G\in
        \G^n_{\wt}$}
      \intertext{if and only if}
      &\gamma^n(a^n),\gamma^n(b^n) \text{ are contained in the \emph{same}
        white 
        $n$-tile.} 
    \end{align*}
  \item 
    \label{item:propGmapG}
    The $(n+1)$-gaps are mapped to $n$-gaps by $\mu$. That means
    for each gap $G^{n+1}\in \G^{n+1}_{\wt}$ there is a gap $G^n\in
    \G^n_{\wt}$ such that
    \begin{equation*}
      \mu(G^{n+1}\cap S^1)= G^n\cap S^1. 
    \end{equation*}
    Furthermore $\mu$ is injective on the interior of $G^{n+1}\cap
    S^1$.
  \item 
    \label{item:propGnGn+1}
    Every gap $G^{n+1}\in \G^{n+1}_{\wt}$ is contained in a (unique) gap
    $G^n\in \G^n_{\wt}$,
    \begin{equation*}
      G^n\supset G^{n+1}.
    \end{equation*}
  \item 
    \label{item:propDcap}
    There is a constant $n_0$ such that the following holds. Let
    $\alpha^n,\tilde{\alpha}^n\in \A^n$ be \emph{not equivalent} with
    respect to $\Sim{n,\wt}$. Then for $m\geq n+n_0$ 
    no gap $G^{m}\in \G^m_{\wt}$ contains points from both sets
    $[\alpha^n]_{m,\wt}, [\tilde{\alpha}^n]_{m,\wt}$.
  \end{enumerate}
\end{lemma}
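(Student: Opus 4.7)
Given a white $n$-tile $X$ with boundary edges $E_0,\dots,E_{k-1}$ in cyclic order (one of each type $0,\ldots,k-1$, since $F^n|_{\partial X}\colon \partial X \to \CC$ is an orientation-preserving homeomorphism), let $a_0,\dots,a_{k-1} \subset S^1$ be the $n$-arcs with $\gamma^n(a_j) = E_j$. By Lemma \ref{lem:lamination_succeeding} applied at each shared $n$-vertex $v$ between $E_j$ and $E_{j+1}$, the angles at the $v$-endpoints of $a_j, a_{j+1}$ are succeeding under $\Sim{n,w}$ and hence bound one side of the leaf $L(v,b) \in \LC^n_w$; so $a_j$ and $a_{j+1}$ lie in the closure of a single component of $S^2_w \setminus \bigcup \LC^n_w$. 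Chaining this around $\partial X$, all $k$ arcs lie in a single gap $\Phi(X) \in \G^n_w$. The converse direction of Lemma \ref{lem:lamination_succeeding} shows that any two $n$-arcs in the boundary of a common gap necessarily correspond to edges of the same white tile, so $\Phi$ is a well-defined bijection.

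\textbf{Proofs of (G 1), (G 2), (G 3).} (G \ref{item:propL4}) is immediate: $\#\G^n_w = \#\X^n_w = d^n$. For (G \ref{item:propL5}), the $k$ arcs in $\partial \Phi(X) \cap S^1$ inherit the cyclic types from $\partial X$, and the $k$ leaves separating consecutive arcs on $\partial \Phi(X)$ correspond bijectively to the $k$ vertices of $X$, hence are of the $k$ types in cyclic order. (G \ref{item:propLDtoX}) is precisely a reformulation of the bijection.

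\textbf{Proofs of (G 4) and (G 5).} For (G \ref{item:propGmapG}), the bijection $\Phi$ is equivariant under $F$ and $\mu$: combining the commuting diagram (\ref{eq:Angn_comm_dia}) with the mapping property ($\LC^n\,4$) of Lemma \ref{lem:prop_L}, if $F(X^{n+1}) = X^n$ then $\mu(\Phi(X^{n+1}) \cap S^1) \subseteq \Phi(X^n) \cap S^1$. A length-counting argument pins down equality and injectivity on the interior: $\Phi(X^{n+1}) \cap S^1$ has Lebesgue measure $1/d^{n+1}$, its $\mu$-image has measure at most $1/d^n = \abs{\Phi(X^n) \cap S^1}$, so the image must fill $\Phi(X^n) \cap S^1$ and $\mu$ cannot overlap. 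For (G \ref{item:propGnGn+1}), the nesting property ($\LC^n\,6$) of Lemma \ref{lem:prop_L} yields $L^n \subseteq L^{n+1}$ for corresponding leaves, hence $\bigcup \LC^n_w \subseteq \bigcup \LC^{n+1}_w$, and each $(n+1)$-gap is contained in some $n$-gap.

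\textbf{Proof of (G 6).} This is the subtlest item; the main technical issue is uniformity of $n_0$. Let $v_1 := \gamma^n(\alpha^n)$ and $v_2 := \gamma^n(\tilde\alpha^n)$; if $G^m = \Phi(X^m)$ meets both $[\alpha^n]_{m,w}$ and $[\tilde\alpha^n]_{m,w}$, then $X^m$ has $v_1, v_2$ among its vertices and is incident at them to the two relevant blocks of $\pi^m_w$. In Case~2 ($v_1 = v_2 =: v$, distinct blocks $b, \tilde b \in \pi^n_w(v)$), the two classes sit in distinct blocks of $\pi^m_w(v)$ by ($\LC^n\,6$), but $X^m$ is incident to only one block at $v$ --- contradiction, so $n_0 = 0$ suffices here. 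In Case~1 ($v_1 \ne v_2$), a flower argument as in \cite[Section 7]{expThurMarkov} together with (\ref{eq:dS_comb}) gives $\abs{v_1 - v_2}_\SC \gtrsim \Lambda^{-n}$ (distinct $n$-vertices are separated at scale $\Lambda^{-n}$), while $\diam_\SC X^m \lesssim \Lambda^{-m}$; both vertices lying in $X^m$ forces $\Lambda^{-m} \gtrsim \Lambda^{-n}$, i.e., $m \leq n + n_0$ for a uniform $n_0$ absorbing the implicit multiplicative constants. The two cases together yield (G \ref{item:propDcap}).
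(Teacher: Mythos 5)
Your treatment of (\ref{item:propL4})--(\ref{item:propLDtoX}) via the bijection $\Phi$ between white $n$-tiles and white $n$-gaps is essentially the paper's argument (chaining Lemma \ref{lem:lamination_succeeding} around $\partial X$), and your proof of (\ref{item:propGmapG}) is a correct, slightly more elaborate version of the paper's one-line deduction from (\ref{item:propL5}) and ($\LC^n$ \ref{item:propL6}) -- the decisive point being that $\mu$ preserves the type of an arc, so the $k$ image arcs are pairwise distinct and must exhaust $G^n\cap S^1$. Your proof of (\ref{item:propDcap}) departs from the paper's: the paper reduces to the case $n=0$ of distinct postcritical points by pushing forward with $\mu^n$ and (\ref{item:propGmapG}), whereas you split into the cases $\gamma^n(\alpha^n)=\gamma^n(\tilde{\alpha}^n)$ and $\gamma^n(\alpha^n)\neq\gamma^n(\tilde{\alpha}^n)$, handling the latter by the metric separation of distinct $n$-vertices. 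This is sound (the separation $\abs{v_1-v_2}_{\SC}\gtrsim \Lambda^{-n}$ is indeed available from \cite{expThurMarkov}), and your Case~2 in fact addresses a configuration the paper's reduction passes over silently (when both angles map to the same vertex, their $\mu^n$-images coincide as $0$-angles, so the paper's ``distinct $0$-angles'' step does not apply).

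The genuine problem is (\ref{item:propGnGn+1}). You derive it from the nesting property ($\LC^n$ \ref{item:Ln5}), i.e.\ from $\Sim{n,w}\,\leq\,\Sim{n+1,w}$ and $L^n\subset L^{n+1}$. But ($\LC^n$ \ref{item:Ln5}) is itself a postponed, nontrivial statement: in the paper it is proved in Section \ref{sec:lcn--d} from Theorem \ref{thm:LnLn+1}, whose proof invokes (\ref{item:propGnGn+1}) in its very first step. So your argument is circular. The underlying issue is that the compatibility of the connections $\pi^n_w(v)$ and $\pi^{n+1}_w(v)$ at a common vertex --- equivalently, that angles identified by $\Sim{n,w}$ remain identified by $\Sim{n+1,w}$ --- is not available at this stage; it is precisely what the inductive machinery is built to establish. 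The paper instead proves (\ref{item:propGnGn+1}) directly via Lemma \ref{lem:anan+1}, using the pseudo-isotopy $H^n$ that deforms $\gamma^n$ to $\gamma^{n+1}$: two $(n+1)$-arcs in a common $(n+1)$-gap correspond to $(n+1)$-edges in a common white $(n+1)$-tile, whose $H^n_1$-preimages lie in the boundary of a common white $n$-tile, which by (\ref{item:propLDtoX}) places the arcs in a common $n$-gap. You need an argument of this kind, or some independent proof of ($\LC^n$ \ref{item:Ln5}), to close the gap. Note that your Case~2 of (\ref{item:propDcap}) also cites ($\LC^n$ \ref{item:Ln5}); that citation becomes legitimate once (\ref{item:propGnGn+1}) has been established independently, since the paper's proof of (\ref{item:propDcap}) likewise sits downstream of Theorem \ref{thm:LnLn+1}.
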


Properties \eqref{item:propL4}, \eqref{item:propL5}, and
\eqref{item:propL5} show that each gap $G\in \G^n_\wt$ corresponds to
a white $n$-tile. 


\begin{proof}[Proof of Lemma \ref{lem:prop_L} and Lemma \ref{lem:prop_Ds}]

  \medskip
  ($\LC^n$ \ref{item:propL2}) 
  Consider an equivalence class $[\alpha]_{n,\wt}$, where $\alpha\in
  \A^n$. Let $b\in \pi^n_{\wt}(v)$ be the block corresponding to
  $[\alpha]_{n,\wt}$ ($\tilde{\alpha}\in [\alpha]_{n,\wt} \Leftrightarrow
  \tilde{\alpha}$ incident to $b$ at $v$). 

  Recall from Section \ref{sec:connection-graph} that the $n$-th white
  \emph{connection graph} $\Gamma^n$ is a \emph{tree}. 

  Consider an $n$-arc $[\beta,\beta']\subset \R/\Z=S^1$ (between
  two consecutive $n$-angles $\beta,\beta'\in \A^n$) in one component of
  $S^1\setminus [\alpha]_{n,\wt}$. Let $d\in \pi^n_{\wt}(w), d'\in
  \pi^n_{\wt}(w')$ be the blocks associated to $[\beta]_{n,\wt},
  [\beta']_{n,\wt}$. Then $c(w,d), c(w',d')$ (the vertices in the
  connection graph $\Gamma^n$ associated to $d,d'$) are in the \emph{same
    component} of $\Gamma^n\setminus c(v,b)$. Indeed
  $\gamma^n([\beta,\beta'])$ is contained in a white $n$-tile incident
  to both $d,d'$.  
 
  Assume there is an equivalence class $[\alpha']_{n,\wt}$ ($\alpha'\in
  \A^n$) containing $n$-angles in \emph{distinct} components of
  $S^1\setminus [\alpha]_{n,\wt}$ (i.e., $[\alpha]_{n,\wt},
  [\alpha']_{n,\wt}$ are \emph{crossing}). Let $b'\in \pi^n_{\wt}(v')$ be
  the associated block. Then in $\Gamma^n$ the vertex $c(v',b')$
  connects distinct components of $\Gamma^n\setminus c(v,b)$. Thus
  $\Gamma^n$ is not a tree, which is a contradiction.




  \medskip
  (\ref{item:propL5}) 
  Fix a white $n$-tile $X$. Let
  $E_0,\dots,E_{k-1}$ be the $n$-edges in the boundary of
  $X$ (ordered mathematically positively in $\partial X$). Consider
  the $n$-arc 
  $a_{j-1}^n=[\alpha^n_{j-1},\alpha^n_j]\subset S^1$ that is mapped by 
  $\gamma^n$ to $E_{0}$. 
  Let $G\in \G^n_{\wt}$ be the gap 
  having 
  $a_{j-1}^n$ in its boundary. Consider the $n$-arc
  $a^n_i=[\alpha^n_i,\alpha^n_{i+1}]$ that is the cyclical successor
  to $a^n_{j-1}$ in $\partial G$. Note that $\alpha^n_i$ is the
  $n$-angle \emph{preceding} $\alpha^n_j$ with respect to
  $\Sim{n,\wt}$. From Lemma
  \ref{lem:lamination_succeeding} it follows that $\gamma^n(a^n_i)=
  E_1$. Continuing in this fashion yields the claim (note $\gamma^n$
  maps \emph{exactly one} $n$-arc to each $n$-edge).   


  \medskip
  (\ref{item:propLDtoX})
  This follows directly from the previous argument. 

  \medskip
  (\ref{item:propL4}) From the above it is clear that there is
  a \emph{bijection} between white $n$-tiles and white $n$-gaps.
  Thus there are exactly $d^n$ such components.

  \medskip
  ($\LC^n$ \ref{item:propLnum_L}) The first equality follows from the
  fact that $\sum m_j=\#\A^n$, the number of $n$-angles, which in turn
  equals $kd^n$. 

  Recall from Section \ref{sec:connection-graph} the definition of the
  \emph{connection graph}. Given a white $n$-tile $X$ let $G=G(X)\in
  \G^n_{\wt}$ be the corresponding gap according to
  (\ref{item:propLDtoX}), meaning that $\gamma^n(G\cap S^1) =\partial
  X$. From Definition  
  \ref{def:simnw_simnb} it follows that the vertices $c(X)$, $c(v,b)$
  (of the $n$-th white connection graph) are connected by an edge
  if and only if the gap $G=G(X)$ has non-empty intersection with the leaf
  $L(v,b)\in \LC^n_{\wt}$.  

  By (\ref{item:propL5}) and \eqref{item:propL4} it thus follows that
  the ($n$-th white) 
  connection graph has $kd^n$ edges. On the other hand it has
  $\#\LC^n_{\wt} + d^n$ vertices and is a tree. Thus it has $\#\LC^n_{\wt} +
  d^n -1$ edges, which implies that $\#\LC^n_{\wt}=(k-1)d^n+1$. Hence
  \begin{equation*}
    \sum_j(m_j-1)=\sum_j m_j - \#\LC^n_{\wt}= kd^n - ((k-1)d^n +1)=d^n-1.
  \end{equation*}

  \medskip
  ($\LC^n$ \ref{item:propL6}) This follows from 
  \cite[Lemma 6.15]{peano}, 
  Lemma \ref{lem:lamination_succeeding}, as
  well as the fact that $F$ maps succeeding $(n+1)$-edges to
  succeeding $n$-edges and $(n+1)$-tiles to $n$-tiles (of the same
  color). 

  \medskip
  (\ref{item:propGmapG}) Each $(n+1)$-arc is mapped by $\mu$ to an
  $n$-arc. The statement follows from (\ref{item:propL5}) and ($\LC^n$
  \ref{item:propL6}).  
  
  \medskip
  ($\LC^n$ \ref{item:propL7}) Consider a leaf $L^n=L^n(v,b)\in
  \LC^n_{\wt}$, and an $n$-edge $E^n\ni v$ incident to $b\in
  \pi^n_{\wt}(v)$ at $v$. Let 
  $E^{n+1}\ni v'$ be an 
  $(n+1)$-edge that is mapped by $F$ to $E^n$, where $F(v')=v$. 
  Then (by 
  \cite[Lemma 6.15]{peano}  
  and Lemma \ref{lem:lamination_succeeding})
  the leaf $L_j^{n+1}=L^{n+1}(v',b')\in \LC^n_{\wt}$ is mapped by
  $\mu$ to $L^n$, where $E^{n+1}$ is incident to $b'\in
  \pi^{n+1}_{\wt}(v')$ at $v'$. If 
  $L^{n+1}_j$ is an ideal $m_j$-gon the number $d_j:=m_j/m$ is the
  number of preimages ($(n+1)$-edges) of $E^n$ incident to $b'$ at
  $v'$. Thus $\sum d_j=d$.   

  \medskip ($\LC^n$ \ref{item:LinQ}) Each $n$-angle $\alpha^n\in \A^n$
  is rational by construction (see \cite[Sections~4.1 and
  Section~4.2]{peano}). The second claim follows from ($\LC^n$
  \ref{item:propL6}) and the fact that each equivalence class
  $[\alpha^0]_{0,w}$ ($\alpha^n\in\A^0$) is a singleton (recall that
  $\gamma^0=\CC$ is a Jordan curve).

  \medskip
  The proof of (\ref{item:propGnGn+1}) will be postponed to Section
  \ref{sec:constr-lcn-induct}, 
  the proofs of ($\LC^n$ \ref{item:Ln5}) and (\ref{item:propDcap})
  to Section \ref{sec:lcn--d}. 
\end{proof}

\subsection{Inductive construction of $\Sim{n,\wt}$}
\label{sec:constr-lcn-induct}

We come to the main result in this section, namely that $\Sim{n,\wt}$
(or equivalently $\LC^n_{\wt}$) can be constructed inductively. This means
that $\Sim{1,\wt},\Sim{1,\bt}$ together allows to recover the map $F$ up
to topological conjugacy.

The inductive construction of $\LC^{n+1}_{\wt}$ may be paraphrased as
follows. 
The boundary $G\cap S^1$ of each gap $G\in \G^n_{\wt}$ is mapped
by $\mu^n$ onto $S^1$. Pulling the lamination 
$\LC^1_{\wt}$ back constructs $\LC^{n+1}_{\wt}$. 

Recall that $\mu^n(z)= d^nt (\bmod 1)$ denotes the $n$-th
iterate of $\mu\colon \R/\Z\to \R/\Z$.  
For any gap $G\in \G^n_{\wt}$, 
this map is surjective on $G\cap S^1$ by (\ref{item:propL5}), a
homeomorphism 
on each $n$-arc, and may fail to be injective only at
endpoints of $n$-arcs. 

Given $\Sim{n,\wt}$ we know all white $n$-gaps. To construct
$\Sim{n+1,\wt}$ it is enough to construct all $(n+1)$-angles that are
succeeding with respect to $\Sim{n+1,\wt}$ (Definition
\ref{def:succeeding}).  

\begin{theorem}[Inductive construction of $\Sim{n,\wt}$]
  \label{thm:LnLn+1}
  For all $\alpha^{n+1},\tilde{\alpha}^{n+1}\in \A^{n+1}$ it holds
  \begin{align*}
    &\alpha^{n+1},\tilde{\alpha}^{n+1}\text{ are succeeding
      with respect to $\Sim{n+1,\wt}$}
    \intertext{if and only if}
    &\alpha^{n+1},\tilde{\alpha}^{n+1} \text{ are contained in the
      same white $n$-gap $G^n$ and}   
    \\
    &\mu^n(\alpha^{n+1}),\mu^n(\tilde{\alpha}^{n+1})  
    \text{ are succeeding with respect to $\Sim{1,\wt}$}.
  \end{align*}
\end{theorem}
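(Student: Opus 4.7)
The plan is to translate the succeeding relation at level $n+1$ down to level $1$ via the map $F^n$, in both directions.

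First, I would iterate the commutative diagram (\ref{eq:Angn_comm_dia}) $n$ times to obtain $\gamma^1\circ\mu^n = F^n\circ\gamma^{n+1}$ on $\A^{n+1}$, together with its arc-level analogue: each $(n+1)$-arc $a^{n+1}$ is mapped homeomorphically by $\mu^n$ to a $1$-arc and $\gamma^1(\mu^n(a^{n+1})) = F^n(\gamma^{n+1}(a^{n+1}))$. The structural fact I would exploit is that $F^n$ restricts to an orientation-preserving homeomorphism from each $(n+1)$-tile $X^{n+1}$ onto the $1$-tile $F^n(X^{n+1})$ of the same color, carrying boundary edges to boundary edges in the same cyclic order.

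For the forward direction, assume $\alpha^{n+1},\tilde\alpha^{n+1}$ are succeeding in $\Sim{n+1,w}$. By Lemma \ref{lem:lamination_succeeding} at level $n+1$, the flanking $(n+1)$-arcs $a^{n+1}$ (just after $\alpha^{n+1}$) and $b^{n+1}$ (just before $\tilde\alpha^{n+1}$) map via $\gamma^{n+1}$ to boundary edges $E,E'$ of a common white $(n+1)$-tile $X^{n+1}$, with $E'$ succeeding $E$ on $\partial X^{n+1}$. Applying $F^n$ yields two boundary edges of the white $1$-tile $F^n(X^{n+1})$ in the same succeeding order, so Lemma \ref{lem:lamination_succeeding} at level $1$ shows that $\mu^n(\alpha^{n+1}),\mu^n(\tilde\alpha^{n+1})$ are succeeding in $\Sim{1,w}$. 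To produce a common white $n$-gap, I would instead apply $F$ alone: $X^n := F(X^{n+1})$ is a white $n$-tile whose associated $n$-gap $G^n$ contains both flanking arcs in its boundary, once we know the gap-nesting $G^{n+1}\subset G^n$.

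Conversely, suppose $\alpha^{n+1},\tilde\alpha^{n+1}$ lie in a common white $n$-gap $G^n$, corresponding to a white $n$-tile $X^n$, and that $\mu^n(\alpha^{n+1}),\mu^n(\tilde\alpha^{n+1})$ are succeeding in $\Sim{1,w}$. Lemma \ref{lem:lamination_succeeding} at level $1$ produces a white $1$-tile $X^1$ whose boundary contains, in succeeding order, the $\gamma^1$-images of the $\mu^n$-images of the flanking $(n+1)$-arcs. The $n$-gap hypothesis selects the correct sheet of $F^n$: pulling $X^1$ back along the homeomorphism $F^n|_{X^n}\colon X^n\to X^0_w$ yields a unique white $(n+1)$-tile $X^{n+1}\subset X^n$ with $F^n(X^{n+1})=X^1$. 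The flanking $(n+1)$-arcs then map by $\gamma^{n+1}$ into $\partial X^{n+1}$ in the required succeeding order, and Lemma \ref{lem:lamination_succeeding} at level $n+1$ yields that $\alpha^{n+1},\tilde\alpha^{n+1}$ are succeeding in $\Sim{n+1,w}$.

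The main obstacle is the gap-nesting $G^{n+1}\subset G^n$, i.e.\ property (\ref{item:propGnGn+1}), equivalently $\Sim{n,w}\leq\Sim{n+1,w}$ (property ($\LC^n$ \ref{item:Ln5})); the proofs of both are postponed in the paper. The cleanest resolution is to prove Theorem \ref{thm:LnLn+1} jointly with ($\LC^n$ \ref{item:Ln5}) and (\ref{item:propGnGn+1}) by induction on $n$, where the inductive step traces how the lifted cnc-partition at each $(n+1)$-vertex $v'\in F^{-1}(v)$ restricts on $\A^n$ to a refinement of the cnc-partition at $v$, thereby producing the required nesting of laminations and gaps.
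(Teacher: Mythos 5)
Your overall architecture (two implications, Lemma \ref{lem:lamination_succeeding} at levels $n+1$ and $1$, transport by $\mu^n$ and $F^n$ via the iterated diagram (\ref{eq:Angn_comm_dia})) matches the paper's, and you correctly identify that the crux is the gap-nesting (\ref{item:propGnGn+1}). But your backward direction contains a genuine gap. You select the ``correct sheet'' by pulling the white $1$-tile $X^1$ back along the homeomorphism $F^n|_{X^n}\colon X^n\to X^0_w$ to get an $(n+1)$-tile $X^{n+1}\subset X^n$. This presupposes that $X^1\subset X^0_w$ and, more generally, that $(n+1)$-tiles are nested inside $n$-tiles. Neither holds here: the paper explicitly does \emph{not} assume the Jordan curve $\CC$ is $F$-invariant, so $F^{-(n+1)}(\CC)\not\supset F^{-n}(\CC)$ and tiles of consecutive levels do not subdivide one another. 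The correspondence between level-$n$ and level-$(n+1)$ objects is mediated not by inclusion of tiles but by the pseudo-isotopy $H^n$ deforming $\gamma^n$ to $\gamma^{n+1}$; this is exactly what Lemma \ref{lem:anan+1} encodes, and it is what the paper uses in the converse direction (via (\ref{item:propGmapG}) and (\ref{item:propGnGn+1}) applied to gaps on the circle, where $\mu$ plays the role you want $F^n$ to play). A related slip occurs in your forward direction: the $n$-gap containing $G^{n+1}$ is \emph{not} in general the gap associated to the $n$-tile $F(X^{n+1})$; it is the gap of the $n$-tile containing $(H^n_1)^{-1}(\inte X^{n+1})$. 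Since you only need \emph{some} common $n$-gap there, that part survives once nesting is granted.

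The second issue is that you defer the crux itself. Contrary to your reading, the paper does not postpone (\ref{item:propGnGn+1}) past this theorem: it proves it immediately beforehand as a corollary of Lemma \ref{lem:anan+1}, whose proof uses the properties of $H^n$ (only ($\LC^n$ \ref{item:Ln5}) and (\ref{item:propDcap}) are deferred, and their proofs \emph{use} Theorem \ref{thm:LnLn+1}, so a joint induction would have to be set up with care to avoid circularity). Your proposed resolution --- a simultaneous induction ``tracing how the lifted cnc-partition restricts'' --- is not an argument: the lifted cnc-partitions determine $\Sim{n+1,w}$ as a relation on the abstract set $\A^{n+1}$, but the nesting statements (\ref{item:propGnGn+1}) and ($\LC^n$ \ref{item:Ln5}) are about how the equivalence classes of consecutive levels sit relative to one another \emph{on the circle}, i.e.\ about the compatibility of the parametrizations $\gamma^n$ and $\gamma^{n+1}$. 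Without invoking the pseudo-isotopy (or some equivalent device linking the two Eulerian circuits), the inductive step has no purchase on this. To repair the proof you should import Lemma \ref{lem:anan+1} and its corollary (\ref{item:propGnGn+1}) as the paper does, and run the converse direction at the level of gaps and $\mu$ rather than tiles and $F^n$.
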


Let us emphasize that $(n+1)$-angles in disjoint $n$-gaps are never
succeeding, though 
the generated equivalence relation may identify angles from different
gaps. 



We first need some preparation to prove Theorem \ref{thm:LnLn+1}.
In the following lemma we consider $(n+1)$-arcs
\begin{align*}
  &a^{n+1},b^{n+1}\subset S^1,
  \intertext{the corresponding $(n+1)$-edges}
  &D^{n+1}=\gamma^{n+1}(a^{n+1}),E^{n+1}=\gamma^{n+1}(b^{n+1}),
  \intertext{and arcs
  $A^{n+1}, B^{n+1}\subset \bigcup \E^n$, satisfying}
  &H^n_1(A^{n+1})=D^{n+1}, H^n_1(B^{n+1})=E^{n+1}.
\end{align*}
Here $H^n$ is the pseudo-isotopy from which $\gamma^{n+1}$ was
constructed (see Section \ref{sec:pseudo-isotopies}). 

\begin{lemma}
  \label{lem:anan+1}
  In the setting as above
  \begin{enumerate}
  \item 
    \label{item:anan_prop1}
    \begin{align*}
      &\text{$a^{n+1},b^{n+1}$ are contained in the boundary of }
      \\
      &\text{the
        \emph{same} white $n$-gap }G^n 
      \intertext{if and only if}
      &A^{n+1},B^{n+1} \text{ are contained in the boundary of}
      \\
      &\text{the \emph{same} white $n$-tile $X^n$.}
    \end{align*}
  \item 
    \label{item:anan_prop2}
    \begin{align*}
      &D^{n+1},E^{n+1} \text{ are contained in the \emph{same} white
        $(n+1)$-tile,}
      \intertext{implies}
      &A^{n+1},B^{n+1} \text{ are contained in the \emph{same} white
         $n$-tile. }
    \end{align*}
  \end{enumerate} 
\end{lemma}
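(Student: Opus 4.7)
The plan is to exploit the fact that the pseudo-isotopy $H^n$ is the lift of $H^0$ via $F^n$: within each $n$-tile $X^n$, the restriction of $H^n$ is, up to conjugation by the homeomorphism $F^n|_{X^n}\colon X^n\to X^0_{w,b}$, a copy of $H^0$ restricted to the corresponding $0$-tile. Recall that $H^0$ was built from the white spanning tree in \cite{peano}; within each $0$-tile it collapses the boundary $\CC = \partial X^0_w = \partial X^0_b$ onto the outline of the white $1$-tiles contained in that $0$-tile. Lifting by $F^n$ yields the analogous statement: within each $n$-tile $X^n$, the pseudo-isotopy $H^n$ collapses $\partial X^n$ onto the outline of the white $(n+1)$-tiles contained in $X^n$.

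The core step is the following key observation, which I call $(\star)$: for each $(n+1)$-edge $D^{n+1}$ and any arc $A^{n+1}\subset \bigcup \E^n$ with $H^n_1(A^{n+1}) = D^{n+1}$, the arc $A^{n+1}$ lies in $\partial X^n$, where $X^n$ is the unique $n$-tile containing the white $(n+1)$-tile bordered by $D^{n+1}$. I would verify $(\star)$ by induction on $n$. The base case $n=0$ follows directly from the construction of $H^0$, which gives $A^1 \subset \CC = \partial X^0_w = \partial X^0_b$. For the inductive step, $F^n$-conjugation transfers the base case statement inside each $n$-tile, using the fact that $F^n|_{X^n}$ is a homeomorphism onto $X^0_{w,b}$ sending the combinatorial structure one level down.

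With $(\star)$ in hand, part (1) follows by combining it with Lemma \ref{lem:prop_Ds}(\ref{item:propLDtoX}), which gives a bijection between white $n$-gaps $G^n_w$ and white $n$-tiles $X^n_w$ under which $n$-arcs on $\partial G^n_w$ correspond via $\gamma^n$ to $n$-edges on $\partial X^n_w$. Thus $a^{n+1}\subset \partial G^n_w$ is equivalent to $\gamma^n(a^{n+1})\subset \partial X^n_w$; since $\gamma^n(a^{n+1})\subset A^{n+1}$ and by the uniqueness of $X^n$ in $(\star)$, this in turn is equivalent to $A^{n+1}\subset \partial X^n_w$. Applying the equivalence to both $a^{n+1}$ and $b^{n+1}$ with the \emph{same} $X^n_w$ yields the iff in part (1). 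Part (2) follows directly from $(\star)$: if $D^{n+1}$ and $E^{n+1}$ border a common white $(n+1)$-tile $W^{n+1}$, then $W^{n+1}$ sits in a unique $n$-tile $X^n$, which serves as the common $n$-tile in $(\star)$ for both edges, forcing $A^{n+1}, B^{n+1}\subset \partial X^n$.

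The main obstacle will be the rigorous verification of $(\star)$, particularly at $n$-vertices where several $n$-tiles meet and the pseudo-isotopy is rel $\V^n$. One must ensure that the preimage arc does not extend into the boundary of a neighboring $n$-tile across a shared $n$-vertex, as well as handle the degenerate case where $D^{n+1}$ already lies on $\partial X^n$ (so the pseudo-isotopy acts essentially trivially there). Resolving these edge cases requires revisiting the explicit white spanning-tree construction of $H^0$ in \cite{peano}.
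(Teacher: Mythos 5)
Your proposal rests on the key observation $(\star)$, and $(\star)$ is ill-posed as stated: you speak of ``the unique $n$-tile containing the white $(n+1)$-tile bordered by $D^{n+1}$'', but no such $n$-tile need exist. The paper does \emph{not} assume the curve $\CC$ is $F$-invariant, so $F^{-n}(\CC)\not\subset F^{-(n+1)}(\CC)$ in general and an $(n+1)$-tile can straddle several $n$-tiles. The same false premise underlies your heuristic picture of $H^0$ (``collapses $\CC$ onto the outline of the white $1$-tiles contained in that $0$-tile'' --- white $1$-tiles need not be contained in a single $0$-tile) and hence the proposed induction. The paper's proof of part (2) circumvents exactly this point: it considers $U^n:=(H^n_1)^{-1}(\inte X^{n+1})$, the preimage of the \emph{open} $(n+1)$-tile under the pseudo-isotopy. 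Since $H^n_1$ maps $\bigcup\E^n$ onto $\bigcup\E^{n+1}$, this preimage is disjoint from $\bigcup\E^n$; since $H^n_1$ is a homeomorphism there (property ($H^n$ 3) of \cite{peano}), it is connected, hence lies in the interior of a \emph{single} $n$-tile $X^n$, and $A^{n+1},B^{n+1}\subset\partial U^n\subset\partial X^n$. It is the $H^n_1$-preimage of the $(n+1)$-tile, not the $(n+1)$-tile itself, that sits inside one $n$-tile; your argument would need to be rebuilt around that object.

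For part (1) you are closer in spirit, but you still route it through $(\star)$ and through the unexplained inclusion $\gamma^n(a^{n+1})\subset A^{n+1}$. The paper's argument is more direct and does not need $(\star)$ at all: by construction of $\gamma^{n+1}$ one has $D^{n+1}\subset H^n_1(D^n)$ where $D^n=\gamma^n(a^n)$ and $a^n\supset a^{n+1}$ is the containing $n$-arc, whence $A^{n+1}\subset D^n$ and likewise $B^{n+1}\subset E^n$; then Lemma \ref{lem:prop_Ds} (\ref{item:propLDtoX}) applied to the $n$-arcs $a^n,b^n$ gives the equivalence directly. I recommend restructuring the proof along these lines: establish the containments $A^{n+1}\subset D^n$, $B^{n+1}\subset E^n$ first, prove (1) from (\ref{item:propLDtoX}), and prove (2) by the pullback-of-the-interior argument.
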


The situation is illustrated in Figure~\ref{fig:lemma5_11}. 

\begin{figure}
  \centering
  \begin{overpic}
    [width=9cm, tics=20,
    ]  
    {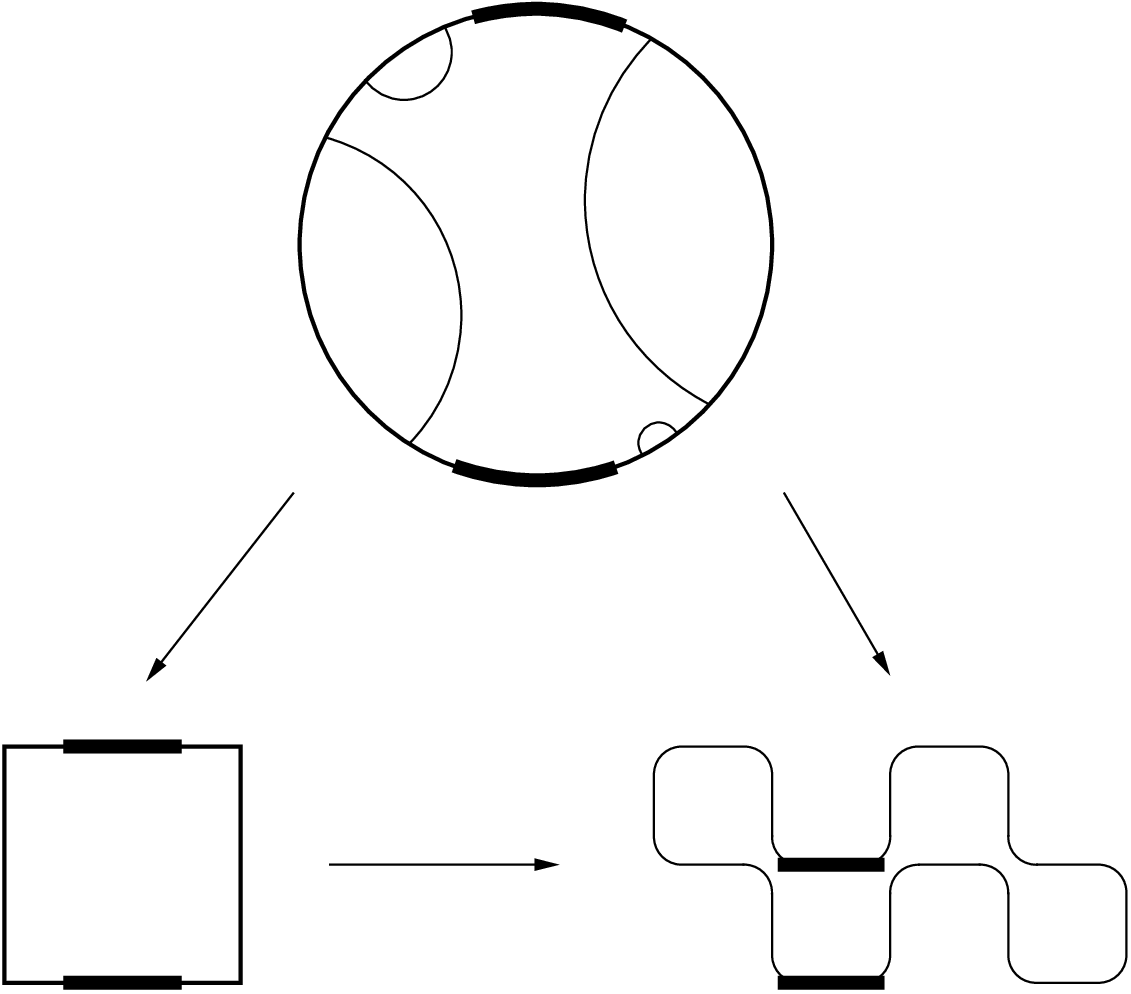}
    \put(21,60){$S^1$}
    \put(24,6){$\gamma^n$}
    \put(102,6){$\gamma^{n+1}$}
    \put(16,40){$\gamma^n$}
    \put(75,40){$\gamma^{n+1}$}
    \put(36,15){$H^n$}
    \put(-7,10){$X^n$}
    \put(48,88){$a^{n+1}\subset a^n$}
    \put(44,40){$b^{n+1}\subset b^n$}
    \put(42,70){$G^n$}
    \put(10,-3){$B^{n+1}\subset E^n$}
    \put(10,23){$A^{n+1}\subset D^n$}
    \put(71,-5){$E^{n+1}$}
    \put(79,5){$X^{n+1}$}
    \put(69,14){$D^{n+1}$}
  \end{overpic}
  \caption{Illustration for Lemma~\ref{lem:anan+1}.}
  \label{fig:lemma5_11}
\end{figure}

\begin{proof}
  (\ref{item:anan_prop1})
  Consider the $n$-arcs $a^n\supset a^{n+1}, b^n\supset b^{n+1}$, and
  the $n$-edges $D^n:=\gamma^n(a^n),E^n:=\gamma^n(b^n)$. 

  By construction of $\gamma^{n+1}$
  (see \cite[Definition 3.8]{peano}) 
  it holds $D^{n+1}\subset H^n_1(D^n)$,
  $E^{n+1}\subset 
  H^n_1(E^n)$. Thus $A^{n+1}\subset D^n,B^{n+1}\subset E^n$.  

  From (\ref{item:propLDtoX}) it follows that
  $a^{n+1},b^{n+1}$ are contained in the same white $n$-gap $G^n$ if
  and only if $D^n\supset A^{n+1},E^n\supset B^{n+1}$ are contained in
  the same white $n$-tile $X^n$.

  \medskip
  (\ref{item:anan_prop2})
  Consider the white $(n+1)$-tile $X^{n+1}\supset D^{n+1},E^{n+1}$. 
  Consider interiors, $U^{n+1}:=\inte X^{n+1}$ and $U^n:=(H^n_1)^{-1}
  (U^{n+1})\subset S^2\setminus\bigcup \E^n$. Clearly
  $A^{n+1},B^{n+1}\subset \partial U^n$. 

  The map $H^n_1$ is a homeomorphism on $U^n$, this follows from
  \cite[Lemma~3.4 ($H^n$ 3)]{peano}. 
  Thus $U^n$ is connected, hence in the interior of a single $n$-tile
  $X^n$. From \cite[Lemma~3.5 ($H^n$ 5)]{peano}   
  it follows that $X^n$ has the same color as $X^{n+1}$.
\end{proof}

We show that Property (\ref{item:propGnGn+1}) follows as a corollary.  

\begin{proof}
  [Proof of \upshape{(\ref{item:propGnGn+1})}]
  We want to prove that every white $(n+1)$-gap $G^{n+1}$ is contained 
  in a white $n$-gap $G^n$. 

  Consider two $(n+1)$-arcs $a^{n+1},b^{n+1}\subset G^{n+1}\cap
  S^1$. From (\ref{item:propLDtoX}) it follows that the $(n+1)$-edges
  $D^{n+1}:= \gamma^{n+1}(a^{n+1}), E^{n+1}:= \gamma^{n+1}(b^{n+1})$
  are contained in the same white $(n+1)$-tile $X^{n+1}$. From Lemma
  \ref{lem:anan+1} (\ref{item:anan_prop2}) it follows that the arcs
  $A^{n+1},B^{n+1}\subset \bigcup \E^n$ satisfying $H^n_1(A^{n+1})=
  D^{n+1}, H^n_1(B^{n+1})=E^{n+1}$ are contained in the same white
  $n$-tile $X^n$. Thus, by Lemma \ref{lem:anan+1}
  (\ref{item:anan_prop1}), $a^{n+1},b^{n+1}$ are contained in the same
  gap $G\in \G^n_{\wt}$.


\end{proof}

Consider two $n$-arcs $a^n=[\alpha^n,\tilde{\alpha}^n]$,
$b^n=[\beta^n,\tilde{\beta}^n]$ that are cyclically consecutive in
$\partial G^n$, for a  
$G^n\in \G^n_{\wt}$. Then we call $\beta,\tilde{\alpha}^n$
\defn{succeeding in $G^n$}. Note that two $n$-angles are succeeding
with respect to $\Sim{n,\wt}$ if and only if they are succeeding with
respect to some white $n$-gap.

\begin{proof}
  [Proof of Theorem \ref{thm:LnLn+1}] ($\Rightarrow$) Let
  $\alpha^{n+1},\tilde{\alpha}^{n+1}\in \A^{n+1}$ be succeeding with
  respect to $\Sim{n+1,\wt}$. Then there is a white $(n+1)$-gap
  $G^{n+1}$ containing both. From (\ref{item:propGnGn+1}) it follows that
  $\alpha^{n+1},\tilde{\alpha}^{n+1}$ are contained in the same white
  $n$-gap $G^n$.

  \smallskip
  By (\ref{item:propGmapG}) and (\ref{item:propL5}) it follows that
  the $1$-angles $\mu^n(\alpha^{n+1}), \mu(\tilde{\alpha}^{n+1})$ are
  succeeding with respect to $\Sim{1,\wt}$. 

  \smallskip
  ($\Leftarrow$)
  Let $\alpha^{n+1}, \tilde{\alpha}^{n+1}\in \A^{n+1}$ be contained in
  the same gap $G^n\in \G^n_{\wt}$, such that
  $\mu(\alpha^{n+1}),\mu(\tilde{\alpha}^{n+1})$ are succeeding
  with respect to $\Sim{1,\wt}$. Thus they are succeeding
  with respect to some $1$-gap $G^1$. 

  From (\ref{item:propGmapG}) and
  (\ref{item:propGnGn+1}) it follows that there is an $(n+1)$-gap
  $G^{n+1}\ni \alpha^{n+1}, \tilde{\alpha}^{n+1}$ such that
  $\mu^n(G^{n+1}\cap S^1)= G^1\cap S^1$. Combined with
  (\ref{item:propL5}) it follows that $\alpha^{n+1},
  \tilde{\alpha}^{n+1}$ are succeeding with respect to $G^{n+1}$, thus
  they are succeeding with respect to $\Sim{n+1,\wt}$. 
\end{proof}

\subsection{($\LC^n$ \ref{item:Ln5}) and (\ref{item:propDcap})}
\label{sec:lcn--d}

Using Theorem \ref{thm:LnLn+1} we can finish the proofs of Lemma
\ref{lem:prop_L} and Lemma \ref{lem:prop_Ds}. 

\begin{proof}[Proof of \upshape{($\LC^n$ \ref{item:Ln5})}]
  We need to show that
  \begin{equation*}
    \alpha^n\Sim{n,\wt} \tilde{\alpha}^n 
    \Leftrightarrow
    \alpha^n\Sim{n+1,\wt} \tilde{\alpha}^n,
  \end{equation*}
  for all $\alpha^n,\tilde{\alpha}^n\in \A^n$ (recall that
  $\A^{n}\subset\ \A^{n+1}$). 
  
  \smallskip
  We first show the statement for $n=0$, which is the following.
  \begin{claim}
    Let $\alpha^0,\tilde{\alpha}^0\in \A^0$ be distinct. Then
    $\alpha^0,\tilde{\alpha}^0$ are not equivalent with respect to
    $\Sim{1,\wt}$. 
  \end{claim}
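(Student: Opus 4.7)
The plan is to observe that this claim is essentially immediate from the definitions. Two angles can only be equivalent with respect to $\Sim{1,w}$ if $\gamma^1$ maps them to the same $1$-vertex (since equivalence requires them to be incident to the same block $b\in\pi^1_w(v)$ at some $v\in\V^1$, and in particular incident to $v$ itself). So it suffices to verify that distinct $0$-angles $\alpha^0,\tilde\alpha^0\in\A^0$ have distinct images under $\gamma^1$.

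First I would recall from Section \ref{sec:approximations} that the approximations all agree on $\A^0$, namely $\gamma^n|_{\A^0}=\gamma^0|_{\A^0}$ for every $n\geq 0$, and that $\gamma^0\colon S^1\to \CC$ is a homeomorphism. Hence if $\alpha^0\neq \tilde\alpha^0$, then $\gamma^1(\alpha^0)=\gamma^0(\alpha^0)$ and $\gamma^1(\tilde\alpha^0)=\gamma^0(\tilde\alpha^0)$ are distinct postcritical points. Both points lie in $\V^0\subset\V^1$, so they are in particular distinct $1$-vertices.

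Finally, since $\gamma^1(\alpha^0)\neq \gamma^1(\tilde\alpha^0)$, the second clause of Definition \ref{def:simnw_simnb} (``$n$-angles $\alpha,\alpha'$ such that $\gamma^n(\alpha)\neq \gamma^n(\alpha')$ are never equivalent'') applies and gives $\alpha^0\not\Sim{1,w}\tilde\alpha^0$. There is no real obstacle here; the content of the claim is simply that the $0$-skeleton $\A^0$ survives the first refinement intact, which is built into the construction of $\gamma^1$. The actual substance of ($\LC^n$ \ref{item:Ln5}) and of (\ref{item:propDcap}) will come in the subsequent inductive step, where Theorem \ref{thm:LnLn+1} will be used to bootstrap this base case to all levels $n$.
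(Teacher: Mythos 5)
Your proof is correct and follows the same route as the paper: both arguments use that $\gamma^1$ agrees with $\gamma^0$ on $\A^0$, that $\gamma^0$ is injective (a Jordan curve), and that angles with distinct $\gamma^1$-images are never $\Sim{1,w}$-equivalent by Definition \ref{def:simnw_simnb}. No gaps.
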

  To prove this claim
  consider distinct angles $\alpha^0,\tilde{\alpha}^0\in \A^0$. 
  By definition of $\A^0$,
  $\gamma^0(\alpha^0),\gamma^0(\tilde{\alpha}^0)$ are postcritical
  points; in fact \emph{different postcritical points}, since
  $\gamma^0$ is a Jordan curve. By construction
  \begin{equation*}
    \gamma^1(\alpha^0)=\gamma^0(\alpha^0)\neq
    \gamma^0(\tilde{\alpha}^0)=\gamma^1(\tilde{\alpha}^0). 
  \end{equation*}
  Thus $\alpha^0,\tilde{\alpha}^0$ are \emph{not} equivalent with
  respect to $\Sim{1,\wt}$, proving the claim.

  \smallskip
  After this preparation we are ready to prove the above equivalence
  in general.
  
  \smallskip ($\Rightarrow$) Let $\alpha^n,\tilde{\alpha}^n\in \A^n$
  be succeeding with respect to $\Sim{n,\wt}$. Then there is a gap
  $G^n\in \G^n_{\wt}$ containing both. Furthermore they are contained
  in the same $n$-leaf. Thus they are mapped by $\mu^n$ to the same
  point $\alpha^0=\mu^n(\alpha^n)=\mu^n(\tilde{\alpha}^n)\in \A^0$
  according to ($\LC^n$~\ref{item:propL6}). Consider the elements of
  $[\alpha^0]_{1,w}$,
  \begin{equation*}
    \alpha^0=\alpha^1_0, \alpha^1_1,\dots, \alpha^1_N= \alpha^0.
  \end{equation*}
  Here $\alpha^1_j,\alpha^1_{j+1}$ are succeeding with respect to
  $\Sim{1,\wt}$. Note that by the claim above each point $\alpha^1_j$,
  $1\leq j \leq N-1$ is \emph{not} a $0$-angle, thus in the
  \emph{interior} of some $0$-arc. Hence there is exactly \emph{one}
  $(n+1)$-angle $\alpha^{n+1}_j\in G^n$, such that
  $\mu^n(\alpha^{n+1}_j)= \alpha^1_j$ (for $1\leq j\leq N-1$)by
  \eqref{item:propGmapG}. It 
  follows from Theorem \ref{thm:LnLn+1} that in the list
  \begin{equation*}
    \alpha^n=: \alpha^{n+1}_0, \alpha^{n+1}_1, \dots,
    \alpha^{n+1}_{N-1}, \alpha^{n+1}_N:= \tilde{\alpha}^n
  \end{equation*}
  the $(n+1)$-angles $\alpha^{n+1}_j, \alpha^{n+1}_{j+1}$ are
  succeeding with respect to $\Sim{n+1, w}$. Thus
  $\alpha^n,\tilde{\alpha}^n$ are equivalent with respect to
  $\Sim{n+1,\wt}$.

  \smallskip ($\Leftarrow$) Assume for $\alpha^n, \tilde{\alpha}^n$ it
  holds $\alpha^n\Sim{n+1,\wt} \tilde{\alpha}^n$. We want to show that
  $\alpha^n\Sim{n,\wt}\tilde{\alpha}^n$. Consider the sequence of
  succeeding $(n+1)$-angles
  \begin{equation*}
    \alpha^n=: \alpha^{n+1}_0, \alpha^{n+1}_1, \dots, \alpha^{n+1}_N:=
    \tilde{\alpha}^n. 
  \end{equation*}
  Let $\alpha^{n+1}_m, m\geq 1$ be the first element (after
  $\alpha^n$) in this sequence
  that is an $n$-angle. Since each angle $\alpha^{n+1}\in
  \A^{n+1}\setminus \A^n$ is contained in a single gap $G^n\in \G^n_{\wt}$
  it follows that $\alpha^{n+1}_0,\dots, \alpha^{n+1}_m$ is contained
  in a single gap $G^n\in \G^n_{\wt}$. Note that
  \begin{equation*}
    \alpha^1_0:= \mu^n(\alpha^{n+1}_0), \dots, \alpha^1_m :=
    \mu^n(\alpha^{n+1}_m) 
  \end{equation*}
  is a sequence of succeeding $1$-angles, i.e., $\alpha^n\Sim{1,\wt}
  \alpha^1_m$. Furthermore $\alpha^1_0,\alpha^1_k\in \A^0$, since
  $\alpha^n, \alpha^{n+1}_m$ were $n$-angles. 
 
  From the claim above it follows that $\alpha^1_0= \alpha^1_k \in
  \A^0$, this means that the $n$-angles $\alpha^n, \alpha^{n+1}_k$ are
  contained in $n$-leaves of the same type. Since they are contained
  in the same $n$-gap $G^n$ this means that they are contained in the
  same $n$-leaf (i.e., boundary component of $G^n$) by
  \eqref{item:propL5}, i.e., $\alpha^n\Sim{n,\wt} \alpha^{n+1}_m$.
  Continuing in this fashion, i.e., considering sequences in
  $(\alpha^{n+1}_j)$ between consecutive $n$-angles we conclude that
  $\alpha^n\Sim{n,\wt}\tilde{\alpha}^n$ as desired.

\end{proof}

\begin{proof}[Proof of \eqref{item:propDcap}]
  We prove the claim first for $n=0$. 
  Consider first distinct $\alpha^0,\tilde{\alpha}^0\in \A^0$. They are
  mapped by $\gamma^0$, hence by $\gamma$ to distinct postcritical
  points, $p:= \gamma^0(\alpha^0),
  \tilde{p}:=\gamma^0(\tilde{\alpha}^0)$. Since $F$ is expanding there
  is $n_0$, such that for $m\geq n_0$ 
  no $m$-tile contains both
  $p,\tilde{p}$. Thus $\alpha^0,\tilde{\alpha}^0$ are not contained in
  the same white $m$-gap
  by (\ref{item:propLDtoX}). 
  
  \smallskip
  Consider now non-equivalent (with respect to $\Sim{n,\wt}$)
  $\alpha^n,\tilde{\alpha}^n\in \A^n$. 
  Assume $\alpha^n\tilde{\alpha}^n\in G^{m+n}\in \G^{m+n}_{\wt}$ for some
  $m\geq n_0$. 
  Then the distinct $0$-angles
  $\alpha^0:= \mu^n(\alpha^n), \tilde{\alpha}^0:=
  \mu^n(\tilde{\alpha}^n)$ are contained in the same $m$-gap
  $G^m$, satisfying $\mu^n(G^{m+n}\cap S^1)= G^m\cap S^1$ by
  (\ref{item:propGmapG}). This contradicts the case $n=0$ above.   
  
\end{proof}

\subsection{The critical portrait}
\label{sec:critical-portrait}

According to Theorem \ref{thm:LnLn+1} we can recover all equivalence
relations $\Sim{n,\wt}$ from $\Sim{1,\wt}$
(and all $\Sim{n,\bt}$ from $\Sim{1,\bt}$). Note that the non-trivial
equivalence classes of 
$\Sim{1,\wt}, \Sim{1,\bt}$ (i.e., the ones containing at least two points)
are mapped by $\gamma$ to critical points of $F$. 

\begin{definition}
  \label{def:critical_portrait}
  The sets $[\alpha^1_j]_{1,\wt}\subset \Q/\Z\subset \R/\Z=S^1$,
  $j=1,\dots, m$ satisfy the following, meaning they form a 
  \defn{critical portrait}. 
  \begin{itemize}
  \item $\mu$ maps all points of $[\alpha^1_j]_{1,\wt}$ to a single
  point,
  \begin{equation*}
    \mu([\alpha^1_j]_{1,\wt})=\{\alpha^0_j\}. 
  \end{equation*}
\item 
  $\sum_j (\#[\alpha^1_j]_{1,\wt}-1)= d-1$.
\item The sets $[\alpha^1_1]_{1,\wt},\dots, [\alpha^1_m]_{1,\wt}$ are
  non-crossing. 
\end{itemize}
The orbit
\begin{equation*}
  \A^0:= \bigcup \{\mu^n(\alpha^1_j)\mid j=1,\dots, m, \; n\geq 1\}
\end{equation*}
is a finite set. 
\begin{itemize}
\item No set $[\alpha^1_j]_{1,\wt}$ contains more than one point from
  $\A^0$. 
\end{itemize}
The equivalence relations $\Sim{n,\wt}$, as well as the laminations
$\LC^n_{\wt}$ and the gaps $\G^n_{\wt}$, are defined inductively
as in Theorem \ref{thm:LnLn+1}. 
\begin{itemize}
\item There is a constant $n_0$ such that the following holds. Let
  $\alpha^0,\tilde{\alpha}^0\in \A^0$ be distinct. Then for $m\geq
  n_0$ no gap $G^m\in \G^m_{\wt}$ contains
  points from both sets $[\alpha^0]_{m,\wt}, [\tilde{\alpha}^0]_{m,\wt}$.
\end{itemize}
\end{definition}

The first property follows from ($\LC^n$~\ref{item:LinQ}), the second
from ($\LC^n$~\ref{item:propLnum_L}), the third from
($\LC^n$~\ref{item:propL2}), the fourth from ($\LC^n$~\ref{item:Ln5}),
and the last from \eqref{item:propDcap}.

\smallskip It can be shown given any critical portrait the properties
($\LC^n$ \ref{item:LinQ})--($\LC^n$ \ref{item:Ln5}) as well as
(\ref{item:propL4}), (\ref{item:propL5}),
(\ref{item:propGmapG})--(\ref{item:propDcap}) are then satisfied,
which we do not prove here.  

The equivalence classes of $\Sim{1,\bt}$ form a critical portrait as
well. This proves Theorem \ref{thm:FcriticalPortraits}.

\smallskip
A.\ Poirier \cite{Poirier} (extending work of Bielefeld-Fisher-Hubbard
\cite{MR1149891}) has shown that for every critical portrait there is
a polynomial ``realizing it'', see Section~\ref{sec:white-polynomial}.  
The last condition above is actually much stronger than the
corresponding condition for the general case in \cite{Poirier}. Indeed
we will show in Section \ref{sec:fatou-set-pw} that it implies that
the corresponding polynomial is of a special 
form. Namely closures of distinct bounded Fatou components are
disjoint and do not contain any critical point.

\smallskip
We remark that the sets $[\alpha^n_j]_{n,\wt}$ as well as
$[\alpha^n]_{n,\bt}$ form critical portraits as well. They will be
realized by the $n$-th iterate of the polynomials $P_\wt, P_\bt$
realizing the critical portraits $[\alpha^1_j]_{1,\wt},
[\alpha^1_j]_{1,\bt}$. 

\subsection{The equivalence relations $\Sim{\wt},\Sim{\bt}$}
\label{sec:equiv-relat-simw}

We consider the \emph{closure of the join} of the equivalence
relations $\Sim{n,\wt},\Sim{n,\bt}$. We will show in Section
\ref{sec:f-mating} that there are polynomials $P_{\wt},P_{\bt}$, such that
$\Sim{\wt},\Sim{\bt}$ are the equivalence relations induced by their
Carath\'{e}odory semi-conjugacies. These will be the polynomials into
which $F$ unmates. 

\begin{definition}[$\Sim{\wt}, \Sim{\bt}$]
  \label{def:simw_simb}
  The equivalence relation $\Sim{\wt}$ on $S^1$ is defined as follows,
  \begin{align*}
    &\Sim{\infty,\wt}:= \textstyle{\bigvee} \Sim{n,\wt}, \text{ meaning
      $s\Sim{\infty,\wt} t$ if and only if $s\Sim{n,\wt} t$ for some $n$};
    \\
    &\Sim{\wt} \text{ is defined to be the closure of } \Sim{\infty,\wt}.
  \end{align*}
  The equivalence relation $\Sim{\bt}$ is similarly defined as the
  closure of $\Sim{\infty,\bt}:= \bigvee \Sim{n,\bt}$. 
\end{definition}

The equivalence relation $\sim$ induced by the invariant Peano curve
$\gamma$ \eqref{eq:eq_rel} may be recovered from $\Sim{\wt},
\Sim{\bt}$ (hence $F$ from $\Sim{1,\wt}, \Sim{1,\bt}$ up to
topological conjugacy by Theorem~\ref{thm:S1simS2} and
Theorem~\ref{thm:LnLn+1}).

\begin{lemma}
  \label{lem:Fnocrit_simwsimb}
  Let $\Sim{\wt}, \Sim{\bt}$ be defined as above, $\sim$ the
  equivalence relation induced by $\gamma$. Then 
  \begin{enumerate}[\upshape(i)]
  \item 
    \label{item:sim_cl_simwsimb}
    $\sim$ is the \emph{closure} of $\Sim{\wt}\vee \Sim{\bt}$;
  \item 
    \label{item:sim_simwsimb}
    if $F$ has no periodic critical points, then
  \begin{equation*}
    \Sim{\wt} \vee \Sim{\bt} \;= \;\sim.
  \end{equation*}
  \end{enumerate}
\end{lemma}

\begin{proof}
  \eqref{item:sim_cl_simwsimb}
  Denote by $\widehat{\Sim{\wt} \vee \Sim{\bt}}$ the closure of
  $\Sim{\wt} \vee \Sim{\bt}$. The equivalence relations $\Sim{n}$ and
  $\Sim{\infty}$ were defined in \eqref{eq:def_simn} and
  \eqref{eq:def_siminfty}. 

  Since $\Sim{\wt} \,\geq \,\Sim{n,\wt}$,
  $\Sim{\bt}\,\geq\, \Sim{n,\bt}$ for all $n\in \N$ it follows that
  \begin{equation*}
    \Sim{\wt} \vee \Sim{\bt} 
    \;\;\geq \;\;
    \Sim{n,\wt} \vee \Sim{n,\bt}
    \;\;=\;\;
    \Sim{n},
  \end{equation*}
  for all $n\in \N$ by \eqref{eq:simn-simnw-simnb}. Hence
  $\Sim{\wt}\vee \Sim{\bt} \;\;\geq \;\; \Sim{\infty}$ and the closure
  of $\Sim{\wt} \vee \Sim{\bt}$ is bigger than the closure of
  $\Sim{\infty}$, which is $\sim$ by
  Theorem~\ref{thm:sim_usc_clos_siminfty}, i.e., 
  \begin{equation*}
    \widehat{\Sim{\wt}\vee \Sim{\bt}} \;\;\geq\;\; \sim. 
  \end{equation*}
  On the other hand we note that $\Sim{n,\wt} \;\leq \;\Sim{n}$
  for all $n\in\N$, thus $\Sim{\infty,\wt} \;\leq\;
  \Sim{\infty}$. Taking closures yields $\Sim{\wt}\; \leq \;\sim$ (using
  Theorem~\ref{thm:sim_usc_clos_siminfty} again), similarly $\Sim{\bt}
  \;\leq \; \sim$. Thus $\Sim{\wt}\vee \Sim{\bt} \;\leq \; \sim$,
  taking closures yields 
  \begin{equation*}
    \widehat{\Sim{\wt}\vee \Sim{\bt}} \;\;\leq\;\; \sim,
  \end{equation*}
  i.e., the statement. 
\end{proof}

The proof of part \eqref{item:sim_simwsimb} of the previous lemma will
be given in the next section after some preparation.

\section{Equivalence classes are finite}
\label{sec:equiv-class-are}

Here we show the following.

\begin{theorem}
  \label{thm:sim_finite}
  Let $\sim$ be the equivalence relation induced by $\gamma$. 
  \begin{itemize}
  \item If $F$ has \emph{no critical periodic cycles}, there is a
    number $N< \infty$ such that
    \begin{equation*}
      \#[s] \leq N \text{ for all } s\in S^1.
    \end{equation*}
  \item If $F$ \emph{has critical periodic cycles}, there is (at
    least) one finite equivalence class $[s]$, where 
    $\gamma(s)\notin \post$. 
  \end{itemize}
\end{theorem}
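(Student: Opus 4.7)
My plan is to handle the two cases separately.

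In the case where $F$ has no periodic critical cycles, the key input is a uniform bound on the local degrees. Strict pre-periodicity of every critical orbit forces any forward orbit $v, F(v), F^2(v),\dots$ to meet $\crit$ only boundedly often, so
\[
   \deg_{F^n}(v)=\prod_{j=0}^{n-1}\deg_F(F^j(v))\leq D
\]
for some constant $D=D(F)$ independent of $n$ and of the $n$-vertex $v$. Since $\gamma^n$ is an Eulerian circuit on $\bigcup\E^n$ with exactly one $n$-angle per visit at each vertex, this forces $\#[s]_n\leq D$ for every $s\in S^1$ and every $n$, where $[s]_n$ denotes the $\Sim{n}$-class of $s$. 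The nesting $[s]_n\subseteq[s]_{n+1}$ coming from Lemma~\ref{lem:prop_L} ($\LC^n$ \ref{item:Ln5}) together with \eqref{eq:simn-simnw-simnb} then promotes this to $\#[s]_\infty\leq D$.

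To upgrade this to a bound on $\#[s]$ itself, I will run the chain construction from the proof of Theorem~\ref{thm:sim_usc_clos_siminfty}: for every $t\in[s]$ there is a sequence $s=v_1,v_2,\dots,v_N=t$ lying in $[s]$, of length $N$ independent of $s,t$, in which each $v_j=\lim_n v_j^n$ is a limit of $n$-angles from a uniformly bounded chain of $n$-edges shrinking onto $x=\gamma(s)$, and consecutive $v_j,v_{j+1}$ are related by the topological closure of $\Sim{\infty}$. The local-degree bound restricts the possible accumulation points at each step to a uniformly bounded set, so iterating $N$ steps yields $\#[s]\leq C(D,N)$. The main obstacle in this case is controlling that topological closure when the endpoints of a step are irrational (hence sit in trivial $\Sim{\infty}$-classes): new identifications could in principle emerge under closure. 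I plan to exploit ($\LC^n$ \ref{item:LinQ})---non-trivial equivalence classes of $\Sim{n,w}$ and $\Sim{n,b}$ are supported in $\Q/\Z$---together with the uniformly bounded local tile count at $x$ to rule out such spurious identifications, showing that each step produces at most $D$ genuinely new neighbors.

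For the case where $F$ has a periodic critical cycle, it suffices to exhibit one finite $[s]$ with $\gamma(s)\notin\post$. My plan is to choose $s\in S^1\setminus\Q$ whose forward $\mu$-orbit $\{\mu^n(s):n\geq 0\}$ misses the countable set $\bigcup_{n\geq 0}\mu^{-n}(\gamma^{-1}(\post))$; such an $s$ exists by a Baire-category argument, and then $F^n(\gamma(s))=\gamma(\mu^n(s))\notin\post$ for every $n\geq 0$, so in particular $x:=\gamma(s)\notin\post$. Because $x$ is never an $n$-vertex, a unique $n$-tile $X^n\ni x$ exists for each $n$ with $\diam X^n\to 0$ by expansion, so that $\gamma^{-1}(x)=\bigcap_n\gamma^{-1}(X^n)$. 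Each $\gamma^{-1}(X^n)$ is a finite disjoint union of arcs of $S^1$ corresponding via \eqref{item:propLDtoX} to the $k$ $n$-arcs on the boundary of the white $n$-gap $G^n$ matched to $X^n$. I will then establish, along the critical-free forward orbit of $x$, a uniform bound on the number of connected components of $G^n\cap S^1$ (equivalently, on how the leaves of $\LC^n_w$ separate the boundary arcs of $G^n$); together with the fact that each such component has Lebesgue measure tending to zero (a consequence of Theorem~\ref{thm:gammaL1L2} and the aperiodicity of the $\mu$-orbit of $s$), each component collapses to a single point in the intersection, giving $\#[s]<\infty$. The hardest step here is the uniform component bound on $G^n\cap S^1$, which I plan to approach via the inductive description of Theorem~\ref{thm:LnLn+1} by tracking how $\LC^1_w$ pulls back along the descending tile chain above a critical-free orbit.
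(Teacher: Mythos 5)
Your proposal contains the right first ingredient for the first bullet (bounded local degree when there are no periodic critical cycles, hence $\#[s]_{\Sim{n}}\leq M$ for the approximations), but the theorem concerns $\sim$, which is the \emph{closure} of $\siminfty$, and the passage from a bound on $\siminfty$-classes to a bound on $\sim$-classes is exactly the step you leave open. The chain from the proof of Theorem \ref{thm:sim_usc_clos_siminfty} relates consecutive points $v_j,v_{j+1}$ only as limits of $\Sim{n}$-related pairs; bounding the number of such limit partners of a given point is the whole difficulty, and ($\LC^n$ \ref{item:LinQ}) (rationality of the angles in non-trivial classes) says nothing about which irrational points arise as limits. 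You acknowledge this as ``the main obstacle'' and offer a plan, not an argument. The paper sidesteps the closure entirely: combining $\norm{\gamma-\gamma^n}_\infty\lesssim \Lambda^{-n}$ with the combinatorial distance estimate \eqref{eq:dS_comb}, it shows there is a $k_0$, independent of $n$, such that $\gamma^{-1}(x)$ is contained in the set $A^n_{k_0}$ of $n$-arcs mapped by $\gamma^n$ into the combinatorial $k_0$-neighborhood $U^n_{k_0}$ of the $n$-tiles containing $x$. Bounded local degree bounds the number of $n$-edges in $U^n_{k_0}$ by some $N$ uniformly in $n$ and $x$; since each of the at most $N$ covering arcs has diameter tending to $0$, this forces $\#\gamma^{-1}(x)\leq N$. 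You need a covering statement of this type; controlling the closure operation abstractly will not work.

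For the second bullet there are three problems. First, a circularity: Theorem \ref{thm:gammaL1L2} is proved in Section \ref{sec:gamma-maps-lebesgue} \emph{using} Theorem \ref{thm:sim_finite} (the point $x_0$ with minimal, finite fiber), so you may not invoke it here. Second, $\gamma^{-1}(X^n)$ is not the union of the $k$ $n$-arcs in $\partial G^n$; that set is $(\gamma^n)^{-1}(\partial X^n)$, and $\gamma\neq\gamma^n$ — moreover, avoiding the countable set $\bigcup_n\mu^{-n}(\gamma^{-1}(\post))$ does not prevent $\gamma(s)$ from lying on an $n$-edge, so ``a unique $n$-tile contains $x$'' is unjustified as stated. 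Third, and most seriously, the uniform bound on the number of arcs accumulating at $x$ is precisely what can fail for an arbitrary Baire-generic point when periodic critical points are present: even if the orbit of $x$ avoids $\crit$, nearby $n$-vertices can have unbounded $\deg_{F^n}$, and your plan of ``tracking how $\LC^1_w$ pulls back'' does not address this. The paper instead \emph{constructs} the point: it places $U^n_{k_0}$ inside $S^2\setminus\CC'$ (hence away from $\post$), pulls back by a fixed inverse branch $F^{-n}$ to obtain nested sets $U^{jn}_{k_0}$ shrinking to a single point $x_0$, and uses that $F^n\colon U^{jn}_{k_0}\to U^{(j-1)n}_{k_0}$ is a bijection to conclude that the number of $jn$-arcs covering $\gamma^{-1}(x_0)$ is \emph{independent of $j$}. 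Some such self-similarity along the orbit is needed; without it your argument does not close.
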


\begin{proof}
  The second statement could be proved using \emph{Douady's lemma},
  which says that given any set $A\subset S^1$, such that $\mu(A)=A$
  and $\mu$ is injective on $A$, then $A$ is finite. A proof of a
  stronger statement may be found in \cite[Lemma~4.2]{MR1765095}.

  We give a slightly different proof, which will follow from the 
  setup in the proof of the first claim.

  \smallskip
  Recall that $\mu(x)= \mu_d(x)= d x\,(\bmod 1)$ maps any $(n+1)$-arc to a
  $n$-arc. It follows that for any $n$-arc $a^n\subset S^1$ it holds
  $\diam a^n\leq d^{-n}$. Assume that for a given point $x\in S^2$ there is
  an $N\in \N$ such that $\gamma^{-1}(x)\subset S^1$ is contained in
  the union of at most $N$ $n$-arcs for any $n\in \N$. Then it follows
  that $\gamma^{-1}(x)$ contains at most $N$ points. 
  
  \smallskip
  Let us fix a visual metric $\varrho$ for $F$ with expansion factor
  $\Lambda>1$. All metrical properties (such as $\diam$ and the
  supremums norm
  $\norm{\cdot}_\infty$) on $S^2$ will from now on be given in terms
  of $\varrho$. 
  Recall from Section \ref{sec:approximations} that there is a
  constant $C>0$ such that 
  \begin{equation}
    \label{eq:distgng}
    \norm{\gamma-\gamma^n}_\infty\leq C \Lambda^{-n}. 
  \end{equation}
  Thus only $n$-arcs $a^n$ such that the $n$-edges $\gamma^n(a^n)$
  have ``small combinatorial distance'' from $x$ can satisfy
  $\gamma(a^n)\ni x$. 
  
  \smallskip 
  Let $x\in S^2$ be arbitrary, and $X^n$ be an $n$-tile
  containing $x$ (for any $n\in \N$). Consider the set of $n$-tiles
  that ``can be reached in $j$ steps from $X^n$'', i.e., 
  \begin{align*}
    &U^n_0=U^n_0(X^n):= X^n, 
    \\
    &U^n_j=U^n_j(X^n):=\bigcup\{Z^n\in \X^n\mid Z^n\cap
    U^n_{j-1}\ne \emptyset\},
  \end{align*}
  for all $j\geq 1$. We will show that points outside of $U^n_j$ are
  ``far away'' from $x$. More precisely we show the following. 
  \begin{claim}
    There are $\alpha>0$, $c>0$ (independent of $n$ and $X$) such that 
    \begin{equation*}
      \dist(X^n, S^2\setminus U^n_j)\geq c j^\alpha \Lambda^{-n},
    \end{equation*}
    for all $n$-tiles $X^n$ and $j\in \N$.
  \end{claim}

  To prove the claim let $y\in S^2\setminus U^n_j$. Recall that
  $\varrho(x,y)\asymp \Lambda^{-m}$, where $m=m(x,y)$ is the smallest
  number such that there are disjoint $m$-tiles containing $x$, $y$
  (see Section~\ref{sec:visual-metric}). Thus (any) $(m-1)$-tiles
  $X^{m-1}, Y^{m-1}$ containing $x,y$ are not disjoint. 

  From \cite[Lemma~5.29]{expThurMarkov} it follows that that there is
  a constant $M\in \N$, such that any two points in  $X^{m-1}\cup
  Y^{m-1}$ (in particular $x,y$) may be joined by a chain
  of $m$-tiles of length at most $M$. This means there are $m$-tiles
  $Y_1, \dots, Y_M$ with $x\in Y_1, y\in Y_M$ and $Y_l \cap Y_{l+1}
  \neq \emptyset$ for all $1\leq l\leq M-1$. Inductively it follows
  that $x,y$ may be connected by a chain of $n$-tiles of length at
  most $M^{n-m+1}$. Since $y\notin U^n_j$ it follows that
  \begin{equation*}
    j \leq M^{n-m+1} \;\text{ or }\; -m \geq \frac{\log j}{\log M} -n -1.
  \end{equation*}
  Thus 
  \begin{align*}
    \varrho(x,y) \asymp \Lambda^{-m} \geq \tilde{c}
    j^{\alpha}\Lambda^{-n}, 
  \end{align*}
  for constants $\tilde{c}= \Lambda^{-1}> 0$, $\alpha= \log \Lambda/
  \log M> 0$ independent of $j,n$. The claim follows.

  \smallskip From the claim together with \eqref{eq:distgng} it
  follows that there is a constant $j_0\in \N$ such that $x=
  \gamma(s)\in X^n$ and $y= \gamma^n(t) \notin U^n_{j_0}$ implies that
  $\gamma(t)\neq x$.  Consider now the $n$-edges in $U^n_{j_0}$ and
  the $n$-arcs in $S^1$ that are mapped by $\gamma^n$ into
  $U_{j_0}^n$,
  \begin{align*}
    \label{eq:def_Enk0}
    &E^n=E^n_{j_0}(X^n) := U^n_{j_0}\cap \bigcup \E^n
    \\
    \notag
    &A^n=A^n_{j_0}(X^n):=(\gamma^n)^{-1}(E^n). 
  \end{align*}
  From the above it follows that for all $t\in S^1$ it holds
  \begin{equation*}
    \label{eq:gammaAnky}
    s\notin A^n \Rightarrow \gamma(t)\ne x,
  \end{equation*}
  for all $n\in \N$. 

  \smallskip
  Assume now that $F$ has no critical periodic cycles. Then the local
  degree is uniformly bounded, i.e., 
  \begin{equation*}
    \deg_{F^n}(v) \leq 2 d,
  \end{equation*}
  for all $v\in S^2$ and $n\in \N$ (recall that $d=\deg F$). Recall
  that the number of $n$-tiles incident to an $n$-vertex $n$ is given
  by $2\deg_{F^n}(v)$. It follows that the number of $n$-tiles
  contained in $U^n_{j_0}(X^n)$ is uniformly bounded (i.e.,
  independent of $n\in \N$ and the $n$-tile $X^n$). Thus the number of
  $n$-edges in $E^n_{j_0}(X^n)$, which equals the number of $n$-arcs
  in $A^n_{j_0}(X^n)$, is uniformly bounded by a number $N\in
  \N$. Thus $\gamma^{-1}(x)$ contains at most $N$ points, i.e., each
  equivalence class of $\sim$ contains at most $N$ points. The proof
  of the first statement of the lemma is finished. 

  \smallskip Assume now that $F$ has critical periodic cycles. Let
  $X^n$ be a white $n$-tile such that $U^n=U^n_{j_0}(X^n)$ is
  contained in the interior of the white $0$-tile $X^0_\wt$ (for some
  sufficiently large $n\in \N$). The existence of such an $n$-tile
  follows easily from the expansion property of $F$, see
  \cite[Lemma~7.9]{expThurMarkov}. Note that $F^n\colon X^n\to
  X^0_\wt$ is a homeomorphism. Let $U^{2n}:=
  (F^n|X^n)^{-1}(U^n)$. Note that $U^{2n}$ is compactly contained in
  $U^n$. Define $U^{(i+1)n}:= (F^n|X^n)^{-1}(U^{in})$ for all $i\geq
  1$. Then $U^{(i+1)n}$ is compactly contained in $U^{in}$. 
  Let the point $x_0\in S^2$ be defined by
  \begin{equation*}
    \{x_0\}= \bigcap_j U^{jn}_{k_0}.
  \end{equation*}
  Note that the number of $in$-tiles contained in $U^{in}$ is the same
  for all $i\in \N$, since $F^n\colon U^{(i+1)n} \to U^{in}$ is a
  homeomorphism (for all $i\in \N$). Hence the number of $n$-edges in
  $E^{in}:= U^{in} \cap \bigcup \E^{in}$, which equals the number of $n$-arcs in
  $A^{in}:= \gamma^{-1}(E^{in})$, is the same for all $i\in \N$, i.e., uniformly
  bounded.  This means there is a constant $N\in \N$, such that the
  number of $n$-arcs $a^n$ with $x_0 \in \gamma(a^n)$ is bounded by
  $N$ for all $n\in \N$. Thus the number of points in
  $\gamma^{-1}(x_0)$ (which is an equivalence class of $\sim$) is at
  most $N$. This finishes the proof of the second
  statement of the lemma.

\end{proof}

\begin{proof}[Proof of
  Lemma~\ref{lem:Fnocrit_simwsimb}~\eqref{item:sim_simwsimb}]
  It follows from
  Lemma~\ref{lem:Fnocrit_simwsimb}~\eqref{item:sim_cl_simwsimb} that
  it is enough to show that $\Sim{\wt} \vee \Sim{\bt}$ is
  \emph{closed}. Consider $s,t\in S^1$ satisfying $s\Sim{\wt}\vee
  \Sim{\bt} t$. Then
  \begin{equation*}
    s=s_1\Sim{\wt}s_2\Sim{\bt}\dots \Sim{\wt}s_{M-1} \Sim{\bt} s_M=t,
  \end{equation*}
  for some points $s_1,\dots, s_M\in S^1$. Recall from the proof of
  Lemma~\ref{lem:Fnocrit_simwsimb}~\eqref{item:sim_cl_simwsimb} that
  $\Sim{\wt}\vee \Sim{\bt}\;\leq \;\sim$. Since the size of each
  equivalence class with respect to $\sim$ is at most a constant $N$
  by Theorem~\ref{thm:sim_finite}; it follows that we can always
  choose $M=2N$ independently of $s,t$.

  \smallskip Consider now convergent sequences $(s^n),(t^n)\subset
  S^1$, i.e., $s^n\to s, t^n\to t$ (as $n\to \infty$), such that
  $s^n\Sim{\wt} \vee\Sim{\bt} t^n$ for all $n\in \N$. By the above
  there are $s^n_j\in S^1$ ($j=1, \dots, M$) such that
  \begin{equation*}
    s^n=s^n_1\;\Sim{\wt}\; s^n_2\;\Sim{\bt} \;\dots\; \Sim{\bt}\;
    s^n_M=t^n,
  \end{equation*}
  for all $n\in \N$. 
  By taking subsequences we can assume that $s^n_j\to s_j$ as $n\to
  \infty$, for all $j$. Since $\Sim{\wt}, \Sim{\bt}$ are closed it
  follows that
  \begin{equation*}
    s = s_1\;\Sim{\wt}\; \dots \;\Sim{\bt}s_N=t, 
  \end{equation*}
  meaning $s\Sim{\wt}\vee \Sim{\bt} t$. This means that $\Sim{\wt}\vee
  \Sim{\bt}$ is closed as desired. 
\end{proof}

\section{$F$ is a mating}
\label{sec:f-mating}

In this section we prove Theorem \ref{thm:mating1}. This means that we
show that $F$ is obtained as a \defn{mating} in the case when $F$ has
no periodic critical points. The construction will however be done for
the general case, in preparation to prove Theorem \ref{thm:mating2}.

\smallskip Recall from Section \ref{sec:mating-polynomials} the
construction of mating of two polynomials $P_{\wt},P_{\bt}$ (which are
monic and of the same degree $d$). Assume for now that every critical
point of $P_{\wt},P_{\bt}$ is \emph{strictly preperiodic}. This means
that the Fatou sets of $P_{\wt},P_{\bt}$ consist both of a single
(unbounded) component, i.e., their filled Julia sets equal their Julia
sets $\K_{\wt}=\J_{\wt}$, $\K_{\bt}=\J_{\bt}$.  
Let $\sigma_{\wt} \colon S^1\to \J_{\wt}$, $\sigma_{\bt} \colon S^1\to
\J_{\bt}$ be the Carath\'{e}odory semi-conjugacies of
$\J_{\wt},\J_{\bt}$.  Consider the equivalence relations
$\Approx{\wt},\Approx{\bt}$ on $S^1$ induced by $\sigma_{\wt},\sigma_{\bt}$,
\begin{align}
  \label{eq:def_approx_w}
  & z\Approx{\wt} w :\Leftrightarrow \sigma_{\wt}(z)=\sigma_{\wt}(w) 
  \\
  \notag
  & z\Approx{\bt} w :\Leftrightarrow \sigma_{\bt}(\bar{z})=
  \sigma_{\bt}(\overline{w}), 
\end{align}
for all $z,w\in S^1=\partial \D$. 
From \eqref{eq:cara_loop} and Lemma~\ref{lem:usc_from_map} it follows  
that 
\begin{align*}
& z^d/\Approx{\wt} \colon S^1/\Approx{\wt}
\;\;\to \; S^1/\Approx{\wt}
\\ 
& z^d/\Approx{\bt} \colon S^1/\Approx{\bt}
\;\;\to \; S^1/\Approx{\bt}
\end{align*}
are topologically conjugate to $P_{\wt}\colon\J_{\wt}\to \J_{\wt}$ and
$P_{\bt}\colon\J_{\bt}\to \J_{\bt}$. Let
\begin{equation*}
  \approx \;\;:=\;\; \Approx{\wt} \vee \Approx{\bt}.
\end{equation*}

From the construction of the mating of $P_{\wt},P_{\bt}$ we obtain the
following.
\begin{lemma}
  \label{lem:mating_dendrites}
  Let $P_{\wt},P_{\bt}$ be monic polynomials of the same degree $d$, where every
  critical point is strictly preperiodic. Let $\approx$ be defined as
  above. 
  Then the topological mating of $P_{\wt},P_{\bt}$ is
  topologically conjugate to the map
  \begin{equation*}
    z^d/ \!\approx \;\colon S^1/\!\approx \;\;\to\; S^1/\!\approx. 
  \end{equation*}
\end{lemma}
We will show that there are polynomials $P_{\wt},P_{\bt}$ as above
such that
\begin{equation*}
  \Approx{\wt}\;\; = \;\;\Sim{\wt}, \quad \Approx{\bt}\;\;= \;\;\Sim{\bt}. 
\end{equation*}
Here $\Sim{\wt},\Sim{\bt}$ are the equivalence relations from
Definition~\ref{def:simw_simb}. Lemma~\ref{lem:Fnocrit_simwsimb} then
implies that $\approx \;=\;\sim$ (i.e., the equivalence relation
induced by the invariant Peano curve $\gamma$ \eqref{eq:eq_rel}). This
will prove Theorem~\ref{thm:mating1} using
Theorem~\ref{thm:sim_usc_clos_siminfty}. 

\begin{proof}[Proof of Lemma~\ref{lem:mating_dendrites}]
  The mating of $z^d/\!\Approx{\wt}$ and $z^d/\!\Approx{\bt}$ is given
  by considering the equivalence relation on the disjoint union of
  $S^1/\!\Approx{\wt}$ and $S^1/\!\Approx{\bt}$ generated by
  identifying $[z]_\wt\in S^1/\!\Approx{\wt}$ with $[\bar{z}]_\bt\in
  S^1/\!\Approx{\bt}$. The quotient is denoted by $S^1/\!\Approx{\wt, \bt}$.
  The maps $z^d/\!\Approx{\wt}$,
  $z^d/\!\Approx{\bt}$ descend to this quotient, i.e., to a map
  \begin{equation*}
    z^d/\!\Approx{\wt,\bt} \;\colon S^1/\!\Approx{\wt, \bt} 
    \;\;\to\;
    S^1/\!\Approx{\wt,\bt}. 
  \end{equation*}
  Since $z^d/\!\Approx{\wt}$, $z^d/\!\Approx{\bt}$ are topologically
  conjugate to $P_\wt, P_\bt$ it follows (from the definition of the
  conjugacy) that $P_\wt \mate P_\bt$ is topologically conjugate to
  $z^d/\!\Approx{\wt,\bt}$. Note that the map $S^1\to S^1/\!\Approx{\wt}
  \to S^1/\!\Approx{\wt,\bt}$ (i.e., the composition of the quotient
  maps) is surjective. The equivalence relation induced by this map is
  $\Approx{\wt}\vee \Approx{\bt}$. Furthermore the following diagram
  commutes
  \begin{equation*}
    \xymatrix{
      S^1 \ar[r]^{z^d} \ar[d] & S^1\ar[d]
      \\
      S1/\!\Approx{\wt} \ar[r]^{z^d/\Approx{\wt}} \ar[d] &
      S^1/\!\Approx{\wt}\ar[d]
      \\
      S^1/\!\Approx{\wt, \bt} \ar[r]^{z^d/\Approx{\wt,\bt}}& S^1/\! \Approx{\wt,\bt}. 
      }
  \end{equation*}
  The statement follow from Lemma~\ref{lem:usc_from_map}. 
\end{proof}

\medskip We now discuss the general case, where $F$ is allowed to have
periodic critical points, i.e., we outline the proof of Theorem
\ref{thm:mating2}. The Carath\'{e}odory semi-conjugacies
$\sigma_{\wt},\sigma_{\bt}$ (for monic, postcritically finite
polynomials $P_{\wt},P_{\bt}$ both of degree $d\geq2$) are defined as
before.

Define equivalence relations on $S^1$ by
\begin{align}
  \label{eq:def_approxFw}
  z\Approx{\F,\wt} w \;:\Leftrightarrow\; &\sigma_{\wt}(z)=\sigma_{\wt}(w) \text{ or}
  \\
  \notag
  &\sigma_{\wt}(z),\sigma_{\wt}(w) \text{ are both contained in $A$}
  \\
  \notag
  &\text{where $A$ is the closure of a bounded Fatou component of
    $P_\wt$}. 
  \\
  \label{eq:def_approxFb}
  z\Approx{\F,\bt} w \;:\Leftrightarrow\;
  &\sigma_{\bt}(\bar{z})=\sigma_{\bt}(\overline{w}) \text{ or}
  \\
  \notag &\sigma_{\bt}(\bar{z}),\sigma_{\bt}(\overline{w}) \text{ are
    both contained in $A$}
  \\
  \notag &\text{where $A$ is the closure of a bounded Fatou component
    of $P_\bt$}.
\end{align}
for all $z,w\in S^1=\partial \D$. Note that in general the above does
not define equivalence relations. Namely the closures of distinct
bounded Fatou components $A, A'$ may not be disjoint. We will however
consider only polynomials $P_\wt, P_\bt$ of a special type, namely
where such sets $A,A'$ are always disjoint. 

We define $\widehat{\approx}$ to be the closure of
$\Approx{\F,\wt}\vee \Approx{\F,\bt}$. Similarly as in the last lemma
it will be shown that the quotient map
\begin{equation*}
  z^d/\widehat{\approx} \;\colon S^1/\widehat{\approx} 
  \;\to \;
  S^1/\widehat{\approx}
\end{equation*}
is topologically conjugate to
\begin{equation*}
  P_{\wt}\;\widehat{\mate} \;P_{\bt} \colon \K_{\wt}\;\widehat{\mate} \;\K_{\bt}  \to
  \K_{\wt}\;\widehat{\mate}\; \K_{\bt}, 
\end{equation*}
as defined in \eqref{eq:PwPbsimhat}. 
We will show that there are polynomials $P_{\wt},P_{\bt}$ such that
\begin{equation*}
  \Approx{\F,\wt}\;=\;\Sim{\wt}, \quad \Approx{\F,\bt}\;=\;\Sim{\bt}.
\end{equation*}
Then $\widehat{\approx}\;=\;\sim$ by
Lemma~\ref{lem:Fnocrit_simwsimb}~\eqref{item:sim_cl_simwsimb}. 
Thus Theorem \ref{thm:mating2} will be proved (using Theorem~\ref{thm:S1simS2}).

\subsection{Julia- and Fatou-type equivalence classes}
\label{sec:julia-fatou-type}

In the following $S^1$ is again identified with $\R/\Z$. Recall that
the map $\mu=\mu_d\colon \R/\Z\to \R/\Z$ is given by $\mu(s)= ds \,
(\bmod 1)$ (which is conjugate to $z^d\colon \partial \D \to \partial
\D$).

The \emph{non-trivial equivalence classes} of $\Sim{1,\wt}$, i.e., the
ones 
that contain at least two points, are called the (white)
\defn{critical equivalence classes}. They are mapped by $\gamma^1$
(and thus by all $\gamma^n$ and $\gamma$) to critical points of
$F$. We 
divide the critical equivalence classes into ones of \defn{Fatou-type}
and \defn{Julia-type} as follows.
\begin{itemize}
\item If $[\alpha^1]_{1,\wt}$ is \emph{periodic}, i.e., if
  \begin{equation*}
    \mu^n([\alpha^1]_{1,\wt})\subset [\alpha^1]_{1,\wt},
  \end{equation*}
  for some $n\geq 1$, it is of \emph{periodic Fatou-type};
\item if $[\alpha^1]_{1,\wt}$ is the \emph{preimage of a periodic
    critical cycle}, i.e., if
  \begin{equation*}
    \mu^n([\alpha^1]_{1,\wt})\subset [\tilde{\alpha}^1]_{1,\wt},
  \end{equation*}
  for some $n\geq 1$,
  where $[\tilde{\alpha}^1]_{1,\wt}$ is of periodic Fatou-type; then
  $[\alpha^1]_{1,\wt}$ is of \defn{preperiodic Fatou-type};
\item otherwise $[\alpha^1]_{1,\wt}$ is of \defn{Julia-type}, i.e., the
  periodic cycle that $[\alpha^1]_{1,\wt}$ eventually lands in does not
  contain any point of a critical equivalence class. 
\end{itemize}
Every Fatou-type equivalence class is mapped by $\gamma^1$ to a point
that is eventually mapped to a critical periodic cycle of $F$. However
if 
$[\alpha^1]_{1,\wt}$ is of Julia-type, and
$c=\gamma^1([\alpha^1]_{1,\wt})$, then the periodic cycle that $c$
eventually lands in may or may not be critical. This is due to the fact
that the periodic critical point of $F$ may ``come from'' the black
polynomial. 

\smallskip
Consider now an equivalence class $[\alpha^n]_{n,\wt}$ ($\alpha^n\in
\A^n$) with respect to $\Sim{n,\wt}$. It is defined to be of
\defn{Fatou-type/Julia-type} if $\mu^{n-1}([\alpha^n]_{n,\wt})$ is
of Fatou-type/Julia-type (recall ($\LC^n$ \ref{item:propL6})). 
We note the following (recall that $[\alpha^n]_{n,\wt}\subset
[\alpha^n]_{n+1,\wt}$ from ($\LC^n$ \ref{item:Ln5}))  
\begin{equation}
  \label{eq:eqtype}
  [\alpha^n]_{n,\wt}, [\alpha^{n}]_{n+1,\wt} \text{ are of the some
    type},
\end{equation}
for all $\alpha^n\in \A^n$.



\subsection{Sizes of equivalence classes}
\label{sec:sizes-equiv-class}

The main result of this subsection is the following. 
\begin{prop}
  \label{prop:Fatou_crit_cycles}
  The expanding Thurston map $F$ has critical periodic cycles if and
  only if there are Fatou-type equivalence classes of $\Sim{1,\wt}$ or
  $\Sim{1,\bt}$. 
\end{prop}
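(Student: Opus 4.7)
The plan is to prove both implications separately.

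For $(\Leftarrow)$: Suppose $[\tilde{\alpha}^1]_{1,w}$ is a periodic Fatou-type class, so $\mu^n([\tilde{\alpha}^1]_{1,w})\subset [\tilde{\alpha}^1]_{1,w}$ for some $n\geq 1$ (the preperiodic Fatou-type case reduces to this by tracing the definition). Set $c:=\gamma(\tilde{\alpha}^1)=\gamma^1(\tilde{\alpha}^1)$. Because the class is non-trivial, the corresponding block of $\pi^1_w$ has size at least two, and since a complementary non-crossing partition forces $\deg_F=1$ when both sides consist only of singleton blocks, $c$ must be a critical point of $F$. The inclusion $\mu^n(\tilde{\alpha}^1)\in [\tilde{\alpha}^1]_{1,w}$, combined with the fact that $\gamma$ is constant on every $\Sim{1,w}$-class (since $\Sim{1,w}$-equivalence implies $\sim$-equivalence), gives $\gamma(\mu^n(\tilde{\alpha}^1))=c$. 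Applying the semi-conjugacy $\gamma\circ\mu=F\circ\gamma$ from Theorem~\ref{thm:main} yields $F^n(c)=c$, so $c$ is a periodic critical point.

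For $(\Rightarrow)$: Let $c$ be a periodic critical point with $F^p(c)=c$. Since $c\in\post=\V^0$ and $\gamma^0\colon S^1\to \CC$ is a homeomorphism, there is a unique $\alpha^0\in\A^0$ with $\gamma^0(\alpha^0)=c$; the commutative diagram~\eqref{eq:Angn_comm_dia} together with injectivity of $\gamma^0$ forces $\mu^p(\alpha^0)=\alpha^0$. Viewing $\alpha^0\in\A^0\subset\A^1$ as a $1$-angle, property ($\LC^n$ \ref{item:propL6}) gives $\mu([\alpha^0]_{1,w})=\{\mu(\alpha^0)\}$, and iterating $p$ times yields $\mu^p([\alpha^0]_{1,w})=\{\alpha^0\}\subset [\alpha^0]_{1,w}$. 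Thus the class $[\alpha^0]_{1,w}$, and symmetrically $[\alpha^0]_{1,b}$, automatically satisfies the periodicity condition defining Fatou-type; if either class is non-trivial, we have exhibited a periodic Fatou-type class and we are done.

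The main obstacle is the combinatorial claim that, for some $c_i$ on the orbit $\{c,F(c),\ldots,F^{p-1}(c)\}$, the unique $0$-angle $(\gamma^0)^{-1}(c_i)$ lies in a non-trivial block of $\pi^1_w(c_i)\cup \pi^1_b(c_i)$. At each critical $c_i$ on the cycle some non-trivial block is guaranteed to exist (by the complementarity observation used in the first direction), but a priori it need not be the block incident to the $0$-angle $\alpha^0_i$. I expect this to be resolved using the spanning-tree property of the white $1$-connection graph from Section~\ref{sec:connection-graph}, together with the dynamical self-similarity forced by $F^p(c)=c$ under the lifting $H^0\mapsto H^n$ of the pseudo-isotopies: the self-similar connection structure around the critical cycle should force some branch of the spanning tree carrying a non-trivial block to be incident to the ``original visit'' of $\gamma^0$ at at least one $c_i$, which then produces the desired non-trivial class containing $\alpha^0_i$.
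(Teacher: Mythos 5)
Your backward direction is complete and correct, and it is actually a more direct route than the paper's: the paper proves that implication contrapositively (no critical cycles gives a uniform bound on $\deg_{F^n}$, hence on the sizes $\#[\alpha]_{n,w}$, which rules out Fatou-type classes via Lemma \ref{cor:sizes_Julia}), whereas you read the periodic critical point straight off the semiconjugacy $F\circ\gamma=\gamma\circ\mu$. The forward direction, however, contains exactly the gap you flag yourself, and it is a genuine one: the whole argument rests on the non-triviality of $[\alpha^0_i]_{1,w}$ or $[\alpha^0_i]_{1,b}$ for some $c_i$ on the cycle, and you do not establish it. The route you sketch for closing it --- the spanning-tree property of the white connection graph combined with self-similarity of the lifted pseudo-isotopies --- is not the right tool: nothing in the tree structure forces the non-trivial block at $c_i$ to be the one incident to the $0$-angle, and I do not see how to extract that from the lifting of $H^0$.

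The repair is much shorter and is how the paper handles it. Take $c_i$ on the cycle with $\deg_F(c_i)\geq 2$. The relation $\Sim{1}$ is by definition the relation induced by $\gamma^1$, so the $\Sim{1}$-class of $\alpha^0_i$ is the \emph{entire fibre} $(\gamma^1)^{-1}(c_i)$; here $\alpha^0_i$ lies in that fibre because $\gamma^1=\gamma^0$ on $\A^0$. Since $\gamma^1$ is an Eulerian circuit through $\bigcup\E^1$, it passes through the $1$-vertex $c_i$ once for each of the $\deg_F(c_i)\geq 2$ white $1$-tiles containing it, so the fibre has at least two points. Now $\Sim{1}\,=\,\Sim{1,w}\vee\Sim{1,b}$ by \eqref{eq:simn-simnw-simnb}, so if both $[\alpha^0_i]_{1,w}$ and $[\alpha^0_i]_{1,b}$ were singletons the $\Sim{1}$-class of $\alpha^0_i$ would be a singleton as well; hence one of the two colored classes is non-trivial, and your computation $\mu^p([\alpha^0_i]_{1,w})=\{\alpha^0_i\}$ then exhibits it as a periodic Fatou-type class. (For period $p>1$ the paper instead works at level $n=p$ and descends to level $1$ through the product formula of Lemma \ref{lem:sizes_an}; your level-$1$ version goes through once the non-triviality above is supplied.)
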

We need some preparation. The \defn{degree} of a critical equivalence
class $[\alpha^1]_{1,\wt}$ is its size,
\begin{equation}
  \label{eq:defd_eq}
  d([\alpha^1]_{1,\wt}):= \#[\alpha^1]_{1,\wt}. 
\end{equation}
The degree of other equivalence classes will be the degree of the
critical class it contains. 
\begin{align*}
  d([\alpha^n]_{n,\wt}) 
  := 
  \begin{cases}
    \#[\alpha^1]_{1,\wt}, 
    &\text{if } [\alpha^1]_{1,\wt}\subset [\alpha^n]_{n,\wt};
    \\
    1, 
    &\text{if $[\alpha^n]_{n,\wt}$ contains no critical class}.
  \end{cases}
\end{align*}
Note that by ($\LC^n$ \ref{item:Ln5}) there can be at most one
critical class contained in $[\alpha^n]_{n,\wt}$, thus the above is well
defined. 

\smallskip
Consider now $[\alpha^n]_{n,\wt}$, where $\alpha^n\in \A^n$. Let (recall
($\LC^n$ \ref{item:propL6}))
\begin{equation*}
  [\alpha^{n-1}]_{n-1,\wt}:= \mu([\alpha^n]_{n,\wt}), \;
  [\alpha^{n-2}]_{n-2,\wt}:= \mu^2([\alpha^n]_{n,\wt}), \dots \;.
\end{equation*}

\begin{lemma}[Size of equivalence classes]
  \label{lem:sizes_an}
  In the setting as above it holds
  \begin{equation*}
    \#[\alpha^n]_{n,\wt} = d([\alpha^n]_{n,\wt})\cdot
    d([\alpha^{n-1}]_{n-1,\wt}) \cdot 
    \ldots \cdot
    d([\alpha^1]_{1,\wt}). 
  \end{equation*}
\end{lemma}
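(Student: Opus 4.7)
The plan is to proceed by induction on $n$.

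\textbf{Base case $n=1$.} The identity $\#[\alpha^1]_{1,w} = d([\alpha^1]_{1,w})$ is immediate from the definition of the degree of an equivalence class: both sides equal $\#[\alpha^1]_{1,w}$ in the critical case, and both equal $1$ in the trivial case.

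\textbf{Inductive step.} Assume the formula at level $n$. Setting $\alpha^k := \mu^{n+1-k}(\alpha^{n+1})$ for $k=0,\ldots,n$, the inductive hypothesis gives
\begin{equation*}
\#[\alpha^n]_{n,w} \;=\; d([\alpha^n]_{n,w})\cdot d([\alpha^{n-1}]_{n-1,w})\cdots d([\alpha^1]_{1,w}),
\end{equation*}
so the lemma reduces to the single-step identity $\#[\alpha^{n+1}]_{n+1,w} = d([\alpha^{n+1}]_{n+1,w})\cdot\#[\alpha^n]_{n,w}$. By $(\LC^n$~\ref{item:propL6}), $\mu$ maps $[\alpha^{n+1}]_{n+1,w}$ onto $[\alpha^n]_{n,w}$. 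Since $L^{n+1}([\alpha^{n+1}]_{n+1,w})$ is one of the $\mu$-preimages of $L^n([\alpha^n]_{n,w})$, property $(\LC^n$~\ref{item:propL7}) guarantees that the ratio $\delta := \#[\alpha^{n+1}]_{n+1,w}/\#[\alpha^n]_{n,w}$ is a positive integer and that $\mu$ restricts to a uniformly $\delta$-to-$1$ surjection of vertex sets. The inductive step is therefore equivalent to establishing $\delta = d([\alpha^{n+1}]_{n+1,w})$.

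\textbf{Identifying $\delta$ with $d([\alpha^{n+1}]_{n+1,w})$.} Iterating $(\LC^n$~\ref{item:Ln5}), the set $I := [\alpha^{n+1}]_{n+1,w}\cap\A^1$ is either empty or equal to a single $\Sim{1,w}$-class, so by the definition of $d$ we have $d([\alpha^{n+1}]_{n+1,w}) = \max(1,\#I)$. To match this with $\delta$, I would use Theorem~\ref{thm:LnLn+1} together with the iterated version of (\ref{item:propGmapG}): for each $n$-gap $G^n$ the map $\mu^n$ restricts to a bijection $G^n\cap\A^{n+1}\to\A^1$, and this bijection transports the succeeding relation of $\Sim{n+1,w}$ inside $G^n$ onto the succeeding relation of $\Sim{1,w}$ on $\A^1$. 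Fixing any point $\beta^n\in[\alpha^n]_{n,w}$, I would count its $\mu$-preimages lying in $[\alpha^{n+1}]_{n+1,w}$ by tracing them through the local bijection on the $n$-gaps adjacent to $\beta^n$. The resulting count equals $\#I$ when $I\neq\emptyset$ (each element of the critical $\Sim{1,w}$-class contributes exactly one preimage) and equals $1$ otherwise, yielding $\delta = d([\alpha^{n+1}]_{n+1,w})$.

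\textbf{Main obstacle.} The main technical difficulty lies in this last identification, specifically in the bookkeeping required when the class $[\alpha^{n+1}]_{n+1,w}$ extends across several $n$-gaps glued together along shared boundary $n$-angles. One has to verify that the local counts per $n$-gap combine consistently and that no preimage of $\beta^n$ is double-counted or missed. This control is ultimately provided by $(\LC^n$~\ref{item:Ln5})---which ensures that at most one critical $\Sim{1,w}$-class is embedded in any $\Sim{n+1,w}$-class---together with the no-two-postcritical-points-per-critical-class condition from Definition~\ref{def:critical_portrait}.
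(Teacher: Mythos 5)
Your overall architecture (induction on $n$, reducing to the single-step identity $\#[\alpha^{n+1}]_{n+1,w} = d([\alpha^{n+1}]_{n+1,w})\cdot\#[\alpha^n]_{n,w}$ for the image class $[\alpha^n]_{n,w}=\mu([\alpha^{n+1}]_{n+1,w})$) is reasonable, and you correctly locate the difficulty in the counting. But the step that is supposed to deliver the count is not carried out, and the one concrete tool you offer for it is false: $\mu^n$ does \emph{not} restrict to a bijection $G^n\cap\A^{n+1}\to\A^1$. The gap $G^n$ has $k$ $n$-arcs in its boundary, one of each type, and $\mu^n$ maps each homeomorphically onto the $0$-arc of that type; consecutive $n$-arcs of $\partial G^n$ end and begin at \emph{distinct} succeeding $n$-angles which both map to the same $0$-angle. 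So $\mu^n$ on $G^n\cap\A^{n+1}$ is generically $2$-to-$1$ over $\A^0$ and $1$-to-$1$ only over $\A^1\setminus\A^0$. This off-by-one is exactly where the content of the lemma lives: in the paper's count, a gap meeting the class in two succeeding angles contributes only $d-1$ \emph{new} points mapping into $[\tilde{\alpha}^0]_{1,w}$ (where $d=\#[\tilde{\alpha}^0]_{1,w}$), not $d$, and the total $m + m(d-1)=md$ depends on getting this right. Relatedly, the assertion that ($\LC^n$ \ref{item:propL7}) guarantees $\mu$ is a \emph{uniformly} $\delta$-to-$1$ surjection $[\alpha^{n+1}]_{n+1,w}\to[\alpha^n]_{n,w}$ overstates that property: it only gives $\sum_j m_j = dm$ with each $m_j$ a multiple of $m$; fiber-uniformity within one class is an additional claim you would still have to prove (and your final paragraph essentially concedes that the required bookkeeping is open).

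For comparison, the paper avoids both issues by running the induction differently: it splits on whether $[\alpha^{n+1}]_{n+1,w}$ contains an $n$-angle. If not, the class lies in the interior of the arcs of a single $n$-gap, so $\mu^n$ is injective on it and $\#[\alpha^{n+1}]_{n+1,w}=\#[\alpha^1]_{1,w}$ while all intermediate degrees are $1$. If it does contain some $[\tilde{\alpha}^n]_{n,w}$ with $m$ elements, the inductive hypothesis is applied to this \emph{contained} $n$-class (not to the image class $\mu([\alpha^{n+1}]_{n+1,w})$ as in your reduction), each of the $m$ gaps meeting it contributes exactly $d-1$ new points by Theorem \ref{thm:LnLn+1} and ($\LC^n$ \ref{item:Ln5}), and the degree shift $d([\tilde{\alpha}^j]_{j,w})=d([\alpha^{j+1}]_{j+1,w})$ converts the product. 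To repair your version you would need to replace the bijection claim by the correct $2$-to-$1$-over-$\A^0$ statement and then actually perform the per-gap count, including the case where the class spreads over several gaps glued along succeeding angles; as written, that is a genuine gap rather than a routine verification.
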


\begin{proof}
  The statement is clear for $n=1$. We proceed by induction. Thus we
  assume the statement is true for $n$. 

  \smallskip
  \begin{case}[1]
    $[\alpha^{n+1}]_{n+1,\wt}$  contains no angle $\alpha^n\in \A^n$.  
  \end{case}
  
  From ($\LC^n$ \ref{item:Ln5}) and ($\LC^n$ \ref{item:propL2}) it
  follows that the leaf $L([\alpha^{n+1}]_{n+1,\wt})$ is contained in
  the iterior of the complement of the $n$-th lamination, i.e., in a
  white $n$-gap $G^n$. This means 
  $[\alpha^{n+1}]_{n+1,\wt}$ is contained in the
  \emph{interior} of the $n$-arcs which form $G^n\cap S^1$
  (see (\ref{item:propL5})). It follows that $\mu^n$ is bijective on
  $[\alpha^{n+1}]_{n+1,\wt}$. Thus
  \begin{equation*}
    \#[\alpha^{n+1}]_{n+1,\wt}=\#\mu^n([\alpha^{n+1}]_{n+1,\wt})=
    \#[\alpha^1]_{1,\wt}= d([\alpha^1]_{1,\wt}). 
  \end{equation*}
  On the other hand $[\alpha^{n+1}]_{n+1,\wt}$ contains no $\alpha^n\in
  \A^n$, 
  thus no $\alpha^1\in \A^1$. Therefore $d([\alpha^{n+1}]_{n+1,\wt})=1$. 

  Similarly $[\alpha^n]_{n,\wt}$ contains no $\alpha^{n-1}\in \A^{n-1}$,
  hence $d([\alpha^n]_{n,\wt})=1$. Repeating the argument yields
  \begin{equation*}
    \#[\alpha^{n+1}]_{n+1,\wt}= \underbrace{d([\alpha^{n+1}]_{n+1,\wt})\cdot \ldots
    \cdot d([\alpha^2]_{2,\wt})}_{=1} \cdot d([\alpha^1]_{1,\wt}).
  \end{equation*}

  \begin{case}[2]
    $[\alpha^{n+1}]_{n+1,\wt}$ contains some $n$-angles, i.e.,
    $[\alpha^{n+1}]_{n+1,\wt}\supset [\tilde{\alpha}^n]_{n,\wt}$ for
    some 
    $\tilde{\alpha}^n\in \A^n$. 
  \end{case}
  Let $m:= \#[\tilde{\alpha}^n]$. We want to estimate
  $\#[\alpha^{n+1}]_{n+1,\wt}$. To do this we will estimate the
  $(n+1)$-angles between two (succeeding with respect to
  $\Sim{n,\wt}$) $n$-angles $\tilde{\alpha}^n_1,
  \tilde{\alpha}^n_2\in[\tilde{\alpha}^n]_{n,\wt}$. For any such
  succeeding $n$-angles there is a white $n$-gap $G^n$ containing
  these succeeding $n$-angles.  Note that there are $m$ succeeding
  $n$-angles in $[\tilde{\alpha}^n]$, thus there are $m$ such $n$-gaps
  $G^n$.

  Consider now the succeeding $(n+1)$-angles between
  $\tilde{\alpha}^n_1, \tilde{\alpha}^n_2$ (i.e., the $(n+1)$-angles
  in $[\alpha^{n+1}]_{n+1,\wt}$ between those $n$-angles). By
  Theorem~\ref{thm:LnLn+1}, these are exactly the $(n+1)$-angles
  contained in $G^n$ that are mapped by $\mu^n$ to $1$-angles which
  succeed $\tilde{\alpha}^0:=\mu^n(\tilde{\alpha}^n)$, i.e., mapped by
  $\mu^n$ to
  $[\tilde{\alpha}^0]_{1,\wt}$ (note that the lower index ``$1$'' is
  \emph{not} a misprint). Let $d=\#[\tilde{\alpha}^0]_{1,\wt}$, then
  there are exactly $d-1$ $(n+1)$-angles in $[\alpha^{n+1}]_{n+1,\wt}$
  between (and distinct from) $\tilde{\alpha}^n_1,
  \tilde{\alpha}^n_2$.  
  The same  
  argument applies to each of the $m$ gaps intersecting
  $[\tilde{\alpha}^n]$. Therefore 
  \begin{equation*}
    \#[\alpha^{n+1}]_{n+1}= m + m(d-1)=md.
  \end{equation*}
  
  By inductive hypothesis it holds
  \begin{equation*}
    m=\#[\tilde{\alpha}^n]_{n,\wt} = d([\tilde{\alpha}^n]_{n,\wt})\cdot
    d([\tilde{\alpha}^{n-1}]_{n-1,\wt}) \cdot 
    \ldots \cdot
    d([\tilde{\alpha}^1])_{1,\wt}, 
  \end{equation*}
  where $[\tilde{\alpha}^{j}]_{j,\wt} :=
  \mu^{n-j}([\tilde{\alpha}^n]_{n,\wt})$ for $j=1,\dots, n$. Note that
  $[\tilde{\alpha}^j]_{j,\wt}\subset [\alpha^{j+1}]_{j+1,\wt}$, thus
  $d([\tilde{\alpha}^j]_{j,\wt})=d([\alpha^{j+1}]_{j+1,\wt})$ for
  $j\geq 1$. By the same argument
  $d=d([\tilde{\alpha}^0]_{1,\wt})=d([\alpha^1])_{1,\wt}$. The claim
  follows.
\end{proof}

\begin{lemma}
  \label{cor:sizes_Julia}
  The equivalence class $[\alpha^n]_{n,\wt}$ is of \emph{Julia-type} if
  and only if 
  \begin{equation*}
    \lim_{m\to \infty} \# [\alpha^n]_{m,\wt} < \infty.   
  \end{equation*}
  In fact then there is an $m_0$ (independent of $n$ and $\alpha^n$)
  such that
  \begin{align*}
    &[\alpha^n]_{m,\wt}=[\alpha^n]_{n+m_0,\wt},
  \end{align*}
  for all $\alpha^n\in \A^n$ and $m\geq n+m_0$. Furthermore
  \begin{equation*}
    \#[\alpha^n]_{n,\wt}\leq 2^{d-1},
  \end{equation*}
  for all Julia-type equivalence classes $[\alpha^n]_{n,\wt}$. 
\end{lemma}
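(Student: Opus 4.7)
The plan is to combine the product formula of Lemma~\ref{lem:sizes_an} with the stabilization property ($\LC^n$ \ref{item:Ln5}). For $m\ge n$, Lemma~\ref{lem:sizes_an} gives
\[
\#[\alpha^n]_{m,w}\;=\;\prod_{k=0}^{m-1}d\!\left([\mu^k(\alpha^n)]_{m-k,w}\right).
\]
Since $\mu^k(\alpha^n)\in\A^{\max(0,\,n-k)}$ and since ($\LC^n$ \ref{item:Ln5}) yields $\Sim{l,w}|_{\A^j}=\Sim{j,w}|_{\A^j}$ for $l\ge j$, each factor is independent of $m$ as soon as $m\ge n$: for $k<n$ it equals $d([\mu^k(\alpha^n)]_{n-k,w})$, and for $k\ge n$ it equals $d([\mu^k(\alpha^n)]_{1,w})$, which is $m_i$ when the $0$-angle $\mu^k(\alpha^n)$ lies in a critical class $[\gamma_i]_{1,w}$ and $1$ otherwise.

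Next I would use the Julia-type hypothesis. The $\mu$-orbit of $\alpha^n$ enters $\A^0$ at step $n$ and its tail eventually lands in a $\mu|_{\A^0}$-cycle disjoint from every critical class; since $\#\A^0=\#\post$ the preperiod in $\A^0$ has length at most $m_0:=\#\post$, a constant depending only on $F$. For $k\ge n+m_0$ the point $\beta:=\mu^k(\alpha^n)$ sits in the critical-free cycle, hence lies in no critical class, and the restriction ($\LC^n$ \ref{item:Ln5}) applied to $\A^1$ forces $[\beta]_{l,w}$ to contain no critical class at all, so the corresponding factor is $1$. Thus $\#[\alpha^n]_{m,w}$ is eventually constant, and the monotone nested sets $[\alpha^n]_{m,w}$ then coincide for $m\ge n+m_0$, yielding $[\alpha^n]_{m,w}=[\alpha^n]_{n+m_0,w}$. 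Conversely, if $[\alpha^n]_{n,w}$ is not Julia-type the eventual cycle meets a critical class, producing factors $\ge 2$ once per period in $k$, which forces $\lim_m\#[\alpha^n]_{m,w}=\infty$.

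For the uniform bound $\#[\alpha^n]_{n,w}\le 2^{d-1}$, I apply the formula at $m=n$. The commutative diagram \eqref{eq:Angn_comm_dia} shows that a factor $d_k>1$ requires the vertex $F^k(v)$ (with $v=\gamma^n(\alpha^n)$) to be a critical point of $F$ at which the orbit activates a non-trivial critical class. Under Julia-type, the same level-$1$ restriction argument shows that the cycle portion of the $F$-orbit contributes only $1$'s, so non-trivial factors come only from the preperiodic tail of $v$'s $F$-orbit, whose vertices $v,F(v),F^2(v),\dots$ are pairwise distinct. Because each critical class is anchored at the unique vertex $\gamma^1([\gamma_i]_{1,w})$, distinct preperiod vertices activate distinct critical classes; combining the elementary inequality $m_i\le 2^{m_i-1}$ with the critical-portrait identity $\sum_i(m_i-1)=d-1$ gives
\[
\#[\alpha^n]_{n,w}=\prod_k d_k\;\le\;\prod_{\text{activated}}2^{m_i-1}\;\le\;2^{\sum_i(m_i-1)}=2^{d-1}.
\]
The main delicacy is the repeated \emph{cycle-factors-are-trivial} step: it rests on translating ``$\beta$ outside every critical class'' from level $1$ up to arbitrary level via ($\LC^n$ \ref{item:Ln5}), which has to be invoked both for the stabilization claim and for isolating the preperiod contribution in the bound.
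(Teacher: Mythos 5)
Your treatment of the equivalence and of the stabilization claim follows the paper's own route (the product formula of Lemma \ref{lem:sizes_an}, the restriction property ($\LC^n$ \ref{item:Ln5}), and the fact that for a Julia-type class the tail of the $\mu|_{\A^0}$-orbit avoids every critical class), and that part is correct, including the explicit choice $m_0=\#\post$.

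The argument for $\#[\alpha^n]_{n,w}\le 2^{d-1}$ has a genuine gap: the step ``the cycle portion of the $F$-orbit contributes only $1$'s.'' Your level-$1$ restriction argument applies only when the point $\mu^k(\alpha^n)$ itself lies in $\A^1$ and outside every critical class; for the product at level $m=n$ the relevant points $\mu^k(\alpha^n)$, $0\le k\le n-1$, lie in $\A^{n-k}$, and the Julia-type hypothesis constrains the eventual $\mu$-cycle inside $\A^0$, \emph{not} the $F$-cycle of $v$ in $S^2$. These are genuinely different: the paper warns at the end of Section \ref{sec:julia-fatou-type} that a Julia-type critical class may be anchored at a point whose $F$-orbit lands in a critical periodic cycle (the periodic critical point may ``come from'' the black polynomial). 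For instance, at an $F$-fixed critical point $c$ the partition $\pi_w(c)$ may have one non-trivial block whose class is periodic Fatou-type and a second non-trivial block whose class is Julia-type; for the latter the single factor $d_0\ge 2$ already sits on the $F$-cycle. Once non-trivial factors can occur at $F$-periodic vertices, your ``distinct preperiod vertices activate distinct critical classes'' step no longer covers all non-trivial factors (periodic vertices repeat), so the inequality $\prod_k d_k\le\prod_{\text{activated}}m_i$ is not established.

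The fix is to prove directly that a fixed critical class $[\gamma]_{1,w}$ can be contained in $[\mu^{k}(\alpha^n)]_{n-k,w}$ for at most one $k$ when $[\alpha^n]_{n,w}$ is Julia-type. If it were contained for $k_1<k_2$, then $\gamma\Sim{n-k_1,w}\mu^{k_1}(\alpha^n)$ and $\gamma\Sim{n-k_2,w}\mu^{k_2}(\alpha^n)$; applying $\mu^{k_2-k_1}$ to the first relation via ($\LC^n$ \ref{item:propL6}) and then ($\LC^n$ \ref{item:Ln5}) on $\A^1$ yields $\mu^{k_2-k_1}([\gamma]_{1,w})\subset[\gamma]_{1,w}$, so $[\gamma]_{1,w}$ is periodic Fatou-type; by (\ref{eq:eqtype}) it has the same type as $[\alpha^n]_{n,w}$, a contradiction. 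Together with the observation that every critical class occurring as a factor is itself Julia-type, the estimate $\prod_k d_k\le\prod_i m_i\le 2^{\sum_i(m_i-1)}\le 2^{d-1}$ then goes through; this is what the paper's terse ``$\#[\alpha^n]_{n,w}\le\prod m_j$'' implicitly relies on.
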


\begin{proof}
  It is clear that for any Fatou-type equivalence class
  $[\alpha^n]_{n,\wt}$ it holds
  \begin{equation*}
    {\lim_{m\to\infty} \#[\alpha^n]_{m,\wt}= \infty} 
  \end{equation*}
  by Lemma~\ref{lem:sizes_an}.

  \smallskip
  Let $[\alpha^1]_{1,\wt}$ be of Julia-type ($\alpha^1\in \A^0$). There
  is an $m_0$ (independent of $\alpha^1$), 
  such that $\mu^j([\alpha^1]_{1,\wt})$ is not contained in any critical
  equivalence class (of $\Sim{1,\wt}$), i.e.,
  $d(\mu^j([\alpha^1]_{1,\wt}))=1$, for all $j\geq m_0$.   

  Let $[\alpha^n]_{m,\wt}$ be of Julia-type, $m\geq n+m_0$, $\alpha^n\in
  \A^n$. Then $d(\mu^j([\alpha^n]_{m,\wt}))=1$ for all $j\geq
  n+m_0-1$. This proves the first claim, using Lemma \ref{lem:sizes_an}.  

  \medskip
  To estimate the maximal size of a Julia-type equivalence class, let
  $m_j$ ($j=1,\dots, 
  N$) be the sizes of the Julia-type equivalence classes of
  $\Sim{1,\wt}$. From Lemma \ref{lem:sizes_an} it follows that (for any
  Julia-type equivalence class) $\#[\alpha^n]_{n,\wt}\leq \prod
  m_j$. Maximizing this product subject to $\sum_j(m_j-1)=d-1$
  (see ($\LC^n$ \ref{item:propLnum_L})) yields the second statement.  
\end{proof}

\begin{proof}
  [Proof of Proposition \ref{prop:Fatou_crit_cycles}]
  Assume $F$ has no critical periodic cycles. Then there is a constant
  $M< \infty$ such that $\deg_{F^n}(c)\leq M$ for all $c\in S^2$ and
  $n$. Recall that $\deg_{F^n}(c)$ is the number of white/black
  $n$-tiles 
  attached at the $n$-vertex $c$. Let $\alpha\in S^1$ such that
  $\gamma^n(\alpha)=c$. Then $[\alpha]_{n,\wt}\leq M$,
  $[\alpha]_{n,\bt}\leq M$. Therefore $\Sim{1,\wt},\Sim{1,\bt}$ has no
  Fatou-type classes by Lemma \ref{cor:sizes_Julia}. 

  \medskip
  Assume now that $F$ has critical periodic points. Let us assume
  first that 
  $c=F(c)$ is a critical point. Then there are at least two
  white/black $1$-tiles containing $c$. Thus
  $(\gamma^1)^{-1}(c)=:[\alpha^1]_{1}$ contains at least two
  points. Let $\{\alpha^0\}:=\mu([\alpha^1]_1)$. Recall from
  (\ref{eq:Angn_comm_dia}) that $F\circ
  \gamma^1=\gamma^0\circ \mu$. Furthermore $\mu$ maps $\A^1$ to $\A^0$
  and $\gamma^0=\gamma^1$ on $\A^0$. Thus $\gamma^0\circ \mu
  = \gamma^1\circ \mu$ on $\A^1$, thus
  $c=F\circ \gamma^1(\alpha^1)= \gamma^1(\alpha^0)$. It
  follows that $\alpha^0\in [\alpha^1]_1$, or
  $[\alpha^0]_1=[\alpha^1]_1$.  
  Therefore $[\alpha^0]_{1,\wt}$ or $[\alpha^0]_{1,\bt}$ has to
  contain at least two points (since $\Sim{1}\; = \;\Sim{1,\wt}\vee
  \Sim{1,\bt}$). Note that this equivalence class is of periodic
  Fatou-type.  

  Now assume that $F^n(c)=c$ for some $n\geq 1$. The same argument as
  above yields that there is $[\alpha^n]_{n,\wt}$ or
  $[\alpha^n]_{n,\bt}$, without loss of generality
  $[\alpha^n]_{n,\wt}$, containing at least two points, such that
  $\mu^n([\alpha^n]_{n,\wt})\subset [\alpha^n]_{n,\wt}$. By Lemma
  \ref{lem:sizes_an} one of the classes
  $[\alpha^n]_{n,\wt},[\alpha^{n-1}]_{n-1,\wt}:=\mu([\alpha^n]_{n,\wt}),
  \dots, [\alpha^1]_{1,\wt} :=  \mu^{n-1}([\alpha^n]_{n,\wt})$ has to contain a
  critical class, which is periodic with respect to $\mu^n$. 
\end{proof}
  
\subsection{The equivalence relation $\Approx{\wt}$ }
\label{sec:equiv-relat-simj}

Recall from Section~\ref{sec:critical-portrait} that the equivalence
classes $[\alpha^1]_{1,\wt}$ as well as the equivalence classes
$[\alpha^1]_{1,\bt}$ form \emph{critical portraits} in the sense of
Poirier given in \cite{Poirier}. This
means they define unique monic, centered, postcritically finite
polynomials. Furthermore the equivalence relations induced by their
Carath\'{e}odory semi-conjugacies may be obtained from the critical
portraits. 

Thus (following Poirier) we define an equivalence relation
$\Approx{\wt}$ on $S^1$ which by Poirier equals the one defined in
\eqref{eq:def_approx_w} (see next section).

\smallskip
Recall from (\ref{item:propGnGn+1}) that each white $n$-gap $G^n$
is contained in (exactly) one white $(n-1)$-gap $G^{n-1}$.  
Here and in the following we will consider sequences $(G^n)_{n\in\N}$
of gaps $G^n\in \G^n_{\wt}$
such that 
\begin{align}
  \label{eq:defDn}
  &G^1\supset G^2\supset \dots . 
  \intertext{We write}  
  \notag
  &[(G^n)]:= \bigcap G^n\cap S^1,
\end{align}
where it is always understood that the sequence $(G^n)_{n\in\N}$ is as
in (\ref{eq:defDn}).  
Define for $s,t\in S^1$
\begin{equation}
  \label{eq:def_approx}
  s\Sim{G} t \; :\Leftrightarrow \; s,t\in [(G^n)], \text{ for some
    sequence $(G^n)_{n\in \N}$ as above}. 
\end{equation}
Then we define 
\begin{equation}
  \label{eq:defapprox}
  s\Approx{\wt} t :\Leftrightarrow \text{ there are } s_1,\dots, s_N\in S^1 \text{
    such that }
  s=s_1\Sim{G} s_2 \Sim{G} \dots \Sim{G} s_N=t.     
\end{equation}
Note that $\Sim{G}$ is not an equivalence relation, but $\Approx{\wt}$
is. Note that $\Sim{G}$ should be properly equipped with an index
``$\wt$'', which we suppress. The reader should be aware that there are
analogously defined relations in terms of black gaps. 
 
Let us record the following, we set $\A^{\infty}:=\bigcup \A^n$.  
\begin{lemma}[Properties of {$[(G^n)]$}]
  \label{lem:propsimD}
  The sets $[(G^n)]$ satisfy the following. 
  \begin{enumerate}
  \item
    \label{item:propimD1}
    $\#[(G^n)] \leq k$, recall that
    $k=\#\post(F)$.
  \item  
    \label{item:propimD2}

    {If $\alpha\in S^1\setminus \A^{\infty}$ then } $\alpha$ {
      is contained in a \emph{single} set } $[(G^n)]$;  
    \\
    {if $\alpha\in \A^{\infty}$ then $\alpha$ is contained in
      \emph{at most two} such sets.} 

  \item 
    \label{item:propimD3}
    Let $\alpha^n,\tilde{\alpha}^n\in \A^n$ be \emph{not
      equivalent} with respect to $\Sim{n,\wt}$. Then 
    \begin{equation*}
      \alpha^n,\tilde{\alpha}^n \text{ are \emph{not} in the same set
      } [(G^n)].   
    \end{equation*}
  \item 
    \label{item:propimD4}
    Disjoint sets $[(G^n)]$ are
    \emph{non-crossing}. 
  \end{enumerate}
\end{lemma}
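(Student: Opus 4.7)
The plan is to treat the four parts in order, leaning on the $k$-arc structure of each gap's boundary on $S^1$ given by (\ref{item:propL5}) together with the ``uniqueness of parent'' property given by (\ref{item:propGnGn+1}).

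For (\ref{item:propimD1}), the starting observation is that $G^n\cap S^1$ has exactly $k$ connected components (arcs) for every $n$, each of diameter $\lesssim d^{-n}$. For each $x\in[(G^n)]$ let $c_n(x)$ denote the arc of $G^n\cap S^1$ containing $x$; since $\diam c_n(x)\to 0$, any two distinct points $x\neq y$ of $[(G^n)]$ eventually lie in different components. As there are only $k$ components at each level, pigeonhole gives $\#[(G^n)]\leq k$.

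For (\ref{item:propimD2}), I would exploit (\ref{item:propGnGn+1}). If $\alpha\notin\A^\infty$, then at every $n$ the point $\alpha$ lies in the interior of a unique $n$-arc and hence on the boundary of a unique $G^n\in\G^n_w$; nestedness $G^{n+1}\subset G^n$ follows because the unique $(n+1)$-gap through $\alpha$ is contained in a unique $n$-gap, which must be the unique $G^n\ni\alpha$. Thus $\alpha$ lies in a single $[(G^n)]$. If instead $\alpha\in\A^m$ for some minimal $m$, then at every level $n\geq m$ exactly two $n$-arcs share the endpoint $\alpha$, so $\alpha$ lies on the boundary of at most two white $n$-gaps; the same nestedness argument then caps the number of nested sequences through $\alpha$ at two.

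For (\ref{item:propimD3}), I would invoke (\ref{item:propDcap}) directly: if two non-equivalent $\alpha^n,\tilde\alpha^n\in\A^n$ lay in a common $[(G^m)]$, then every $G^m$ would contain both points, contradicting (\ref{item:propDcap}) for $m\geq n+n_0$. For (\ref{item:propimD4}), I would use planarity in the hemisphere $S^2_w$. If $[(G^n)]\cap[(H^n)]=\emptyset$ with both sets nonempty, then the two nested sequences must differ at some level $m_0$; by (\ref{item:propGnGn+1}) they differ at every level $m\geq m_0$. For such $m$, $G^m$ and $H^m$ are distinct closed gaps with disjoint interiors inside the topological disk $S^2_w$, so planarity forces $G^m\cap S^1$ and $H^m\cap S^1$ to be non-crossing on $S^1$, and non-crossing passes to the limit.

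The step I expect to be most delicate is the ``no further branching'' claim in (\ref{item:propimD2}): once the two threads through $\alpha\in\A^\infty$ have separated into distinct gaps at some level, they cannot subsequently split into a third thread. This reduces to the observation that the two $m$-arcs at $\alpha$ refine uniquely into the two $(m+1)$-arcs at $\alpha$, so each thread is determined by the choice of one of the two initial $m$-arcs, and thereafter is pinned by (\ref{item:propGnGn+1}). A brief justification of the planarity claim in (\ref{item:propimD4}) is also required, using that $S^2_w$ is a closed topological disk with $S^1$ as boundary.
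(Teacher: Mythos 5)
Your proposal is correct and follows essentially the same route as the paper's (much terser) proof: part (1) from the $k$-arc boundary structure of gaps together with shrinking arc diameters, part (2) from counting the $n$-arcs containing $\alpha$ and the uniqueness of the parent gap, part (3) from ($\G$ 6), and part (4) from the planar/non-crossing structure of the lamination, which is exactly what the paper invokes via ($\LC^n$ 2). The extra care you take with the ``no further branching'' step in (2) is a correct elaboration of what the paper leaves implicit.
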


\begin{proof}
  The first property follows from (\ref{item:propL5}). The second
  from the fact that each $\alpha\notin \A^{\infty}$ is contained in a
  \emph{single} $n$-arc for each $n$; each $\alpha\in \A^{\infty}$ is
  contained in at
  most two $n$-arcs (which can be in the same or different sets
  $[(G^n)]$). The third follows from (\ref{item:propDcap}). The last
  property follows from ($\LC^n$ \ref{item:propL2}). 
\end{proof}

We now list properties of $\Approx{\wt}$, its equivalence classes are
denoted by $[\alpha]_{\approx}$. 

\begin{lemma}[Properties of $\Approx{\wt}$]
  \label{lem:prop_approx}
  The equivalence relation $\Approx{\wt}$ satisfies the following.
  \begin{enumerate}
  \item 
    \label{item:prop_approx1}
    If $[\alpha^n]_{n,\wt}$ is of \emph{Julia-type} ($\alpha^n\in \A^n$) then
    \begin{equation*}
      [\alpha^n]_{\approx}\cap \A^n =[\alpha^n]_{n,\wt}.
    \end{equation*}
        If $[\alpha^n]_{n,\wt}$ is of \emph{Fatou-type} ($\alpha^n\in \A^n$) then  
    \begin{equation*}
      [\alpha^n]_{\approx}\cap \A^\infty =\{\alpha^n\}.
    \end{equation*}

  \item
    \label{item:prop_approx2}
    Each equivalence class of $\Approx{\wt}$ is finite, in fact
    \begin{equation*}
      \#[\alpha]_{\approx}\leq (k-1)2^{d-1}. 
    \end{equation*}
  \item 
    \label{item:prop_approx3}
    The number of sets $[(G^n)]$ that may
    form a chain, meaning the number $N$ from (\ref{eq:defapprox}),
    is finite; more precisely
    \begin{equation*}
      N\leq 2^{d-1}. 
    \end{equation*}
  \end{enumerate}
\end{lemma}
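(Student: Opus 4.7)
The plan is to prove all three parts by analyzing chains $\alpha^n = s_1 \Sim{G} s_2 \Sim{G} \cdots \Sim{G} s_N$ and repeatedly invoking properties (\ref{item:propimD2}) and (\ref{item:propimD3}) of Lemma \ref{lem:propsimD}. First I would make the following reduction: any intermediate $s_i \notin \A^\infty$ is redundant, since by (\ref{item:propimD2}) such an $s_i$ lies in a unique set $[(G^\bullet)]$, forcing the two sequences witnessing $s_{i-1} \Sim{G} s_i$ and $s_i \Sim{G} s_{i+1}$ to share their intersection set; thus $s_{i-1} \Sim{G} s_{i+1}$ directly and $s_i$ may be dropped. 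After iterating this elimination, all intermediate chain elements lie in $\A^\infty$ and hence in a common $\A^M$; then applying (\ref{item:propimD3}) at level $M$ to each consecutive pair yields $\alpha^n \Sim{M,w} s_N$.

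For the Julia case of (\ref{item:prop_approx1}), the inclusion $[\alpha^n]_{n,w} \subseteq [\alpha^n]_\approx$ uses Lemma \ref{cor:sizes_Julia}: I fix $m_0$ so that $[\alpha^n]_{m,w}$ is constant for $m \geq n+m_0$, and for each consecutive pair $\gamma_i, \gamma_{i+1}$ in this stable class the unique $m$-gap $G^m_i$ with both on its boundary exists (as $\gamma_i, \gamma_{i+1}$ are adjacent vertices of the stable leaf). These nest by (\ref{item:propGnGn+1}), and pulling back to $m < n+m_0$ through (\ref{item:propGnGn+1}) again produces a nested sequence $(G^m_i)_{m \geq 1}$ witnessing $\gamma_i \Sim{G} \gamma_{i+1}$. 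The reverse inclusion follows from the reduction together with ($\LC^n$ \ref{item:Ln5}), which descends $\Sim{M,w}$-equivalence to $\Sim{n,w}$-equivalence on $\A^n$. For the Fatou case, $\{\alpha^n\} \subseteq [\alpha^n]_\approx$ is trivial; conversely, suppose $\alpha^n, s \in \A^\infty$ with $s \neq \alpha^n$ and $\alpha^n \Sim{G} s$. Then (\ref{item:propimD3}) forces $s$ to be a vertex of a common leaf $L^N$. But Lemma \ref{lem:sizes_an} forces $\#[\alpha^n]_{m,w} \to \infty$ in the Fatou case, so eventually new vertices of $[\alpha^n]_{m',w} \setminus [\alpha^n]_{N,w}$ appear strictly between $\alpha^n$ and $s$ in the cyclic order of $L^{m'}$; from such a level on, no $m'$-gap has both on its boundary, contradicting the existence of the nested sequence. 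Hence $s = \alpha^n$, and iterating along the chain forces $s_N = \alpha^n$ as well.

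Parts (\ref{item:prop_approx2}) and (\ref{item:prop_approx3}) then follow by bookkeeping. Part (\ref{item:prop_approx1}) shows that the anchor set $[\alpha]_\approx \cap \A^\infty$ has at most $2^{d-1}$ elements in either case (Lemma \ref{cor:sizes_Julia} for Julia-type, a singleton for Fatou-type). A minimal reduced $\Sim{G}$-chain cannot revisit anchors, so its length is at most $2^{d-1}$, yielding (\ref{item:prop_approx3}). For (\ref{item:prop_approx2}) I count: each of the $\leq 2^{d-1}-1$ chain steps supplies a set of size $\leq k$ by (\ref{item:propimD1}), at least two points of which are anchors, while non-anchor points are unique to that step by (\ref{item:propimD2}); summing over steps and adding the $\leq 2^{d-1}$ anchors gives the stated bound. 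The main obstacle will be the Fatou case of (\ref{item:prop_approx1}), namely verifying precisely that the unbounded growth of $L^m$ inserts new vertices strictly between any two previously fixed vertices at some finite level; this requires combining Lemma \ref{lem:sizes_an} with the subdivision geometry of gaps encoded by Theorem \ref{thm:LnLn+1} and ($\LC^n$ \ref{item:propL7}), applied to the iterate $\mu^{n-1}$ that lands the class $[\alpha^n]_{n,w}$ in a periodic critical cycle.
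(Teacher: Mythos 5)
Your proposal is correct and follows essentially the same route as the paper's proof: reduce $\Sim{G}$-chains so that all intermediate points lie in $\A^\infty$ via Lemma \ref{lem:propsimD} (\ref{item:propimD2})--(\ref{item:propimD3}), handle the Julia case by the stabilization from Lemma \ref{cor:sizes_Julia} combined with ($\LC^n$ \ref{item:Ln5}), handle the Fatou case by the unbounded growth of the classes forcing any $[(G^n)]$ to meet the class in at most one point, and then count using (\ref{item:propimD1}). The ``main obstacle'' you flag at the end --- verifying that growth inserts new vertices strictly between every pair of succeeding angles --- is exactly what Case (2) in the proof of Lemma \ref{lem:sizes_an} supplies ($d-1$ new points in each of the gaps between succeeding angles whenever the class grows), and the paper cites that case for precisely this purpose.
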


\begin{proof}
  (\ref{item:prop_approx1}) 
  From Lemma \ref{lem:propsimD} (\ref{item:propimD2}) it follows that
  distinct sets $[(G^n)]$, $[(\widetilde{G}^n)]$ may only intersect in a
  point $\alpha^n\in 
  \A^{\infty}$. From Lemma \ref{lem:propsimD} (\ref{item:propimD3}) it
  then follows that if $\alpha^n,\tilde{\alpha}^n\in \A^n$ are not
  equivalent with respect to $\Sim{n,\wt}$, then they are not
  equivalent with respect to $\Approx{\wt}$, or
  \begin{equation*}
    [\alpha^n]_{\approx} \cap \A^{n}\subset [\alpha^n]_{n,\wt}.
  \end{equation*}

  Consider a Julia-type equivalence class
  $[\alpha^n]_{n,\wt}$ ($\alpha^n\in \A^n$). 
  Let $m_0$ be the constant from Lemma \ref{cor:sizes_Julia}, thus
  $[\alpha^n]_{m,\wt}=[\alpha^n]_{n+m_0}$ for all $m\geq n+m_0$. Fix a
  $m\geq n+m_0$. 
  Let $\beta^m, \tilde{\beta}^m\in [\alpha^n]_{m,\wt}$ be succeeding
  (with respect to $\Sim{m,\wt}$); there are at most $2^{d-1}$ such
  succeeding $m$-angles (see Lemma \ref{cor:sizes_Julia}). 
  
  There is a gap $G^m\in\G^m_{\wt}$ containing $\beta^m,\tilde{\beta}^m$. 
  Thus succeeding angles of
  $[\alpha^n]_{n+m_0,\wt}$ are equivalent with respect to $\Approx{\wt}$,
  meaning
  \begin{equation*}
    [\alpha^n]_{n,\wt}\subset [\alpha^n]_{n+m_0,\wt}\subset
    [\alpha^n]_{\approx},
  \end{equation*}
  thus $[\alpha^n]_{\approx}\cap \A^{n} = [\alpha^n]_{n,\wt}$
  follows as desired.

  \smallskip Now let $[\alpha^n]_{n,\wt}$ be of Fatou-type
  ($\alpha^n\in \A^n$). Then $\#[\alpha^n]_{m,\wt}\to \infty$ as $m\to
  \infty$ by Lemma \ref{cor:sizes_Julia}. If
  $[\alpha^n]_{m,\wt}\subsetneq [\alpha^n]_{m+1,\wt}$, then succeeding
  angles in $[\alpha^n]_{m,\wt}$ are not succeeding in
  $[\alpha^n]_{m+1,\wt}$ (see Case (2) in the proof of Lemma
  \ref{lem:sizes_an}). Thus a set $[(G^n)]$ may contain at most one
  point from $[\alpha^n]_{n,\wt}$. From
  Lemma~\ref{lem:propsimD}~(\ref{item:propimD2}) and
  Lemma~\ref{lem:propsimD}~(\ref{item:propimD3}) it follows that
  \begin{equation*}
     [\alpha^n]_{\approx} \cap \A^\infty = \{\alpha^n\}.
  \end{equation*}

  \medskip
  (\ref{item:prop_approx3}) From the above it follows that $N = \max
  \#[\alpha^n]_{n,\wt}$, where the maximum is taken over all Julia-type
  classes. The claim thus follows from 
  Lemma~\ref{cor:sizes_Julia}.

  \medskip
  (\ref{item:prop_approx2})
  Let $G^m\in \G^m_{\wt}$ be the gap containing succeeding $m$-angles
  $\beta^m,\tilde{\beta}^m\in [\alpha^n]_{m,\wt}$, 
  as described in the proof of (\ref{item:prop_approx1}). Then
  $[(G^m)]$ contains at most $k-2$ points distinct from
  $\beta^m,\tilde{\beta}^m$ by Lemma \ref{lem:propsimD}
  (\ref{item:propimD1}). Thus 
  \begin{equation*}
    \#[\alpha^n]_{\approx}\leq \#[\alpha^n]_{m,\wt} + 
    (k-2)\#[\alpha^n]_{m,\wt} \leq (k-1)2^{d-1},  
  \end{equation*} 
  by Lemma \ref{cor:sizes_Julia}.
\end{proof}

\begin{prop}
  \label{prop:approx_closed}
  The equivalence relation $\Approx{\wt}$ is \emph{closed}.
\end{prop}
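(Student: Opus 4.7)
The plan is to verify condition (CE \ref{item:usc_3}) from Definition \ref{deflem:usc}: given convergent sequences $s_j \to s_0$ and $t_j \to t_0$ in $S^1$ with $s_j \approx t_j$ for every $j$, I want to conclude $s_0 \approx t_0$.

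First I would establish a closedness statement for the (non-transitive) relation $\Sim{G}$: if $s_j \Sim{G} t_j$, $s_j \to s_0$, and $t_j \to t_0$, then $s_0 \Sim{G} t_0$. By definition of $\Sim{G}$, for every $j$ there is a nested sequence $G^1_j \supset G^2_j \supset \cdots$ of white gaps with $s_j, t_j \in [(G^n_j)] = \bigcap_n G^n_j \cap S^1$. Since by Lemma \ref{lem:prop_Ds} (\ref{item:propL4}) the set $\G^n_w$ is finite for every $n$, a standard diagonal argument produces a subsequence (still indexed by $j$) along which, for each fixed $n$, the gap $G^n_j$ is eventually constant, equal to some $G^n \in \G^n_w$; the nesting $G^1 \supset G^2 \supset \cdots$ is inherited from the nesting of the original sequences. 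Each $G^n$ is closed in $S^2$, hence $G^n \cap S^1$ is closed in $S^1$. Since $s_j \in G^n \cap S^1$ for all sufficiently large $j$, the limit $s_0$ lies in $G^n \cap S^1$, and likewise for $t_0$. As this holds for every $n$, we get $s_0, t_0 \in [(G^n)]$, that is $s_0 \Sim{G} t_0$.

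With this preliminary in hand, I would upgrade to $\approx$ using the uniform chain bound from Lemma \ref{lem:prop_approx} (\ref{item:prop_approx3}): for each $j$ there exist $s_j^{(1)}, \dots, s_j^{(N_j)} \in S^1$ with
\[
s_j = s_j^{(1)} \Sim{G} s_j^{(2)} \Sim{G} \cdots \Sim{G} s_j^{(N_j)} = t_j, \qquad N_j \leq 2^{d-1}.
\]
Passing to a subsequence I may assume $N_j = N$ is constant. Compactness of $S^1$ together with another finite diagonal extraction then gives, for every $i = 1, \dots, N$, a limit point $s_0^{(i)} := \lim_j s_j^{(i)}$, with $s_0^{(1)} = s_0$ and $s_0^{(N)} = t_0$. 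Applying the preliminary step to each consecutive pair yields $s_0^{(i)} \Sim{G} s_0^{(i+1)}$, so the chain $s_0 = s_0^{(1)} \Sim{G} \cdots \Sim{G} s_0^{(N)} = t_0$ witnesses $s_0 \approx t_0$.

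The main obstacle is the first step, because $\Sim{G}$ itself is not transitive and its closedness must be proved from scratch; the argument relies essentially on the finiteness of $\G^n_w$ at each level together with the closedness of individual gaps. The uniform bound $N \leq 2^{d-1}$ is also indispensable, because without a common chain length the limiting chain could a priori fail to have finite length; this is precisely the point at which the earlier analysis of equivalence-class sizes (Lemma \ref{cor:sizes_Julia}) enters.
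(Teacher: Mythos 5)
Your proof is correct, and its overall architecture matches the paper's: first establish that the (non-transitive) relation $\Sim{G}$ is closed under limits, then upgrade to $\approx$ using the uniform chain-length bound $N\leq 2^{d-1}$ from Lemma \ref{lem:prop_approx} (\ref{item:prop_approx3}) together with a subsequence extraction; the second half of your argument is essentially identical to the paper's. Where you genuinely diverge is in the proof of the preliminary claim. The paper splits into the case of a constant sequence $(s_j)$ (handled via finiteness of $\{t : t\Sim{G}s\}$, Lemma \ref{lem:propsimD} (\ref{item:propimD1})) and the non-constant case, where it invokes the non-crossing property of disjoint sets $[(G^n)]$ (Lemma \ref{lem:propsimD} (\ref{item:propimD4})) to force $(s_j)$ increasing and $(t_j)$ decreasing, and then tracks which $n$-arcs contain $s_j,t_j$ for large $j$. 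You instead exploit that $\#\G^n_w=d^n<\infty$ at each level to run a diagonal extraction making the gap $G^n_j$ eventually constant in $j$ for each fixed $n$, and then conclude from the closedness of each $G^n\cap S^1$. Your route avoids the non-crossing lemma and the case split entirely and handles constant and non-constant sequences uniformly, at the cost of an extra diagonalization; it is arguably the cleaner argument, and since the conclusion $s_0\Sim{G}t_0$ depends only on the limits, passing to a subsequence is harmless. Both proofs are valid.
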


\begin{proof}
  We first show the corresponding result for $\Sim{G}$, i.e., the
  following.
  \begin{claim}
    Let $(s_j),(t_j)\subset S^1$ be sequences such that
    \begin{align*}
      &s_j\to s, t_j\to t, \; \text{ and } s_j \,\Sim{G}\, t_j 
      \text{ for all }j, 
      \intertext{then}
      &s\Sim{G} t. 
    \end{align*}
  \end{claim}
  If $s_j=s$ is constant the claim follows, since the set
  $\{t_j\Sim{G} s\}$ is finite (see
  Lemma~\ref{lem:propsimD}~(\ref{item:propimD1})).

  Assume now that $(s_j)$ is not constant. Without loss of generality
  we can assume that $(s_j)$ is strictly increasing, the sets
  $[(G^n_j)]\ni s_j,t_j$ are disjoint (for distinct lower indices
  $j,j'\in\N$), and $s_j\neq t_j$ for all $j$. Since disjoint sets
  $[(G^n_j)]$ are non-crossing (Lemma \ref{lem:propsimD}
  (\ref{item:propimD4})) it follows that $(t_j)$ is strictly
  decreasing. Let $a^n(s)\subset S^1$ be an $n$-arc containing $s$. If
  $s$ is contained in two $n$-arcs, i.e., if $s\in \A^n$, we choose
  $a^n(s)$ as the $n$-arc having $s$ as the right endpoint. Similarly
  let $a^n(t)\subset S^1$ be an $n$-arc containing $t$. If $t\in
  \A^n$, let $a^n(t)$ be the $n$-arc with $t$ as the left endpoint.

  \smallskip
  For each $n$ the points $s_j,t_j$ are in the interiors of
  $a^n(s),a^n(t)$ respectively for sufficiently large $j$. Since
  $s_j\Sim{G}t_j$ it follows that $a^n(s),a^n(t)$ are contained in the
  same gap $G^n\in\G^n_{\wt}$. 
  Thus $s,t\in [(G^n)]$ proving the claim. 
  
  \medskip
  Consider now sequences $(s^n),(t^n)\subset S^1$, where $s^n\to s,
  t^n\to t$, such that $s^n\Approx{\wt} t^n$ for all $n$. Thus by Lemma
  \ref{lem:prop_approx} (\ref{item:prop_approx3}) there are $s^n_j\in
  S^1$ such that 
  \begin{equation*}
    s^n=s^n_1\;\Sim{G}\; s^n_2\;\Sim{G} \;\dots\; \Sim{G}\; s^n_N=t^n.
  \end{equation*}
  Here $N\in \N$ is \emph{independent} of $n$. 
  By taking subsequences we can assume that $s^n_j\to s_j$ as $n\to
  \infty$, for all $j$. From the previous claim it follows that
  $s_j\Sim{G} s_{j+1}$ (for $j=1,\dots N-1$). 
  Thus
  \begin{equation*}
    s = s_1\;\Sim{G}\; \dots \;\Sim{G}s_N=t, 
  \end{equation*}
  meaning $s\Approx{\wt} t$. 

\end{proof}

\subsection{The white polynomial}
\label{sec:white-polynomial}

A.\ Poirier \cite{Poirier}, extending work of Bielefeld-Fisher-Hubbard
\cite{MR1149891}, has shown that postcritically finite polynomials
admit a combinatorial classification in terms of external rays. The
result is paraphrased here, not in full generality, but only in the
relevant case at hand.

\begin{poiriers_theorem*}[\cite{Poirier}, \cite{MR1149891}]
  Let the sets $[\alpha^1_j]_{1,\wt} \subset \Q/\Z\subset \R/\Z=S^1$,
  $j=1,\dots, m$ form a \emph{critical portrait} as in
  Definition \ref{def:critical_portrait}. Then there is a unique
  monic, 
  centered, postcritically finite polynomial $P_{\wt}$ such that
  \begin{enumerate}
    \renewcommand{\theenumi}{Poi \arabic{enumi}}
  \item 
    \label{item:Poi1}
    the equivalence realtion $\Approx{\wt}$ (as defined in the last
    section) is the equivalence relation induced by the
    Carath\'{e}odory semi-conjugacy of $P_\wt$, meaning that
    \begin{equation*}
      s\Approx{\wt} t \Leftrightarrow \sigma_\wt(s)=\sigma_\wt(t); 
    \end{equation*}
    where $\sigma\colon S^1=\R/\Z\to \partial \K_\wt =\J_\wt$ is
    defined as in Section~\ref{sec:carath-semi-conjugacy-polyn}. 
    This is \cite[Theorem~1.13]{Poirier} and
    \cite[Proposition~7.7]{Poirier}. In particular each set
    $\sigma([\alpha]_\approx)$ is a single point.


    \smallskip
  \item 
    \label{item:Poi2}
    If $[\alpha^n]_{n,\wt}$ is of Fatou-type, then all points in
    $\sigma([\alpha^n]_{n,\wt})$ are in the boundary of the \emph{same}
    bounded Fatou component of $P_{\wt}$. Distinct sets $[\alpha^n]_{n,\wt},
    [\tilde{\alpha}^n]_{n,\wt}$ are in the boundaries of \emph{distinct}
    bounded Fatou components. This is
    \cite[Proposition~8.4]{Poirier}. 
    Furthermore for each bounded
    Fatou component $A$ there is a 
    $\alpha^n\in \A^n$ such that $\sigma(\alpha^n)\in \clos A$, where
    $[\alpha^n]_{n,\wt}$ is of Fatou-type.  

    \smallskip
    Since $P_{\wt}^{-1}(\sigma(\A^n))= \sigma(\A^{n+1})$ it follows that
    (for every Fatou-type class $[\alpha^n]_{n,\wt}$)
    \begin{equation*}
      \sigma([\alpha^n]_{\infty,\wt}) 
      \text{ is \emph{dense}}
    \end{equation*}
    in the boundary of the (bounded) Fatou component $A$ satisfying
    $\sigma([\alpha^n]_{n,\wt})\subset \partial A$.

    \smallskip
  \item 
    \label{item:Poi3}
    For each gap $G^n\in\G^n_{\wt}$ the set 
    \begin{align*}
      &\sigma(G^n) \text{ is a \emph{connected} subset of
        the Julia set of }P_{\wt}. 
      \intertext{The images of two disjoint gaps $G^n,\widetilde{G}^n\in
        G^n_{\wt}$ are 
        disjoint:} 
      &G^n \cap \widetilde{G}^n =\emptyset \quad
      \Rightarrow \quad
      \sigma(G^n\cap S^1)\cap \sigma(\widetilde{G}^n\cap
      S^1)=\emptyset;
    \end{align*}
    see \cite[Section~5]{Poirier}. 
  \end{enumerate}
\end{poiriers_theorem*}

From now on the \emph{white polynomial} $P_{\wt}$ will be the one
obtained from Poirier's Theorem from the sets $[\alpha^1]_{1,\wt}$,
$\alpha^1,\in \A^1$.

\subsection{Outline of the proof of Poirier's Theorem}
\label{sec:outl-proof-poir}

Poirier's definition of the critical portrait of a postcritically finite
polynomial is slightly different from ours. This is due to the fact
that he describes general such polynomials, not just ones with
``separated Fatou set'' (see Proposition \ref{prop:Fatou_Pw}) as
considered here. For the convenience of the reader we give a very
brief outline of the proof of the main result from Poirier's Theorem,
namely the existence of the polynomial $P_{\wt}$. 

\smallskip
Consider a topological polynomial, i.e., a Thurston map $P\colon S^2
\to S^2$ such that $P^{-1}(\infty)= \infty$. It is well known that $P$
is ``Thurston equivalent'' to a polynomial if and only if it has no
``Levy cycle'' (Theorem 5.4 and Theorem 5.5 in \cite{MR1149891}). A
Levy cycle is a Jordan curve $\Gamma\subset S^2\setminus \post(P)$
such that 
\begin{itemize}
\item each component of $S^2\setminus \Gamma$ contains at least
  two postcritical points and
\item some component $\Gamma'$ of $P^{-j}(\Gamma)$ is isotopic rel.\
  $\post(P)$ to $\Gamma$ for some $j$; and the map
  \begin{equation*}
    P^j\colon \Gamma'\to \Gamma \text{ is of degree $1$.}
  \end{equation*}
\end{itemize}
To prove Poirier's Theorem (in our special case) one constructs a
(postcritically finite) topological polynomial from the critical
portrait. For each $\alpha^1\in \A^1$ there is an ``extended external
ray'' $R(\alpha^1)$ that is mapped by $P$ to $R(\mu(\alpha^1))$. 
The extended external
rays associated to the angles of one equivalence class
$[\alpha^1]_{1,\wt}$ intersect 
in a point. Assume there is a Levy cycle $\Gamma$ (i.e., $P$ is not
equivalent to a polynomial). We can choose $\Gamma$ in such a way that
$\Gamma$ intersects no preperiodic extended external ray (Lemma 8.7 in
\cite{MR1149891}). From the last property of the critical portrait
(Definition \ref{def:critical_portrait}) it follows that two
postcritical points are separated by some preperiodic extended
external rays. Thus $\Gamma$ contains at most one postcritical point
in its interior, giving a contradiction. 

\subsection{The Fatou set of $P_{\wt}$}
\label{sec:fatou-set-pw}

Here we show that the Fatou set of $P_{\wt}$ is ``separated''.
\begin{prop}
  \label{prop:Fatou_Pw}
  The Fatou set of $P_{\wt}$ has the following property.
  \begin{itemize}
  \item The closures of two distinct bounded components
    $A_1,A_2$ of the Fatou set of $P_{\wt}$ are disjoint,
    \begin{equation*}
      \clos A_1 \cap \clos A_2 =\emptyset.
    \end{equation*}
  \item No bounded Fatou component of $P_{\wt}$ contains a point
    $\sigma([\alpha^n]_{n,\wt})$, where
    $[\alpha^n]_{n,\wt}$ is of 
    Julia-type, in its boundary (recall from (\ref{item:Poi1}) and Lemma
    \ref{lem:prop_approx} (\ref{item:propimD1}) that $\sigma$ maps
    $[\alpha^n]_{n,\wt}$ to a single point).  
  \end{itemize}
\end{prop}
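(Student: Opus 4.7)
My plan is to derive both statements in tandem from Poirier's property (Poi 3) (disjoint gaps have disjoint connected $\sigma$-images in $\J$) together with the combinatorial gap-separation property (\ref{item:propDcap}) of Lemma~\ref{lem:prop_Ds}. The key observation is that two $\Sim{n,w}$-classes which are already distinct at the first level where they can be compared -- Julia-type versus Fatou-type, or Fatou-type classes associated to distinct bounded Fatou components -- become separated by pairwise disjoint gaps at all sufficiently large levels, whereupon (Poi 3) forces their $\sigma$-images to lie in disjoint closed connected subsets of $\J$.

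Concretely, for the second statement I will argue by contradiction. Assuming $[\alpha^n]_{n,w}$ is of Julia-type and $x := \sigma(\alpha^n) \in \clos \F$ for a bounded Fatou component $\F$, (Poi 2) combined with the classical eventual-periodicity of Fatou components of postcritically finite polynomials produces a periodic Fatou-type class $[\beta^1]_{1,w}$ with $\sigma([\beta^1]_{\infty,w})$ dense in $\partial \F$ (after possibly replacing $\F$ by another component in its $P_w$-cycle). Upgrading $[\beta^1]_{1,w}$ to level $n$ via (\ref{item:Ln5}), the classes $[\alpha^n]_{n,w}$ (Julia-type) and $[\beta^1]_{n,w}$ (Fatou-type) are distinct $\Sim{n,w}$-classes, so (\ref{item:propDcap}) yields $n_1 \geq n$ such that for all $m \geq n_1$ no gap in $\G^m_w$ contains points from both $[\alpha^n]_{m,w}$ and $[\beta^1]_{m,w}$. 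I then choose $\beta^{(j)} \in [\beta^1]_{m_j,w}$ with $m_j \to \infty$ and $\sigma(\beta^{(j)}) \to x$ (by density), and consider the finite family $\mathcal{U}_j \subset \G^{m_j}_w$ of gaps whose closures contain $\alpha^n$. Assuming that every gap in $\mathcal{U}_j$ is actually disjoint (as a closed subset of $S^2_w$) from the gap $G^{m_j}_\beta \in \G^{m_j}_w$ containing $\beta^{(j)}$, (Poi 3) gives $\bigl(\bigcup_{G \in \mathcal{U}_j} \sigma(G \cap S^1)\bigr) \cap \sigma(G^{m_j}_\beta \cap S^1) = \emptyset$; since the first union is a connected closed neighborhood of $x$ in $\J$ that shrinks to $\{x\}$ as $m_j \to \infty$ (using that nested gap sequences through the Julia-type angle $\alpha^n$ are non-periodic, hence their $\sigma$-images shrink), $\sigma(\beta^{(j)})$ stays uniformly bounded away from $x$, contradicting $\sigma(\beta^{(j)}) \to x$.

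The first statement follows by the same argument, replacing the Julia-type class with a second Fatou-type class associated to a distinct bounded Fatou component $\F_2$, which by (Poi 2) is automatically distinct as a $\Sim{n,w}$-class from the one for $\F_1$. The hard part will be justifying the disjointness assumption on gap families: two distinct white $m$-gaps can share a boundary leaf, so (Poi 3) does not immediately apply. I would handle this by recursively applying (\ref{item:propDcap}) to the vertex class of any shared leaf -- which by the preceding separation must be a third distinct $\Sim{m,w}$-class -- and passing to a still higher level; by the finite chain bound $N \leq 2^{d-1}$ of Lemma~\ref{lem:prop_approx}~(\ref{item:prop_approx3}), this recursion terminates after at most $N$ steps and produces genuinely disjoint gap families at a sufficiently high level, as required.
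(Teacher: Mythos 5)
You have assembled the right ingredients --- (\ref{item:Poi2}), (\ref{item:Poi3}), property (\ref{item:propDcap}), and the observation that adjacent gaps sharing a boundary leaf block a naive application of (\ref{item:Poi3}) --- but the contradiction step in your second argument does not go through as written. You let the level $m_j$ tend to infinity, so the sets $U_j=\bigcup_{G\in\mathcal{U}_j}\sigma(G\cap S^1)$ form a \emph{shrinking} sequence of neighborhoods of $x$; knowing $\sigma(\beta^{(j)})\notin U_j$ for each $j$ therefore gives no uniform lower bound on the distance from $\sigma(\beta^{(j)})$ to $x$ (compare a neighborhood of radius $1/j$ and a point at distance $2/j$), so no contradiction with $\sigma(\beta^{(j)})\to x$ is obtained. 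The level must be \emph{frozen}: fix one $m$ large enough that (\ref{item:propDcap}) separates the two classes, show that the \emph{entire} set $\sigma([\beta]_{\infty,w})$ --- hence its closure $\partial\F$ --- lies in the $\sigma$-image of the $\beta$-side gap union at level $m$, and that this image misses $x$. Moreover, your assertion that $U_j$ is a neighborhood of $x$ in $\J$ is itself the nontrivial point: it amounts to showing that $\sigma(\widetilde{G}\cap S^1)\not\ni x$ for every gap $\widetilde{G}$ outside the family, which cannot be read off from (\ref{item:Poi3}) alone, precisely because such a $\widetilde{G}$ may share a boundary leaf with a gap of the family and hence fail to be disjoint from it.

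The paper resolves both difficulties at once with Lemma \ref{lem:Dm_cpt_cont}: passing from level $m$ to level $m+n_0$ makes the gap union around a class \emph{compactly} contained in the previous one, so any gap outside the family is genuinely disjoint from the level-$(m+n_0)$ gaps of the family and (\ref{item:Poi3}) applies cleanly. This yields the exact identifications $\sigma^{-1}(\partial\F)=\bigcap_m\bigcup_j G^m_j\cap S^1$ (Lemma \ref{lem:sigma_Fatou}) and $\sigma^{-1}(v)=\bigcup[(G^m)]$ (Corollary \ref{cor:JuliaType}), after which both bullets follow directly from (\ref{item:propDcap}): distinct classes have eventually disjoint gap families, hence disjoint $\sigma$-preimages, hence disjoint images. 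Your proposed recursion on shared leaves, bounded by $N\leq 2^{d-1}$, is aimed at the same disjointness problem but is not worked out and becomes unnecessary once the compact-containment lemma is in place. I recommend replacing the sequential contradiction by this fixed-level, preimage-characterization argument.
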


We need some preparation to prove this proposition. The key is an
explicit description of the set of angles in $S^1$ that are mapped by
$\sigma$ to 
the boundary of a given bounded Fatou component/critical point (or
more generally $n$-vertex).

\smallskip
Fix an $n$-angle $\alpha^n\in \A^n$. We will consider the equivalence
class $[\alpha^n]_{m,\wt}$ for some $m\geq n$. 
Let $G^m_1,\dots, G^m_{N_m}\in \G^m_{\wt}$ be the gaps intersecting
$[\alpha^n]_{m,\wt}$.  
Here $N_m= \# [\alpha^n]_{m,\wt}$. 
\begin{lemma}
  \label{lem:Dm_cpt_cont}
  In the setting as above 
  \begin{equation*}
    \bigcup_j G^{m+n_0}_j\cap S^1 \text{ is compactly contained in } 
    \bigcup_j G^m_j\cap S^1,
  \end{equation*}
  for all $m\geq n_0$, where $n_0$ is the constant from
  \upshape{(\ref{item:propDcap})}.  
\end{lemma}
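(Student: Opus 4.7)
My plan is to show (I) the inclusion $\bigcup_j G^{m+n_0}_j\cap S^1\subseteq \bigcup_j G^m_j\cap S^1$ and (II) that no $S^1$-boundary point of the latter lies in the former; together these give compact containment. Step (II) will fall out of (\ref{item:propDcap}), while (I) is the more delicate piece.

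For (II), $\bigcup_j G^m_j\cap S^1$ is a finite union of $m$-arcs, so its boundary points in $S^1$ must be $m$-angles. If such a boundary point $\beta$ belonged to $[\alpha^n]_{m,w}$, it would be a vertex of $L:=L([\alpha^n]_{m,w})$; since each $m$-angle lies on at most one non-trivial leaf of $\LC^m_w$, the two $m$-arcs adjacent to $\beta$ would sit in the two succeeding list-gaps at $\beta$, making $\beta$ interior rather than a boundary point. Thus $\beta\notin[\alpha^n]_{m,w}$, and $(\beta,\alpha^n)\in\A^m\times\A^m$ is a $\Sim{m,w}$-inequivalent pair. Applying (\ref{item:propDcap}) with its role of ``$n$'' played by our $m$, no $(m+n_0)$-gap contains points from both $[\beta]_{m+n_0,w}$ and $[\alpha^n]_{m+n_0,w}$. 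Were $\beta$ to lie in some list-gap $G^{m+n_0}_j$, that gap would contain $\beta$ (from $[\beta]_{m+n_0,w}$) together with a point of $[\alpha^n]_{m+n_0,w}$ by definition of the list---a contradiction.

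For (I), each $(m+n_0)$-gap sits in a unique $m$-gap by iterating (\ref{item:propGnGn+1}); I must show the $m$-gap $G$ containing $G^{m+n_0}_j$ is adjacent to $L$. Let $e$ be the edge of $L':=L([\alpha^n]_{m+n_0,w})$ on $\partial G^{m+n_0}_j$, with succeeding endpoints $\beta,\tilde\beta\in[\alpha^n]_{m+n_0,w}$. Restricting ($\LC^n$ \ref{item:Ln5}) to $\A^m$ yields $[\alpha^n]_{m+n_0,w}\cap\A^m=[\alpha^n]_{m,w}$; so if either endpoint of $e$ lies in $\A^m$ it actually lies in $V(L)$, and the one-non-trivial-leaf-per-$m$-angle principle then forces $G$ to be adjacent to $L$ at that endpoint. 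In the remaining case, both $\beta,\tilde\beta$ are new $(m+n_0)$-angles in $\A^{m+n_0}\setminus\A^m$, and I would invoke the tile-gap dictionary (\ref{item:propLDtoX}) together with the compatibility of the connections at $v:=\gamma^m(\alpha^n)$: the block $b'\in\pi^{m+n_0}_w(v)$ attached to $[\alpha^n]_{m+n_0,w}$ should refine the block $b\in\pi^m_w(v)$ attached to $[\alpha^n]_{m,w}$, so every white $(m+n_0)$-tile incident to $b'$ sits inside a white $m$-tile incident to $b$, delivering $G^{m+n_0}_j\subseteq G^m_k$ for some $k$.

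The main obstacle will be this last case of Step (I): verifying the refinement of the connection blocks of $\pi^{m+n_0}_w(v)$ over those of $\pi^m_w(v)$ is not merely bookkeeping and requires reaching back into how the lift of the level-$1$ connection to level $n$ is carried out in the preceding paper.
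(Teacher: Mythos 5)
Your decomposition into (I) an inclusion and (II) boundary avoidance is exactly right, and your Step (II) is the paper's argument verbatim: boundary points of $\bigcup_j G^m_j\cap S^1$ are $m$-angles not $\Sim{m,w}$-equivalent to $\alpha^n$, and (\ref{item:propDcap}) then excludes them from every $G^{m+n_0}_j$. The problem is the case of Step (I) that you flag yourself: when both endpoints of the leaf-edge lie in $\A^{m+n_0}\setminus\A^m$, your proposed route through a ``refinement of connection blocks'' $\pi^{m+n_0}_w(v)$ over $\pi^m_w(v)$ is not established anywhere in the paper, and via the tile--gap dictionary (\ref{item:propLDtoX}) it is essentially a restatement of the containment $G^{m+n_0}_j\subset G^m_i$ you are trying to prove; as written this case remains open.

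The gap closes with tools already at hand, and this is how the paper does it. By the inductive construction (Theorem \ref{thm:LnLn+1} together with ($\LC^n$ \ref{item:Ln5})), every new angle of $[\alpha^n]_{k+1,w}$ is produced inside a $k$-gap meeting $[\alpha^n]_{k,w}$, so by induction $[\alpha^n]_{k,w}\subset\bigcup_i G^m_i$ for all $k\geq m$. Now take any list-gap $G^{m+n_0}_j$ and a point $\beta\in G^{m+n_0}_j\cap[\alpha^n]_{m+n_0,w}$. By (\ref{item:propGnGn+1}) $G^{m+n_0}_j$ lies in a unique $m$-gap $G$, and $\beta\in G\cap\bigcup_i G^m_i$. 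If $\beta\notin\A^m$ it lies in the interior of a single $m$-arc, so $G$ is the unique $m$-gap containing $\beta$ and hence equals some $G^m_i$; if $\beta\in\A^m$ then $\beta\in[\alpha^n]_{m+n_0,w}\cap\A^m=[\alpha^n]_{m,w}$ by ($\LC^n$ \ref{item:Ln5}), so $G$ meets $[\alpha^n]_{m,w}$ and is again a list-gap. This disposes of both cases of your Step (I) without any appeal to the connection-lifting machinery of the preceding paper.
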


\begin{proof}
  Note first that every $m$-angle in $[\alpha^n]_{m,\wt}$ is
  contained in two $m$-arcs. Thus $[\alpha^n]_{m, \wt}$ is contained
  in the interior of $\bigcup_j G^m_j\cap S^1$.

  From Theorem~\ref{thm:LnLn+1} and ($\LC^n$~\ref{item:Ln5})) it
  follows that $[\alpha^n]_{i,\wt}\subset \bigcup_j G^m_j$ for all
  $i\geq m$ by construction

  \smallskip
  Every boundary point of $\bigcup_j G^m_j\cap S^1$ is a
  point $\tilde{\alpha}^m\in \A^m$ not equivalent (with respect to
  $\Sim{m,\wt}$) to $\alpha^n$. The statement follows from
  (\ref{item:propDcap}).  
\end{proof}

\begin{lemma}
  \label{lem:sigma_Fatou}
  Let $A$ be a bounded component of the Fatou set of $P_{\wt}$, $\alpha^n\in
  \A^n$ such that $\sigma(\alpha^n)\in \partial A$. The gaps
  $G^m_j=G^m_j(\alpha^n)\in \G^m_{\wt}$ are the ones 
  intersecting $[\alpha^n]_{m,\wt}$ as before. Then
  \begin{equation*}
    \sigma^{-1}(\partial A)= \bigcap_m \bigcup_j G^m_j \cap S^1.
  \end{equation*}
\end{lemma}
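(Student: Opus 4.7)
\emph{Plan.} Let $\F$ be the bounded Fatou component of $P_w$ with $\sigma([\alpha^n]_{n,w})\subset \partial \F$, so $[\alpha^n]_{n,w}$ is of Fatou-type. The argument rests on a preliminary combinatorial claim:
\[
  [\alpha^n]_{\infty,w}\subset \bigcup_j G^m_j(\alpha^n)\cap S^1 \qquad\text{for every } m\geq n.
\]
To prove this I fix $\tilde\beta\in[\alpha^n]_{k,w}$ with $k\geq m$ and take a chain of succeeding $k$-angles $\alpha^n=\gamma_0,\dots,\gamma_N=\tilde\beta$. By Theorem \ref{thm:LnLn+1} consecutive pairs lie in a common $(k-1)$-gap, and iterating (\ref{item:propGnGn+1}) each such $(k-1)$-gap is contained in a unique $m$-gap $H^m_i\ni\gamma_i,\gamma_{i+1}$. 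Starting with $H^m_0\ni\alpha^n$ I argue inductively that every $H^m_i$ is one of the $G^m_j$: when $H^m_i\neq H^m_{i+1}$ the common point $\gamma_{i+1}$ must lie in $\A^m$, and ($\LC^n$ \ref{item:Ln5}) forces $\gamma_{i+1}\in[\alpha^n]_{k,w}\cap\A^m=[\alpha^n]_{m,w}$, so $H^m_{i+1}$ meets $[\alpha^n]_{m,w}$.

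For the inclusion $(\supseteq)$, given $s\in\bigcap_m\bigcup_j G^m_j\cap S^1$, the preliminary claim and (\ref{item:propGnGn+1}) allow me to select (via K\"onig's lemma) a nested sequence of gaps $G^m_{j(m)}\supset G^{m+1}_{j(m+1)}\supset\cdots$ each containing $s$. Choosing $\beta^m\in[\alpha^n]_{m,w}\cap G^m_{j(m)}$ and passing to a convergent subsequence $\beta^m\to\beta$, nesting gives $\beta\in\bigcap_m G^m_{j(m)}$, hence $s,\beta\in[(G^m_{j(m)})]$ and $s\Sim{G}\beta$. By (Poi \ref{item:Poi1}) $\sigma(s)=\sigma(\beta)$, and by continuity of $\sigma$ combined with (Poi \ref{item:Poi2}), $\sigma(\beta)=\lim_m\sigma(\beta^m)\in\partial\F$.

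For the inclusion $(\subseteq)$, suppose $\sigma(s)\in\partial\F$ but $s\notin\bigcup_j G^m_j$ for some $m$. Any $m$-gap $H$ containing $s$ is then disjoint from every $G^m_j$: if $H$ met some $G^m_j$, it could only be at an $m$-vertex $v\in\A^m$, but then $v\in[\alpha^n]_{m,w}$ would place $H$ itself into $\{G^m_j\}$, contradicting $s\notin\bigcup_j G^m_j$. On the other hand, the preliminary claim combined with the density of $\sigma([\alpha^n]_{\infty,w})$ in $\partial\F$ (Poi \ref{item:Poi2}) and compactness gives
\[
  \partial\F \subset \clos\sigma([\alpha^n]_{\infty,w}) \subset \sigma\Bigl(\bigcup_j G^m_j\cap S^1\Bigr).
\]
By (Poi \ref{item:Poi3}), $\sigma(H\cap S^1)$ is disjoint from every $\sigma(G^m_j\cap S^1)$, so $\sigma(s)\in\sigma(H\cap S^1)$ cannot lie in $\partial\F$, a contradiction. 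The hardest step is the preliminary chain argument, which must carefully handle the case when two consecutive $m$-gaps in the chain coincide versus share only an $m$-angle, invoking ($\LC^n$ \ref{item:Ln5}) to guarantee that any such shared angle lies in $[\alpha^n]_{m,w}$.
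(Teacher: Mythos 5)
Your ``$\supseteq$'' direction is essentially the paper's argument: the right-hand side contains $[\alpha^n]_{\infty,w}$, every point of it lies in some set $[(G^m)]$ accumulating on $[\alpha^n]_{\infty,w}$, and density of $\sigma([\alpha^n]_{\infty,w})$ in $\partial\F$ plus (\ref{item:Poi1}) finishes it. (Your K\"onig's-lemma step needs the observation that the $m$-gap containing a given $G^{m+1}_{j'}$ really does meet $[\alpha^n]_{m,w}$; this follows from ($\LC^n$ \ref{item:Ln5}) as in your preliminary claim, so it is recoverable.)

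The ``$\subseteq$'' direction, however, has a genuine gap. You assert that an $m$-gap $H\ni s$ with $s\notin\bigcup_j G^m_j$ must be \emph{disjoint} from every $G^m_j$, arguing that any common point would be an $m$-angle $v$ which ``must lie in $[\alpha^n]_{m,w}$''. That is false: by (\ref{item:propL5}) each gap $G^m_j$ meets $k$ leaves of $\LC^m_w$, only one of which corresponds to $[\alpha^n]_{m,w}$; the gap on the other side of any of the remaining $k-1$ leaves shares the endpoints of that leaf with $G^m_j$ without meeting $[\alpha^n]_{m,w}$, hence is a legitimate candidate for $H$. Since such an $H$ is \emph{not} disjoint from $G^m_j$, (\ref{item:Poi3}) cannot be applied, and $\sigma(H\cap S^1)\cap\sigma(G^m_j\cap S^1)\neq\emptyset$ (they share $\sigma(v)$), so you cannot conclude $\sigma(s)\notin\partial\F$. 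This is precisely the point where the paper invokes Lemma \ref{lem:Dm_cpt_cont}: one passes $n_0$ levels deeper, where the compact containment of $\bigcup_j G^{m+n_0}_j\cap S^1$ in $\bigcup_j G^m_j\cap S^1$ (a consequence of the separation property (\ref{item:propDcap})) guarantees that $\widetilde{G}^m\cap S^1$ is genuinely disjoint from each $G^{m+n_0}_j\cap S^1$, so that (\ref{item:Poi3}) applies and $\sigma(\widetilde{G}^m\cap S^1)$ misses $\partial\F\subset\bigcup_j\sigma(G^{m+n_0}_j\cap S^1)$. Without some use of (\ref{item:propDcap}) or Lemma \ref{lem:Dm_cpt_cont}, your argument does not close.
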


\begin{proof}
  The right hand side of the above expression is compact and contains
  all points of $[\alpha^n]_{\infty,\wt}$. Since the set
  $\sigma([\alpha^n]_{\infty,\wt})$ is dense in $\partial A$
  (\ref{item:Poi2}), it follows that $\sigma(\bigcap_m \bigcup_j G^m_j
  \cap S^1)\supset \partial A$. Note that
  \begin{equation*}
    \bigcap_m \bigcup_j G^m_j = \bigcup [(G^n)], 
  \end{equation*}
  where the union on the right hand side is taken over all sequences
  of white gaps $G^1\supset G^2\supset \dots$ such that $G^n\cap
  [\alpha^n]_{\infty,\wt}\neq \emptyset$ for all $n$. For each such
  sequence the point $\sigma([(G^n)])$ is an accumulation point of
  $\sigma([\alpha^n]_{\infty,\wt})$, thus
  \begin{equation*}
    \sigma\big(\bigcap_m \bigcup_j G^m_j \cap S^1\big)=\partial A.    
  \end{equation*}

  \smallskip
  Consider a gap $\widetilde{G}^m\in \G^m_{\wt}$ 
  that is distinct from all gaps $G^m_j$. From Lemma
  \ref{lem:Dm_cpt_cont} it follows that $\widetilde{G}^m\cap
  S^1$ and $G^{m+n_0}_j  \cap S^1$ are disjoint. By
  (\ref{item:Poi3}) it follows that these sets are mapped to disjoint
  sets by $\sigma$, thus $\sigma(\widetilde{G}^m\cap S^1)\cap
  \partial A=\emptyset$. This proves the claim. 
\end{proof}

The same argument as
above applies to Julia-type equivalence classes (see
Lemma~\ref{cor:sizes_Julia} and Lemma~\ref{lem:prop_approx} 
(\ref{item:prop_approx1})). 

\begin{cor}
  \label{cor:JuliaType}
  Let $[\alpha^n]_{n,\wt}$ ($\alpha^n\in \A^n$) be of \emph{Julia-type},
  $\sigma(\alpha^n)=v$. Then
  \begin{equation*}
    \sigma^{-1}(v)= \bigcup [(G^m)],
  \end{equation*}
  where the (finite) union is taken over all sets $[(G^m)]$ such that
  $G^m\cap [\alpha^n]_{m,\wt}\neq \emptyset$ for all $m$. 
\end{cor}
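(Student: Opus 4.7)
The plan is to mimic the proof of Lemma \ref{lem:sigma_Fatou}, substituting the bounded Fatou component $\F$ with the single point $v$, and exploiting two features of the Julia-type case: the classes $[\alpha^n]_{m,w}$ stabilize to a finite set as $m \to \infty$ (Lemma \ref{cor:sizes_Julia}), and this stable set is entirely contained in the $\approx$-class of $\alpha^n$ (Lemma \ref{lem:prop_approx}\,(\ref{item:prop_approx1})).

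By (\ref{item:Poi1}), $\sigma^{-1}(v) = [\alpha^n]_\approx$. Let $G^m_1,\dots,G^m_{N_m}$ denote the white $m$-gaps meeting $[\alpha^n]_{m,w}$. Since $[\alpha^n]_{m,w}$ is stationary for $m \geq n + m_0$, the counts $N_m$ are bounded and the sets $\bigcup_j G^m_j \cap S^1$ form a decreasing sequence, so
\[
\bigcap_m \bigcup_j G^m_j \cap S^1 \;=\; \bigcup [(G^m)],
\]
a \emph{finite} union over nested sequences $(G^m)$ satisfying $G^m \cap [\alpha^n]_{m,w} \neq \emptyset$ for all $m$. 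The remaining task is to identify this intersection with $[\alpha^n]_\approx$.

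For the inclusion $\supset$, given $s \in [(G^m)]$ of the above form, pick $\beta^m \in G^m \cap [\alpha^n]_{m,w}$; then $s \Sim{G} \beta^m$, and the Julia-type argument in the proof of Lemma \ref{lem:prop_approx}\,(\ref{item:prop_approx1}) gives $\beta^m \approx \alpha^n$, hence $\sigma(s) = v$. For the inclusion $\subset$, if $s$ lies outside the intersection, then for some $m$ it sits in a gap $\widetilde{G}^m$ distinct from all $G^m_j$; Lemma \ref{lem:Dm_cpt_cont} tells us $\widetilde{G}^{m+n_0} \cap S^1$, for the $(m+n_0)$-gap $\widetilde{G}^{m+n_0} \ni s$ contained in $\widetilde{G}^m$, is disjoint from $\bigcup_j G^{m+n_0}_j \cap S^1$. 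Applying (\ref{item:Poi3}) separates the $\sigma$-images, and since $v$ belongs to $\sigma(G^{m+n_0}_j \cap S^1)$ for the $j$ with $\alpha^n \in G^{m+n_0}_j$, we conclude $\sigma(s) \neq v$.

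The main obstacle I anticipate is justifying $[\alpha^n]_{m,w} \subset [\alpha^n]_\approx$ for every $m \geq n$, which requires observing that the Julia-type property is preserved under refinement of the equivalence relation: by (\ref{eq:eqtype}), $[\alpha^n]_{n,w}$ and $[\alpha^n]_{m,w}$ share the same type, so Lemma \ref{lem:prop_approx}\,(\ref{item:prop_approx1}) applies at level $m$ as well. With this in hand, the argument is an almost verbatim adaptation of the Fatou-type proof.
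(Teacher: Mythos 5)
Your proposal is correct and is precisely the argument the paper intends: the paper's entire proof of this corollary is the remark that the argument of Lemma \ref{lem:sigma_Fatou} applies, citing Lemma \ref{cor:sizes_Julia} and Lemma \ref{lem:prop_approx} (\ref{item:prop_approx1}), which is exactly what you carry out. The only point to tighten is the step ``pick $\beta^m\in G^m\cap[\alpha^n]_{m,w}$; then $s\Sim{G}\beta^m$'': the relation $\Sim{G}$ requires $\beta^m\in G^{m'}$ for \emph{all} $m'$, which you should obtain by noting that the nonempty finite sets $G^{m'}\cap[\alpha^n]_{n+m_0,w}$ are nested and hence share a common point $\beta$ with $s\Sim{G}\beta$ and $\sigma(\beta)=v$.
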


Proposition \ref{prop:Fatou_Pw} now follows using
(\ref{item:propDcap}). 

\subsection{The equivalence relation $\Approx{\F,\wt}$}
\label{sec:equiv-relat-simp}

We consider the
equivalence relation $\Approx{\F,\wt}$ obtained from the Carath\'{e}odory
semi-conjugacy, together with the identification of Fatou components as in
(\ref{eq:def_approxFw}). Our main objective is to show the following. 
\begin{prop}
  \label{prop:SimPFSimw}
  We have
  \begin{equation*}
    \Approx{\F,\wt}\;=\;\Sim{\wt}.
  \end{equation*}
\end{prop}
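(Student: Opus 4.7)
The plan is to prove the two inclusions $\Sim{w}\subseteq \Approx{\F,w}$ and $\Approx{\F,w}\subseteq \Sim{w}$ separately, exploiting that $\Sim{w}$ is the smallest closed equivalence relation containing $\Sim{\infty,w}$ (Definition \ref{def:simw_simb}).

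For $\Sim{w}\subseteq \Approx{\F,w}$, I would first show $\Sim{\infty,w}\subseteq \Approx{\F,w}$ and then argue that $\Approx{\F,w}$ is closed. Given $s\Sim{n,w} t$ with $s,t\in [\alpha^n]_{n,w}$, there are two cases: if this class is of Julia-type, Lemma \ref{lem:prop_approx}(\ref{item:prop_approx1}) places $[\alpha^n]_{n,w}$ inside a single $\approx$-class, so $\sigma_w(s)=\sigma_w(t)$; if it is of Fatou-type, property (\ref{item:Poi2}) of Poirier's Theorem puts $\sigma_w(s)$ and $\sigma_w(t)$ in the closure of the same bounded Fatou component of $P_w$. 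To verify that $\Approx{\F,w}$ is closed I would invoke Proposition \ref{prop:Fatou_Pw}: the equivalence classes are the closed sets $\sigma_w^{-1}(\clos \F)$ for bounded Fatou components $\F$ together with the singleton fibres $\sigma_w^{-1}(x)$, and they are pairwise disjoint. For closedness of the relation as a subset of $S^1\times S^1$, in a sequence $s_j\Approx{\F,w} t_j$ with $s_j\to s$, $t_j\to t$, either the witnessing Fatou closure is the same for infinitely many $j$ (so $s\Approx{\F,w} t$ by closedness of $\clos \F$) or infinitely many distinct bounded Fatou components occur, in which case their diameters tend to zero (standard for postcritically finite polynomials with separated Fatou sets, thanks to the expanding behavior of $P_w$ on $\J_w$), forcing $\sigma_w(s)=\sigma_w(t)$.

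For $\Approx{\F,w}\subseteq \Sim{w}$ I would match the equivalence classes of the two relations directly. For a Fatou-type class $[\alpha^n]_{n,w}$ associated with a bounded Fatou component $\F$, the central claim is $\clos_{S^1}[\alpha^n]_{\infty,w}=\sigma_w^{-1}(\clos \F)$. Inclusion $\subseteq$ is immediate. For $\supseteq$, Lemma \ref{lem:sigma_Fatou} places any $s\in \sigma_w^{-1}(\partial \F)$ in a nested sequence of white gaps $G^m_{j_m}$ each meeting $[\alpha^n]_{m,w}$; the $m$-arcs of $G^m_{j_m}$ containing $s$ have diameter $\lesssim d^{-m}$, and since the Fatou class grows without bound (Lemma \ref{cor:sizes_Julia} together with Lemma \ref{lem:sizes_an}), new Fatou-corners subdivide these arcs at each refinement, enabling extraction of a sequence in $[\alpha^n]_{\infty,w}$ converging to $s$. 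Then any pair $s,t\in \sigma_w^{-1}(\clos \F)$ is connected under $\Sim{w}$ via limits of $\Sim{\infty,w}$-related pairs. For a Julia-type class $[\alpha^n]_{n,w}$ with image $v=\sigma_w(\alpha^n)$, Lemma \ref{cor:sizes_Julia} ensures $[\alpha^n]_{\infty,w}$ stabilizes to a finite set; combining this with Corollary \ref{cor:JuliaType} and the second statement of Proposition \ref{prop:Fatou_Pw} (which rules out $\sigma_w$-images of Julia-type classes lying on any bounded Fatou boundary), any element of $\sigma_w^{-1}(v)$ outside the stable class would have to arise as a Fatou-type limit corner of a nested gap sequence, which is impossible. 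Hence $\sigma_w^{-1}(v)=[\alpha^n]_{\infty,w}$ is already a single class of $\Sim{\infty,w}$, and no additional closure is required.

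The hard part will be the density identity $\clos_{S^1}[\alpha^n]_{\infty,w}=\sigma_w^{-1}(\clos \F)$ in the Fatou case. Poirier's Theorem (\ref{item:Poi2}) supplies density of $\sigma_w([\alpha^n]_{\infty,w})$ in $\partial \F$, but lifting this density to $S^1$ demands controlling how the newly appearing elements of $[\alpha^n]_{m+1,w}\setminus [\alpha^n]_{m,w}$ are distributed along the $m$-arcs near an arbitrary point $s\in \sigma_w^{-1}(\partial \F)$; I expect this to rely on the $d^{-m}$-shrinking of $m$-arcs in concert with the compact-containment estimate of Lemma \ref{lem:Dm_cpt_cont}.
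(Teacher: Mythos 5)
Your first inclusion ($\Sim{w}\,\leq\,\Approx{\F,w}$) is essentially the paper's argument: establish $\Sim{\infty,w}\,\leq\,\Approx{\F,w}$ and then use that $\Approx{\F,w}$ is closed, so that it dominates the closure $\Sim{w}$ of $\Sim{\infty,w}$. Your closedness argument via shrinking diameters of bounded Fatou components is a legitimate alternative to the paper's Lemma \ref{lem:approxFclosed} (the paper itself invokes the same subhyperbolicity fact in the proof of Lemma \ref{lem:PwF_simFw}).

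The second inclusion is where the genuine gap lies. The paper's proof rests on one short observation you do not have: for any nested sequence of white gaps, $[(G^n)]=\bigcap G^n\cap S^1$ consists of at most $k$ points $x_0,\dots,x_{k-1}$, each $x_j$ being the common limit of the endpoints $\alpha^n_j,\beta^n_j$ of the $j$-th $n$-arc of $G^n$; since $\beta^n_j\Sim{n,w}\alpha^n_{j+1}$ for every $n$ and $\Sim{w}$ is closed, the limits satisfy $x_j\Sim{w}x_{j+1}$, so \emph{all} of $[(G^n)]$ lies in a single $\Sim{w}$-class. Combined with Lemma \ref{lem:Fatou_classes_ApproxF} this yields $\Approx{\F,w}\,\leq\,\Sim{w}$ with no density considerations whatsoever. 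You instead aim for the set identity $\clos_{S^1}[\alpha^n]_{\infty,w}=\sigma_w^{-1}(\clos\F)$, which is strictly stronger, and your sketch of it is exactly where the argument breaks: the assertion that ``new Fatou-corners subdivide these arcs at each refinement'' is unjustified. By Case~2 of the proof of Lemma \ref{lem:sizes_an}, the new points of $[\alpha^n]_{m+1,w}$ appear \emph{somewhere} in each gap $G^m_j$ meeting $[\alpha^n]_{m,w}$, but nothing controls into which of the $k$ $m$-arcs of $G^m_j$ they fall; also the two corner points of $[\alpha^n]_{m,w}$ lying in the gap $G^m_{j_m}\ni s$ need not be close to $s$, since $G^m_{j_m}\cap S^1$ consists of $k$ short arcs that may be spread around the circle. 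The identity is presumably true (it amounts to density of the corners in the boundary of the Fatou gap), but establishing it is substantial extra work that the paper's route avoids.

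The Julia-type half of your second inclusion contains an outright error: $\sigma_w^{-1}(v)$ is in general \emph{strictly larger} than the stabilized class $[\alpha^n]_{\infty,w}$. By Corollary \ref{cor:JuliaType} it equals $\bigcup[(G^m)]$, and each $[(G^m)]$ contains, besides the corner points at $v$, up to $k-2$ further limit points coming from the other ends of the nested gaps; these are typically not in $\A^{\infty}$ at all (this is precisely how the bound $(k-1)2^{d-1}$ in Lemma \ref{lem:prop_approx}~(\ref{item:prop_approx2}) arises). So ``no additional closure is required'' is false; those extra points are reached only as limits of $\Sim{m,w}$-paired angles, i.e., via the paper's claim about $[(G^n)]$ described above.
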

Recall that $\Sim{\wt}$ is the equivalence relation from Definition
\ref{def:simw_simb}. 
To prove this proposition some preparation is needed
first. Let us first note the following, which is an immediate
consequence of Proposition \ref{prop:Fatou_Pw}.

\begin{lemma}
  \label{lem:ApproxF_equiv}
  The relation $\Approx{\F,\wt}$ is an \emph{equivalence relation}. 
\end{lemma}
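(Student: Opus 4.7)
The plan is to verify the three defining axioms of an equivalence relation for $\Approx{\F,w}$. Reflexivity is immediate from $\sigma_w(s)=\sigma_w(s)$, and symmetry follows because each of the two defining clauses in \eqref{eq:def_approxFw} is itself symmetric in $s,t$. Thus the entire content of the lemma lies in transitivity.

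To establish transitivity, I would fix $s,t,u\in S^1$ with $s\Approx{\F,w} t$ and $t\Approx{\F,w} u$ and do a short case analysis on which clause of \eqref{eq:def_approxFw} witnesses each of the two relations. If at least one of the two witnesses is an equality $\sigma_w(s)=\sigma_w(t)$ or $\sigma_w(t)=\sigma_w(u)$, then one can simply substitute: the point $\sigma_w(t)$ (or $\sigma_w(s)$) can be replaced by the equal point, and $s\Approx{\F,w} u$ follows at once from whichever clause applies to the remaining pair.

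The only genuinely nontrivial case is when both witnesses come from the second clause, i.e., $\sigma_w(s),\sigma_w(t)\in\clos\F_1$ and $\sigma_w(t),\sigma_w(u)\in\clos\F_2$ for bounded Fatou components $\F_1,\F_2$ of $P_w$. Here $\sigma_w(t)\in\clos\F_1\cap\clos\F_2$. This is precisely where I invoke the first bullet of Proposition \ref{prop:Fatou_Pw}: closures of distinct bounded Fatou components are disjoint. Therefore $\F_1=\F_2$, so $\sigma_w(s),\sigma_w(u)\in\clos\F_1$, giving $s\Approx{\F,w} u$.

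The key (and essentially the only) obstacle is this separation property of the Fatou set, and the lemma should be one paragraph once Proposition \ref{prop:Fatou_Pw} is in hand. No further preparation is required.
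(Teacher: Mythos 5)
Your proof is correct and is exactly the argument the paper intends: the paper dispenses with the lemma by calling it ``an immediate consequence of Proposition \ref{prop:Fatou_Pw}'', and your case analysis (with the separation of closures of bounded Fatou components resolving the only nontrivial transitivity case) is just that consequence spelled out.
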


We write $[s]_{\F,\wt}:= \{t\in S^1\mid s\Approx{\F,\wt} t\}$ for equivalence
classes of $\Approx{\F,\wt}$. A description of them follows
immediately from Section \ref{sec:fatou-set-pw}. 
 
\begin{lemma}
  \label{lem:Fatou_classes_ApproxF}
  Consider $[s]_{\F,\wt}$ ($s\in S^1$). Either 
  \begin{itemize}
  \item 
    $[s]_{\F,\wt}\cap \A^\infty=\emptyset$, then 
    \begin{equation*}
      [s]_{\F,\wt} = [(G^m)],
    \end{equation*}
    for one sequence $G^1\supset G^2\supset \dots$ of white
    gaps. Note that in this case
    \begin{equation*}
      [s]_{\F,\wt} \text{ is \emph{compactly contained} in } G^n 
      \text{ for all }n. 
    \end{equation*}
  \item Or there is $\alpha^n\in [s]_{\F,\wt}$, $\alpha^n\in \A^n$. Then
    \begin{equation*}
      [s]_{\F,\wt}=[\alpha^n]_{\F,\wt}  = \bigcup [(G^m)],  
    \end{equation*}
    where the union is taken over all sets $[(G^m)]$ (as in
    (\ref{eq:defDn})),  satisfying
    $G^m\cap [\alpha^n]_{m,\wt}\neq \emptyset$ for all $m$. Again
    \begin{equation*}
      [s]_{\F,\wt} \text{ is \emph{compactly contained} in } 
      \bigcup_j G^m_j,
    \end{equation*}
    for all $m$, where the union is taken over all white $m$-gaps
    intersecting $[\alpha^n]_{m,\wt}$.  
  \end{itemize}
\end{lemma}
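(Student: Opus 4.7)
The plan is to analyse the two cases separately, using Poirier's Theorem (especially (\ref{item:Poi2})) together with Proposition \ref{prop:Fatou_Pw}, Lemma \ref{lem:prop_approx}, Lemma \ref{lem:sigma_Fatou}, and Corollary \ref{cor:JuliaType} to identify $[s]_{\F,w}$ with the claimed sets of gap-intersections.

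First, for the case $[s]_{\F,w} \cap \A^\infty = \emptyset$, I would observe that $\sigma_w(s)$ cannot lie in the closure of any bounded Fatou component $\F$: indeed, by (\ref{item:Poi2}) there would then exist a Fatou-type class $[\alpha^n]_{n,w}$ with $\sigma_w([\alpha^n]_{n,w})\subset \partial \F$, and hence $\alpha^n \in \sigma_w^{-1}(\clos \F) \subset [s]_{\F,w}$, contradicting the assumption. Consequently $[s]_{\F,w} = [s]_{\approx} = \sigma_w^{-1}(\sigma_w(s))$. Since $s\notin \A^\infty$, Lemma \ref{lem:propsimD} (\ref{item:propimD2}) gives a unique sequence $G^1\supset G^2\supset\dots$ with $s\in [(G^m)]$. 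I would then show $[s]_{\approx} = [(G^m)]$: any element is obtained by a chain $s=s_1\Sim{G} \dots \Sim{G} s_N = t$ (cf.\ (\ref{eq:defapprox})); by Lemma \ref{lem:propsimD} (\ref{item:propimD2}) each intermediate $s_j\notin \A^\infty$ lies in exactly one set $[(G^m)]$, and by assumption none of them is in $\A^\infty$, so the chain never switches sets. Finally, compact containment in $G^n$ follows since $[s]_{\F,w}$ is closed (as a $\sigma_w$-preimage of a point) and disjoint from $\partial G^n \cap S^1 \subset \A^n$.

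Second, for the case $[s]_{\F,w} \cap \A^\infty \neq \emptyset$, I would pick $\alpha^n \in [s]_{\F,w}\cap \A^n$ with $n$ minimal. Split according to the type of $[\alpha^n]_{n,w}$. If it is of \emph{Julia-type}, then by Proposition \ref{prop:Fatou_Pw} the point $\sigma_w(\alpha^n)$ is not on the boundary of any bounded Fatou component, so $[s]_{\F,w} = [\alpha^n]_{\approx} = \sigma_w^{-1}(\sigma_w(\alpha^n))$; Corollary \ref{cor:JuliaType} identifies this directly with $\bigcup [(G^m)]$ over sequences with $G^m\cap [\alpha^n]_{m,w}\neq \emptyset$. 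If it is of \emph{Fatou-type}, then by (\ref{item:Poi2}) there is a bounded Fatou component $\F$ with $\sigma_w(\alpha^n)\in \partial \F$, so $[s]_{\F,w} = \sigma_w^{-1}(\partial \F)$, which by Lemma \ref{lem:sigma_Fatou} equals $\bigcap_m \bigcup_j G^m_j \cap S^1$ where $G^m_1,\dots,G^m_{N_m}$ are the white $m$-gaps intersecting $[\alpha^n]_{m,w}$.

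The remaining step, and I expect the main technical point, is to rewrite the set $\bigcap_m \bigcup_j G^m_j \cap S^1$ as $\bigcup [(G^m)]$ where the union is over sequences $G^1\supset G^2\supset \dots$ with $G^m\cap [\alpha^n]_{m,w}\neq \emptyset$. The inclusion $\supseteq$ is immediate from the definition. For $\subseteq$, I would take $t$ in the intersection and run a König's lemma argument: at each level $m$ the set of finite descending chains $G^1\supset\dots\supset G^m$ with $t\in G^j\cap S^1$ and $G^j \cap [\alpha^n]_{j,w}\neq\emptyset$ for every $j\leq m$ is finite (by (\ref{item:propL4})) and non-empty (by hypothesis on $t$ and (\ref{item:propGnGn+1})), yielding an infinite descending chain $(\tilde G^m)$ whose intersection $[(\tilde G^m)]$ contains $t$. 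Compact containment of $[s]_{\F,w}$ in $\bigcup_j G^m_j$ is then immediate from Lemma \ref{lem:Dm_cpt_cont} applied at level $m+n_0$.
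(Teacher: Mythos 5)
Your proposal is correct and is precisely the derivation the paper intends: the paper gives no proof of this lemma, saying only that the description ``follows immediately from Section \ref{sec:fatou-set-p_w}'', and you assemble exactly those ingredients --- (\ref{item:Poi2}), Proposition \ref{prop:Fatou_Pw}, Lemma \ref{lem:sigma_Fatou}, Corollary \ref{cor:JuliaType}, and Lemma \ref{lem:Dm_cpt_cont} --- in the natural way. If anything you are more complete than the source, since the identity $\bigcap_m\bigcup_j G^m_j=\bigcup[(G^m)]$ that you establish via K\"onig's lemma is asserted without proof inside the paper's proof of Lemma \ref{lem:sigma_Fatou} (the only point worth adding is that the ancestor $G^{m}\supset G^{m+1}$ of a gap meeting $[\alpha^n]_{m+1,w}$ indeed meets $[\alpha^n]_{m,w}$, which follows from ($\LC^n$ \ref{item:Ln5}) and the proof of Theorem \ref{thm:LnLn+1}).
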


\begin{lemma}
  \label{lem:approxFclosed}
  The equivalence relation $\Approx{\F,\wt}$ is \emph{closed}. 
\end{lemma}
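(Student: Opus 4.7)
The plan is to verify criterion (CE \ref{item:simclosed}): the set $R := \{(s,t) \in S^1 \times S^1 \mid s \Approx{\F,w} t\}$ is closed in $S^1 \times S^1$. Write
\begin{equation*}
R = R_0 \cup \bigcup_{\F} R_{\F},
\end{equation*}
where $R_0 := \{(s,t) \mid \sigma_w(s) = \sigma_w(t)\}$, the union runs over bounded Fatou components $\F$ of $P_w$, and $R_{\F} := \sigma_w^{-1}(\clos \F) \times \sigma_w^{-1}(\clos \F)$. Continuity of $\sigma_w$ makes each piece closed individually; the real work is in showing that the (countable) union remains closed.

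Given a convergent sequence $(s_n, t_n) \to (s, t)$ with $s_n \Approx{\F,w} t_n$, I would pass to a subsequence on which either $(s_n, t_n) \in R_0$ for every $n$, or $(s_n, t_n) \in R_{\F_n}$ for every $n$ for some bounded Fatou components $\F_n$. The first case is immediate from continuity of $\sigma_w$. In the second, a further subsequence arranges either that $\F_n \equiv \F$ is constant, whereupon closedness of $\clos \F$ delivers $(s,t) \in R_{\F}$, or that the $\F_n$ are pairwise distinct.

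The main obstacle is the pairwise-distinct subcase, which I would handle by showing $\diam \F_n \to 0$; this forces $\abs{\sigma_w(s_n) - \sigma_w(t_n)} \to 0$, hence $\sigma_w(s) = \sigma_w(t)$ by continuity and $(s,t) \in R_0$. The diameter estimate is a consequence of two features of our postcritically finite polynomial $P_w$: every bounded Fatou component iterates eventually into one of finitely many super-attracting periodic Fatou components, so a sequence of pairwise distinct $\F_n$ must have unbounded generation; and iterated preimages of these finitely many periodic basins shrink uniformly in Euclidean diameter, by expansion of $P_w$ in its canonical orbifold metric, or equivalently by Koebe distortion applied away from the finite postcritical set. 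A purely combinatorial alternative would appeal to Lemma \ref{lem:Fatou_classes_ApproxF} together with Proposition \ref{prop:Fatou_Pw}, showing that the sets $\sigma_w^{-1}(\clos \F_n)$, being disjoint compact subsets of $S^1$ each compactly contained in a union of white $m$-gaps and correlated with higher and higher pullback level under $\mu_d$, must themselves shrink in $S^1$-diameter; property (\ref{item:propDcap}) is what drives that shrinkage.
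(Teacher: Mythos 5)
Your main argument is correct, but it takes a genuinely different route from the paper's. You decompose the relation as $R_0\cup\bigcup_{\F}R_{\F}$, extract subsequences, and reduce the only nontrivial case to a sequence of pairwise distinct bounded Fatou components $\F_n$, which you dispatch with the analytic fact that $\diam\F_n\to 0$ for a postcritically finite (subhyperbolic) polynomial. The paper never leaves the circle: it uses the description of the classes $[s]_{\F,w}$ as nested intersections $\bigcap_m\bigcup_j G^m_j\cap S^1$ of unions of white gaps, together with the compact-containment statement of Lemma \ref{lem:Fatou_classes_ApproxF}, and argues that for large $n$ the class $[s_n]_{\F,w}$ misses $\A^n$, hence lies in a single white $n$-gap, which is eventually forced to be one of the gaps $G^n_j$ defining $[s]_{\F,w}$; the limit $t$ then lies in $\bigcap_m\bigcup_j G^m_j=[s]_{\F,w}$. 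Your route is structurally cleaner (closedness of each piece plus a diameter estimate) but imports an external analytic input; the paper's stays inside the combinatorial framework it has already built. The imported fact is, however, entirely consistent with the paper, which invokes exactly the same shrinking of Fatou components later, in the proof of Lemma \ref{lem:PwF_simFw}.

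The one point I would not sign off on is your sketched ``purely combinatorial alternative'': the sets $\sigma_w^{-1}(\clos\F_n)$ need \emph{not} shrink in $S^1$-diameter. A single white $m$-gap consists of $k$ $m$-arcs which may be spread far apart on the circle (already visible for the $1$-gaps in the example of Section \ref{sec:laminations}), and a nested sequence of gaps can keep two of its arcs a definite distance apart whenever the corresponding set $[(G^m)]$ is a nontrivial class; so high pullback level does not force small $S^1$-diameter, and property (\ref{item:propDcap}) does not deliver it. What the paper's combinatorial argument establishes is not shrinkage but that $s$ and $t$ land in the same nested gap system, which is precisely membership in the same class of $\Approx{\F,w}$. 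Since this alternative is not load-bearing for you, your proof stands as written on its analytic leg.
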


\begin{proof}
  Consider a convergent sequence $s_n\to s$ in $S^1$. Let $t_n\in
  [s_n]_{\F,\wt}$ for all $n$, such that $t_n\to t$. We want to show that
  $s\Approx{\F,\wt}t$, i.e., $t\in [s]_{\F,\wt}$. This is clearly the case when
  $s_n=s$ is constant, since $[s]_{\F,\wt}$ is compact.

  Thus we can assume that $s_n\neq s$ for all $n$. Fix an $m$. Since
  $\A^m$ is a finite set it follows that $[s_n]_{\F,\wt}\cap
  \A^m=\emptyset$ for 
  sufficiently large $n$. Thus we can assume that $[s_n]_{\F,\wt} \cap
  \A^n = \emptyset$,   for all $n$. It follows that 
  \begin{equation*}
    [s_n]_{\F,\wt} \text{ is contained in a
      \emph{single} white $n$-gap $G^n$}. 
  \end{equation*}
  Assume first that $[s]_{\F,\wt}$ contains $\alpha^n\in \A^n$. Let
  $G^m_j\in \G^m_{\wt}$ be the gaps intersecting $[\alpha^n]_{m,\wt}$ as in
  Lemma \ref{lem:Fatou_classes_ApproxF}. Since $[s]_{\F,\wt}$ is compactly
  contained in $\bigcup_j  G^m_j$ it follows that $s_n$ is in the
  interior of $\bigcup_j G^m_j$ for sufficiently large $n$. Taking a
  subsequence if necessary as before, we can assume that
  \begin{equation*}
    s_m\text{ is in the \emph{interior} of } 
    \bigcup_j G^m_j,
  \end{equation*}
  for all $m$. Thus the white $m$-gap $G^m\supset [s_m]_{\F,\wt}$ equals
  one of the gaps $G^m_j$. It follows 
  that $t\in \bigcap_m \bigcup_j G^m_j= [s]_{\F,\wt}$ as desired.  

  \smallskip
  The case when $[s]_{\F,\wt}$ contains no angle in $\A^\infty$ is proved
  by exactly the same argument. 
\end{proof}

\begin{proof}[Proof of Proposition \ref{prop:SimPFSimw}]
  From the second part in Lemma \ref{lem:Fatou_classes_ApproxF} it
  follows that $[\alpha^n]_{n,\wt}\subset [\alpha^n]_{\F,\wt}$ for all
  $\alpha^n\in \A^n$. Thus
  \begin{equation*}
    \Sim{n,\wt} \;\leq \; \Approx{\F,\wt}
  \end{equation*}
  for all $n$. It follows that $\Sim{\infty,\wt}\; \leq\;
  \Approx{\F,\wt}$. Since $\Approx{\F,\wt}$ is closed by
  Lemma~\ref{lem:approxFclosed} it follows that
  \begin{equation*}
    \Sim{\wt} \;\leq\; \Approx{\F,\wt}.
  \end{equation*}

  To see the reverse inequality we first prove the following.
  \begin{claim}
    $s,t\in [(G^n)]$ implies $s\Sim{\wt} t$.
  \end{claim}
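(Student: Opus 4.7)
The strategy is to produce, for each sufficiently large $m$, a pair of $m$-angles $s_m, t_m$ with $s_m \Sim{m,w} t_m$, $s_m \to s$, and $t_m \to t$. Since $\Sim{w}$ is by definition the closure of $\Sim{\infty,w} = \bigvee_j \Sim{j,w}$ and therefore contains each individual $\Sim{m,w}$, closedness then forces $s \Sim{w} t$. Because $[(G^n)]$ is finite with at most $k$ points (Lemma \ref{lem:propsimD}, property (\ref{item:propimD1})) and $\Sim{w}$ is transitive, I may further reduce to the case in which $s$ and $t$ are distinct and cyclically adjacent in $[(G^n)] \subset S^1$.

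Fix such $s \neq t$, and let $m \geq n$. By Lemma \ref{lem:prop_Ds} (\ref{item:propL5}), the boundary $\partial G^m$ consists, in cyclic order, of $k$ $m$-arcs on $S^1$ alternating with $k$ leaves of $\LC^m_w$ adjacent to $G^m$. For each such adjacent leaf, its edge on $\partial G^m$ has two endpoints, namely the right endpoint of the preceding arc and the left endpoint of the following arc; these two $m$-angles are vertices of a single leaf, hence belong to a single equivalence class of $\Sim{m,w}$ by Definition \ref{def:Ln}.

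Since each $m$-arc has diameter $\lesssim d^{-m}$ (Section \ref{sec:approximations}) and the decreasing sequence of compact sets $G^m \cap S^1$ converges in Hausdorff distance to $[(G^n)]$, I fix $\epsilon > 0$ smaller than half the minimal cyclic distance between distinct points of $[(G^n)]$; for $m$ sufficiently large, each $m$-arc in $\partial G^m$ then lies entirely inside the $\epsilon$-neighborhood of exactly one point of $[(G^n)]$. Since the cyclic order on $\partial G^m$ coincides with that inherited from $S^1$, the arcs attracted to a given point of $[(G^n)]$ form a cyclically contiguous block in $\partial G^m$. Adjacency of $s$ and $t$ in $[(G^n)]$ therefore forces the $s$-block and the $t$-block to be cyclically adjacent in $\partial G^m$; the leaf $L^m_\star \in \LC^m_w$ separating them has one edge-vertex $s_m$ within $\epsilon$ of $s$ and the other $t_m$ within $\epsilon$ of $t$, with $s_m \Sim{m,w} t_m$ by the previous paragraph. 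Letting $\epsilon \to 0$ and $m \to \infty$ jointly produces $s_m \to s$ and $t_m \to t$, and closedness of $\Sim{w}$ then yields $s \Sim{w} t$.

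The principal obstacle is the clustering claim: that for $m$ large the $k$ arcs of $\partial G^m$ partition into cyclically contiguous blocks in bijection with the points of $[(G^n)]$. It rests on combining the vanishing diameters $\lesssim d^{-m}$ of the $m$-arcs, the Hausdorff convergence $G^m \cap S^1 \to [(G^n)]$, and the agreement of the cyclic orders on $\partial G^m$ and on $S^1$.
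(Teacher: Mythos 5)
Your argument is correct and is essentially the paper's own proof: the paper likewise invokes (\ref{item:propL5}) to write $G^m\cap S^1$ as $k$ cyclically ordered $m$-arcs whose consecutive endpoints are $\Sim{m,w}$-equivalent, notes that each arc shrinks to a single point of $[(G^n)]$, and concludes by closedness and transitivity of $\Sim{w}$. Your explicit $\epsilon$-neighborhood clustering step is a more detailed rendering of what the paper compresses into ``$\lim_n\alpha^n_j=\lim_n\beta^n_j$,'' but the underlying mechanism is identical.
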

  Recall from (\ref{item:propL5}) that $G^n\cap S^1$ consists of $k$
  $n$-arcs $[\alpha^n_0,\beta^n_0], \dots, [\alpha^n_{k-1},
  \beta^n_{k-1}]$, where $\beta^n_j \Sim{n,\wt} \alpha^n_{j+1}$ (lower
  index taken $\bmod k$). Since $\lim_n \alpha^n_j = \lim_n \beta^n_j$
  for all $j$ the claim follows.    

  \smallskip
  From the claim it follows using Lemma~\ref{lem:Fatou_classes_ApproxF} that
  $\Approx{\F,\wt} \; \leq \; \Sim{w}$, finishing the proof.
\end{proof}

\subsection{The black polynomial $P_{\bt}$}
\label{sec:black-polynomial-pb}

 The
\emph{black polynomial} $P_{\bt}$ is the one obtained from Poirier's
Theorem from the black critical portrait, i.e., the sets
$[\alpha^1]_{1,\bt}$ (for all $\alpha^1\in \A^1$).  
More precisely $P_{\bt}$ is the (unique monic, centered,
postcritically 
finite) polynomial such that the
equivalence relation defined by (for all $z,w\in S^1=\partial D$)
\begin{equation*}
  s\Approx{\bt} t :\Leftrightarrow \sigma_{\bt}(\bar{z})= \sigma_{\bt}(\overline{w})
\end{equation*}
is equal to the equivalence relation relation $\Approx{\bt}$ defined in
terms of the black gaps as in Section~\ref{sec:equiv-relat-simj}. Here
$\sigma_{\bt}$ is a Carath\'{e}odory semi-conjugacy of the Julia set of
$P_{\bt}$. 
The
equivalence relation $\Approx{\F,\bt}$ on $S^1$ is then defined as in
(\ref{eq:def_approxFb}). As in Proposition~\ref{prop:SimPFSimw} it
follows that
\begin{equation*}
  \Approx{\F,\bt}\;= \;\Sim{\bt},
\end{equation*}
where $\Sim{\bt}$ was defined in Definition \ref{def:simw_simb}.

\subsection{Proof of Theorem \ref{thm:mating1}}
\label{sec:proof-theorem}

We assume now that $F$ has no periodic critical points.  This means 
that there are
no Fatou-type equivalence classes of $\Sim{1,\wt},\Sim{1,\bt}$
(Proposition \ref{prop:Fatou_crit_cycles}), hence no Fatou-type
classes of $\Sim{n,\wt}$, $\Sim{n,\bt}$. 
The white polynomial $P_{\wt}$ is defined as in Section
\ref{sec:white-polynomial}, the black polynomial $P_{\bt}$ as in Section
\ref{sec:black-polynomial-pb}. 

\smallskip
From (\ref{item:Poi2}) it
follows that the Fatou sets of $P_{\wt},P_{\bt}$ have no bounded components,
thus their Julia sets are dendrites. Let $\Approx{\wt},
\Approx{\F,\wt}$ be the equivalence relations (on $S^1$) from Section
\ref{sec:equiv-relat-simj} and Section
\ref{sec:equiv-relat-simp}. Then $\Approx{\bt}, \Approx{\F,\bt}$ are
defined analogously in terms of the black
equivalence relations $\Sim{n,\bt}$. Since $P_{\wt},P_{\bt}$ have no bounded
Fatou components it follows from Proposition \ref{prop:SimPFSimw} that
\begin{equation*}
  \Approx{\wt}\;=\;\Approx{\F,\wt}\;= \;\Sim{\wt} 
  \text{ and } 
  \Approx{\bt}\;=\;\Approx{\F,\bt}\;=\; \Sim{\bt}.
\end{equation*}
Recall that $\sim$ is the equivalence relation (on $S^1$) induced by
the invariant Peano curve (\ref{eq:eq_rel}). From
Lemma~\ref{lem:Fnocrit_simwsimb}~\eqref{item:sim_simwsimb} it follows
that $\sim\;=\; \Sim{\wt} \vee \Sim{\bt}\;= \;\Approx{\wt} \vee \Approx{\bt}\;=\;
\approx$. 
Theorem~\ref{thm:mating1} now follows using Theorem~\ref{thm:S1simS2}
and Lemma~\ref{lem:mating_dendrites}.   
 
\section{Proof of Theorem \ref{thm:mating2}}
\label{sec:proof-theorem-1}
We finish the proof of Theorem \ref{thm:mating2} here.
The white/black polynomials $P_{\wt},P_{\bt}$ are defined as in Section
\ref{sec:white-polynomial} and \ref{sec:black-polynomial-pb}. 

Recall
from (\ref{eq:def_approxFw}), (\ref{eq:def_approxFb}) the definition
of the associated equivalence relations $\Approx{\F,\wt},
\Approx{\F,\bt}$. In Proposition \ref{prop:SimPFSimw} it was shown that
$\Approx{\F,\wt}\;=\;\Sim{\wt}$ as well as $\Approx{\F,\bt}\;=\;
\Sim{\bt}$. Let $\sim$ be the closure of $\Approx{\F,\wt}\vee
\Approx{\F,\bt}\;= \;\Sim{\wt}\vee \Sim{\bt}$. In
Lemma~\ref{lem:Fnocrit_simwsimb} it was shown that $\sim$ is the equivalence
relation induced by the invariant Peano curve $\gamma$. 
Recall the definition of $P_{\wt}\widehat{\mate}\, P_{\bt}$ from
(\ref{eq:PwPbsimhat}). In Section~\ref{sec:proof-lemma-top_conj} we
will show the following lemma. 

\begin{lemma}
  \label{lem:PwPb_zdsim}
  The map 
  \begin{equation*}
    P_{\wt}\widehat{\mate}\, P_{\bt}\colon \K_{\wt}\widehat{\mate}\, \K_{\bt} \to 
    \K_{\wt} \widehat{\mate}\, \K_{\bt}
  \end{equation*}
  is well defined and topologically conjugate to 
  \begin{equation*}
    z^d/\!\sim \colon S^1/\!\sim \,\to S^1/\!\sim. 
  \end{equation*}
\end{lemma}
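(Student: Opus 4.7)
The plan is to build the conjugating homeomorphism from the two Carath\'eodory semi-conjugacies $\sigma_w,\sigma_b$ and identify the equivalence relation on $S^1$ they induce with $\sim$. First I would check that $P_w\,\widehat{\amalg}\,P_b$ is well defined: it suffices to verify that each generator of the relation whose closure is $\simhat$ on $\K_w\amalg\K_b$ is preserved. The Carath\'eodory identification $\sigma_w(t)\sim\sigma_b(-t)$ maps under $P_w\amalg P_b$ to $\sigma_w(dt)\sim\sigma_b(-dt)$ by (\ref{eq:CaraCommDia}); identifications within $\clos\F$ for a bounded Fatou component $\F$ of $P_w$ map into $\clos P_w(\F)$, again the closure of a bounded Fatou component, and similarly for $P_b$. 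Extension to $\simhat$ itself is then automatic from the continuity of $P_w,P_b$ together with the closedness of $\simhat$.

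Next, define $h\colon S^1\to\K_w\,\widehat{\amalg}\,\K_b$ by $h(s):=[\sigma_w(s)]$; the Carath\'eodory gluing makes this equal to $[\sigma_b(-s)]$, so $h$ is unambiguous and continuous. The intertwining identity $h(s^d)=(P_w\,\widehat{\amalg}\,P_b)(h(s))$ is immediate from $\sigma_w\circ\mu=P_w\circ\sigma_w$. Surjectivity is clear: every class in $\K_w\,\widehat{\amalg}\,\K_b$ has a representative in $\J_w$ or $\J_b$ (use a Fatou identification to reach a boundary point, which lies in the Julia set), and such points lie in the image of $\sigma_w$ or $\sigma_b$.

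The heart of the argument is to identify the equivalence relation $\sim_h$ on $S^1$ induced by $h$ with $\sim$. The inclusion $\sim\,\subset\,\sim_h$ is straightforward: by Proposition \ref{prop:SimPFSimw} we have $\Sim{w}\,=\,\Approx{\F,w}$ and $\Sim{b}\,=\,\Approx{\F,b}$, so $\Sim{w}\vee\Sim{b}\,\subset\,\sim_h$; since $\sim_h$ is closed (pullback under $h\times h$ of the diagonal of the Hausdorff space $\K_w\,\widehat{\amalg}\,\K_b$) and contains $\Sim{w}\vee\Sim{b}$, it contains the closure $\sim$. For the reverse inclusion, I would argue by chain-lifting: suppose $h(s)=h(t)$, so $\sigma_w(s)\,\simhat\,\sigma_w(t)$; use the separation of bounded Fatou components (Proposition \ref{prop:Fatou_Pw}) and the finiteness of equivalence classes (Theorem \ref{thm:sim_finite}) to show that the pre-closure relation on $\K_w\amalg\K_b$ is already closed in our setting, so that $\sigma_w(s)\,\simhat\,\sigma_w(t)$ is witnessed by a finite chain of Carath\'eodory and Fatou generators; at each step choose a representative $s_i\in S^1$ so that consecutive $s_i,s_{i+1}$ are related by $\Sim{w}\vee\Sim{b}$, giving $s\sim t$.

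Once $\sim_h\,=\,\sim$ is established, $h$ descends to a continuous bijection $\bar h\colon S^1/\!\sim\,\to\K_w\,\widehat{\amalg}\,\K_b$ between compact Hausdorff spaces, hence a homeomorphism, and the intertwining identity passes to the quotient to yield the topological conjugacy with $P_w\,\widehat{\amalg}\,P_b$. The main obstacle is the reverse inclusion $\sim_h\,\subset\,\sim$: since closures of equivalence relations are not topological closures of sets (see the remark after Lemma \ref{lem:usc_closure}), the closure operation on the $\K$-side does not obviously commute with pulling back via $h\times h$; the delicate point is leveraging the special structure of our situation (separated Fatou components, finite equivalence classes) to guarantee that no new identifications arise beyond finite chains of generators.
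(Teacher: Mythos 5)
Your setup (well-definedness via invariance of the generators, the map $h(s)=[\sigma_w(s)]$, and the forward inclusion $\sim\,\subset\,\sim_h$ via Proposition \ref{prop:SimPFSimw} and the closedness of $\sim_h$) matches the paper. The genuine gap is in your reverse inclusion. You propose to show that the pre-closure relation on $\K_w\amalg\K_b$ (generated by the Carath\'eodory gluing and the Fatou identifications) ``is already closed in our setting,'' so that every identification is witnessed by a finite chain of generators liftable to $\Sim{w}\vee\Sim{b}$. But by Lemma \ref{lem:hclosedS} that relation is closed if and only if its pullback to $S^1\sqcup S^1$, which is exactly $\Sim{w}\vee\Sim{b}$, is closed --- and the author explicitly states at the end of Section \ref{sec:proof-theorem-1} that he believes $\Sim{w}\vee\Sim{b}$ is \emph{not} closed in general. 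Moreover, the tools you cite do not deliver what you need: Theorem \ref{thm:sim_finite} gives a uniform bound on equivalence classes only when $F$ has no periodic critical points, whereas Lemma \ref{lem:PwPb_zdsim} is needed precisely in the general case, where $\Sim{w}$ collapses the (infinite) set of angles landing on the boundary of a bounded Fatou component, so classes of $\sim$ are infinite and the finite-chain argument of Lemma \ref{lem:Fnocrit_simwsimb} is unavailable. (Note also that even with bounded chain lengths, closedness would not follow by ``taking limits,'' since the closure of an equivalence relation is not the topological closure of its graph, as remarked after Lemma \ref{lem:usc_closure}.)

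The paper circumvents this entirely with an abstract commutation statement: Lemma \ref{cor:hclusure} shows that the closure of a pullback equals the pullback of the closure, and Lemma \ref{lem:closure_top_conju} packages this into ``taking the closure of a commutative diagram.'' Concretely, one first forms the intermediate quotients $\K_w/\F$, $\K_b/\F$ (Hausdorff by the closedness of $\Approx{\F}$, proved in Lemma \ref{lem:PwF_simFw} using $\diam\F_n\to 0$), checks that the relation $\simeq$ generated by the Carath\'eodory gluing on $\K_w/\F\sqcup\K_b/\F$ pulls back under $S^1\sqcup S^1\to \K_w/\F\sqcup\K_b/\F$ to $\Sim{w}\vee\Sim{b}$ (Proposition \ref{prop:SimPFSimw}), and only then takes closures on both sides simultaneously. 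This never requires $\Sim{w}\vee\Sim{b}$ or its counterpart on the filled Julia sets to be closed before completion. To repair your argument you would need to prove a statement of the type of Lemma \ref{cor:hclusure}; without it, the step ``$h(s)=h(t)\Rightarrow s\sim t$'' is unsupported.
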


Theorem \ref{thm:mating2} follows, using Theorem \ref{thm:S1simS2}. 

\subsection{Closures}
\label{sec:closures}
Here we collect some elementary lemmas that will be needed.

\smallskip
Let $S,S'$ be compact metric spaces, $h\colon S\to S'$ be a
continuous surjection, and $\approx$ be an equivalence relation on
$S'$. The equivalence relation $\sim$ on $S$ defined by
\begin{equation*}
  s\sim t :\Leftrightarrow h(s)\approx h(t), \text{ for all } s,t\in S, 
\end{equation*}
is called the \defn{pullback} of $\approx$ by $h$.
\begin{lemma}
  \label{lem:hclosedS}
  In the setting as above, $\sim$ is closed if and only
  if $\approx$ is closed.  
\end{lemma}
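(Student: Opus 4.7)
The plan is to use the product characterization of closedness, namely (CE 1), and to work directly with the relations as subsets of $S \times S$ and $S' \times S'$. Write $R_{\sim} := \{(s,t) \in S \times S : s \sim t\}$ and $R_{\approx} := \{(s',t') \in S' \times S' : s' \approx t'\}$. Consider the product map $h \times h \colon S \times S \to S' \times S'$, which is continuous. The definition of $\sim$ as the pullback of $\approx$ gives directly
\begin{equation*}
  R_{\sim} = (h \times h)^{-1}(R_{\approx}).
\end{equation*}

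For the ``$\approx$ closed $\Rightarrow$ $\sim$ closed'' direction, I would just invoke the identity above: if $R_{\approx}$ is closed in $S' \times S'$, then its preimage $R_{\sim}$ under the continuous map $h \times h$ is closed in $S \times S$.

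For the converse ``$\sim$ closed $\Rightarrow$ $\approx$ closed'', the key observation is that $(h \times h)(R_{\sim}) = R_{\approx}$. The inclusion $\subseteq$ is immediate from the definition of $\sim$; the inclusion $\supseteq$ uses the surjectivity of $h$: given $(s',t') \in R_{\approx}$, pick preimages $s,t \in S$ with $h(s) = s'$, $h(t) = t'$, then by definition $s \sim t$ and $(s',t') = (h \times h)(s,t)$. Now $S \times S$ is compact, $S' \times S'$ is Hausdorff (as a metric space), and $h \times h$ is continuous, hence a closed map. Therefore $R_{\approx} = (h \times h)(R_{\sim})$ is closed as the image of the closed set $R_{\sim}$ under a closed map.

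The only subtle point worth flagging is that both directions rely on having chosen the product-characterization (CE 1) of closedness; the other equivalent conditions (CE 2), (CE 3) would require more effort. Beyond this, the argument is routine — there is no real obstacle, only the need to invoke the standard fact that continuous maps from compact spaces to Hausdorff spaces are closed in order to push the closedness forward through $h \times h$.
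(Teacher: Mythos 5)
Your proof is correct and complete: the identity $R_{\sim}=(h\times h)^{-1}(R_{\approx})$ handles one direction by continuity, and the converse follows from surjectivity of $h$ together with the fact that the continuous map $h\times h$ from the compact space $S\times S$ to the Hausdorff space $S'\times S'$ is closed. The paper leaves this lemma as an exercise, and your argument via characterization (CE 1) is exactly the standard one intended.
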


The proof is straightforward and left as an exercise. 

We now assume that an equivalence relation $\sim$ is defined on $S$,
we want to define a corresponding equivalence relation $\approx$ on
$S'$. 

\begin{lemma}
  \label{lem:sim_push}
  Let $h\colon S\to S'$ be a surjection, $\sim$ an equivalence
  relation on $S$ that is bigger than the one induced by $h$
  ($h(s)=h(t) \Rightarrow s\sim t$ for all $s,t\in S$). Define
  $\approx$ on $S'$ as follows
  \begin{equation*}
    s'\approx t' :\Leftrightarrow  \text{ there exists } s,t\in S
    \text{ such that } s'=h(s), t'=h(t), 
  \end{equation*}
  for all $s',t'\in S'$. Then $\approx$ is an equivalence relation (on
  $S'$) such that $\sim$ is the pullback of $\approx$ by $h$. 
\end{lemma}

\begin{proof}
  It is straightforward to check that $\approx$ is an equivalence
  relation, which is left as an exercise. 

  If $s\sim t$, then $s'=h(s)\approx\; t'=h(t)$ for all $s,t\in S$.

  Now let $s'\approx t'$ for some $s',t'\in S'$. Consider $\tilde{s},
  \tilde{t}\in S$ with $s'=h(\tilde{s})$, $t'=h(\tilde{t})$. We want
  to show that $\tilde{s}\sim \tilde{t}$.  There are $s,t\in S$ with
  $s\sim t$ and $h(s)=s'$, $h(t)=t'$. Since $\sim$ is bigger than the
  equivalence relation induced by $h$ it follows that $\tilde{s} \sim
  s \sim t \sim \tilde{t}$.
\end{proof}

\begin{lemma}
  \label{cor:hclusure}
  Let $h\colon S\to S'$ be a continuous surjection, where $S,S'$ are
  compact metric spaces. Let $\sim, \approx$ be equivalence relations
  on $S,S'$, such that $\sim$ is the pullback of $\approx$ by $h$. 
  Let $\simhat$ be the closure of
  $\sim$, $\widehat{\approx}$ the closure of $\approx$. Then $\simhat$
  is the pullback of $\widehat{\approx}$ by $h$. 
\end{lemma}

\begin{proof}
  Recall from the proof of Lemma~\ref{lem:usc_closure} that the
  closure of an equivalence relation is given by the intersection of
  all bigger closed equivalence relations. 

  The pullback of $\widehat{\approx}$ is closed by
  Lemma~\ref{lem:hclosedS}, as well as bigger than $\sim$, hence
  bigger than $\widehat{\sim}$. 

  Now consider the equivalence relation $\approx'$ on $S'$ induced by
  $\widehat{\sim}$ and $h$ as in Lemma~\ref{lem:sim_push}. It is
  bigger than $\approx$. Furthermore $\approx'$ pulls
  back to $\widehat{\sim}$ by $h$ and is closed by
  Lemma~\ref{lem:hclosedS}. It follows that $\widehat{\sim}$ is bigger
  than the pullback of $\widehat{\approx}$ by $h$. 
\end{proof}

Consider now a continuous surjection $\mu\colon S\to S$ on a compact
metric space $S$. An equivalence relation $\sim$ on $S$ is called
\defn{invariant} with respect to $\mu$ if
\begin{equation*}
  s\sim t \Rightarrow \mu(s) \sim
  \mu(t) \text{ for all } s,t\in S. 
\end{equation*}
\begin{lemma}
  \label{lem:closure_dynamics}
  Let $\mu\colon S\to S$ be continuous, surjective; $\sim$ an
  equivalence relation on $S$ invariant
  with respect to $\mu$. 
  Then the closure $\simhat$ of $\sim$ is
  invariant with respect to $\mu$. 
\end{lemma}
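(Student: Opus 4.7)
The plan is to realize $\simhat$ as being dominated by a closed equivalence relation defined from $\mu$ and $\simhat$ itself, and then invoke the minimality property of the closure from Lemma \ref{lem:usc_closure}.

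First I would define a relation $\simeq$ on $S$ to be the pullback of $\simhat$ by $\mu$, namely
\begin{equation*}
  s\simeq t \; :\Leftrightarrow \; \mu(s)\,\simhat\,\mu(t),
\end{equation*}
for all $s,t\in S$. The fact that $\simeq$ is an equivalence relation is immediate from the corresponding property of $\simhat$. Applying Lemma \ref{lem:hclosedS} in the special case $S=S'$ and $h=\mu$, we get that $\simeq$ is \emph{closed}, since $\simhat$ is closed by construction.

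Next I would verify that $\sim\;\leq\;\simeq$. This is exactly the hypothesis of $\mu$-invariance: if $s\sim t$, then $\mu(s)\sim\mu(t)$, and since $\sim\;\leq\;\simhat$, we conclude $\mu(s)\,\simhat\,\mu(t)$, which means $s\simeq t$. Now Lemma \ref{lem:usc_closure} asserts that $\simhat$ is the \emph{smallest} closed equivalence relation bigger than $\sim$; since $\simeq$ is a closed equivalence relation bigger than $\sim$, it must satisfy $\simhat\;\leq\;\simeq$. Unpacking, for all $s,t\in S$,
\begin{equation*}
  s\,\simhat\, t \;\Rightarrow\; s\simeq t \;\Rightarrow\; \mu(s)\,\simhat\,\mu(t),
\end{equation*}
which is precisely the statement that $\simhat$ is invariant with respect to $\mu$.

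There is no real obstacle here: the entire argument reduces to the two general facts already proved earlier in the excerpt, namely that pullback by a continuous map preserves closedness (Lemma \ref{lem:hclosedS}) and that the closure is minimal among closed equivalence relations bigger than $\sim$ (Lemma \ref{lem:usc_closure}). The only point worth flagging is that continuity (rather than any stronger regularity) of $\mu$ is enough, since that is precisely what Lemma \ref{lem:hclosedS} requires of $h$; surjectivity of $\mu$ is not actually used in this argument, though it appears in the hypothesis.
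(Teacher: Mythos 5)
Your proof is correct and follows essentially the same route as the paper: both define the pullback of $\simhat$ by $\mu$, observe via Lemma \ref{lem:hclosedS} that it is closed and contains $\sim$ by $\mu$-invariance, and then conclude $\simhat$ is dominated by this pullback using the minimality of the closure (the paper phrases this last step via the meet $\approx\wedge\,\simhat$, but that is the same minimality argument). Your remark that surjectivity of $\mu$ is not needed for the direction of Lemma \ref{lem:hclosedS} being used is also accurate.
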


\begin{proof}
  Consider the equivalence relation $\approx$ given by
  \begin{equation*}
    s\approx t :\Leftrightarrow \mu(s) \simhat \mu(t),
  \end{equation*}
  for all $s,t\in S$. From Lemma \ref{lem:hclosedS} it follows that
  $\approx$ is closed. Note that 
  \begin{equation*}
    s\sim t \Rightarrow \mu(s)\sim \mu(t) \Rightarrow \mu(s)\simhat
    \mu(t)
    \Rightarrow s\approx t,
  \end{equation*}
  meaning that $\approx\;\geq\; \sim$. Note that the meet of any two
  closed equivalence relations is closed (see (\ref{eq:defmeet})),
  thus the meet
  \begin{align*}
    &\approx \wedge \;\simhat \text{ is closed and bigger than $\sim$; which implies\ }
    \\
    &\simhat \; = \; \approx \wedge\, \simhat.
  \end{align*}
  Thus for all $s,t\in S$
  \begin{equation*}
    s\;\simhat\; t \Rightarrow s\,\approx \,t \Rightarrow \mu(s)\, \simhat\,\mu(t),
  \end{equation*}
  finishing the proof. 
\end{proof}

In the next lemma we ``take the closure of a commutative diagram and
show that everything goes well''.   Let $\sim,\approx $ be equivalence
relations on compact metric 
spaces $S,S'$. The maps $\mu\colon S\to S$, 
$\varphi\colon S'\to S'$ as well as $h\colon S\to S'$
are continuous surjections such that the following
diagram commutes  
  \begin{equation*}
    \xymatrix{
      (S,\sim) \ar[r]^{\mu} \ar[d]_{h}
      &
      (S,\sim) \ar[d]^{h}
      \\
      (S',\approx) \ar[r]_\varphi & (S',\approx).
    }
  \end{equation*}
  By this is meant that $\varphi\circ h = h \circ \mu$. The
  equivalence relation 
  $\sim$ is invariant with respect to $\mu$, and
  $\approx$ is invariant with respect to $\varphi$. Furthermore $\sim$
  is the pullback of $\approx$ by $h$. 
 
\begin{lemma}
  \label{lem:closure_top_conju}
  In the setting as above, let $\simhat$ be the closure of $\sim$;
  $\widehat{\approx}$ be the closure of $\approx$. Then 
  \begin{align*}
    &\mu/ \simhat\colon S/\simhat \to S/\simhat
    \intertext{is topologically conjugate to}
    &\varphi/\widehat{\approx}\colon S'/ \widehat{\approx} \to S'/
  \widehat{\approx}.
  \end{align*}
\end{lemma}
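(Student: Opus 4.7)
The plan is to use the preceding two lemmas to reduce the problem to verifying that the natural candidate map $\bar{h}\colon S/\widehat{\sim} \to S'/\widehat{\approx}$, $[s]_{\widehat{\sim}} \mapsto [h(s)]_{\widehat{\approx}}$, is a well-defined homeomorphism conjugating the two quotient dynamics.

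First I would apply Lemma \ref{cor:hclusure} to deduce that $\widehat{\sim}$ is the pullback of $\widehat{\approx}$ by $h$, i.e., for all $s,t\in S$,
\begin{equation*}
  s\,\widehat{\sim}\, t \;\Longleftrightarrow\; h(s) \,\widehat{\approx}\, h(t).
\end{equation*}
This is precisely the statement that $\bar{h}$ is both well-defined and injective. Surjectivity of $\bar{h}$ follows from surjectivity of $h$. Continuity follows since the composition $S\xrightarrow{h}S' \xrightarrow{\pi'} S'/\widehat{\approx}$ factors through the quotient $\pi\colon S\to S/\widehat{\sim}$. Both quotient spaces are compact (being continuous images of compact spaces) and Hausdorff (by (CE \ref{item:usc4}) applied to the closed equivalence relations $\widehat{\sim}$ and $\widehat{\approx}$), so the continuous bijection $\bar{h}$ is a homeomorphism.

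Next I would apply Lemma \ref{lem:closure_dynamics} to conclude that $\widehat{\sim}$ is invariant under $\mu$ and $\widehat{\approx}$ is invariant under $\varphi$, so that the descended maps
\begin{equation*}
  \mu/\widehat{\sim}\colon S/\widehat{\sim}\to S/\widehat{\sim}, \qquad
  \varphi/\widehat{\approx}\colon S'/\widehat{\approx}\to S'/\widehat{\approx}
\end{equation*}
are well defined. Finally, the conjugacy relation follows from a direct computation: for every $s\in S$,
\begin{equation*}
  \bar{h}\bigl((\mu/\widehat{\sim})([s])\bigr)
  = [h(\mu(s))]_{\widehat{\approx}}
  = [\varphi(h(s))]_{\widehat{\approx}}
  = (\varphi/\widehat{\approx})\bigl(\bar{h}([s])\bigr),
\end{equation*}
using the hypothesis $\varphi\circ h = h\circ \mu$.

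There is no real obstacle here; the lemma is essentially a bookkeeping consequence of Lemma \ref{cor:hclusure} and Lemma \ref{lem:closure_dynamics}. The only point that requires a moment's attention is the Hausdorff property of the quotients, which is what makes the continuous bijection $\bar{h}$ automatically a homeomorphism, and this is guaranteed precisely because we pass to the \emph{closures} $\widehat{\sim},\widehat{\approx}$ rather than working with $\sim,\approx$ directly.
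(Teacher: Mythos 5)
Your proposal is correct and follows essentially the same route as the paper: both first invoke Lemma \ref{cor:hclusure} to see that $\simhat$ is the pullback of $\widehat{\approx}$, use Lemma \ref{lem:closure_dynamics} for well-definedness of the descended maps, and conclude with the same conjugacy computation. The only cosmetic difference is that you verify the homeomorphism $\bar{h}$ by hand (continuous bijection onto a compact Hausdorff quotient), whereas the paper packages exactly that argument as an application of Lemma \ref{lem:usc_from_map} to $S\xrightarrow{h}S'\to S'/\widehat{\approx}$.
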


\begin{proof}
  We note first that the maps $\mu/\simhat$,
  $\varphi/\widehat{\approx}$ are well defined by Lemma
  \ref{lem:closure_dynamics}. From Lemma \ref{cor:hclusure} it
  follows that $\simhat$ is the pullback of $\widehat{\approx}$ by
  $h$. From (CE \ref{item:usc4}) it follows that
  $S'/\widehat{\approx}$ is a compact Hausdorff space. Applying Lemma
  \ref{lem:usc_from_map} to the map $S\xrightarrow{h} S' \to
  S'/\widehat{\approx}$ yields that $S/\simhat$ is homeomorphic to
  $S'/\widehat{\approx}$, where the homeomorphism is given by
  $\tilde{h}([s]_{\simhat}) = [h(s)]_{\widehat{\approx}}$. Write
  $\widetilde{\varphi}= \varphi/\widehat{\approx}$,
  $\tilde{\mu}=\mu/\simhat$, then
  \begin{equation*}
    \widetilde{\varphi}\circ \tilde{h}([s]_{\simhat}) 
    = \widetilde{\varphi}([h(s)]_{\widehat{\approx}})
    = [\varphi\circ h(s)]_{\widehat{\approx}}
    = [h\circ \mu(s)]_{\widehat{\approx}}
    = \tilde{h}([\mu(s)]_{\simhat})
    = \tilde{h} \circ \tilde{\mu}([s])_{\simhat}.
  \end{equation*}
  This
  finishes the proof.  
\end{proof}

\subsection{Proof of Lemma  \ref{lem:PwPb_zdsim}}
\label{sec:proof-lemma-top_conj}
We first show that $P_{\wt}\widehat{\mate}\, P_{\bt}$ is well defined.

Consider $\K_{\wt}\sqcup \K_{\bt}$, the disjoint union of $\K_{\wt},\K_{\bt}$. The
equivalence relation $\sim$ on $\K_{\wt}\sqcup \K_{\bt}$ is the one generated
by
\begin{align}
  \label{eq:defsimKwKb}
  &\sigma_{\wt}(z) \sim \sigma_{\bt}(\bar{z}), 
  \quad \text{for all $z\in S^1=\partial \D$ and }
  \\
  \notag
  &x\sim y, \quad \text{if } x,y \in \clos A_{\wt} \text{ or } x,y \in \clos
  A_{\bt},
\end{align}
for all $x,y \in \K_{\wt}$, or $x,y\in \K_{\bt}$. Here $A_{\wt},A_{\bt}$ are
bounded components of the Fatou sets of $P_{\wt},P_{\bt}$ and
$\sigma_{\wt}(z)\in \K_{\wt}\subset \K_{\wt}\sqcup \K_{\bt}$, $\sigma_{\bt}(\bar{z})\in
\K_{\bt}\subset \K_{\wt}\sqcup \K_{\bt}$. The map 
\begin{equation*}
  P_{\wt}\sqcup P_{\bt} \colon \K_{\wt}\sqcup \K_{\bt} \to \K_{\wt}\sqcup \K_{\bt},
\end{equation*}
is well defined. Furthermore the equivalence relation defined in
(\ref{eq:defsimKwKb}) is invariant with respect to $P_{\wt}\sqcup
P_{\bt}$. Let $\simhat$ be the closure of $\sim$, and
$\K_{\wt}\widehat{\mate}\, \K_{\bt}=\K_{\wt}\sqcup K_{\bt}/\simhat$. From Lemma
\ref{lem:closure_dynamics} it follows that $P_{\wt}\sqcup P_{\bt}$ descends to
this quotient, meaning that
\begin{equation*}
  P_{\wt}\widehat{\mate}\, P_{\bt}\colon \K_{\wt}\widehat{\mate}\, \K_{\bt} \to
  \K_{\wt}\widehat{\mate}\, \K_{\bt},
\end{equation*}
is well defined.

\smallskip
We
first show a one-sided version of Lemma \ref{lem:PwPb_zdsim}.
Identify the closure of each bounded Fatou component in $\K_{\wt}$ to form 
the quotient $\K_{\wt}/\F$. Recall that the Fatou set of $P_{\wt}$ is
separated. Since $P_{\wt}$ maps each bounded Fatou component to a bounded
Fatou component the quotient map
\begin{equation*}
  P_{\wt}/\F \colon \K_{\wt}/\F \to \K_{\wt}/\F 
\end{equation*}
is well defined. 
\begin{lemma}
  \label{lem:PwF_simFw}
  The map $P_{\wt}/\F$ as above is topologically conjugate to
  \begin{equation*}
    z^d\colon S^1/\Approx{\F,\wt} \to S^1/\Approx{\F,\wt}.
  \end{equation*}
\end{lemma}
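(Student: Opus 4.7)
The plan is to build the topological conjugacy via a canonical map $\tilde{\sigma}_w \colon S^1 \to \K_w/\F$ obtained by composing the Carath\'{e}odory semi-conjugacy with the quotient. Let $\pi_\F \colon \K_w \to \K_w/\F$ denote the projection that collapses the closure of each bounded Fatou component to a point, and set $\tilde{\sigma}_w := \pi_\F \circ \sigma_w$. First I would verify that $\tilde{\sigma}_w$ is a continuous surjection: continuity is automatic, and surjectivity follows because $\partial \F \subset \J_w$ for every bounded Fatou component $\F$, so $\pi_\F(\J_w) = \pi_\F(\K_w) = \K_w/\F$.

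Next I would identify the equivalence relation induced by $\tilde{\sigma}_w$ with $\Approx{\F,w}$. Unpacking the definition, $\tilde{\sigma}_w(s) = \tilde{\sigma}_w(t)$ iff either $\sigma_w(s) = \sigma_w(t)$, or $\sigma_w(s)$ and $\sigma_w(t)$ lie in the closure of the same bounded Fatou component of $P_w$. This is exactly (\ref{eq:def_approxFw}). Before invoking Lemma \ref{lem:usc_from_map} to conclude that $[s] \mapsto \tilde{\sigma}_w(s)$ is a homeomorphism $S^1/\Approx{\F,w} \to \K_w/\F$, I need $\K_w/\F$ to be compact Hausdorff. Compactness is clear, and Hausdorffness follows from Proposition \ref{prop:Fatou_Pw}, which gives that distinct bounded Fatou components have disjoint closures, combined with the standard fact that for a postcritically finite polynomial the diameters of its bounded Fatou components tend to zero; equivalently, the equivalence relation on $\K_w$ identifying closures of bounded Fatou components is closed (Definition \ref{deflem:usc}).

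The dynamical half amounts to checking that $z^d$ descends to $S^1/\Approx{\F,w}$ and that the diagram
\[
  \xymatrix{
    S^1/\Approx{\F,w} \ar[r]^{z^d} \ar[d]_{\tilde{h}}
    &
    S^1/\Approx{\F,w} \ar[d]^{\tilde{h}}
    \\
    \K_w/\F \ar[r]_{P_w/\F} & \K_w/\F
  }
\]
commutes, where $\tilde{h}([s]) = \tilde{\sigma}_w(s)$. Invariance of $\Approx{\F,w}$ under $z^d$ follows from B\"{o}ttcher's identity $\sigma_w \circ z^d = P_w \circ \sigma_w$: if $\sigma_w(s)=\sigma_w(t)$ then applying $P_w$ preserves the equality, while if $\sigma_w(s), \sigma_w(t) \in \clos \F$ then $P_w(\sigma_w(s)), P_w(\sigma_w(t)) \in \clos P_w(\F)$, and $P_w(\F)$ is again a bounded Fatou component. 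The same identity then yields $\tilde{h}(z^d [s]) = \pi_\F(P_w(\sigma_w(s))) = (P_w/\F)\tilde{h}([s])$, finishing the conjugacy.

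The main potential obstacle is the Hausdorffness of $\K_w/\F$, i.e., the closedness of the relation that collapses Fatou closures. Separation of closures gives pairwise disjointness, but one must additionally argue that only finitely many of these sets can have diameter above any $\varepsilon > 0$; for postcritically finite polynomials this shrinking is standard (each bounded Fatou component is eventually mapped, injectively after finitely many steps, into a periodic component on which $P_w$ is conformally conjugate to $z \mapsto z^k$, and inverse branches contract uniformly on the Julia set). Granting this, every other step is formal quotient bookkeeping.
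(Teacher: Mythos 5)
Your argument is correct and follows essentially the same route as the paper's proof: both hinge on showing that the relation collapsing closures of bounded Fatou components of $\K_w$ is closed (via disjointness of closures and shrinking diameters, which the paper attributes to subhyperbolicity), then invoking Lemma \ref{lem:usc_from_map} to get the homeomorphism $S^1/\Approx{\F,w}\to \K_w/\F$, with the conjugacy itself coming from the B\"{o}ttcher identity $\sigma_w(dt)=P_w(\sigma_w(t))$. Your write-up is somewhat more explicit than the paper's about why $\Approx{\F,w}$ is the relation induced by $\pi_\F\circ\sigma_w$ and why it is $z^d$-invariant, but there is no substantive difference.
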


\begin{proof}
  Consider the equivalence relation on $\K_{\wt}$, defined by
  ($x,y\in \K_{\wt}$)
  \begin{equation*}
    x\Approx{\F} y :\Leftrightarrow x,y \in \clos\F,
  \end{equation*}
  where $\F$ is a bounded Fatou component of $P_{\wt}$. 
  \begin{claim}
    $\Approx{\F}$ is closed.
  \end{claim}
  Consider two convergent sequences $x_n\to x_0$, $y_n\to y_0$ in
  $\K_{\wt}$, satisfying $x_n\Approx{\F} y_n$ (for all $n\geq 1$). We need
  to show that $x_0\Approx{\F}y_0$. This is clear when the sequence
  $(x_n)$ is contained in a single equivalence class of $\Approx{\F}$. 

  Assume now that each $x_n$ is contained in a distinct
  equivalence class, which we can assume to be non-trivial. This means
  that $x_n,y_n$ are contained in the closure of the same bounded
  Fatou component $A_n$. From the subhyperbolicity of $P_{\wt}$ it
  follows that $\diam A_n\to 0$, thus $x_0=\lim x_n=\lim y_n=y_0$
  proving the claim.

  \smallskip
  From the claim it follows, that $\K_{\wt}/\F$ is a compact Hausdorff
  space (see (CE \ref{item:usc4})). Clearly $\Approx{\F,\wt}$ is the
  equivalence relation (on $S^1$) induced by the map $S^1
  \xrightarrow{\sigma_{\wt}} \K_{\wt}\to \K_{\wt}/\F$. Thus we obtain from Lemma
  \ref{lem:usc_from_map} that $S^1/\Approx{\F,\wt}$ is homeomorphic to
  $\K_{\wt}/\F$. The topological conjugacy is clear, since $P_{\wt}$ maps each
  point $\sigma_{\wt}(z)\in \K_{\wt}$ to $\sigma_{\wt}(z^d)\in
  \K_{\wt}$ (for all $z\in\partial \D$).  
\end{proof}

\begin{lemma}
  \label{lem:simw_inv}
  The equivalence relation $\Sim{\wt}\vee \Sim{\bt}$ is invariant with
  respect to $z^d\colon S^1\to S^1$.
\end{lemma}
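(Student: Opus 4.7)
My plan is to establish $\mu$-invariance of the constituent relations $\Sim{w}$ and $\Sim{b}$ separately, and then observe that $\mu$-invariance is automatically preserved under joins. First I would show that $\Sim{\infty,w}$ is $\mu$-invariant. Property ($\LC^n$ \ref{item:propL6}) asserts that for $n\geq 1$ the map $\mu$ sends each equivalence class $[\alpha^n]_{n,w}$ into the class $[\mu(\alpha^n)]_{n-1,w}$, so $s\Sim{n,w} t$ implies $\mu(s)\Sim{n-1,w}\mu(t)$, and hence $\mu(s)\Sim{\infty,w}\mu(t)$ (the edge case $n=0$ is trivial, since $\Sim{0,w}$ is the equality relation on $\A^0$). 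As $s\Sim{\infty,w}t$ just means $s\Sim{n,w}t$ for some $n$, this proves that $\Sim{\infty,w}$ is $\mu$-invariant. Lemma \ref{lem:closure_dynamics} then implies that its closure $\Sim{w}$ is $\mu$-invariant, and the symmetric argument applied to black gaps gives $\mu$-invariance of $\Sim{b}$.

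To conclude, suppose $s(\Sim{w}\vee\Sim{b})t$. By the definition of the join given in Section \ref{sec:latt-equiv-class}, there is a finite chain $s=s_1,s_2,\ldots,s_N=t$ in $S^1$ so that each consecutive pair $s_i,s_{i+1}$ is related by $\Sim{w}$ or by $\Sim{b}$. Applying $\mu$ to this chain and using the invariance of each of $\Sim{w}$ and $\Sim{b}$ separately yields a chain $\mu(s_1),\ldots,\mu(s_N)$ of exactly the same type, whence $\mu(s)(\Sim{w}\vee\Sim{b})\mu(t)$.

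I do not anticipate a serious obstacle here; the only subtlety worth flagging is that $\Sim{w}\vee\Sim{b}$ is the join of two closures and is not, a priori, itself the closure of a join, so one cannot simply apply Lemma \ref{lem:closure_dynamics} directly to $\Sim{\infty,w}\vee\Sim{\infty,b}$. Establishing invariance componentwise before joining sidesteps this issue completely.
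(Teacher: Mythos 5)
Your proof is correct and follows the same route as the paper: invariance of $\Sim{\infty,w}$ via ($\LC^n$ \ref{item:propL6}), passage to the closure via Lemma \ref{lem:closure_dynamics}, the symmetric argument for $\Sim{b}$, and the observation that the join of invariant relations is invariant. The paper states these steps more tersely (the chain argument for the join is left as "immediate"), but the substance is identical.
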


\begin{proof}
  From ($\LC^n$ \ref{item:propL6}) it follows that $\Sim{\infty,\wt} = \bigvee
  \Sim{n,\wt}$ is invariant with respect to $\mu$. Thus $\Sim{\wt}$ (being
  the closure of $\Sim{\infty,\wt}$) is invariant with respect to
  $z^d\colon S^1\to S^1$
  by Lemma~\ref{lem:closure_dynamics}. Similarly $\Sim{\bt}$ is
  invariant for this map. It is immediate that the join of two invariant
  equivalence relations is invariant.  
\end{proof}

\begin{proof}[Proof of Lemma \ref{lem:PwPb_zdsim}]
  Let $\K_{\wt}/\F$ be as in the last lemma, $\K_{\bt}/\F$ the quotient
  obtained by identifying bounded Fatou components of $\K_{\bt}$. 
  Consider the equivalence relation $\simeq$ on
  $\K_{\wt}/\F \sqcup \K_{\bt}/\F$ generated by
  \begin{align*}
    [\sigma_{\wt}(z)] &\simeq [\sigma_{\bt}(\bar{z})], \text{ where}
    \\
    [\sigma_{\wt}(z)]&\in \K_{\wt}/\F \subset \K_{\wt}/\F \sqcup \K_{\bt}/\F
    \\
    [\sigma_{\bt}(\bar{z})]&\in \K_{\bt}/\F \subset \K_{\wt}/\F \sqcup \K_{\bt}/\F,
  \end{align*}
  for all $z\in S^1=\partial\D$. 
  Clearly $\simeq$ is invariant with respect to
  $P_{\wt}/\F\sqcup P_{\bt}/\F$. 
  Consider the map
  \begin{equation*}
    S^1 \sqcup S^1 \xrightarrow{[\sigma_{\wt}(z)], [\sigma_{\bt}(\bar{w})]} 
    \K_{\wt}/\F \sqcup \K_{\bt}/\F.
  \end{equation*}
  The pullback of $\simeq$ is $\Sim{\wt}\vee \Sim{\bt}$ (on each $S^1$),
  see Proposition \ref{prop:SimPFSimw}; it is invariant with respect
  to $\mu$ (Lemma \ref{lem:simw_inv}). Thus we have the following
  commutative diagram
  \begin{equation*}
    \xymatrix{
      (S^1\sqcup S^1,\Sim{\wt}\vee \Sim{\bt}) \ar[r]^{z^d} \ar[d]
      &
      (S^1\sqcup S^1,\Sim{\wt}\vee \Sim{\bt})  \ar[d]
      \\
      (\K_{\wt}/\F \sqcup \K_{\bt}/\F, \simeq) \ar[r]
      & (\K_{\wt}/\F \sqcup \K_{\bt}/\F,\simeq). 
    }
  \end{equation*}
  The quotient of $\K_{\wt}/\F \sqcup \K_{\bt}/\F$ with
  respect to the closure $\widehat{\simeq}$ is $\K_{\wt}\widehat{\mate}\,
  \K_{\bt}$; the quotient of the map $P_{\wt}/\F\sqcup P_{\bt}/\F$ is
  $P_{\wt}\widehat{\mate}\,P_{\bt}$ (see (\ref{eq:PwPbsimhat})). 

  The closure of $\Sim{\wt}\vee \Sim{\bt}$ is $\sim$, i.e., the
  equivalence relation induced by $\gamma$ (see
  Lemma~\ref{lem:Fnocrit_simwsimb}. Clearly $S^1\sqcup S^1/\!\sim\; =
  S^1/\sim$. Note that $\K_{\wt}/\F,\K_{\wt}/\F$ are metrizable
  (\cite[Proposition~2.2]{MR872468}).  The claim follows from Lemma
  \ref{lem:closure_top_conju}.
\end{proof}

The author believes that in general $\Sim{\wt} \vee \Sim{\bt} \,\neq\, \sim$,
in particular  $\Sim{\wt} \vee \Sim{\bt}$ will not be closed in
general. We do not present the examples that seem to indicate this
here. 

\section{$\gamma$ maps Lebesgue measure to measure of maximal   
  entropy}  
\label{sec:gamma-maps-lebesgue}

In this section we show that $\gamma$ maps Lebesgue measure on $S^1$ to
the measure of maximal entropy on $S^2$, i.e., prove Theorem
\ref{thm:gammaL1L2}. 


The \defn{measure of maximal entropy} $\nu$ for $F\colon S^2\to S^2$
may be constructed as the 
weak limit 
of $1/d^n \sum_{x\in F^{-n}(x_0)} \delta_x$, where $x_0\in S^2$ is
arbitrary ($F^{-n}(x_0)$ are the preimages of $x_0$ under $F^n$). 

We denote Lebesgue measure on the circle $S^1$ by $\abs{A}$, it is
assumed here to be normalized ($\abs{S^1}=1$). This is the measure of
maximal entropy of the map $\mu$ 
meaning it is the weak limit of
$\frac{1}{d^n}\sum_{w\in \mu^{-n}(z_0)}\delta_{\wt}$ for any $z_0\in S^1$.

\medskip
Let $x_0\in S^2\setminus \post$ be a point with the smallest number of
preimages by $\gamma$; $\gamma^{-1}(x_0)=\{t_1,\dots, t_N\}$ (see
Theorem \ref{thm:sim_finite}). Consider
preimages, 
$\{s_1,\dots, s_{dN}\}:= \mu^{-1}(\{t_1,\dots, t_N\})$, and
$\{w_1,\dots, w_d\}:= F^{-1}(x_0)$. By the commutativity of the
diagram from Theorem \ref{thm:main}, it follows that
$\gamma(\{s_1,\dots,s_{dN}\})=\{w_1,\dots, 
w_d\}$ and $\gamma^{-1}(\{w_1,\dots, w_d\})=\{s_1,\dots,s_{dN}\}$. By
the minimality of $x_0$ it follows that $\#\gamma^{-1}(w_j) \geq
N$. Thus $\#\gamma^{-1}(w_j)=N$ for all $j$. The same argument yields
that for all $w\in F^{-n}(x_0)$ there are $N$ points in
$\mu^{-n}(\{t_1,\dots,t_N\})$ that are mapped by $\gamma$ to
$w$.      

\smallskip
Thus the (probability) measure
\begin{align*}
  \frac{1}{Nd^n}\sum_{s\in \mu^{-n}\{t_1,\dots,t_N\}} \delta_s
  \quad
  \text{(on $S^1$)} 
  \intertext{ is mapped by $\gamma$ to }
  \\
  \frac{1}{d^n}\sum_{w\in F^{-n}(x_0)} \delta_{w} 
  \quad 
  \text{(on $S^2$)}.
\end{align*}
Clearly the first measure converges weakly to Lebesgue measure on
$S^1$, and the second measure converges weakly to the measure of
maximal entropy $\nu$ (of $F\colon S^2\to S^2$). This proves the theorem.

\section{Fractal tilings}
\label{sec:fractal-tilings}

From the invariant Peano curve $\gamma\colon S^1\to S^2$ one obtains
\emph{fractal tilings}. Indeed divide the circle $\R/\Z$ in $d$
intervals $[j/d,(j+1)/d]$ ($j=0,\dots d-1$). Since $\mu$ maps each
such interval onto $\R/\Z$ it follows from Theorem \ref{thm:main} that
$F$ maps each set $\gamma([j/d,(j+1)/d])$ to the whole sphere. 
The tiling lifts to the \emph{orbifold covering} (which is either the
Euclidean or the hyperbolic plane).
The
thus obtained tiles are illustrated for the example from Section
\ref{sec:laminations} in Figure \ref{fig:tiling_Lattes1}. 

\begin{figure}
  \centering
  \includegraphics[width=11cm]{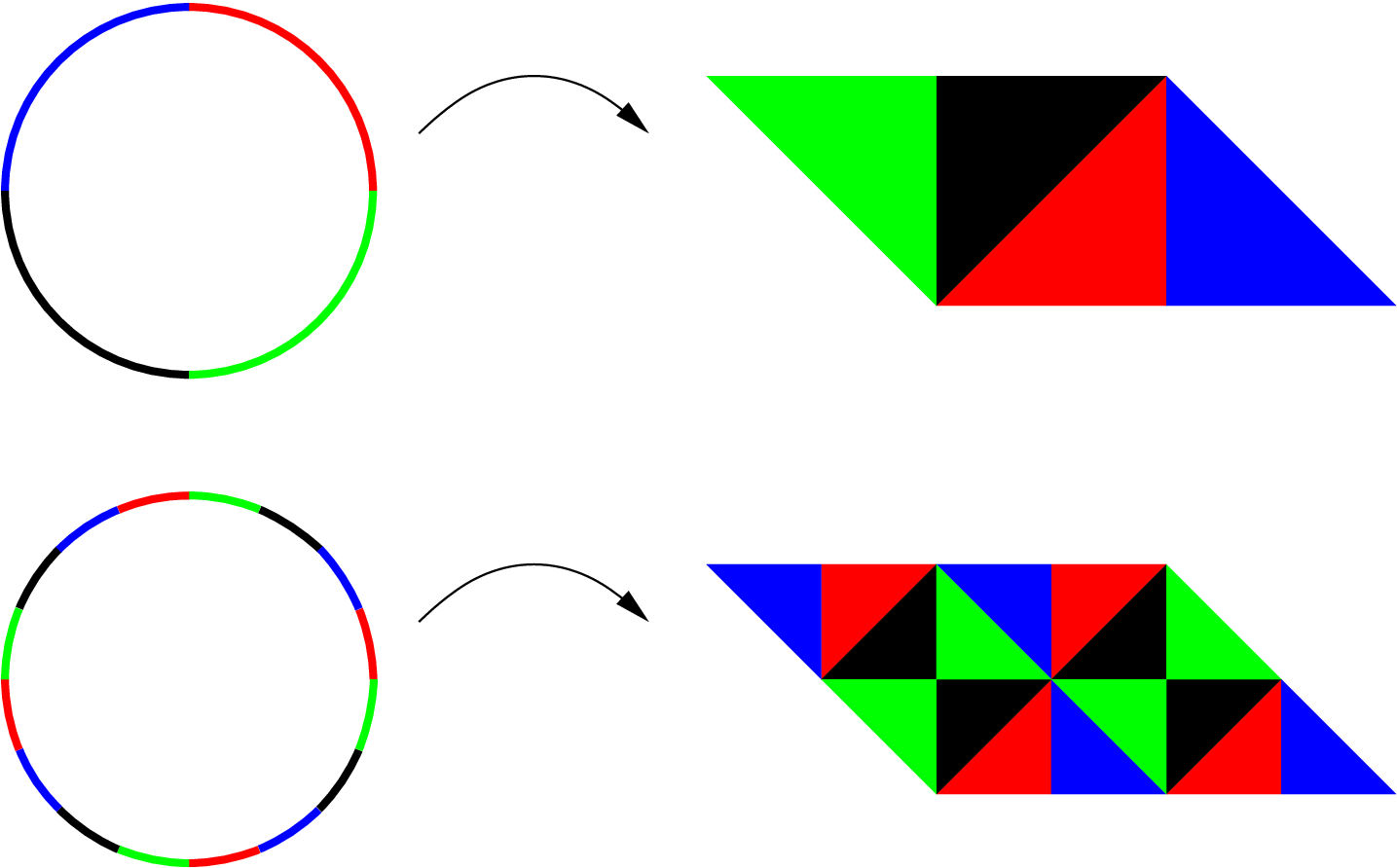}
  \begin{picture}(10,10)
    \put(-200,185){$\gamma$}
    \put(-200,75){$\gamma$}
    \put(-320,110){$S^1$}
    \put(-320,0){$S^1$}
  \end{picture}
  \caption{Tiling given by example from Section \ref{sec:laminations}} 
  \label{fig:tiling_Lattes1}
\end{figure}

This example is atypical however, since usually the tiles are very
fractal. We show the fractal tilings obtained from the Peano curve
$\gamma$ for two more examples.

The first is a Latt\`{e}s map whose orbifold has signature
$(2,3,6)$. It is the map $R_4=1-(3z+1)^3/(9z-1)^2$ from
\cite[Section 6.1]{snowemb}. 
The first approximation $\gamma^1$ of the
Peano curve is illustrated (in the orbifold covering) in Figure
\ref{fig:R4g1}. Tiles given by the resulting Peano curve are
illustrated in Figure \ref{fig:tiles_R4}. The two critical portraits
(i.e., the equivalence classes of $\Sim{1,\wt}, \Sim{1,\bt}$) that
describe $R_4$ according to Theorem \ref{thm:FcriticalPortraits} 
are:
\begin{align*}
  &\text{white portrait: } \left\{\frac{1}{9}, \frac{4}{9},
    \frac{7}{9}\right\}, 
  \\
  &\text{black portrait: } \left\{\frac{1}{3}, \frac{2}{3}\right\},
  \left\{\frac{1}{6}, \frac{5}{6}\right\}.
\end{align*}

\begin{figure}
  \centering
  \includegraphics[width=4cm]{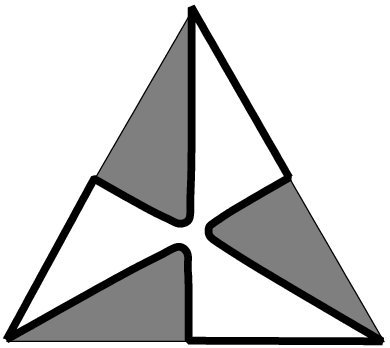}
  \caption{First approximation $\gamma^1$ for $R_4$.}
  \label{fig:R4g1}
\end{figure}

\begin{figure}
  \centering
  \subfigure[First order Tiles.]
  {
    \includegraphics[width=5.5cm]{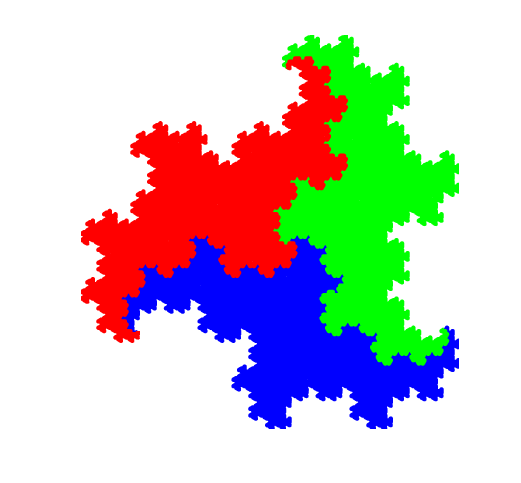}
    \label{fig:tiles1_R4}
  }
  \subfigure[Fourth order tiles.]     
  {
    \includegraphics[width=5.5cm]{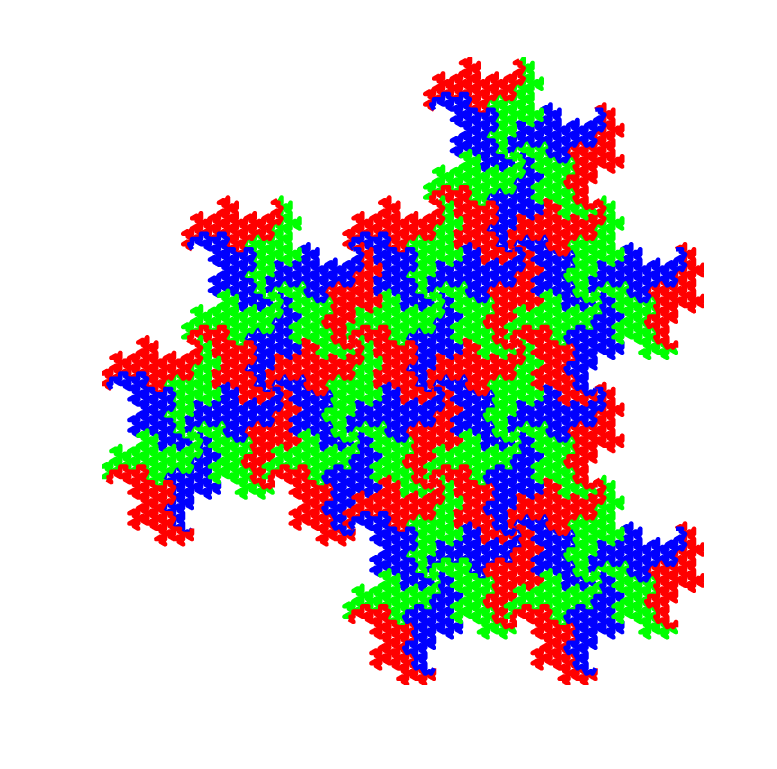}
    \label{fig:tiles2_R4}
  }     
  \caption{Tilings induced by the Peano curve of $R_4$.}
  \label{fig:tiles_R4}
\end{figure}

\smallskip
The second example is the map $R_2= 1-2
(z-1)(z+3)^3/((z+1)(z-3)^3)$ (see \cite[Section 6.1]{snowemb}). It is
a Latt\`{e}s map whose orbifold has signature $(3,3,3)$. The first
approximation $\gamma^1$ is shown (in the orbifold covering) in Figure
\ref{fig:R2g1}. The tiles that are obtained from the resulting
invariant Peano curve are shown in Figure \ref{fig:tiles_R2}. The two
critical portraits describing $R_2$ are: 
\begin{align*}
  &\text{white portrait: } \left\{\frac{7}{60}, \frac{22}{60},
    \frac{37}{60}\right\}, \left\{\frac{43}{60}, \frac{58}{60}\right\},  
  \\
  &\text{black portrait: } \left\{\frac{15}{60}, \frac{30}{60},
    \frac{45}{60}\right\},
  \left\{\frac{13}{60}, \frac{58}{60}\right\}. 
\end{align*}

\begin{figure}
  \centering
  \includegraphics[width=5cm]{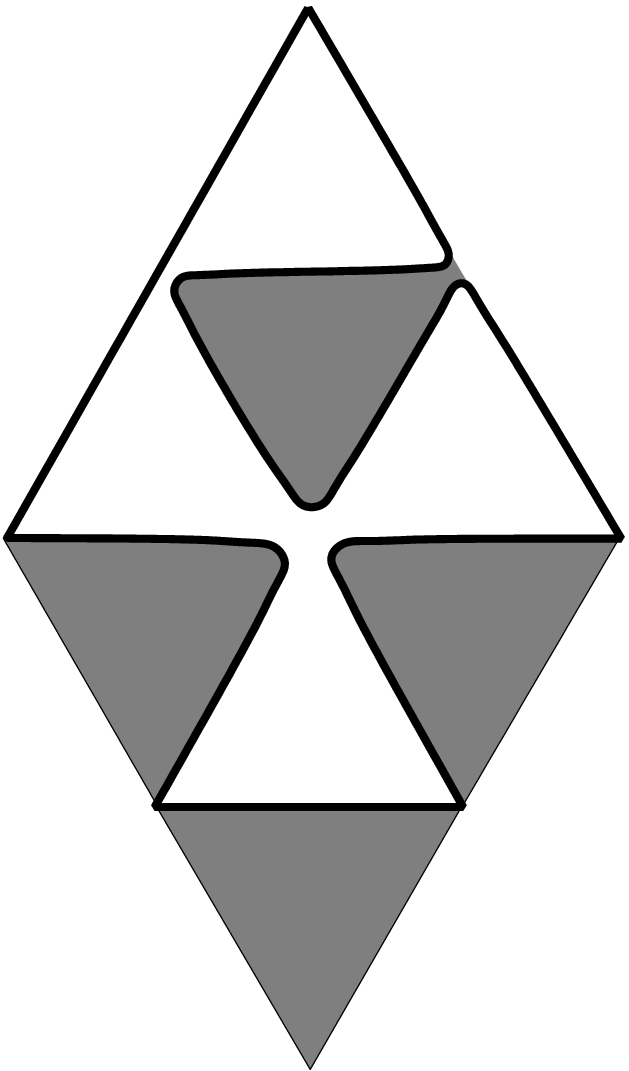}
  \caption{First approximation $\gamma^1$ for $R_2$.}
  \label{fig:R2g1}
\end{figure}

\begin{figure}
  \centering
  \subfigure[First order tiles.]
  {
    \includegraphics[width=5.5cm]{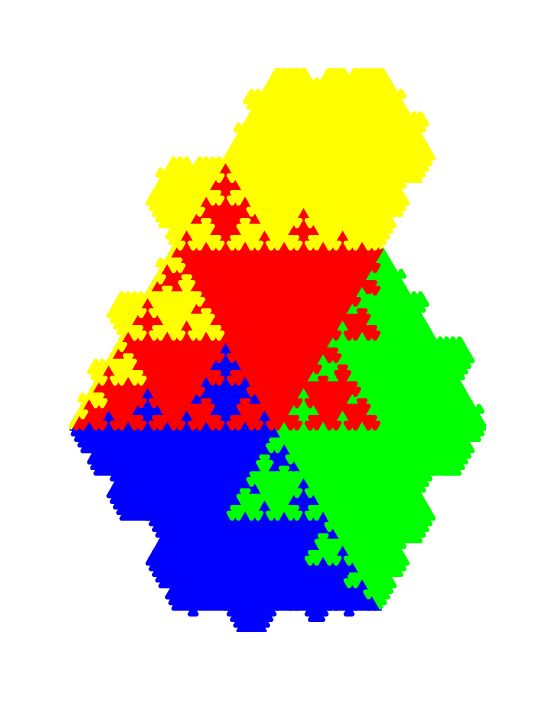}
    \label{fig:tiles1_R2}
  }
  \subfigure[Third order tiles.]
  {
    \includegraphics[width=5.5cm]{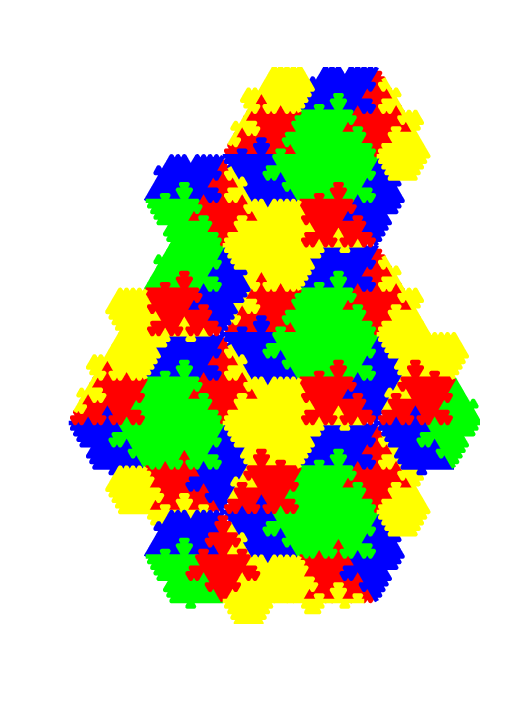}
    \label{fig:tiles2_R2}
  }
  \caption{Tilings induced by the Peano curve of $R_2$.}
  \label{fig:tiles_R2}
\end{figure}

\smallskip
All examples considered above had \emph{parabolic orbifold}. Consider
a \emph{rational} expanding Thurston map (meaning it has no
\emph{Thurston obstruction}) with \emph{hyperbolic orbifold}. The
tiling obtained from the invariant Peano curve lifts to the
\emph{orbifold cover}, i.e., the hyperbolic plane. Thus one obtains
fractal tilings of the hyperbolic plane with interesting self-similar
properties. 

There are other ways to obtain fractal tilings from the invariant
Peano curve $\gamma$. Instead of dividing the circle into $d^n$
intervals of the same length, we can take the images of the $n$-arcs
by $\gamma$. Thus we get tilings of the hyperbolic/Euclidean plane
with $k$($=\#\post$) different tiles. Each tile divides into tiles of
the $(n+1)$-th order. 

There is yet another way to obtain tilings from the invariant Peano
curve $\gamma$ in a natural way. Namely define tiles as the images of
(either white or black) $n$-gaps by $\gamma$.

\section{Open Questions}
\label{sec:open-questions}

The construction presented here to decompose, or unmate, an expanding
Thurston map into polynomials is not the most general one. In
\cite{unmating} an example of an expanding Thurston map (which is
rational) was given that arises indeed as the (topological) mating of
two poynomials, yet this cannot be shown with the methods presented
here.

\begin{open}
  Let $f\colon S^2\to S^2$ be a Thurston map. Give a necessary and
  sufficient condition that $f$ arises as a mating. 
\end{open}

Ideally this condition should give all shared matings, i.e., all
distinct possibilities how $f$ arises as a mating. Furthermore from
one should be able to read off the polynomials into which $f$ unmates
in a combinatorial manner. For \emph{hyperbolic} rational Thurston
maps such a necessary and sufficient condition to arise as a mating is
known, namely the existence of an \emph{equator}, see
\cite[Theorem~4.2]{unmating}. 

Several people have asked whether there is a bound on the number of
points that are identified in a mating. If a mating (of strictly
preperiodic polynomials) is obtained as constructed here this is
answered by Theorem~\ref{thm:sim_finite}. 

\begin{open}
  Is it possible to decide whether an expanding Thurston map $F$ is 
  equivalent to a rational map from the critical portraits (see
  Section \ref{sec:critical-portrait})? By Thurston's topological
  characterization \cite{DouHubThurs} this amounts to the question
  whether it is 
  possible to read off \defn{Thurston obstructions} from the
  \emph{critical  portraits}. 

  In principle this is possible. Recall
  that each $1$-tile/$1$-edge has a natural corresponding
  $1$-gap/$1$-arc. Thus every multicurve in $S^2\setminus \post$ can
  be naturally represented in the ``critical portrait sphere''
  $\widetilde{S}^2$ (i.e.,
  the sphere whose two hemispheres are $S^2_{\wt}$ and $S^2_{\bt}$ as in
  Section \ref{sec:laminations-2}). A multicurve $\Gamma$ in this picture is
  just a multicurve in $\widetilde{S}^2\setminus \A^0$. Since each
  $1$-gap is mapped by $\mu$ to $S^2_{\wt}$ or $S^2_{\bt}$, it is possible to take
  the preimage of the multicurve. It is invariant if each component of
  the preimage is isotopic rel.\ $\A^0$ to one component of
  $\Gamma$. The Thurston matrix is then taken as usual.
  
  However it is not clear whether the description above offers any
  advantage in finding Thurston obstructions.  
\end{open}

\begin{open}
  Consider a postcritically finite rational map $\tilde{f}$ whose \emph{Julia
    set is a 
  Sierpi\'{n}ski carpet}. Identifying the closure of each Fatou
  component yields an expanding Thurston map $f$ (see Section
  \ref{sec:expand-thurst-maps}). Assume $f$ has an invariant Peano
  curve $\gamma$ (meaning we do not have to take an iterate in Theorem
  \ref{thm:main}). Is it possible to construct from $\gamma$ a
  semiconjugacy 
  $\tilde{\gamma}\colon S^1 \to \widetilde{\J}$ ($\widetilde{\J}$ is
  the Julia set of $\tilde{f}$) such that
  $\tilde{f}(\tilde{\gamma}(z))=\tilde{\gamma}(z^d)$ for all $z\in
  S^1$ (where $d=\deg
  \tilde{f}$)? This is false in general (see 
  \cite[Section 4]{MR1961296}), but possibly true under some additional
  assumptions.  
\end{open}
    
\bibliographystyle{alpha}
\bibliography{main}

\end{document}